\documentclass[11pt,a4paper]{article}
\usepackage[utf8]{inputenc}
\usepackage{microtype}
\usepackage[margin=2.5cm]{geometry}
\usepackage{amsmath,amssymb,amsthm,mathtools}
\usepackage{bm}
\usepackage{booktabs,array,multirow,tabularx}
\usepackage{graphicx}
\usepackage{xcolor}
\usepackage{enumitem}
\usepackage{caption,subcaption}
\usepackage{tikz}
\usetikzlibrary{arrows.meta,positioning,shapes.geometric,calc,fit,backgrounds,decorations.pathreplacing}
\usepackage{pgfplots}
\pgfplotsset{compat=1.18}
\usepackage{algorithm}
\usepackage{algpseudocode}
\usepackage[numbers,sort&compress]{natbib}
\usepackage[colorlinks=true,linkcolor=blue!60!black,citecolor=green!45!black,urlcolor=blue!60!black]{hyperref}
\usepackage[capitalise,noabbrev,nameinlink]{cleveref}

\setlist{itemsep=2pt,topsep=3pt}
\allowdisplaybreaks

\theoremstyle{plain}
\newtheorem{theorem}{Theorem}[section]
\newtheorem{lemma}[theorem]{Lemma}
\newtheorem{corollary}[theorem]{Corollary}

\newtheorem{fact}[theorem]{Fact}

\theoremstyle{definition}
\newtheorem{definition}[theorem]{Definition}
\newtheorem{assumption}[theorem]{Assumption}

\newtheorem{problem}[theorem]{Open problem}
\theoremstyle{remark}
\newtheorem{remark}[theorem]{Remark}
\newtheorem{convention}[theorem]{Convention}
\crefname{assumption}{Assumption}{Assumptions}
\crefname{problem}{Open Problem}{Open Problems}
\crefname{convention}{Convention}{Conventions}

\newcommand{\R}{\mathbb{R}}
\newcommand{\E}{\mathbb{E}}
\newcommand{\Prob}{\mathbb{P}}
\IfFileExists{dsfont.sty}{\usepackage{dsfont}\newcommand{\1}{\mathds{1}}}{\newcommand{\1}{\mathbf{1}}} 
\newcommand{\norm}[1]{\left\|#1\right\|}
\newcommand{\inner}[2]{\left\langle #1,#2\right\rangle}

\DeclareMathOperator*{\argmin}{arg\,min}
\DeclareMathOperator*{\argmax}{arg\,max}

\DeclareMathOperator{\dom}{dom}
\DeclareMathOperator{\tr}{tr}
\DeclareMathOperator{\KL}{KL}
\DeclareMathOperator{\TV}{TV}
\DeclareMathOperator{\Ham}{Ham}

\DeclareMathOperator{\Var}{Var}
\newcommand{\cX}{\mathcal{X}}
\newcommand{\cY}{\mathcal{Y}}
\newcommand{\cZ}{\mathcal{Z}}
\newcommand{\cA}{\mathcal{A}}
\newcommand{\cB}{\mathcal{B}}
\newcommand{\cD}{\mathcal{D}}
\newcommand{\cF}{\mathcal{F}}
\newcommand{\cG}{\mathcal{G}}

\newcommand{\Q}{Q}
\newcommand{\Ball}{\mathbb{B}}
\newcommand{\Sphere}{\mathbb{S}}
\newcommand{\Nseq}{N_{\mathrm{seq}}}
\newcommand{\Nf}{N_{f}}
\newcommand{\Ng}{N_{g}}
\newcommand{\Norc}{N_{\mathrm{orc}}}
\newcommand{\Nmv}{N_{\mathrm{inner}}}
\newcommand{\Cert}{\mathsf{C}}
\newcommand{\viol}{\mathsf{v}}
\newcommand{\LagS}{\Phi_\gamma}

\newcommand{\rowinf}[1]{\norm{#1}_{2\to\infty}}
\newcommand{\pos}[1]{[#1]_+}
\newcommand{\Rmult}{R}
\newcommand{\eps}{\varepsilon}
\newcommand{\kap}{\kappa_m}
\newcommand{\Lam}{\Lambda_m}
\newcommand{\nuj}{\nu_m}
\newcommand{\dA}{D_{A}}
\newcommand{\SA}{\Sigma_{A}}
\newcommand{\Sb}{\Sigma_{b}}
\newcommand{\Xr}{\bar x}
\newcommand{\yr}{\bar y}
\newcommand{\rs}{\varrho}
\newcommand{\Jg}{J_\gamma}
\newcommand{\Vy}{V_{\cY}}
\newcommand{\Vx}{V_{\cX}}

\newcommand{\ie}{i.e.}
\newcommand{\eg}{e.g.}
\newcommand{\ZOS}{\textsc{ZO-Sliding}}
\newcommand{\BSMP}{\textsc{B-SMP}}
\newcommand{\RZOS}{\textsc{R-ZO-Sliding}}
\newcommand{\AZOS}{\textsc{ZO-Sliding-A}}

\newcommand{\SmoothStabilizer}{865.1}
\newcommand{\SmoothInner}{678}
\newcommand{\SmoothCalls}{28,064}
\newcommand{\CuspStabilizer}{1896.0}
\newcommand{\CuspInner}{147}
\newcommand{\CuspCalls}{6,824}
\newcommand{\RestartCalls}{81,231}
\newcommand{\RestartJoint}{0.0009}
\newcommand{\RestartCI}{[0.0009, 0.0010]}
\newcommand{\PlainEightCalls}{28,064}
\newcommand{\PlainEightJoint}{0.0016}
\newcommand{\PlainEightCI}{[0.0013, 0.0020]}
\newcommand{\PlainSixteenCalls}{56,128}
\newcommand{\PlainSixteenJoint}{0.0018}
\newcommand{\PlainSixteenCI}{[0.0016, 0.0020]}
\newcommand{\PlainEqCalls}{80,684}
\newcommand{\PlainEqJoint}{0.0019}
\newcommand{\PlainEqCI}{[0.0017, 0.0020]}
\newcommand{\EqBatch}{23}
\newcommand{\NumSeeds}{30}
\newcommand{\RestartRounds}{70}
\newcommand{\SmoothBsmpOneIters}{4,677}
\newcommand{\SmoothBsmpEightIters}{584}
\newcommand{\SmoothNf}{10,848}
\newcommand{\CuspBsmpOneIters}{1,137}
\newcommand{\CuspBsmpEightIters}{142}
\newcommand{\CuspNf}{2,352}
\newcommand{\SmoothBsmpRoundsToZO}{992}
\newcommand{\CuspZORoundsToBsmpEight}{24}
\newcommand{\CuspZORoundsToBsmpOne}{47}
\newcommand{\SmoothBsmpOneRounds}{9,354}
\newcommand{\SmoothBsmpOneJoint}{0.0806}
\newcommand{\SmoothBsmpEightRounds}{1,168}
\newcommand{\SmoothBsmpEightJoint}{0.0014}
\newcommand{\SmoothZOJoint}{0.0016}
\newcommand{\SmoothZOCI}{[0.0013, 0.0020]}
\newcommand{\SmoothWorstJoint}{0.34}
\newcommand{\SmoothBsmpEightCI}{[0.0013, 0.0015]}
\newcommand{\CuspBsmpOneRounds}{2,274}
\newcommand{\CuspBsmpOneJoint}{0.0363}
\newcommand{\CuspBsmpEightRounds}{284}
\newcommand{\CuspBsmpEightJoint}{0.0856}
\newcommand{\CuspZOJoint}{0.0228}

\newcommand{\CuspWorstJoint}{0.74}

\title{\bfseries Gradient-Free Methods for Stochastic Convex Optimization with Stochastic Functional Constraints}
\author{Vadim D. Abronin$^{2}$ \and Alexander V. Gasnikov$^{1,2,3}$ \and Darina M. Dvinskikh$^{4}$}
\date{%
\small $^1$Innopolis University \quad $^2$MIRAI\\[-1pt]
\small $^3$Kharkevich Institute for Information Transmission Problems RAS \quad $^4$HSE University\\[2pt]
\small \texttt{abronin.vd@phystech.edu}, \texttt{gasnikov@yandex.ru}, \texttt{dmdvinskikh@hse.ru}\\[6pt]
\today}

\begin{document}
\maketitle

\begin{abstract}
We develop accelerated gradient-free methods for stochastic convex optimization with constraints defined by expectations. Our batched primal-dual sliding method uses two-point evaluations sharing a random sample and guarantees expected objective error and expected maximum constraint violation at most $\varepsilon$. It achieves $O(\varepsilon^{-1/2})$ sequential oracle rounds for smooth data and $O(d^{1/4}/\varepsilon)$ for nonsmooth data in dimension $d$, recovering the accuracy and dimension dependence of the corresponding unconstrained accelerated methods. The smooth rate is optimal in accuracy. Each round collects all samples needed for its inner primal-dual updates. Total evaluations retain quadratic dependence on inverse accuracy, with only polylogarithmic dependence on the number of constraints under vector feedback. Complementary lower bounds distinguish the dimension cost of gradient estimation from the unavoidable logarithmic cost of estimating noisy constraint levels. For smooth strongly convex objectives, restarts give logarithmic round complexity and objective evaluation cost linear in inverse accuracy, while constraint-level estimation necessarily remains quadratic. Known affine constraints require no constraint-oracle calls.
\end{abstract}

\noindent\textbf{Keywords.} zeroth-order optimization; gradient-free methods; stochastic functional constraints; expectation constraints; two-point feedback; randomized smoothing; primal--dual sliding; accelerated methods; Mirror--Prox; restarts; oracle complexity; parallel complexity; lower bounds.

\medskip
\noindent\textbf{MSC 2020.} 90C25, 90C15, 90C06, 65K05, 68Q25.

\tableofcontents

\section{Introduction}\label{sec:intro}

\subsection{The problem}
We consider the stochastic convex optimization problem with $m$ stochastic functional constraints
\begin{equation}\label{eq:P}
 \min_{x\in \Q}\ f(x):=\E_\xi F(x,\xi)\qquad\text{s.t.}\qquad g_i(x):=\E_\xi G_i(x,\xi)\le 0,\quad i=1,\dots,m,
\end{equation}
where $\Q\subset\R^d$ is a convex compact set with Euclidean diameter $D$, $\xi$ has an unknown distribution $P$, and $f,g_1,\dots,g_m$ are convex.
Neither the functions nor their (sub)gradients are available. We only have a \emph{stochastic zeroth-order oracle}: given two points $x^+,x^-$, it draws
a fresh sample $\xi\sim P$ and returns
\begin{equation}\label{eq:oracle-intro}
 \bigl(F(x^+,\xi),G_1(x^+,\xi),\dots,G_m(x^+,\xi)\bigr)\quad\text{and}\quad \bigl(F(x^-,\xi),G_1(x^-,\xi),\dots,G_m(x^-,\xi)\bigr).
\end{equation}
Using the same $\xi$ at both points is the standard \emph{two-point feedback} model \citep{agarwal2010optimal,duchi2015optimal,shamir2017optimal}.
The constraints hold only in expectation, so the oracle never tells us whether a point is feasible. We have to estimate feasibility from noisy values,
and this noise does not cancel the way it does in the differences $F(x^+,\xi)-F(x^-,\xi)$.
Such problems appear when the objective and the constraints come from a simulator, a physical experiment or a non-differentiable model, and the
constraints express risk, budget or safety requirements averaged over uncertainty.

For a given accuracy $\eps>0$ we want a point $\hat x\in\Q$ with
\begin{equation}\label{eq:goal}
\E\bigl[f(\hat x)\bigr]-f^\ast\le \eps\qquad\text{and}\qquad \E\Bigl[\max_{1\le i\le m}\pos{g_i(\hat x)}\Bigr]\le\eps ,
\end{equation}
where $f^\ast$ is the optimal value of \eqref{eq:P}, and we want to know exactly what it costs to get one.

\subsection{Three measures of complexity}\label{sec:intro-measures}
Following \citet{gasnikov2022power}, we distinguish oracle calls from \emph{sequential} rounds of calls.
In a \emph{round}, the query points depend only on what was known before the round, so all of them can be sent at once, in parallel or as one
batched request to a simulator. The number of rounds $\Nseq$ determines wall-clock time on a parallel system. The number of calls $\Norc$ (evaluations of the
vector $(F,G_1,\dots,G_m)$) determines the total sampling cost.

With constraints, it also matters what a call is used for. We therefore count separately the calls used for the objective, $\Nf$, and those used for the
constraint vector, $\Ng$. The two kinds of information may have different prices, and we show that their intrinsic complexities differ too. One call can
serve both purposes, so $\Norc\le\Nf+\Ng$.

Finally, some methods do many cheap steps between two rounds: proximal steps in the low-dimensional dual space, or products of a stored constraint
Jacobian with a vector. We denote their number by $\Nmv$; it plays the role of the matrix--vector count in \citet{zhanglan2022}. \Cref{fig:round} shows
one round of our main method.

\begin{figure}[t]
\centering
\resizebox{\textwidth}{!}{%
\begin{tikzpicture}[>=Latex, font=\small,
 box/.style={draw, rounded corners=2pt, align=center, minimum height=9mm, inner sep=4pt},
 orc/.style={box, fill=blue!8},
 cmp/.style={box, fill=orange!12},
 outp/.style={box, fill=green!10}]
 \node[cmp] (center) {center $u_t$\\ (known before the round)};
 \node[orc, right=11mm of center, minimum width=42mm] (queries) {\textbf{oracle round $t$}\\ $(b_p+b_y)S_t$ paired and $b_yS_t$ single queries at $u_t$,\\ $b_p$ paired queries at $u_{t-1}$ (reserve)\\ \emph{all issued in parallel}};
 \node[cmp, right=11mm of queries, minimum width=40mm] (inner) {\textbf{inner loop}: $S_t$ primal--dual\\ proximal steps without new\\ oracle rounds, each consuming\\ one fresh mini-batch of the round};
 \node[outp, right=11mm of inner] (phase) {phase output\\ $x_t,\ \bar y_t$};
 \draw[->] (center) -- (queries);
 \draw[->] (queries) -- (inner);
 \draw[->] (inner) -- (phase);
 \draw[->] (phase.south) -- ++(0,-9mm) -| node[pos=.25,below] {extrapolation $u_{t+1}=\dfrac{\tau_{t+1}u_t+x_t+\theta_{t+1}(x_t-x_{t-1})}{1+\tau_{t+1}}$} (center.south);
 \begin{scope}[on background layer]
  \node[fit=(queries), draw=blue!40, dashed, rounded corners, inner sep=6pt, label={[blue!60!black]above:{counted in $\Nseq$ (one round) and in $\Norc$, $\Nf$, $\Ng$}}] {};
  \node[fit=(inner), draw=orange!60!black, dashed, rounded corners, inner sep=6pt, label={[orange!60!black]above:{counted in $\Nmv$}}] {};
 \end{scope}
\end{tikzpicture}}
\caption{One phase of the zeroth-order primal--dual sliding method (\Cref{alg:sliding}). All oracle queries of phase $t$ are located at the
current center $u_t$ (plus one reserve mini-batch at $u_{t-1}$) and therefore form a single sequential round. The inner loop performs $S_t$
proximal steps without further oracle rounds; each step uses a fresh mini-batch of the round, which keeps the estimation errors conditionally
centered. The number of rounds is $N=O(\sqrt{LD^2/\eps})$ for smooth data, while the total number of calls is
$O\bigl((dM_R^2D^2+\Rmult^2\sigma_g^2\log(m+1))/\eps^2\bigr)$ up to logarithmic factors (gradient information with the factor $d$, level information with
the factor $\log(m+1)$).}
\label{fig:round}
\end{figure}

\subsection{Related work}\label{sec:related}

\paragraph{Zeroth-order optimization.} Gradient-free methods go back to \citet{nemirovski1983problem,polyak1990optimal,spall1992multivariate}.
Most modern analyses rely on randomized smoothing. One replaces $f$ by $f_\gamma(x)=\E f(x+\gamma v)$ with $v$ uniform in the unit ball; its gradient
has the single-point representation $\nabla f_\gamma(x)=\frac d\gamma \E[f(x+\gamma u)u]$ with $u$ uniform on the unit sphere
\citep{flaxman2005online,nesterov2017random}. With two evaluations per sample, the estimator
$\frac{d}{2\gamma}[F(x+\gamma u,\xi)-F(x-\gamma u,\xi)]u$ has second moment $O(dM^2)$ for every $\gamma$
\citep{agarwal2010optimal,duchi2015optimal,shamir2017optimal}. The constant $2d$ follows from the Poincar\'e inequality on the sphere
\citep{shamir2017optimal,gasnikov2022power}. This gives $O(dM^2D^2/\eps^2)$ calls for nonsmooth stochastic convex problems, which is optimal
\citep{duchi2015optimal,shamir2017optimal}.

Smoothing also makes the function $\sqrt d M/\gamma$-smooth \citep{gasnikov2022power}, so accelerated methods apply. \citet{gasnikov2022power} used this to
separate the number of sequential iterations from the number of calls. For nonsmooth $M$-Lipschitz objectives on a set of diameter $D$, an accelerated
batched method reaches accuracy $\eps$ in $O(d^{1/4}MD/\eps)$ iterations with $O(dM^2D^2/\eps^2)$ calls in total. For $\mu$-strongly convex objectives
the numbers are $O(d^{1/4}\sqrt{M^2/(\mu\eps)}\log(1/\eps))$ iterations and $O(dM^2/(\mu\eps))$ calls. The same paper also covers bounded adversarial noise and
one-point feedback. For surveys see \citet{gasnikov2022survey,larson2019derivative,conn2009introduction}.
Later work exploits higher-order smoothness \citep{bach2016highly,akhavan2020exploiting,novitskii2021improved,lobanov2024highly}, handles heavy-tailed noise
\citep{kornilov2023accelerated}, combines stochastic and adversarial noise \citep{lobanov2023stochastic}, uses kernel approximations
\citep{lobanov2023kernel}, and studies saddle-point problems \citep{beznosikov2020gradient} and inexact oracles
\citep{bayandina2018gradient,gasnikov2017stochastic}.

\paragraph{Parallel complexity and rounds.} How many rounds an algorithm needs when it may send polynomially many queries per round was studied by
\citet{nemirovski1994parallel}, \citet{balkanski2018parallelization}, \citet{duchi2018minimax}, \citet{bubeck2019complexity} and \citet{diakonikolas2020lower}.
For nonsmooth Lipschitz convex minimization on the Euclidean ball, the smoothing bound $O(d^{1/4}/\eps)$ of \citet{duchi2012randomized} is not known to be tight.
The best lower bounds \citep{bubeck2019complexity,diakonikolas2020lower} are $\tilde\Omega(\min\{\eps^{-2},d^{1/3}\eps^{-2/3}\})$ when many queries per round are
allowed. For $\eps\lesssim d^{-1/4}$, the depth $\tilde O(d^{1/3}\eps^{-2/3})$ is achieved by the ball-acceleration methods of \citet{bubeck2019complexity}, and by
\citet{carmon2023resqueing} with a polynomial number of stochastic gradient queries. Both use (stochastic) gradients. For the two-point value oracle we
consider, $d^{1/4}/\eps$ is still the best known depth.

For smooth convex minimization, accelerated methods with mini-batches achieve the round complexity $\Theta(\sqrt{LD^2/\eps})$
\citep{lan2012optimal,dvurechensky2021accelerated,gorbunov2019optimal}. \citet{zhang2026nearoptimal} recently proved near-optimal lower bounds for randomized
algorithms with an \emph{exact} scalar value oracle. Their model is different from ours: with a common-sample two-point oracle, differences of values are
noise-free but the values themselves are not. \citet{renluo2025parameterfree} obtained parameter-free stochastic zeroth-order methods with near-optimal
oracle complexity for \emph{unconstrained} convex problems; they adapt the step and the smoothing radius, whereas we use fixed schedules.
Our results carry these regimes over to problems with functional constraints.

\paragraph{Functional constraints with first-order information.} Subgradient methods that switch between objective and constraint steps go back to
\citet{polyak1967general}. For nonsmooth convex problems they need $O(M^2D^2/\eps^2)$ subgradient calls \citep{nemirovski1983problem,bayandina2018mirror}.
For stochastic constraints, the cooperative stochastic approximation of \citet{lan2020algorithms} reaches the rate $O(1/\eps^2)$ for both the optimality gap and
the violation, without dual variables. Primal--dual methods work with the Lagrangian saddle-point problem: ConEx \citep{boob2023stochastic} and the methods of
\citet{xu2021first,hamedani2021primal} cover smooth and nonsmooth constraints with stochastic first-order information. Level-set methods
\citep{lin2018level,lin2020data,aravkin2019level} and the level-constrained methods of \citet{boob2025level} avoid dependence on the size of the multipliers.
\citet{deng2026uniformly} give parameter-free first-order methods with optimal oracle complexity for function-constrained problems.

For smooth convex constraints, \citet{zhanglan2022} showed that accelerated constrained gradient descent (ACGD) needs only $O(\sqrt{L/\eps})$ gradient evaluations,
as in the unconstrained case. Here $L=L_f+\norm{y^\ast}L_g$ is the smoothness constant of the Lagrangian; our constant $H=L_0+\Rmult L_g$ comes from this.
In the sliding variant ACGD-S, the $O(1/\eps)$ cost of the bilinear coupling is paid in matrix--vector products with the constraint Jacobian, not in gradient
evaluations. \citet{zhanglan2022} also prove that these complexities are optimal in their first-order model. The lower bound of \citet{ouyang2021lower} for
bilinearly coupled saddle-point problems shows that $O(1/\eps)$ matrix--vector products cannot be improved when the coupling matrix is accessed only through
such products.

\paragraph{Functional constraints with zeroth-order information.} Much less is known here. \citet{nguyen2023stochastic} analyzed a zeroth-order version of
ConEx with Gaussian smoothing and separate gradient estimates for each function. It needs $O((m+1)d/\eps^2)$ calls in the convex case, with the violation
measured in the Euclidean norm of $\pos{g(\hat x)}$. The factor $m+1$ comes from estimating $m+1$ gradients with independent samples, because their oracle
returns one function per call; ours returns all $m+1$ values at once. \citet{usmanova2019safe} studied safe zeroth-order convex learning with unknown
constraints. Other zeroth-order work covers nonconvex constrained problems \citep{balasubramanian2022zeroth,li2022zeroth}, distributed gradient-free mirror
descent with constraints \citep{yu2021distributed}, block methods for Lipschitz expectation-valued problems \citep{shanbhag2021zeroth}, and saddle-point
problems \citep{beznosikov2020gradient,dvinskikh2022noisy}. As far as we know, none of these papers separates rounds from calls, distinguishes objective
from constraint information, or gives accelerated round complexities for constrained zeroth-order problems.

\paragraph{Saddle-point methods.} Mirror--Prox \citep{nemirovski2004prox} and its stochastic version \citep{juditsky2011solving,nemirovski2009robust} solve
convex--concave saddle-point problems in $O(L/\eps)$ iterations plus $O(\sigma^2/\eps^2)$ samples. The primal--dual methods of \citet{chambolle2011first} with dual
extrapolation are the inner engine of the sliding methods of \citet{lan2016gradient,zhanglan2022}.

\subsection{Contributions}\label{sec:contrib}
All results in the main text are proved in full, with explicit constants.
\begin{enumerate}[label=\textbf{(C\arabic*)},leftmargin=*]
\item \textbf{Reduction to a smooth saddle-point problem (\Cref{sec:smoothing,sec:lagrangian}).} We smooth the objective \emph{and} the constraints and
 shift the constraint levels explicitly. The resulting feasible set contains the original one, and the Lagrangian is smooth in $x$. The certificate
 $\Cert_{\Rmult,\gamma}(x)=f_\gamma(x)-f_\gamma^\ast+\Rmult\|\pos{h(x)}\|_\infty$ bounds both the optimality gap and the violation for the original problem
 (\Cref{lem:transfer,lem:certificate}). Two-point estimators of the Lagrangian gradient have second moment $2d(M_0+\Rmult M_g)^2$ for any smoothing radius, and
 the constraint levels are estimated from single calls. We also prove a new bound on the $\ell_\infty$-norm of mini-batches of Jacobian samples applied to a
 fixed vector (\Cref{lem:action}). It removes the factor $d$ from the dual noise at the cost of a logarithm in $m$.
\item \textbf{Batched stochastic Mirror--Prox (\Cref{sec:mp}).} As a baseline we prove (\Cref{thm:mp})
 $\E\Cert\le \sqrt3 L_\alpha\cD_\alpha^2/N+5\sigma_\ast\cD_\alpha/\sqrt N$, which gives $\Nseq=O(\eps^{-1})$ rounds for smooth data with $O(\eps^{-2})$ calls.
\item \textbf{Zeroth-order primal--dual sliding (\Cref{sec:sliding}).} Our main method (\Cref{alg:sliding}) uses the outer recursion of ACGD-S
 \citep{zhanglan2022}. Its inner proximal steps use fresh mini-batches drawn at the same center. We prove (\Cref{thm:sliding})
 \[
   \E\,\Cert_{\Rmult,\gamma}(\Xr_N)\;\le\;\frac{2L\dA^2}{N(N+1)}+4\dA\sqrt{\frac{17dM_R^2}{b_pK_N}+\frac{\rs^2\sigma_y^2}{b_yK_N}},
 \]
 where $N$ is the number of rounds, $K_N=\sum_t S_t$ the number of inner steps, and $b_p$, $b_y$ the primal and dual mini-batch sizes. As a result,
 $\Nseq=O(\sqrt{(L_0+\Rmult L_g)D^2/\eps})$ for smooth data and $\Nseq=O(d^{1/4}\sqrt{M_R M}D/\eps)$ for nonsmooth data. The method uses
 $\Nf=O(\Nmv+dM_R^2D^2/\eps^2)$ calls for the objective and $\Ng=\Norc=O\bigl(\Nmv+[dM_R^2D^2+\Lam\Rmult^2(\sigma_g^2+\nuj M_g^2D^2)]/\eps^2\bigr)$ calls for the
 constraints, with $\Lam=O((1+\log m)\log(m+1))$ and $\nuj=O(\min\{d,1+\log m\})$ (\Cref{cor:sliding-smooth,cor:sliding-nonsmooth}). Up to the multiplier
 scale, these round complexities are the best known ones for the unconstrained two-point problem. The analysis rests on two inequalities that may be
 useful elsewhere: a \emph{sliding inequality} for arbitrary sequences (\Cref{lem:outer}) and an \emph{energy inequality} for the inner primal--dual loop that
 tracks the stochastic errors explicitly (\Cref{lem:inner}).
\item \textbf{Strongly convex objectives (\Cref{sec:restarts}).} Restarting on the certificate gives $\Nseq=O(\sqrt{L/\mu}\log(\mu D^2/\eps))$ rounds.
 Because the primal and dual batch sizes are separate, the objective is used in only $\Nf=O(\Nmv+dM_R^2/(\mu\eps))$ calls. The constraint calls carry an extra
 $\tilde O(\Rmult^2\sigma_g^2\log(m+1)/\eps^2)$ term for the level information (\Cref{thm:restart-sliding,thm:restart-mp}), and this term cannot be removed
 even for strongly convex objectives (\Cref{thm:lower-levels} and \Cref{rem:lower-levels-sc}).
\item \textbf{Exactly known affine constraints (\Cref{sec:affine}).} If $g(x)=Ax-c$ is known, one mini-batch per round is enough and the coupling costs only
 matrix--vector products: $\Nf=O(\sqrt{L_{0,\gamma}D^2/\eps}+dM_0^2D^2/\eps^2)$, $\Ng=0$ and
 $\Nmv=O(\sqrt{L_{0,\gamma}D^2/\eps}+\rowinf{A}\Rmult D\sqrt{\log(m+1)}/\eps)$ (\Cref{thm:affine}). This is the zeroth-order counterpart of ACGD-S.
\item \textbf{Lower bounds (\Cref{sec:lower}).} Any algorithm with $T$ two-point calls has error $\Omega(MD\sqrt{d/T})$ on a class satisfying our assumptions
 (\Cref{thm:lower-objective}). A scaled epigraph reduction gives $\Omega(M_gD\sqrt{d/T})$ for the constraint calls, with universal constants
 (\Cref{thm:lower-constraint}). Detecting the active constraint takes $\Omega(\sigma_g^2\log(1+m)/\eps^2)$ vector calls and $\Omega(m\sigma_g^2/\eps^2)$ scalar calls
 (\Cref{thm:lower-levels,thm:lower-scalar}); this is where the $\log(m+1)$ factors of the entropy dual geometry come from. The objective term is tight up to
 constants. The constraint-gradient term is tight in $d$, $M_g$, $D$ and $\eps$ but not in $\Rmult^2$, and the level term is tight up to $\kap\Rmult^2$.
 For the nonsmooth depth we only match the best known bound (\Cref{tab:lower}).
\item \textbf{Experiments (\Cref{sec:experiments}).} On a smooth and a nonsmooth instance with $d=24$ and $m=8$, using the schedule of \Cref{thm:sliding}, we see
 the predicted gap between rounds and calls. We also test the restart scheme at equal budgets and illustrate two information-theoretic effects: the
 common-sample variance and the logarithmic dependence on $m$.
\end{enumerate}
Open questions are collected in \Cref{sec:future}. \Cref{tab:overview} summarizes the rates, and \Cref{tab:comparison} compares them with the literature.

\begin{table}[t]
\centering\footnotesize\setlength{\tabcolsep}{3pt}
\caption{Overview of the upper bounds proved in this paper (constants and $\log$ factors in $m$ omitted; see \Cref{sec:summary} for the exact
statements). Here $M_R=M_0+\Rmult M_g$, $H=L_0+\Rmult L_g$, $\sigma_g^2$ is the variance proxy of the constraint values, $\Lam=(1+\log 2m)\log(m+1)$,
$\nuj=\min\{4d,92\kap\}$, $\Nseq$ counts sequential oracle rounds, $\Nf$ and $\Ng$ count the vector calls (point evaluations of the whole vector $(F,G)$)
in which the objective, resp.\ the constraint component is used, and $\Nmv$ counts inner steps that require no new round. In all methods every call carries the
constraint vector, so $\Norc=\Ng$ (and $\Ng=0$, $\Norc=\Nf$ for known affine constraints). The terms of the call counts of the order of $\Nmv$ are omitted.}
\label{tab:overview}
\resizebox{\textwidth}{!}{%
\begin{tabular}{@{}lllll@{}}
\toprule
Regime & $\Nseq$ & $\Nf$ & $\Ng=\Norc$ & $\Nmv$ \\
\midrule
\multicolumn{5}{@{}l}{\emph{Mirror--Prox baseline (\Cref{thm:mp})}}\\
smooth $f$, smooth $g$ & $\dfrac{HD^2+M_g\Rmult D\sqrt{\log m}}{\eps}$ & $\dfrac{dM_R^2D^2+\Lam\Rmult^2\sigma_g^2}{\eps^2}$ & $=\Nf$ & $=\Nseq$\\[6pt]
nonsmooth & $\dfrac{\sqrt d M_RMD^2}{\eps^2}$ & $\dfrac{dM_R^2D^2+\Lam\Rmult^2\sigma_g^2}{\eps^2}$ & $=\Nf$ & $=\Nseq$\\
\midrule
\multicolumn{5}{@{}l}{\emph{Zeroth-order sliding (\Cref{thm:sliding})}}\\
smooth $f$, smooth $g$ & $\sqrt{\dfrac{HD^2}{\eps}}$ & $\dfrac{dM_R^2D^2}{\eps^2}$ & $\dfrac{dM_R^2D^2+\Lam\Rmult^2(\sigma_g^2+\nuj M_g^2D^2)}{\eps^2}$ & $\Nseq+\dfrac{M_g\Rmult D\sqrt{\log m}}{\eps}$\\[6pt]
nonsmooth & $\dfrac{d^{1/4}\sqrt{M_RM}D}{\eps}$ & $\dfrac{dM_R^2D^2}{\eps^2}$ & $\dfrac{dM_R^2D^2+\Lam\Rmult^2(\sigma_g^2+\nuj M_g^2D^2)}{\eps^2}$ & $\Nseq+\dfrac{M_g\Rmult D\sqrt{\log m}}{\eps}$\\[6pt]
$\mu$-s.c., smooth & $\sqrt{\dfrac{H}{\mu}}\log\dfrac{\mu D^2}{\eps}$ & $\dfrac{dM_R^2}{\mu\eps}$ & $\dfrac{dM_R^2}{\mu\eps}+\dfrac{\Lam\Rmult^2(\sigma_g^2+\nuj M_g^2D^2)}{\eps^2}$ & $\Nseq+M_g\Rmult\sqrt{\dfrac{\log m}{\mu\eps}}$\\[6pt]
$\mu$-s.c., nonsmooth & $d^{1/4}\sqrt{\dfrac{M_RM}{\mu\eps}}\log\dfrac{\mu D^2}{\eps}$ & $\dfrac{dM_R^2}{\mu\eps}$ & $\dfrac{dM_R^2}{\mu\eps}+\dfrac{\Lam\Rmult^2(\sigma_g^2+\nuj M_g^2D^2)}{\eps^2}$ & $\Nseq+M_g\Rmult\sqrt{\dfrac{\log m}{\mu\eps}}$\\
\midrule
\multicolumn{5}{@{}l}{\emph{Exactly known affine constraints (\Cref{thm:affine}), $\Ng=0$}}\\
smooth $f$ & $\sqrt{\dfrac{L_0D^2}{\eps}}$ & $\dfrac{dM_0^2D^2}{\eps^2}$ & $0$ & $\Nseq+\dfrac{\rowinf{A}\Rmult D\sqrt{\log m}}{\eps}$\\[6pt]
nonsmooth $f$ & $\dfrac{d^{1/4}M_0D}{\eps}$ & $\dfrac{dM_0^2D^2}{\eps^2}$ & $0$ & $\Nseq+\dfrac{\rowinf{A}\Rmult D\sqrt{\log m}}{\eps}$\\
\bottomrule
\end{tabular}}
\end{table}

\begin{table}[t]
\centering\footnotesize\setlength{\tabcolsep}{4pt}
\caption{Comparison with related results (constants omitted). ``FO'' means stochastic first-order information, ``ZO'' zeroth-order information,
``pairs'' two-point calls, $d$ is the dimension. Complexities are for the convex nonsmooth case unless stated otherwise;
$\Nseq$ is listed only for papers that account for it. The information models differ: in \citet{nguyen2023stochastic} a call returns one function,
the $m+1$ gradients are estimated from independent samples and the violation is measured in $\ell_2$; in this paper one call returns all $m+1$ values
with a common sample and the violation is measured in $\ell_\infty$, so part of the factor $m+1$ reflects the oracle model rather than the algorithm.}
\label{tab:comparison}
\begin{tabular}{@{}p{3.0cm}p{2.4cm}>{\raggedright\arraybackslash}p{4.9cm}>{\raggedright\arraybackslash}p{5.0cm}@{}}
\toprule
Method & Setting & Calls & Rounds / remarks\\
\midrule
CSA \citep{lan2020algorithms} & FO, stochastic $g$ & $1/\eps^2$ & no dual variables\\
ConEx \citep{boob2023stochastic} & FO, stochastic $g$ & $1/\eps^2$ & $\sqrt{1/\eps}$ gradients for smooth deterministic data\\
ACGD-S \citep{zhanglan2022} & deterministic FO, smooth $f,g$ & $\sqrt{L/\eps}$ gradients & $\rowinf{\nabla g}\Rmult D/\eps$ matrix--vector products\\
SZO-ConEx \citep{nguyen2023stochastic} & ZO, stochastic $g$ & $(m+1)d/\eps^2$ & violation in $\ell_2$; independent estimates per function\\
\citet{gasnikov2022power} & ZO pairs, unconstrained & $dM^2D^2/\eps^2$ & $\Nseq=d^{1/4}MD/\eps$\\
\midrule
\Cref{thm:mp} & ZO pairs, stochastic $g$ & $(dM_R^2D^2+\Lam\Rmult^2\sigma_g^2)/\eps^2$ & $\Nseq=\sqrt dM_RMD^2/\eps^2$ (nonsmooth), $HD^2/\eps$ (smooth)\\
\Cref{thm:sliding} & ZO pairs, stochastic $g$ & $\Nf=dM_R^2D^2/\eps^2$; $\Ng=dM_R^2D^2/\eps^2$\newline $+\,\Lam\Rmult^2(\sigma_g^2+\nuj M_g^2D^2)/\eps^2$ & $\Nseq=d^{1/4}\sqrt{M_RM}D/\eps$ (nonsmooth), $\sqrt{HD^2/\eps}$ (smooth)\\
\Cref{thm:affine} & ZO pairs, known affine $g$ & $\Nf=dM_0^2D^2/\eps^2$, $\Ng=0$ & $\Nmv=\rowinf A\Rmult D\sqrt{\log m}/\eps$\\
\Cref{thm:lower-objective,thm:lower-constraint} & ZO pairs & $\Omega(dM_0^2D^2/\eps^2)$ for $f$, $\Omega(dM_g^2D^2/\eps^2)$ for $g$ & lower bounds (unit multiplier)\\
\Cref{thm:lower-levels,thm:lower-scalar} & ZO, constraint levels & $\Omega(\sigma_g^2\log(1+m)/\eps^2)$ vector calls, $\Omega(m\sigma_g^2/\eps^2)$ scalar calls & lower bounds\\
\bottomrule
\end{tabular}
\end{table}

\subsection{Organization}
\Cref{sec:setting} introduces notation, assumptions, the oracle model and the complexity measures. \Cref{sec:smoothing} covers randomized smoothing,
the two-point estimators, the $\ell_\infty$ mini-batch lemma and the martingale-supremum lemma. \Cref{sec:lagrangian} defines the smoothed shifted Lagrangian,
bounds the multipliers and introduces the certificate. \Cref{sec:mp} analyzes the Mirror--Prox baseline. \Cref{sec:sliding} is the core of the paper: the
sliding method, its deterministic skeleton and the stochastic theorem. \Cref{sec:restarts} treats strongly convex objectives, \Cref{sec:affine} known affine
constraints and \Cref{sec:lower} lower bounds. \Cref{sec:summary} collects the rates, \Cref{sec:experiments} reports experiments, and \Cref{sec:conclusion} lists
open problems. Long proofs are in the appendices.

\section{Setting, oracle model and complexity measures}\label{sec:setting}

\subsection{Notation}
$\norm{\cdot}$ denotes the Euclidean norm on $\R^d$ and $\inner{\cdot}{\cdot}$ the Euclidean inner product; $\norm{\cdot}_1$ and
$\norm{\cdot}_\infty$ are the $\ell_1$- and $\ell_\infty$-norms on $\R^m$. For a matrix $J\in\R^{m\times d}$ with rows $J_1,\dots,J_m$ we write
$\rowinf{J}:=\max_i\norm{J_i}$; then
\begin{equation}\label{eq:rowinf}
\norm{Jw}_\infty\le \rowinf{J}\norm{w}\quad(w\in\R^d),\qquad \norm{J^\top y}\le \rowinf{J}\norm{y}_1\quad(y\in\R^m).
\end{equation}
$\Ball=\{v\in\R^d:\norm v\le 1\}$ is the unit ball, $\Sphere=\{u:\norm u=1\}$ the unit sphere, $U(\Ball)$ and $U(\Sphere)$ the uniform distributions
on them. $\pos{z}=\max\{z,0\}$ componentwise, and $\viol(x):=\norm{\pos{g(x)}}_\infty=\max_i\pos{g_i(x)}$ is the maximal constraint violation.
$\Q\subset\R^d$ is convex and compact, $D:=\max_{x,x'\in\Q}\norm{x-x'}$ its diameter and $\Q_{\bar\gamma}:=\Q+\bar\gamma\Ball$ its
$\bar\gamma$-neighbourhood, where $\bar\gamma>0$ is a fixed upper bound for the smoothing radii used below. Throughout,
\begin{equation}\label{eq:kappa}
 \kap:=1+\log(2m),\qquad \Lam:=\kap\log(m+1),
\end{equation}
and $\log$ is the natural logarithm. \Cref{tab:notation} lists the recurring symbols.

\begin{table}[t]
\centering\footnotesize
\caption{Recurring notation.}\label{tab:notation}
\begin{tabularx}{\textwidth}{@{}lXl@{}}
\toprule
Symbol & Meaning & Defined in\\
\midrule
$d,\ m$ & dimension, number of constraints & \eqref{eq:P}\\
$\Q,\ D,\ \Q_{\bar\gamma}$ & feasible set, its diameter, its $\bar\gamma$-neighbourhood & \Cref{sec:setting}\\
$M_0,M_g,\ M:=\max\{M_0,M_g\}$ & Lipschitz moduli (mean square) of the samples & \Cref{ass:lip}\\
$L_0,\ L_g$ & Lipschitz constants of $\nabla f$, $\nabla g_i$ (when smooth) & \Cref{ass:smooth}\\
$\sigma_g^2,\ \sigma_h^2$ & variance proxies of the constraint values & \Cref{ass:levels}, \eqref{eq:sigmah}\\
$\rho,\ x^{\mathrm s}$ & Slater margin and Slater point & \Cref{ass:slater}\\
$\mu$ & strong convexity parameter of $f$ & \Cref{ass:sc}\\
$\gamma,\ f_\gamma,\ g_{i,\gamma}$ & smoothing radius and smoothed functions & \eqref{eq:smoothing}\\
$L_{0,\gamma},\ L_{g,\gamma},\ H,\ L$ & smoothness constants after smoothing & \Cref{lem:smoothing}, \eqref{eq:HL}\\
$b_0,\ b_i,\ h_i=g_{i,\gamma}-b_i$ & bias bounds and shifted smoothed constraints & \eqref{eq:shift}\\
$\Rmult,\ Y_\Rmult,\ \tilde Y_\Rmult,\ M_R$ & multiplier bound, dual sets, $M_R:=M_0+\Rmult M_g$ & \eqref{eq:R}, \eqref{eq:Ysets}\\
$\LagS,\ \Cert_{\Rmult,\gamma}$ & smoothed Lagrangian and certificate & \eqref{eq:Lag}, \eqref{eq:cert}\\
$x^\ast_\gamma,\ f^\ast_\gamma,\ y^\ast_\gamma$ & solution, value and multipliers of the smoothed problem & \Cref{lem:multipliers}\\
$\cA,\ \cB$ & primal and dual prox radii: $\cA=\tfrac12\norm{x_0-x^\ast_\gamma}^2$ (unknown; replaced by $D^2/2$ in parameter choices), $\cB=\Rmult^2\log(m+1)$ & \eqref{eq:radii}\\
$\Vx,\ \Vy$ & Euclidean and entropic Bregman distances & \eqref{eq:bregman}\\
$\hat g,\ \hat J,\ \hat h$ & two-point gradient, Jacobian and one-point level estimators & \eqref{eq:estimators}\\
$\nuj$ & constant in the Jacobian-action bound: $\nuj=\min\{4d,92\kap\}$ & \Cref{lem:action}\\
$\Norc,\ \Nf,\ \Ng,\ \Nseq,\ \Nmv$ & vector calls, calls used for $f$ and for $g$, rounds, inner steps & \Cref{def:measures}\\
$b,\ \lambda,\ \rs$ & mini-batch size, stabilizer, dual scale & \Cref{alg:sliding}\\
$S_t,\ a_t,\ \eta_t,\ \beta_t,\ K_N,\ A_N,\ W_N$ & inner schedule of the sliding method & \eqref{eq:schedule}\\
$\dA,\ \Sb,\ \sigma_y$ & radius and noise constants of the sliding method & \eqref{eq:DA}, \eqref{eq:SigmaA}\\
$b_p,\ b_y$ & sizes of the primal and dual mini-batches of \Cref{alg:sliding} & \Cref{alg:sliding}\\
\bottomrule
\end{tabularx}
\end{table}

\subsection{Assumptions}
The functions $F(\cdot,\xi)$ and $G_i(\cdot,\xi)$ are defined on $\R^d$ for every $\xi$, and all functions of $x$ below are assumed measurable in
$\xi$ and integrable, so that the expectations exist. We say that a differentiable function $\varphi$ is $L$-smooth if $\nabla\varphi$ is
$L$-Lipschitz, and $\mu$-strongly convex if $\varphi-\frac\mu2\norm{\cdot}^2$ is convex.

\begin{assumption}[convexity]\label{ass:convex}
$f,g_1,\dots,g_m:\R^d\to\R$ are convex.
\end{assumption}

\begin{assumption}[Lipschitz samples]\label{ass:lip}
For every $\xi$ the function $F(\cdot,\xi)$ is $M_0(\xi)$-Lipschitz and $G_i(\cdot,\xi)$ is $M_i(\xi)$-Lipschitz on the neighbourhood $\Q_{\bar\gamma}$ of $\Q$,
$i=1,\dots,m$, where $M_g(\xi):=\max_iM_i(\xi)$ and
\[
\E\,M_0(\xi)^2\le M_0^2,\qquad \E\,M_g(\xi)^2\le M_g^2 ,
\]
with constants $M_0,M_g>0$. We put $M:=\max\{M_0,M_g\}$.
\end{assumption}

By Jensen's inequality, \Cref{ass:lip} implies that $f$ is $M_0$-Lipschitz and every $g_i$ is $M_g$-Lipschitz on $\Q_{\bar\gamma}$.
Note that no bound on the absolute values $F(x,\xi)$ or on their variance is required: the methods below never use the value of the objective
at a single point, only differences at pairs of points evaluated with the same $\xi$. The Lipschitz property is required only on $\Q_{\bar\gamma}$, which
contains every point the algorithms query; \Cref{lem:smoothing}(e) below extends the expected functions from $\Q_{\bar\gamma}$ to $\R^d$ whenever a global
property is convenient.

\begin{assumption}[noise of the constraint values]\label{ass:levels}
$\E\norm{G(x,\xi)-g(x)}_\infty^2\le\sigma_g^2$ for all $x\in\Q_{\bar\gamma}$, where $G=(G_1,\dots,G_m)$.
\end{assumption}

\begin{assumption}[smoothness, used when stated]\label{ass:smooth}
$f$ is $L_0$-smooth and each $g_i$ is $L_g$-smooth on $\R^d$.
\end{assumption}

\begin{assumption}[Slater condition]\label{ass:slater}
There exist $x^{\mathrm s}\in\Q$ and $\rho>0$ with $g_i(x^{\mathrm s})\le-\rho$ for all $i$.
\end{assumption}

\begin{assumption}[strong convexity, used when stated]\label{ass:sc}
$f$ is $\mu$-strongly convex on $\Q_{\bar\gamma}$, \ie\ $f-\frac\mu2\norm\cdot^2$ is convex on $\Q_{\bar\gamma}$.
\end{assumption}

\begin{remark}[on the standing assumptions]\label{rem:assumptions}
(i) \emph{Local versus global properties.} The Lipschitz property (\Cref{ass:lip}) and the strong convexity (\Cref{ass:sc}) are required only on the
neighbourhood $\Q_{\bar\gamma}$ of the compact set $\Q$. This is not a mere convenience: a function cannot be both $M$-Lipschitz and $\mu$-strongly convex on
all of $\R^d$ (for a unit vector $v$, strong convexity gives $\varphi(x+tv)+\varphi(x-tv)-2\varphi(x)\ge\mu t^2$, the Lipschitz property gives $\le2Mt$, and the
two are incompatible for $t>2M/\mu$), so global versions of \Cref{ass:lip,ass:sc} would define an empty class. On $\Q_{\bar\gamma}$ the two assumptions are
compatible (a strongly convex quadratic is Lipschitz on every bounded set), and $\Q_{\bar\gamma}$ is all the algorithms ever see: the Lipschitz property of
the \emph{samples} enters only through the moment bounds of \Cref{lem:twopoint,lem:action,lem:levels} and the bias bounds of \Cref{lem:smoothing}(a), all
evaluated at query points in $\Q_{\bar\gamma}$; the Lipschitz property of the \emph{expected} functions is used on $\Q$ in \Cref{lem:multipliers}; and strong
convexity is used on $\Q$ in \Cref{lem:certificate}. The only place where a global property is needed is the sliding inequality (\Cref{lem:outer}), which requires
the smoothed Lagrangian $\LagS(\cdot,y)$ to be convex and $L$-smooth on $\R^d$. In the nonsmooth regime this is obtained by the extension device of
\Cref{lem:smoothing}(e): a convex function that is $M$-Lipschitz on $\Q_{\bar\gamma}$ has a convex $M$-Lipschitz extension to $\R^d$ that coincides with it on
$\Q_{\bar\gamma}$, hence has the same smoothing on $\Q$ and produces the same oracle answers; the extension is used only for the smoothness of the Lagrangian,
never for its strong convexity (\Cref{conv:extension}). In the smooth regime \Cref{ass:smooth} is stated on $\R^d$; it is the only global requirement of
the paper, it is compatible with \Cref{ass:lip,ass:sc} (quadratics satisfy all three), and it may be read as the assumption that the data admit convex
$L_0$-, $L_g$-smooth extensions, since a function that is convex and $L$-smooth only on a neighbourhood of $\Q$ need not have such an extension with the same
constant. The test instances of \Cref{sec:experiments} satisfy \Cref{ass:lip,ass:smooth,ass:sc} in exactly this form (\Cref{sec:exp-setup}).
(ii) \Cref{ass:levels} is a genuine additional assumption: \Cref{ass:lip} bounds the differences $G_i(x,\xi)-G_i(x',\xi)$ but says nothing about
$G_i(x,\xi)-g_i(x)$, which may have infinite variance (a random offset independent of $x$). The constraint levels enter the dual updates and their
noise does not cancel.
(iii) \Cref{ass:slater} is used only to bound the Lagrange multipliers; any known upper bound on $\norm{y^\ast_\gamma}_1$ can replace it
(see \eqref{eq:R}).
(iv) The constants $M_0,M_g$ are taken positive and, in the smooth regime, $\max\{L_0,L_g\}>0$ (if $L_0=L_g=0$ all data are affine and \Cref{sec:affine}
applies); this only excludes degenerate cases in which the corresponding terms of the bounds are absent, and avoids divisions by zero in the parameter rules.
\end{remark}

\subsection{Oracle model}\label{sec:oracle}
The unit of oracle cost is a \emph{vector call}: one evaluation at a point $x\in\Q_{\bar\gamma}$ that returns $(F(x,\xi),G(x,\xi))\in\R^{m+1}$ for one
realization $\xi$. A \emph{paired query}
(or two-point call) at $(x^+,x^-)$ consists of \emph{two} vector calls with one common fresh $\xi\sim P$ independent of everything else, as in
\eqref{eq:oracle-intro}. A \emph{single query} is one vector call with a fresh $\xi$. A method may use only the objective part or only the
constraint part of a call. These two kinds of information can have different prices, and we show that their complexities differ. So besides the total
number of calls we count the calls in which each part is used.

\begin{definition}[complexity measures]\label{def:measures}
For a run of an algorithm we denote by
\begin{itemize}
\item $\Norc$ the total number of vector calls (point evaluations) issued by the algorithm; $\Nf$ the number of those calls whose objective
 component $F(x,\xi)$ is used, and $\Ng$ the number of those calls whose constraint component $G(x,\xi)$ is used (one call returns all $m$
 components). A call may count in both, so $\max\{\Nf,\Ng\}\le\Norc\le\Nf+\Ng$. We compare methods by $\Norc$;
\item $\Nseq$ the number of \emph{sequential oracle rounds}: the run is split into rounds, and the locations of all queries of a round are
 measurable with respect to the information (oracle answers and internal randomness) available before the round;
\item $\Nmv$ the number of inner operations that require no new oracle round: proximal steps on $\Q$ and on the dual set, or matrix--vector
 products with a constraint Jacobian. In \Cref{alg:sliding} these steps consume oracle samples that were drawn in the current round (see
 \Cref{sec:sliding-discussion}); in \Cref{alg:affine} they are oracle-free.
\end{itemize}
\end{definition}

The lower bounds of \Cref{sec:lower} count either paired queries or single vector calls, as stated in each theorem ($T$ paired queries are $2T$ vector
calls). In the \emph{scalar} oracle model of \Cref{thm:lower-scalar}, a call returns one component $G_i(x,\xi)$ chosen by the algorithm. In the \emph{vector}
model, used by all our algorithms, all $m$ components come at once.

\subsection{Target accuracy}
For $\eps>0$ a random point $\hat x\in\Q$ is an \emph{$\eps$-solution in expectation} if \eqref{eq:goal} holds. All upper bounds use this criterion.
The lower bounds of \Cref{sec:lower} are for $\E[f(\hat x)-f^\ast]+\E\viol(\hat x)$, the expected \emph{sum} of the gap and the violation (or the gap alone when
the constraints are inactive). We use the sum and not the maximum because $f(\hat x)-f^\ast$ can be negative at infeasible points. Since \eqref{eq:goal}
implies that the sum is at most $2\eps$, upper and lower bounds are comparable up to a factor $2$ in $\eps$. Bounds in probability follow from independent
repetitions in the standard way; see \Cref{sec:future}.

\section{Randomized smoothing and two-point estimators}\label{sec:smoothing}

\subsection{Smoothing by averaging over a ball}
For a function $\varphi:\R^d\to\R$ and a radius $\gamma>0$ define
\begin{equation}\label{eq:smoothing}
 \varphi_\gamma(x):=\E_{v\sim U(\Ball)}\varphi(x+\gamma v),\qquad x\in\R^d .
\end{equation}
We apply \eqref{eq:smoothing} to $f$ and to every $g_i$ and write $f_\gamma$, $g_{i,\gamma}$, $g_\gamma=(g_{1,\gamma},\dots,g_{m,\gamma})$ and
$\Jg(x)\in\R^{m\times d}$ for the matrix with rows $\nabla g_{i,\gamma}(x)^\top$.

\begin{lemma}[properties of the smoothing]\label{lem:smoothing}
Let $\varphi:\R^d\to\R$ be convex and $\gamma>0$; for a convex set $C\subseteq\R^d$ write $C_\gamma:=C+\gamma\Ball$.
\begin{enumerate}[label=(\alph*)]
\item $\varphi_\gamma$ is convex. If $\varphi$ is $M$-Lipschitz on $C_\gamma$, then $\varphi_\gamma$ is $M$-Lipschitz on $C$ and
 $\varphi(x)\le\varphi_\gamma(x)\le\varphi(x)+\gamma M$ for all $x\in C$. If $\varphi$ is $L$-smooth on $\R^d$, then
 $\varphi_\gamma(x)\le\varphi(x)+\frac{L\gamma^2}{2}\frac{d}{d+2}\le\varphi(x)+\frac{L\gamma^2}2$ for all $x$. If $\varphi$ is affine, $\varphi_\gamma=\varphi$.
\item If $\varphi$ is $M$-Lipschitz on $\R^d$, or $L$-smooth on $\R^d$, then $\varphi_\gamma$ is continuously differentiable on $\R^d$ and, for $u\sim U(\Sphere)$,
 \begin{equation}\label{eq:gradformula}
 \nabla\varphi_\gamma(x)=\frac d\gamma\,\E\bigl[\varphi(x+\gamma u)\,u\bigr]=\frac{d}{2\gamma}\,\E\bigl[\bigl(\varphi(x+\gamma u)-\varphi(x-\gamma u)\bigr)u\bigr].
 \end{equation}
\item If $\varphi$ is $M$-Lipschitz on $\R^d$, then $\nabla\varphi_\gamma$ is $L_\gamma$-Lipschitz on $\R^d$ with $L_\gamma:=\sqrt d\,M/\gamma$. If $\varphi$ is
 $L$-smooth on $\R^d$, then $\nabla\varphi_\gamma(x)=\E\nabla\varphi(x+\gamma v)$ and $\nabla\varphi_\gamma$ is $L$-Lipschitz on $\R^d$.
\item If $\varphi$ is $\mu$-strongly convex on $C_\gamma$, then $\varphi_\gamma$ is $\mu$-strongly convex on $C$.
\item (extension) If $\varphi$ is $M$-Lipschitz on $C_{\bar\gamma}$ for some $\bar\gamma\ge\gamma$, then
 \begin{equation}\label{eq:extension}
 \tilde\varphi(x):=\inf_{z\in C_{\bar\gamma}}\bigl\{\varphi(z)+M\norm{x-z}\bigr\},\qquad x\in\R^d,
 \end{equation}
 is convex and $M$-Lipschitz on $\R^d$, coincides with $\varphi$ on $C_{\bar\gamma}$, and $\tilde\varphi_\gamma=\varphi_\gamma$ on $C$.
\end{enumerate}
\end{lemma}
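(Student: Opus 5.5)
I would prove the five items separately, in the order (a), (e), (b), (c), (d). All of them are standard facts about uniform ball smoothing. The only non-elementary ingredients are the divergence theorem, which gives the gradient formula, and a concentration or Poincaré-type estimate for the $\sqrt d$ constant in (c).

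\emph{(a) and (d).} Convexity follows because $\varphi_\gamma$ is an average over $v$ of the convex functions $x\mapsto\varphi(x+\gamma v)$. For $x,x'\in C$ the points $x+\gamma v,\,x'+\gamma v$ lie in $C_\gamma$, so the Lipschitz bound passes through the expectation. The lower bound $\varphi\le\varphi_\gamma$ is Jensen's inequality with $\E v=0$, and the upper bound is $|\varphi(x+\gamma v)-\varphi(x)|\le M\gamma\norm v\le M\gamma$. In the smooth case I use $\varphi(x+\gamma v)\le\varphi(x)+\gamma\inner{\nabla\varphi(x)}{v}+\tfrac L2\gamma^2\norm v^2$. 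Taking expectations, the linear term vanishes and $\E\norm v^2=d/(d+2)$ for $v\sim U(\Ball)$. If $\varphi$ is affine, the linear term averages to zero and $\varphi_\gamma=\varphi$. For (d), write $\varphi=\psi+\tfrac\mu2\norm\cdot^2$ with $\psi$ convex on $C_\gamma$. Then $\varphi_\gamma=\psi_\gamma+\tfrac\mu2\norm\cdot^2+\mathrm{const}$, because $\E\norm{x+\gamma v}^2=\norm x^2+\gamma^2\E\norm v^2$. Here $\psi_\gamma$ is convex on $C$ by the averaging argument, which only evaluates $\psi$ on $C_\gamma$.

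\emph{(e).} This is the McShane-type extension. $\tilde\varphi$ is an infimum of $M$-Lipschitz functions of $x$, so it is $M$-Lipschitz. For $x\in C_{\bar\gamma}$, the choice $z=x$ gives $\tilde\varphi(x)\le\varphi(x)$. The Lipschitz property on $C_{\bar\gamma}$ gives $\varphi(z)+M\norm{x-z}\ge\varphi(x)$, so the two coincide. For convexity I note that $(x,z)\mapsto\varphi(z)+M\norm{x-z}$ is jointly convex on $\R^d\times C_{\bar\gamma}$, with $C_{\bar\gamma}$ convex, and partial minimization over a convex set preserves convexity. Finally, for $x\in C$ and $\gamma\le\bar\gamma$ we have $x+\gamma v\in C_{\bar\gamma}$, so the two averages agree.

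\emph{(b).} Differentiate $\varphi_\gamma(x)=\frac{1}{\mathrm{vol}(\gamma\Ball)}\int_{x+\gamma\Ball}\varphi$. By the divergence theorem the gradient equals $\frac{1}{\mathrm{vol}(\gamma\Ball)}\int_{\partial(x+\gamma\Ball)}\varphi\,n\,dS$. This is first done for smooth $\varphi$ and then extended to Lipschitz $\varphi$ by mollification and dominated convergence, using local boundedness. The ratio of surface area to volume, $d/\gamma$, then gives the first formula. The symmetric form follows from $u\overset{d}{=}-u$. Continuity of the gradient comes from continuity of $x\mapsto\E[\varphi(x+\gamma u)u]$. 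In the smooth case one may instead differentiate under the expectation, which gives $\nabla\varphi_\gamma=\E\nabla\varphi(x+\gamma v)$.

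\emph{(c).} In the smooth case, $L$-Lipschitzness is immediate from $\nabla\varphi_\gamma=\E\nabla\varphi(x+\gamma v)$. The Lipschitz case is the main obstacle, since the naive bound from the formula gives $dM/\gamma$. The sharp constant $\sqrt dM/\gamma$ is the standard result recalled in the introduction as \citet{gasnikov2022power}. I would first try a direct argument that treats $\nabla\varphi_\gamma(x)=\E\nabla\varphi(x+\gamma v)$ (valid almost everywhere by Rademacher's theorem) as a convolution with the ball indicator. For a unit vector $e$,
\[
\norm{\nabla\varphi_\gamma(x)-\nabla\varphi_\gamma(y)}\le M\,\frac{\mathrm{vol}\bigl((x+\gamma\Ball)\,\triangle\,(y+\gamma\Ball)\bigr)}{\mathrm{vol}(\gamma\Ball)} .
\]
The volume of the symmetric difference is at most $2\norm{x-y}\,\mathrm{vol}_{d-1}(\gamma\Ball^{d-1})$, and the resulting ratio $\frac{2\mathrm{vol}_{d-1}(\Ball^{d-1})}{\gamma\,\mathrm{vol}_d(\Ball^d)}$ is at most $\sqrt{2d/\pi}\cdot\frac1\gamma\cdot c$. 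That gives $O(\sqrt d M/\gamma)$. To obtain exactly $\sqrt d$, I would instead bound it against the total-variation distance between the two uniform distributions, or cite the cited lemma directly if the Gamma-function ratio estimate $\Gamma(\frac d2+1)/\Gamma(\frac{d+1}2)\le\sqrt{(d+1)/2}$ is too loose.
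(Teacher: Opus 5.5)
Parts (a), (d), (e), the smooth case of (c), and the structure of (b) (divergence theorem, then mollification) match the paper's proof. The genuine gap is in the Lipschitz case of (c).

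The lemma asserts the specific constant $L_\gamma=\sqrt d\,M/\gamma$, which the paper later uses verbatim for $L_{0,\gamma}$ and $L_{g,\gamma}$. Your argument stops at $O(\sqrt d\,M/\gamma)$ with an unspecified constant $c$ and then falls back on citing the literature, so the claim is not proved. The missing idea is short, and it also shows that your remark that the gradient formula ``only gives $dM/\gamma$'' is wrong. That bound comes from estimating $\norm u\le1$. Instead, test the difference against a fixed unit vector $e$; you introduce one but never use it. By \eqref{eq:gradformula},
\[
 \bigl|\inner{\nabla\varphi_\gamma(x)-\nabla\varphi_\gamma(x')}{e}\bigr|=\frac d\gamma\Bigl|\E\bigl[(\varphi(x+\gamma u)-\varphi(x'+\gamma u))\inner{u}{e}\bigr]\Bigr|\le\frac d\gamma M\norm{x-x'}\,\E|\inner{u}{e}|\le\frac{\sqrt d\,M}{\gamma}\norm{x-x'},
\]
because $\E|\inner{u}{e}|\le(\E\inner{u}{e}^2)^{1/2}=1/\sqrt d$. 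Taking the supremum over $e$ gives exactly $\sqrt d\,M/\gamma$, with no Gamma functions. This is the paper's argument.

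Your symmetric-difference route can in fact be completed, and it even gives the asymptotically smaller constant $\sqrt{2d/\pi}$, so your hedge was unnecessary. The ratio is $2|\Ball^{d-1}|/|\Ball^d|=2\Gamma(\frac d2+1)/(\sqrt\pi\,\Gamma(\frac{d+1}2))$. The estimate you quote follows from log-convexity of $\Gamma$: $\Gamma(s+1)^2\le\Gamma(s+\frac12)\Gamma(s+\frac32)=(s+\frac12)\Gamma(s+\frac12)^2$ with $s=d/2$. It yields the bound $\sqrt{2(d+1)/\pi}\le\sqrt d$ for $d\ge2$, and for $d=1$ the ratio is exactly $1$.

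You would, however, still need to justify $\nabla\varphi_\gamma(x)=\E\nabla\varphi(x+\gamma v)$ at \emph{every} $x$ for a merely Lipschitz $\varphi$. The phrase ``valid almost everywhere by Rademacher's theorem'' conflates the a.e.\ existence of $\nabla\varphi$ with the identity itself. The clean fix is to run the volume estimate for the mollifications $\varphi^\delta$, which are smooth with $\norm{\nabla\varphi^\delta}\le M$, and pass to the limit.

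That limit step needs the fact that $\nabla(\varphi^\delta)_\gamma$ converges \emph{uniformly}, within $dM\delta/\gamma$, to the continuous field $\frac d\gamma\E[\varphi(\cdot+\gamma u)u]$. The same fact is what actually closes your (b); dominated convergence alone does not justify exchanging the limit with differentiation, and the paper uses exactly this uniform-convergence argument there. So your route works, at the cost of a Gamma-function estimate and an approximation step, whereas the projection onto $e$ settles (c) in one line.
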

The proof is given in \Cref{app:smoothing}. The representation \eqref{eq:gradformula} is classical \citep{flaxman2005online,nesterov2017random};
we include a proof because the functions involved are merely Lipschitz, and the constant $\sqrt d M/\gamma$ in (c) is the one obtained in
\citet{gasnikov2022power}. \Cref{fig:smoothing} illustrates (a) and the level shift used in \Cref{sec:lagrangian}.

\begin{convention}[which functions are smoothed]\label{conv:extension}
Under \Cref{ass:lip} the expected functions $f,g_i$ are Lipschitz on $\Q_{\bar\gamma}$ only. For each of them we fix once and for all the function
to which the smoothing \eqref{eq:smoothing} is applied: if the function is $L$-smooth on $\R^d$ (\Cref{ass:smooth}, or an affine constraint), it is
smoothed as it is; otherwise it is replaced by its extension \eqref{eq:extension} from $\Q_{\bar\gamma}$ (with $C=\Q$ and $M=M_0$, resp.\ $M_g$), which is
convex and Lipschitz on $\R^d$. Throughout, $f_\gamma$ and $g_{i,\gamma}$ denote the smoothings of the functions so fixed. Nothing that the algorithms or
the guarantees can see depends on this choice: the oracle is only queried in $\Q_{\bar\gamma}$, where the extension coincides with the original function
(\Cref{lem:smoothing}(e)), so the estimators of \Cref{sec:estimators} are unchanged; by \Cref{lem:smoothing}(e) the smoothed functions on $\Q$, hence the
smoothed problem \eqref{eq:Pgamma}, its value, its solutions and its multipliers, are unchanged; and the transfer to the original problem (\Cref{lem:transfer}) only
involves points of $\Q$. What the convention buys is that each $f_\gamma$, $g_{i,\gamma}$ is convex and smooth on all of $\R^d$
(\Cref{lem:smoothing}(c)), with constant $L_0$, resp.\ $L_g$, in the smooth case and $\sqrt dM_0/\gamma$, resp.\ $\sqrt dM_g/\gamma$, in the Lipschitz case;
this is the property needed in \Cref{lem:outer}. The extension is never used for strong convexity: \Cref{ass:sc} concerns $f$ on $\Q_{\bar\gamma}$, where the
extension equals $f$, and \Cref{lem:smoothing}(d) with $C=\Q$ gives the strong convexity of $f_\gamma$ on $\Q$, which is where \Cref{lem:certificate} uses it.
\end{convention}

\begin{figure}[t]
\centering
\begin{tikzpicture}
\begin{axis}[width=8.6cm,height=5.4cm,xlabel={$x$},ylabel={},xmin=-1.6,xmax=1.6,ymin=-0.35,ymax=1.7,
 legend style={at={(0.5,1.02)},anchor=south,legend columns=3,font=\footnotesize,draw=none},
 axis lines=left, grid=major, grid style={gray!25}, tick label style={font=\footnotesize}]
\addplot[black,thick,domain=-1.6:1.6,samples=200] {abs(x)};
\addlegendentry{$\varphi(x)=|x|$}
\addplot[blue!70!black,thick,domain=-1.6:1.6,samples=400] {ifthenelse(abs(x)<=0.6, (x*x+0.36)/(1.2), abs(x))};
\addlegendentry{$\varphi_\gamma$, $\gamma=0.6$}
\addplot[red!70!black,thick,dashed,domain=-1.6:1.6,samples=400] {ifthenelse(abs(x)<=0.6, (x*x+0.36)/(1.2), abs(x))-0.6};
\addlegendentry{$\varphi_\gamma-\gamma M$}
\addplot[gray,dotted] coordinates {(-1.6,0) (1.6,0)};
\end{axis}
\end{tikzpicture}
\caption{One-dimensional illustration of \Cref{lem:smoothing}(a) for $\varphi(x)=|x|$ ($M=1$, $d=1$): $\varphi\le\varphi_\gamma\le\varphi+\gamma M$, and the
shifted function $h=\varphi_\gamma-\gamma M$ satisfies $h\le\varphi$, so that the set $\{h\le 0\}$ contains $\{\varphi\le0\}$. In the constrained
problem this inclusion guarantees that the optimal value of the smoothed problem does not exceed the original one (\Cref{lem:transfer}).}
\label{fig:smoothing}
\end{figure}

\subsection{Estimators}\label{sec:estimators}
Let $u\sim U(\Sphere)$ and $v\sim U(\Ball)$ be drawn independently of $\xi\sim P$. For $x\in\Q$, $y\in\R^m$ and a vector of shifts
$b=(b_1,\dots,b_m)$ (fixed in \eqref{eq:shift} below) we use the estimators
\begin{equation}\label{eq:estimators}
\begin{aligned}
 \hat g_0(x)&:=\frac{d}{2\gamma}\bigl[F(x+\gamma u,\xi)-F(x-\gamma u,\xi)\bigr]u, &\quad
 \hat J(x)&:=\frac{d}{2\gamma}\bigl[G(x+\gamma u,\xi)-G(x-\gamma u,\xi)\bigr]u^\top,\\
 \hat g_y(x)&:=\hat g_0(x)+\hat J(x)^\top y, &\quad
 \hat h(x)&:=G(x+\gamma v,\xi)-b\in\R^m ,
\end{aligned}
\end{equation}
with $\hat J(x)\in\R^{m\times d}$.
$\hat g_0$ and $\hat J$ are computed from one paired query at $(x+\gamma u,x-\gamma u)$; $\hat h$ from one single query. The following lemma
is the reason why the smoothing radius may be taken arbitrarily small when the data are smooth: the second moment of the two-point estimator does not depend on $\gamma$.

\begin{lemma}[two-point estimator]\label{lem:twopoint}
Let $x\in\Q$ and $\gamma\le\bar\gamma$. Let $\Phi(\cdot,\xi)$ be $\Lambda(\xi)$-Lipschitz on $\Q_{\bar\gamma}$ for every $\xi$, with $\E\Lambda(\xi)^2\le\Lambda^2$, let
$\phi:=\E\Phi(\cdot,\xi)$, smoothed according to \Cref{conv:extension} (so that $\phi_\gamma$ is differentiable on $\R^d$), and
$\hat\phi(x):=\frac{d}{2\gamma}[\Phi(x+\gamma u,\xi)-\Phi(x-\gamma u,\xi)]u$ with $u\sim U(\Sphere)$ independent of $\xi$. Then
\[
 \E\hat\phi(x)=\nabla\phi_\gamma(x),\qquad
 \E\norm{\hat\phi(x)}^2\le\frac{d^2}{d-1}\Lambda^2\le 2d\Lambda^2\quad(d\ge2),\qquad \E\norm{\hat\phi(x)}^2\le\Lambda^2\quad(d=1).
\]
In particular $\E\norm{\hat\phi(x)-\nabla\phi_\gamma(x)}^2\le2d\Lambda^2$ for every $d\ge1$.
\end{lemma}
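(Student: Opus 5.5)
Unbiasedness comes first. Condition on $\xi$ and take the expectation over $u$ only; Fubini applies because $\Phi(\cdot,\xi)$ is Lipschitz on $\Q_{\bar\gamma}\ni x\pm\gamma u$, so $|\Phi(x\pm\gamma u,\xi)|$ is integrable. This gives
\[
\E\hat\phi(x)=\frac{d}{2\gamma}\E_u\bigl[(\phi(x+\gamma u)-\phi(x-\gamma u))u\bigr].
\]
Since $x\pm\gamma u\in\Q_{\bar\gamma}$, the extension of \Cref{conv:extension} coincides with $\phi$ there by \Cref{lem:smoothing}(e). Formula \eqref{eq:gradformula} of \Cref{lem:smoothing}(b), applied to the fixed function (Lipschitz or smooth on $\R^d$), then identifies the right-hand side with $\nabla\phi_\gamma(x)$.

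For the second moment, fix $\xi$ and set $\psi:=\Phi(\cdot,\xi)$, which is $\Lambda(\xi)$-Lipschitz on $\Q_{\bar\gamma}$; replace it by its Lipschitz extension \eqref{eq:extension}, which does not change the values at $x\pm\gamma u$. Define $w(u):=\psi(x+\gamma u)$ on $\Sphere$, so that
\[
\norm{\hat\phi}^2=\frac{d^2}{4\gamma^2}\bigl(w(u)-w(-u)\bigr)^2 .
\]
Let $c:=\E_u w(u)$. Then $(w(u)-w(-u))^2\le 2(w(u)-c)^2+2(w(-u)-c)^2$, and since $u$ and $-u$ have the same law, $\E(w(u)-w(-u))^2\le 4\Var_u w(u)$. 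The function $w$ is $\gamma\Lambda(\xi)$-Lipschitz on $\Sphere$. The Poincaré inequality on the unit sphere $\Sphere\subset\R^d$ (spectral gap $d-1$ of the Laplace–Beltrami operator) gives $\Var w\le \frac{1}{d-1}\E\norm{\nabla_{\Sphere} w}^2\le\frac{\gamma^2\Lambda(\xi)^2}{d-1}$. To apply it to a merely Lipschitz $w$, I would invoke the Rademacher theorem together with a standard density argument, approximating $\psi$ by mollified functions with the same Lipschitz constant. Combining the estimates,
\[
\E_u\norm{\hat\phi}^2\le\frac{d^2}{4\gamma^2}\cdot\frac{4\gamma^2\Lambda(\xi)^2}{d-1}=\frac{d^2}{d-1}\Lambda(\xi)^2 .
\]
Taking the expectation over $\xi$ gives the bound $\frac{d^2}{d-1}\Lambda^2$, and $\frac{d^2}{d-1}\le 2d$ for $d\ge2$.

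The case $d=1$ is treated separately, since there $\Sphere=\{\pm1\}$ and there is no Poincaré inequality. Here $|\hat\phi|=\frac{1}{2\gamma}|\psi(x+\gamma u)-\psi(x-\gamma u)|\le\frac{1}{2\gamma}\Lambda(\xi)\cdot2\gamma=\Lambda(\xi)$ directly. The final claim follows from $\E\norm{\hat\phi-\E\hat\phi}^2\le\E\norm{\hat\phi}^2$, together with $\Lambda^2\le 2d\Lambda^2$ when $d=1$.

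The main obstacle is the Poincaré step: the constant $1/(d-1)$ has to be justified for Lipschitz (non-smooth) functions on the sphere. I would cite the standard spectral gap and handle the regularity by approximation, as in \citet{shamir2017optimal,gasnikov2022power}.
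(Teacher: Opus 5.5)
Your proof is correct and follows the paper's argument: Fubini plus \Cref{lem:smoothing}(b),(e) for unbiasedness, the Poincar\'e inequality on the sphere for $d\ge2$, and the direct bound $|\hat\phi|\le\Lambda(\xi)$ for $d=1$. The differences are cosmetic. The paper applies Poincar\'e directly to the odd, hence mean-zero, $\Lambda(\xi)$-Lipschitz function $q_\xi(u)=\frac1{2\gamma}[\Phi(x+\gamma u,\xi)-\Phi(x-\gamma u,\xi)]$ instead of to $w$ followed by your symmetrization, and gets the same constant. It also justifies Fubini through $|\Phi(x+\gamma u,\xi)-\Phi(x-\gamma u,\xi)|\le2\gamma\Lambda(\xi)$, which is cleaner than your claim that the Lipschitz property alone makes $|\Phi(x\pm\gamma u,\xi)|$ integrable; that claim also needs the standing integrability of $\Phi(x,\xi)$.
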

\begin{proof}
Unbiasedness: by Fubini, $\E\hat\phi(x)=\frac{d}{2\gamma}\E_u[(\phi(x+\gamma u)-\phi(x-\gamma u))u]$, where the integrability follows from
$|\Phi(x+\gamma u,\xi)-\Phi(x-\gamma u,\xi)|\le2\gamma\Lambda(\xi)$ (the points $x\pm\gamma u$ lie in $\Q_{\bar\gamma}$). The points $x\pm\gamma u$ also lie in the set on
which $\phi$ coincides with the function smoothed according to \Cref{conv:extension}, so the right-hand side equals $\nabla\phi_\gamma(x)$ by
\Cref{lem:smoothing}(b), applied to the smooth, resp.\ globally Lipschitz, function that is actually smoothed. For the second moment fix $\xi$ and put
$q_\xi(u):=\frac1{2\gamma}[\Phi(x+\gamma u,\xi)-\Phi(x-\gamma u,\xi)]$. Then $q_\xi$ is odd, so $\E_uq_\xi(u)=0$, and it is $\Lambda(\xi)$-Lipschitz on
$\Sphere$ with respect to the Euclidean distance. For $d\ge2$ the Poincar\'e inequality on the sphere (\Cref{fact:poincare}) gives
$\E_uq_\xi(u)^2\le\Lambda(\xi)^2/(d-1)$, whence $\E\norm{\hat\phi(x)}^2=d^2\,\E_\xi\E_uq_\xi(u)^2\le d^2\Lambda^2/(d-1)\le2d\Lambda^2$.
For $d=1$, $|q_\xi(u)|\le\Lambda(\xi)$ and $\norm{\hat\phi}=|q_\xi|$. Finally $\E\norm{\hat\phi-\E\hat\phi}^2\le\E\norm{\hat\phi}^2$.
\end{proof}

\begin{lemma}[Jacobian estimator and its action]\label{lem:action}
Under \Cref{ass:lip}, for fixed $x\in\Q$, $y\in\R^m$ and $w\in\R^d$:
\begin{enumerate}[label=(\alph*)]
\item $\E\hat J(x)=\Jg(x)$, $\E\hat g_y(x)=\nabla f_\gamma(x)+\Jg(x)^\top y$ and
 $\E\norm{\hat g_y(x)-\nabla f_\gamma(x)-\Jg(x)^\top y}^2\le 2d\,(M_0+\norm y_1M_g)^2$.
\item $\rowinf{\Jg(x)}\le M_g$, hence $\norm{\Jg(x)w}_\infty\le M_g\norm w$.
\item $\E\norm{\hat J(x)w}_\infty^2\le d\,M_g^2\norm w^2$ and $\E\norm{(\hat J(x)-\Jg(x))w}_\infty^2\le 4d\,M_g^2\norm w^2$.
\item $\E\norm{\hat J(x)w}_\infty^2\le 45\,\kap M_g^2\norm w^2$ and
 $\E\norm{(\hat J(x)-\Jg(x))w}_\infty^2\le 92\,\kap M_g^2\norm w^2$.
\end{enumerate}
We write $\nuj:=\min\{4d,92\kap\}$, so that
$\E\norm{(\hat J(x)-\Jg(x))w}_\infty^2\le\nuj M_g^2\norm w^2$.
\end{lemma}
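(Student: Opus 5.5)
Parts (a)–(c) follow from \Cref{lem:twopoint} and the structure of $\hat J$; part (d) needs a separate concentration argument, and that is where most of the work goes.

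\textbf{Part (a).} Apply \Cref{lem:twopoint} componentwise to each $G_i$; this gives $\E\hat J(x)=\Jg(x)$, and linearity then gives the mean of $\hat g_y$. For the variance, apply \Cref{lem:twopoint} to the single scalar function $\Phi_y(\cdot,\xi):=F(\cdot,\xi)+\inner{y}{G(\cdot,\xi)}$. Its sample Lipschitz modulus is at most $M_0(\xi)+\norm y_1M_g(\xi)$, and Minkowski's inequality bounds the second moment of this modulus by $(M_0+\norm y_1M_g)^2$. Since $\hat g_y=\hat\phi$ for $\Phi_y$, the bound $2d(\cdot)^2$ follows.

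\textbf{Part (b).} By \Cref{lem:smoothing}(a) each $g_{i,\gamma}$ is $M_g$-Lipschitz on $\Q$, and it is differentiable, so $\norm{\nabla g_{i,\gamma}(x)}\le M_g$. Then \eqref{eq:rowinf} gives the bound on $\norm{\Jg(x)w}_\infty$.

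\textbf{Part (c).} The $i$-th entry of $\hat J w$ is $d\,q_{i,\xi}(u)\inner uw$, where $q_{i,\xi}(u)=\frac1{2\gamma}[G_i(x+\gamma u,\xi)-G_i(x-\gamma u,\xi)]$ satisfies $|q_{i,\xi}|\le M_i(\xi)\le M_g(\xi)$. Hence
\[
\norm{\hat Jw}_\infty\le d\,M_g(\xi)|\inner uw|,\qquad \E\inner uw^2=\norm w^2/d,
\]
which gives $\E\norm{\hat Jw}_\infty^2\le dM_g^2\norm w^2$. The centered bound then follows from $\norm{a-b}^2\le2\norm a^2+2\norm b^2$ together with (b), giving $2d+2\le4d$.

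\textbf{Part (d).} This is the main obstacle: we need a bound that is logarithmic in $m$ rather than linear in $d$. Fix $\xi$ and write $\norm w=1$. For each $i$ the function $\psi_i(u):=d\,q_{i,\xi}(u)\inner uw$ has mean zero with respect to $u$, since $q_{i,\xi}(u)\inner uw$ is even in $u$ but the relevant expectation is $\inner{\nabla g}{w}$-like; we center it anyway.

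The plan is to use concentration of measure on the sphere (Lévy's inequality, i.e.\ sub-Gaussianity of Lipschitz functions with variance proxy $C/d$), applied to $\psi_i$. The difficulty is that $\psi_i$ is not globally $O(M_g)$-Lipschitz, since it carries the factor $d$. The way around this is to split $\psi_i$ into two parts.
\begin{itemize}
\item The term $d\,q_{i,\xi}(u)\inner uw$ is a product of two factors. The factor $\inner uw$ is $1$-Lipschitz with sub-Gaussian scale $1/\sqrt d$. The factor $\sqrt d\,q_{i,\xi}$ is $\sqrt d M_g(\xi)$-Lipschitz, odd, and has sub-Gaussian scale $M_g(\xi)$ by the Poincaré and Lévy inequalities.
\item The product of two sub-Gaussians of scales $M_g(\xi)$ and $1$ is sub-exponential with scale $O(M_g(\xi))$.
\end{itemize}
A maximal inequality for $m$ sub-exponential variables then gives
\[
\E_u\max_i\psi_i^2\le C M_g(\xi)^2\bigl(1+\log(2m)\bigr)^2 .
\]
This has a squared logarithm, whereas the target is linear in $\kap$. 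To get the linear dependence, first bound $|\psi_i|\le d\,M_g(\xi)|\inner uw|\cdot\bigl(|\sqrt d q_{i}|/(\sqrt d M_g)\bigr)$. Then pull out the common factor $\sqrt d|\inner uw|$, whose fourth moment is $O(1)$, and take the maximum only over the sub-Gaussian factors $\sqrt d\,q_i$. By Cauchy–Schwarz,
\[
\E\max_i\psi_i^2\le\bigl(\E(\sqrt d\inner uw)^4\bigr)^{1/2}\bigl(\E\max_i(\sqrt d q_i)^4\bigr)^{1/2}=O(\kap M_g(\xi)^2),
\]
because the maximum of $m$ sub-Gaussians with scale $M_g$ has fourth moment $O(M_g^4\log^2(2m))$. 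Taking $\E_\xi$ gives $45\kap M_g^2$, with the constants tracked from Lévy's constant. Finally, the centered version uses $2\cdot45\kap+2\le92\kap$, via (b) and $\kap\ge1$.
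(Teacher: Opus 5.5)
Parts (a)--(c) are exactly the paper's argument. Once you drop the sub-exponential detour, which only gives $\log^2(2m)$ and is not needed, your argument for (d) follows the paper's outline:
\begin{itemize}
\item for fixed $\xi$, write $\max_i\psi_i^2=(\sqrt d\inner uw)^2\max_i(\sqrt d\,q_i)^2$;
\item separate the two factors by Cauchy--Schwarz, using $\E(\sqrt d\inner uw)^4\le3\norm w^4$;
\item bound the fourth moment of the maximum of $m$ centered sub-Gaussian variables by integrating the union-bound tail, which gives $\kap^2$ and hence $\kap$ after the square root;
\item average over $\xi$ only at the end, so that only $\E M_g(\xi)^2\le M_g^2$ is used.
\end{itemize}

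The real difference is where the sub-Gaussian tail of $q_i$ comes from. In (d) the paper never uses concentration on the sphere. It extends $q_i$ positively homogeneously to $\mathsf Q_i(\mathsf g)=\norm{\mathsf g}\,q_i(\mathsf g/\norm{\mathsf g})$ and checks that this is $3M_g(\xi)$-Lipschitz on $\R^d$. It then applies Gaussian concentration and transfers the fourth moment back to $\Sphere$, using the independence of $\norm{\mathsf g}$ and $\mathsf g/\norm{\mathsf g}$ and $\E\norm{\mathsf g}^4=d(d+2)$. This needs only the textbook Gaussian inequality with its explicit constant. The price is the factor $3$ in the Lipschitz constant, which is where $\sqrt{1944}<45$ comes from.

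Your direct route needs an explicit-constant concentration inequality on $\Sphere$ (L\'evy/Herbst; the Poincar\'e inequality alone controls only the variance). In exchange it gives a better constant:
\begin{itemize}
\item The log-Sobolev constant $d-1$ of the sphere gives $\Prob(|\sqrt d\,q_i|\ge t)\le2e^{-t^2/(4M_g(\xi)^2)}$ for $d\ge2$.
\item The same tail integration then gives $\E_u\max_i(\sqrt d\,q_i)^4\le32\kap^2M_g(\xi)^4$, hence $\E_u\norm{\hat Jw}_\infty^2\le\sqrt{96}\,\kap M_g(\xi)^2\norm w^2$.
\item Even the cruder curvature bound with $d-2$ in place of $d-1$ gives $\sqrt{216}\,\kap$ for $d\ge3$, and $d\le2$ is already covered by (c).
\end{itemize}
You should write this computation out rather than assert that $45$ comes from tracked constants.

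One statement is false and should be deleted: $\psi_i$ does \emph{not} have mean zero in $u$. It is a product of two odd functions, hence even, and its mean is in general nonzero. Your argument never uses this claim. What it does use is that $q_i$ itself is odd and therefore centered, which is what makes the concentration bound a tail bound around $0$.
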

\begin{proof}
(a) The $i$-th row of $\hat J(x)$ is the two-point estimator of $G_i(\cdot,\xi)$, so $\E\hat J(x)=\Jg(x)$ by \Cref{lem:twopoint}. The vector
$\hat g_y(x)=\frac{d}{2\gamma}[\Phi(x+\gamma u,\xi)-\Phi(x-\gamma u,\xi)]u$ is the two-point estimator of $\Phi(\cdot,\xi):=F(\cdot,\xi)+\inner{y}{G(\cdot,\xi)}$,
which is $(M_0(\xi)+\norm y_1M_g(\xi))$-Lipschitz; by Minkowski's inequality $\E(M_0(\xi)+\norm y_1M_g(\xi))^2\le(M_0+\norm y_1M_g)^2$, and
\Cref{lem:twopoint} applies. (b) $g_{i,\gamma}$ is $M_g$-Lipschitz by \Cref{lem:smoothing}(a), so $\norm{\nabla g_{i,\gamma}(x)}\le M_g$; use \eqref{eq:rowinf}.
(c) With $q_i(u):=\frac1{2\gamma}[G_i(x+\gamma u,\xi)-G_i(x-\gamma u,\xi)]$ we have $|q_i(u)|\le M_i(\xi)\le M_g(\xi)$ and
$(\hat J(x)w)_i=d\,q_i(u)\inner{u}{w}$, hence $\E\max_i(\hat J(x)w)_i^2\le d^2\,\E[M_g(\xi)^2]\,\E\inner uw^2=dM_g^2\norm w^2$ because $u$ and $\xi$ are
independent and $\E\inner uw^2=\norm w^2/d$. Then $\E\norm{(\hat J-\Jg)w}_\infty^2\le2\E\norm{\hat Jw}_\infty^2+2\norm{\Jg w}_\infty^2\le(2d+2)M_g^2\norm w^2\le4dM_g^2\norm w^2$
by (b). Part (d) is proved in \Cref{app:smoothing}.
\end{proof}

\begin{lemma}[level estimator]\label{lem:levels}
Under \Cref{ass:lip,ass:levels}, for $x\in\Q$ and $\gamma\le\bar\gamma$, $\E\hat h(x)=g_\gamma(x)-b=:h(x)$ and
\begin{equation}\label{eq:sigmah}
 \E\norm{\hat h(x)-h(x)}_\infty^2\le\sigma_h^2:=2\sigma_g^2+8\gamma^2M_g^2 .
\end{equation}
\end{lemma}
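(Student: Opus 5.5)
We prove the two claims of \Cref{lem:levels} separately, by averaging over $\xi$ and then over $v$; unbiasedness comes from the definition of the smoothing, and the second moment from splitting the error into a noise part and a smoothing part.

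\emph{Unbiasedness.} Since $u,v$ are independent of $\xi$, we condition on $v$ and use Fubini to get
\[
\E\hat h(x)=\E_v\,g(x+\gamma v)-b .
\]
The points $x+\gamma v$ lie in $\Q_{\bar\gamma}$, because $x\in\Q$ and $\gamma\le\bar\gamma$. There $g_i$ coincides with the function fixed in \Cref{conv:extension}. Hence $\E_v g_i(x+\gamma v)=g_{i,\gamma}(x)$ by \eqref{eq:smoothing} and \Cref{lem:smoothing}(e), which gives $\E\hat h(x)=g_\gamma(x)-b=h(x)$. Integrability follows from \Cref{ass:levels}, since $G(z,\xi)-g(z)$ is square integrable at every $z\in\Q_{\bar\gamma}$, together with the continuity of $g$ on the compact set $\Q_{\bar\gamma}$.

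\emph{Second moment.} We decompose
\[
\hat h(x)-h(x)=\bigl[G(x+\gamma v,\xi)-g(x+\gamma v)\bigr]+\bigl[g(x+\gamma v)-g_\gamma(x)\bigr],
\]
and apply $\norm{a+c}_\infty^2\le2\norm a_\infty^2+2\norm c_\infty^2$.

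For the first term, we condition on $v$. Since $\xi$ is independent of $v$ and $x+\gamma v\in\Q_{\bar\gamma}$, \Cref{ass:levels} bounds its conditional second moment by $\sigma_g^2$. Taking expectation over $v$ gives a contribution of at most $2\sigma_g^2$.

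For the second term we use only deterministic bounds. Each $g_i$ is $M_g$-Lipschitz on $\Q_{\bar\gamma}$ by Jensen's inequality applied to \Cref{ass:lip}, and $\norm{\gamma v}\le\gamma$. So $|g_i(x+\gamma v)-g_i(x)|\le\gamma M_g$. By \Cref{lem:smoothing}(a), $0\le g_{i,\gamma}(x)-g_i(x)\le\gamma M_g$. The triangle inequality then gives $|g_i(x+\gamma v)-g_{i,\gamma}(x)|\le2\gamma M_g$ for every $i$. Hence the squared $\ell_\infty$ norm is at most $4\gamma^2M_g^2$, and the contribution is at most $8\gamma^2M_g^2$.

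Adding the two contributions gives $\sigma_h^2=2\sigma_g^2+8\gamma^2M_g^2$. The only delicate point is the first term: \Cref{ass:levels} is stated pointwise, so applying it at the random point $x+\gamma v$ requires conditioning on $v$, which is legitimate because $v$ is independent of $\xi$ and stays in $\Q_{\bar\gamma}$.
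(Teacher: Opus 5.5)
Your proposal is correct and follows essentially the same route as the paper's proof. Unbiasedness comes from averaging over $\xi$ and then over $v$. The second moment comes from the same split into a noise bracket and a smoothing bracket: Assumption~\ref{ass:levels} at $x+\gamma v\in\Q_{\bar\gamma}$ bounds the noise bracket, the Lipschitz property and Lemma~\ref{lem:smoothing}(a) give $2\gamma M_g$ per coordinate for the smoothing bracket, and $\norm{a+c}_\infty^2\le2\norm a_\infty^2+2\norm c_\infty^2$ combines them. You simply make explicit the conditioning on $v$, the role of Convention~\ref{conv:extension}, and the integrability behind Fubini, which the paper leaves implicit.
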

\begin{proof}
$\E_\xi G(x+\gamma v,\xi)=g(x+\gamma v)$ and $\E_vg(x+\gamma v)=g_\gamma(x)$ give unbiasedness. Moreover
$\hat h(x)-h(x)=[G(x+\gamma v,\xi)-g(x+\gamma v)]+[g(x+\gamma v)-g_\gamma(x)]$; the first bracket has $\E\norm\cdot_\infty^2\le\sigma_g^2$ because
$x+\gamma v\in\Q_{\bar\gamma}$, and $|g_i(x+\gamma v)-g_{i,\gamma}(x)|\le|g_i(x+\gamma v)-g_i(x)|+|g_i(x)-g_{i,\gamma}(x)|\le2\gamma M_g$ by \Cref{lem:smoothing}(a).
Use $\norm{a+c}_\infty^2\le2\norm a_\infty^2+2\norm c_\infty^2$.
\end{proof}

\subsection{Mini-batches in the \texorpdfstring{$\ell_\infty$}{l-infinity}-norm}
Averaging $b$ independent copies of a centered random vector divides its \emph{Euclidean} second moment by $b$. In the $\ell_\infty$-norm, which is
the norm relevant for the dual (entropy) geometry, the same is true up to a logarithmic factor in $m$, but only after the batch size exceeds that factor.

\begin{lemma}[$\ell_\infty$ mini-batch lemma]\label{lem:batch}
Let $Z_1,\dots,Z_b\in\R^m$ be independent with $\E Z_j=0$ and $\E\norm{Z_j}_\infty^2\le\mathsf H$. Then
\[
\E\Bigl\|\frac1b\sum_{j=1}^bZ_j\Bigr\|_\infty^2\le\min\Bigl\{1,\frac{8\kap}{b}\Bigr\}\mathsf H,\qquad \kap=1+\log(2m).
\]
The same holds conditionally on a $\sigma$-field $\cG$ if the $Z_j$ are independent, centered and bounded as above conditionally on $\cG$.
\end{lemma}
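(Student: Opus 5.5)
The plan is to prove the two bounds in the minimum separately. The bound by $\mathsf H$ is elementary: by the triangle inequality for $\norm{\cdot}_\infty$ and Jensen (convexity of $t\mapsto t^2$),
\[
\Bigl\|\frac1b\sum_jZ_j\Bigr\|_\infty^2\le\Bigl(\frac1b\sum_j\norm{Z_j}_\infty\Bigr)^2\le\frac1b\sum_j\norm{Z_j}_\infty^2 .
\]
Taking expectations gives at most $\mathsf H$; independence and centering are not needed here.

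For the $8\kap/b$ bound we use the standard smoothness device for $\ell_\infty$ in dimension $m$. We work with the $\ell_p$-norm, $p=\max\{2,1+\log(2m)\}$ (or $p=2\log(2m)$, to be tuned). The function $\psi(z)=\frac12\norm z_p^2$ is $(p-1)$-smooth with respect to $\norm\cdot_p$, meaning
\[
\psi(z+w)\le\psi(z)+\inner{\nabla\psi(z)}{w}+\frac{p-1}2\norm w_p^2 .
\]
Also $\norm z_\infty\le\norm z_p\le(2m)^{1/p}\norm z_\infty$; we double $m$ to symmetrize against $\pm$ coordinates if convenient. Define $S_k=\sum_{j\le k}Z_j$. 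Since $Z_{k+1}$ is centered and independent of $S_k$, the linear term vanishes in expectation, and
\[
\E\psi(S_{k+1})\le\E\psi(S_k)+\frac{p-1}2\E\norm{Z_{k+1}}_p^2 .
\]
Summing over $k$ gives
\[
\E\norm{S_b}_p^2\le(p-1)\sum_j\E\norm{Z_j}_p^2\le(p-1)(2m)^{2/p}\,b\,\mathsf H .
\]
With $p\approx\log(2m)$ we have $(2m)^{2/p}\le e^2$, and $p-1\le\kap$. Combining with $\norm{\cdot}_\infty\le\norm\cdot_p$ and dividing by $b^2$ yields $\E\norm{S_b/b}_\infty^2\le C\kap\mathsf H/b$.

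The main obstacle is getting the explicit constant $8$: the naive choice $p=\log(2m)$ gives $e^2\approx7.4$ times $(p-1)$, but $p$ must be at least $2$ for small $m$. The plan is to set $p=\max\{2,2\log(2m)\}$. Then $(2m)^{2/p}\le e$, and $p-1\le2\kap$ gives the constant $2e<8$. The case $m=1$, where $p=2$ and the inequality is exact, is checked directly. The key property used is the $(p-1)$-smoothness of $\frac12\norm\cdot_p^2$ for $p\ge2$, which is a classical fact (Nemirovski; Juditsky–Nemirovski) to be cited or proved via the second derivative.

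The conditional version follows verbatim. Replace every expectation by $\E[\cdot\mid\cG]$; the linear term still vanishes because $Z_{k+1}$ is conditionally centered and conditionally independent of $S_k$ given $\cG$.
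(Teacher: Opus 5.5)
Your proof is correct, but it follows a different route from the paper's.

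\textbf{The paper's route.} For the $8\kap/b$ bound the paper symmetrizes. Jensen's inequality over an independent copy $Z'_j$, together with Rademacher signs, bounds $\E\norm{\sum_jZ_j}_\infty^2$ by $\E\norm{\sum_j\varepsilon_j(Z_j-Z'_j)}_\infty^2$. Conditionally on $(Z_j,Z'_j)_j$, it then applies Hoeffding's lemma to each coordinate with variance proxy $s^2=\sum_j\norm{Z_j-Z'_j}_\infty^2$. Integrating the tail of $\max_iX_i^2$ from the threshold $2s^2\log(2m)$ gives $2\kap s^2$. The constant $8$ is this factor $2$ times the factor $4$ in $\E s^2\le4b\mathsf H$.

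\textbf{Your route.} You run a one-step recursion for $\psi=\frac12\norm\cdot_p^2$, using its $(p-1)$-smoothness with respect to $\norm\cdot_p$. With $p=\max\{2,2\log(2m)\}$ this gives $\E\norm{S_b}_\infty^2\le(p-1)(2m)^{2/p}b\mathsf H\le2e\kap\,b\mathsf H$. In fact the bound is $(2\kap-3)e\,b\mathsf H$ for $m\ge2$.

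\textbf{What your route buys.} You get a smaller constant and no symmetrization loss. You also need a weaker hypothesis: the linear term vanishes as soon as $\E[Z_{k+1}\mid Z_1,\dots,Z_k]=0$ (given $\cG$ in the conditional version), so martingale differences suffice. Integrability of that term follows from $\norm{\nabla\psi(z)}_{p/(p-1)}=\norm z_p$ and the finite second moments.

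\textbf{What it costs.} Your argument relies on the uniform smoothness of $\ell_p$, which is the dual form of the $(q-1)$-strong convexity of $\frac12\norm\cdot_q^2$ with $q=p/(p-1)$. This is a classical but nontrivial fact that you must cite or prove. The paper's argument needs only Hoeffding's lemma and an elementary tail integral.

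\textbf{Minor remarks.} The bound $\norm z_p\le m^{1/p}\norm z_\infty$ already holds without doubling $m$. The separate check of $m=1$ is not needed, since there $p=2$ and $(2m)^{2/p}=2\le e$.
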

The proof (\Cref{app:smoothing}) uses symmetrization and the sub-Gaussian maximal inequality; the factor $\min\{1,\cdot\}$ is Jensen's inequality.
For $b\le 8\kap$ the bound of the lemma is the trivial one, $\mathsf H$: the worst-case analysis does not certify any reduction of the $\ell_\infty$
noise before the batch size reaches the logarithmic scale (for particular distributions the actual variance may of course decrease earlier). This is why the
dual noise appears with the factor $\kap$ in all bounds below and why the parameter choices never rely on dual mini-batches smaller than $8\kap$.

\subsection{Suprema of martingale transforms over the dual set}
The certificate of \Cref{sec:lagrangian} requires a bound that holds uniformly over the dual variable $y$. Stochastic errors enter the analysis in
the form $\sum_k a_k\inner{e_k}{y-\zeta_k}$ with predictable $\zeta_k$; the supremum over $y$ of such a sum is controlled by the following
classical device \citep{nemirovski2009robust,juditsky2011solving}, which we state for a general Bregman setup.

\begin{lemma}[martingale supremum]\label{lem:martsup}
Let $Z\subseteq Z'$ be convex sets in a finite-dimensional normed space $(\mathbb E,\norm\cdot_Z)$ with dual norm $\norm\cdot_{Z,\ast}$, $Z$ compact, let
$\omega$ be a distance-generating function on $Z'$ (differentiable on $Z'$, or on its relative interior, and $1$-strongly convex with respect to $\norm\cdot_Z$
on $Z'$), $V(z,z'):=\omega(z')-\omega(z)-\inner{\nabla\omega(z)}{z'-z}$ the associated Bregman distance, $z_0\in Z'$ a point at which $\omega$ is differentiable
(not necessarily in $Z$) and $\Theta\ge\sup_{z\in Z}V(z_0,z)$. Let $(\cF_k)_{k\ge0}$ be a filtration, $e_k$ an $\cF_k$-measurable
random vector with $\E[e_k\mid\cF_{k-1}]=0$ and $\E\norm{e_k}_{Z,\ast}^2\le v_k^2$, $\zeta_k$ an $\cF_{k-1}$-measurable bounded random vector in $\mathbb E$
(not necessarily in $Z$) and $a_k>0$ deterministic, $k=1,\dots,K$. Then for every $\lambda>0$
\[
 \E\sup_{z\in Z}\sum_{k=1}^Ka_k\inner{e_k}{z-\zeta_k}\le\lambda\Theta+\frac1{2\lambda}\sum_{k=1}^Ka_k^2v_k^2,
\]
and consequently, optimizing over $\lambda$,
\[
 \E\sup_{z\in Z}\sum_{k=1}^Ka_k\inner{e_k}{z-\zeta_k}\le\Bigl(2\Theta\sum_{k=1}^Ka_k^2v_k^2\Bigr)^{1/2}.
\]
\end{lemma}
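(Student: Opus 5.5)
The plan is the classical ``ghost iterate'' argument of \citet{nemirovski2009robust}. We split the sum into a part that is linear in $z$ and a martingale part with zero mean:
\[
\sum_k a_k\inner{e_k}{z-\zeta_k}=\sum_k a_k\inner{e_k}{z-w_{k-1}}+\sum_k a_k\inner{e_k}{w_{k-1}-\zeta_k}.
\]
Here $(w_k)$ is an auxiliary predictable sequence generated by a mirror step driven by the noise itself. Fix $\lambda>0$ and put $w_0:=z_0$. Then define
\[
w_k:=\argmin_{w\in Z'}\bigl\{-\inner{a_ke_k}{w}+\lambda V(w_{k-1},w)\bigr\}.
\]
If $Z'$ is not closed, or if the minimizer is inconvenient, we replace $Z'$ by $Z$ in the prox step for $k\ge1$. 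The only requirement is that $\omega$ be differentiable at every $w_k$ with $k\ge1$, which holds at prox points, and at $w_0=z_0$ by hypothesis. By construction $w_{k-1}$ is $\cF_{k-1}$-measurable.

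For the first sum we use the standard three-point (prox) inequality together with $1$-strong convexity of $\omega$ and Young's inequality. For every $z\in Z$ this gives
\[
a_k\inner{e_k}{z-w_{k-1}}\le\lambda V(w_{k-1},z)-\lambda V(w_k,z)+\frac{a_k^2}{2\lambda}\norm{e_k}_{Z,\ast}^2 .
\]
Summing telescopes the Bregman terms. Using $V(w_0,z)=V(z_0,z)\le\Theta$ and dropping $-\lambda V(w_K,z)\le0$, we get, uniformly in $z\in Z$,
\[
\sum_k a_k\inner{e_k}{z-w_{k-1}}\le\lambda\Theta+\frac1{2\lambda}\sum_k a_k^2\norm{e_k}_{Z,\ast}^2 .
\]
The right-hand side does not depend on $z$. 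Taking the supremum and then the expectation yields at most $\lambda\Theta+\frac1{2\lambda}\sum_ka_k^2v_k^2$.

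For the second sum, $w_{k-1}-\zeta_k$ is $\cF_{k-1}$-measurable and $\E[e_k\mid\cF_{k-1}]=0$, so each term has zero expectation. The supremum of the total is at most the supremum of the first part plus the second part, because the second part does not depend on $z$. Combining the two pieces gives the first claim. The second claim follows by choosing $\lambda=\bigl(\sum_ka_k^2v_k^2/(2\Theta)\bigr)^{1/2}$.

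The main obstacle is integrability, which is needed to justify $\E\inner{e_k}{w_{k-1}-\zeta_k}=0$ by conditioning. We need $\E\norm{e_k}_{Z,\ast}\norm{w_{k-1}-\zeta_k}_Z<\infty$. Since $\zeta_k$ is bounded, the $\zeta_k$ part is handled by Cauchy--Schwarz. The difficulty is that $w_{k-1}$ may be unbounded when the prox step is taken over a noncompact $Z'$. To avoid this, I would take the prox steps over the compact set $Z$ for $k\ge1$. Then $w_{k-1}\in Z$ is bounded for $k\ge2$, and $w_0=z_0$ is deterministic. The three-point inequality is still valid for $z\in Z$ with prox over $Z$, and it only needs $\omega$ differentiable at $w_{k-1}$. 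If differentiability at the boundary of $Z$ is an issue (entropy), one can use the relative interior or the standard convention that the prox point of the entropy lies in the relative interior. The inequality $V(w_{k-1},w_k)\ge\frac12\norm{w_k-w_{k-1}}_Z^2$ is used in Young's step and holds because of strong convexity on $Z'\supseteq Z$.
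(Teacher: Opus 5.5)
Your proposal is correct and follows the paper's proof. Both use the auxiliary mirror sequence $w_0=z_0$, $w_k=\argmin_{w\in Z}\{-a_k\inner{e_k}{w}+\lambda V(w_{k-1},w)\}$, the three-point inequality combined with Young's inequality to get a bound uniform in $z\in Z$, and the split off of the zero-mean remainder $\sum_k a_k\inner{e_k}{w_{k-1}-\zeta_k}$. Your final choice of taking the prox steps over the compact set $Z$ rather than $Z'$ is also the paper's choice, and it is what guarantees that the minimizers exist and that $w_{k-1}$ is bounded, so the centering argument goes through.
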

\begin{proof}
Define the auxiliary sequence $w_0:=z_0$, $w_k:=\argmin_{w\in Z}\{a_k\inner{-e_k}{w}+\lambda V(w_{k-1},w)\}$; the minimizers exist since $Z$ is compact
and lie in $Z\subseteq Z'$, so that all Bregman distances below are defined ($w_0=z_0$ may lie outside $Z$, which is why the lemma is stated with the
ambient set $Z'$; \Cref{fact:threepoint} covers this case). By the three-point inequality (\Cref{fact:threepoint}), for every $z\in Z$: $a_k\inner{-e_k}{w_k-z}\le\lambda[V(w_{k-1},z)-V(w_k,z)-V(w_{k-1},w_k)]$. Hence
\begin{align*}
 a_k\inner{e_k}{z-w_{k-1}}&=a_k\inner{e_k}{z-w_k}+a_k\inner{e_k}{w_k-w_{k-1}}\\
 &\le\lambda[V(w_{k-1},z)-V(w_k,z)]-\lambda V(w_{k-1},w_k)+a_k\norm{e_k}_{Z,\ast}\norm{w_k-w_{k-1}}_Z .
\end{align*}
By Young's inequality and $V(w_{k-1},w_k)\ge\frac12\norm{w_k-w_{k-1}}_Z^2$,
$a_k\norm{e_k}_{Z,\ast}\norm{w_k-w_{k-1}}_Z\le\frac{a_k^2}{2\lambda}\norm{e_k}_{Z,\ast}^2+\lambda V(w_{k-1},w_k)$. Summing over $k$ and telescoping,
\[
 \sum_ka_k\inner{e_k}{z-w_{k-1}}\le\lambda V(w_0,z)+\frac1{2\lambda}\sum_ka_k^2\norm{e_k}_{Z,\ast}^2\le\lambda\Theta+\frac1{2\lambda}\sum_ka_k^2\norm{e_k}_{Z,\ast}^2
\]
for all $z\in Z$ simultaneously. Finally
\[
 \sum_ka_k\inner{e_k}{z-\zeta_k}=\sum_ka_k\inner{e_k}{z-w_{k-1}}+\sum_ka_k\inner{e_k}{w_{k-1}-\zeta_k},
\]
where $w_{k-1}$ and $\zeta_k$ are $\cF_{k-1}$-measurable and bounded, so the last sum has zero expectation. Taking the supremum over $z$ in the first sum and then expectations
proves the first bound; the second follows with $\lambda=\bigl(\sum_ka_k^2v_k^2/(2\Theta)\bigr)^{1/2}$.
\end{proof}

\section{The smoothed Lagrangian and the certificate}\label{sec:lagrangian}

\subsection{The smoothed and shifted problem}
Fix $\gamma\in(0,\bar\gamma]$, and let $f_\gamma$, $g_{i,\gamma}$ be the smoothed functions of \Cref{conv:extension}. By \Cref{lem:smoothing}(a) with $C=\Q$, the
smoothed constraints satisfy $g_i\le g_{i,\gamma}\le g_i+b_i$ on $\Q$ with the \emph{known} bias bounds
\begin{equation}\label{eq:shift}
 b_i:=\begin{cases}\gamma M_g,&\text{in general},\\ L_g\gamma^2/2,&\text{if $g_i$ is $L_g$-smooth},\\ 0,&\text{if $g_i$ is affine},\end{cases}
 \qquad
 b_0:=\begin{cases}\gamma M_0,&\text{in general},\\ L_0\gamma^2/2,&\text{if $f$ is $L_0$-smooth}.\end{cases}
\end{equation}
We consider the \emph{smoothed shifted problem}
\begin{equation}\label{eq:Pgamma}
 \min_{x\in\Q}\ f_\gamma(x)\qquad\text{s.t.}\qquad h_i(x):=g_{i,\gamma}(x)-b_i\le0,\quad i=1,\dots,m,
\end{equation}
whose data are smooth on all of $\R^d$: writing $L_{0,\gamma}$ and $L_{g,\gamma}$ for the smoothness constants of $f_\gamma$ and of the $g_{i,\gamma}$ given by
\Cref{lem:smoothing}(c) and \Cref{conv:extension} ($L_{0,\gamma}=L_0$ if $f$ is $L_0$-smooth on $\R^d$ and $L_{0,\gamma}=\sqrt dM_0/\gamma$ otherwise; $L_{g,\gamma}$ is the
largest of the corresponding constants of the $g_{i,\gamma}$, \ie\ $L_g$ if all constraints are $L_g$-smooth, $0$ if they are affine, and $\sqrt dM_g/\gamma$ if some of
them are merely Lipschitz), we put
\begin{equation}\label{eq:HL}
 H:=L_{0,\gamma}+\Rmult L_{g,\gamma},\qquad L\ge H\quad\text{(any upper bound used by an algorithm)},
\end{equation}
where $\Rmult$ is the multiplier bound \eqref{eq:R} below. We denote by $f_\gamma^\ast$ the optimal value of \eqref{eq:Pgamma}, by $x^\ast_\gamma$ an
optimal solution and by $b_g:=\max_ib_i$.

\begin{lemma}[transfer between the original and the smoothed problem]\label{lem:transfer}
Under \Cref{ass:convex,ass:lip}:
\begin{enumerate}[label=(\alph*)]
\item every feasible point of \eqref{eq:P} is feasible for \eqref{eq:Pgamma}, and $f^\ast_\gamma\le f^\ast+b_0$;
\item \Cref{ass:slater} for \eqref{eq:P} implies $h_i(x^{\mathrm s})\le-\rho$ for all $i$, \ie\ the Slater condition for \eqref{eq:Pgamma} with the same point and margin;
\item if $\hat x\in\Q$ satisfies $f_\gamma(\hat x)-f^\ast_\gamma\le\eps'$ and $\norm{\pos{h(\hat x)}}_\infty\le\eps'$, then
 $f(\hat x)-f^\ast\le\eps'+b_0$ and $\viol(\hat x)=\norm{\pos{g(\hat x)}}_\infty\le\eps'+b_g$.
\end{enumerate}
\end{lemma}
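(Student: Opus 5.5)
The whole argument rests on the two-sided sandwich $g_i\le g_{i,\gamma}\le g_i+b_i$ and $f\le f_\gamma\le f+b_0$ on $\Q$. We take these from \Cref{lem:smoothing}(a) with $C=\Q$, together with \Cref{conv:extension}.

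We first check that these bounds hold with the specific values of $b_i$, $b_0$ in \eqref{eq:shift}. In the Lipschitz case the lower bound and the bias $\gamma M$ come from (a) applied to the extension. The extension is $M$-Lipschitz on $\Q_\gamma\subseteq\Q_{\bar\gamma}$ and coincides with the original function there, so on $\Q$ the smoothed function is the one in the statement (\Cref{lem:smoothing}(e)). In the smooth case the bias is $L\gamma^2/2$ by (a). The lower bound $\varphi\le\varphi_\gamma$ is Jensen's inequality, since $\E v=0$. In the affine case $\varphi_\gamma=\varphi$. Hence on $\Q$ we obtain
\[
h_i=g_{i,\gamma}-b_i\in[g_i-b_i,\,g_i]\qquad\text{and}\qquad f_\gamma\in[f,\,f+b_0].
\]

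\textbf{Part (a).} The upper bound $h_i\le g_i$ on $\Q$ shows that any $x\in\Q$ with $g(x)\le0$ also satisfies $h(x)\le0$, so the feasible set of \eqref{eq:P} is contained in that of \eqref{eq:Pgamma}. Let $x^\ast$ be an optimal point of \eqref{eq:P}. It exists because $\Q$ is compact, $f$ and $g$ are continuous, and the feasible set is nonempty; if one prefers to avoid this, work with a minimizing sequence. Then $x^\ast$ is feasible for \eqref{eq:Pgamma}, and
\[
f^\ast_\gamma\le f_\gamma(x^\ast)\le f(x^\ast)+b_0=f^\ast+b_0 .
\]

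\textbf{Part (b).} At the Slater point we have $x^{\mathrm s}\in\Q$, so $h_i(x^{\mathrm s})\le g_i(x^{\mathrm s})\le-\rho$.

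\textbf{Part (c).} For the gap, use $f\le f_\gamma$ on $\Q$ and then part (a):
\[
f(\hat x)-f^\ast\le f_\gamma(\hat x)-f^\ast\le f_\gamma(\hat x)-f^\ast_\gamma+b_0\le\eps'+b_0 .
\]
For the violation, use $g_i\le h_i+b_i\le h_i+b_g$ and monotonicity of $\pos{\cdot}$:
\[
\pos{g_i(\hat x)}\le\pos{h_i(\hat x)}+b_g\le\eps'+b_g .
\]
Taking the maximum over $i$ gives the claimed bound on $\viol(\hat x)$.

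The only point needing care is the first step: the bounds of \Cref{lem:smoothing}(a) must be applied to the function actually smoothed under \Cref{conv:extension}, and it must agree with $f$, $g_i$ on $\Q_\gamma$. Once the sandwich inequalities are in place, the rest is elementary.
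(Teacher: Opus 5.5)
Your proof is correct and follows the paper's argument essentially line by line. You use the sandwich $g_i\le g_{i,\gamma}\le g_i+b_i$, $f\le f_\gamma\le f+b_0$ on $\Q$ from \Cref{lem:smoothing}(a), applied to the function fixed by \Cref{conv:extension}, to get the feasible-set inclusion, $f^\ast_\gamma\le f^\ast+b_0$ and the Slater transfer, and then (c) via $f\le f_\gamma$ and $g_i\le\pos{h_i}+b_g$. The one loose remark is that nonemptiness of the feasible set of \eqref{eq:P} does not follow from \Cref{ass:convex,ass:lip} alone. This does no harm: if that set is empty then $f^\ast=+\infty$, and (a) and (c) hold trivially.
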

\begin{proof}
(a) If $g(x)\le0$ then $h_i(x)=g_{i,\gamma}(x)-b_i\le g_i(x)\le0$. Hence the feasible set of \eqref{eq:Pgamma} contains that of \eqref{eq:P} and
$f^\ast_\gamma\le\min\{f_\gamma(x):g(x)\le0,x\in\Q\}\le\min\{f(x)+b_0:g(x)\le0,x\in\Q\}=f^\ast+b_0$ by \Cref{lem:smoothing}(a).
(b) $h_i(x^{\mathrm s})\le g_i(x^{\mathrm s})\le-\rho$.
(c) $f(\hat x)-f^\ast\le f_\gamma(\hat x)-f^\ast\le(f_\gamma(\hat x)-f^\ast_\gamma)+(f^\ast_\gamma-f^\ast)\le\eps'+b_0$ by (a). Moreover
$g_i(\hat x)\le g_{i,\gamma}(\hat x)=h_i(\hat x)+b_i\le\pos{h_i(\hat x)}+b_g$.
\end{proof}

\subsection{Lagrange multipliers}
\begin{lemma}[multipliers]\label{lem:multipliers}
Under \Cref{ass:convex,ass:lip,ass:slater}, problem \eqref{eq:Pgamma} has an optimal solution $x^\ast_\gamma\in\Q$ and a vector
$y^\ast_\gamma\in\R^m_+$ such that
\[
 x^\ast_\gamma\in\argmin_{x\in\Q}\bigl\{f_\gamma(x)+\inner{y^\ast_\gamma}{h(x)}\bigr\},\qquad \inner{y^\ast_\gamma}{h(x^\ast_\gamma)}=0,\qquad
 \norm{y^\ast_\gamma}_1\le\frac{f_\gamma(x^{\mathrm s})-f^\ast_\gamma}{\rho}\le\frac{M_0D}{\rho}.
\]
\end{lemma}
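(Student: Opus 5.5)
We prove the lemma by the standard Lagrangian-duality argument for convex programs under Slater's condition, applied to the smoothed shifted problem \eqref{eq:Pgamma}.

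\textbf{Existence of a solution.} By \Cref{lem:smoothing}(a) and \Cref{conv:extension}, $f_\gamma$ and every $h_i=g_{i,\gamma}-b_i$ are convex and continuous; they are Lipschitz on $\Q$, and in fact differentiable on $\R^d$. The feasible set $\{x\in\Q:h(x)\le0\}$ is therefore compact. It is nonempty, because it contains $x^{\mathrm s}$ by \Cref{lem:transfer}(b). Weierstrass' theorem then gives a minimizer $x^\ast_\gamma$.

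\textbf{Existence of multipliers.} Next we invoke strong duality under the Slater condition of \Cref{lem:transfer}(b). Consider the convex set
\[
\mathcal S=\{(t,s)\in\R\times\R^m:\exists x\in\Q,\ f_\gamma(x)\le t,\ h(x)\le s\}.
\]
The point $(f^\ast_\gamma,0)$ lies on its boundary, since no point of $\mathcal S$ has $t<f^\ast_\gamma$ and $s\le0$. Separating $\mathcal S$ from the open convex set $\{t<f^\ast_\gamma,\,s<0\}$ gives a nonzero vector $(\lambda_0,y)\ge0$ with
\[
\lambda_0 f_\gamma(x)+\inner{y}{h(x)}\ge\lambda_0 f^\ast_\gamma\qquad\text{for all }x\in\Q.
\]
Evaluating at $x^{\mathrm s}$ excludes $\lambda_0=0$: in that case $y\ne0$, $y\ge0$, so $\inner{y}{h(x^{\mathrm s})}\le-\rho\norm y_1<0$, which contradicts the inequality. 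Normalizing $\lambda_0=1$ and setting $y^\ast_\gamma:=y$, we obtain
\[
f_\gamma(x)+\inner{y^\ast_\gamma}{h(x)}\ge f^\ast_\gamma\qquad\text{on }\Q.
\]
At $x=x^\ast_\gamma$ we have $\inner{y^\ast_\gamma}{h(x^\ast_\gamma)}\le0$, because $y^\ast_\gamma\ge0$ and $h(x^\ast_\gamma)\le0$. Combined with the inequality above, this forces complementary slackness $\inner{y^\ast_\gamma}{h(x^\ast_\gamma)}=0$. It also shows that $x^\ast_\gamma$ attains the minimum $f^\ast_\gamma$ of the Lagrangian over $\Q$, which is the claimed $\argmin$ property.

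\textbf{The bound on $\norm{y^\ast_\gamma}_1$.} Plug $x=x^{\mathrm s}$ into the Lagrangian inequality:
\[
f^\ast_\gamma\le f_\gamma(x^{\mathrm s})+\inner{y^\ast_\gamma}{h(x^{\mathrm s})}\le f_\gamma(x^{\mathrm s})-\rho\norm{y^\ast_\gamma}_1.
\]
This yields the first bound $\norm{y^\ast_\gamma}_1\le(f_\gamma(x^{\mathrm s})-f^\ast_\gamma)/\rho$. For the second bound, $f_\gamma$ is $M_0$-Lipschitz on $\Q$ by \Cref{lem:smoothing}(a), with $C=\Q$ and the Lipschitz property of $f$ on $\Q_{\bar\gamma}$ guaranteed by \Cref{ass:lip}. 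Both $x^{\mathrm s}$ and $x^\ast_\gamma$ lie in $\Q$, so
\[
f_\gamma(x^{\mathrm s})-f^\ast_\gamma=f_\gamma(x^{\mathrm s})-f_\gamma(x^\ast_\gamma)\le M_0D.
\]

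The only delicate step is the separation argument: it needs the convexity of $\mathcal S$, which follows from the convexity of $f_\gamma$, of each $h_i$, and of $\Q$. Alternatively, one can cite the standard Slater strong duality theorem for convex programs over a convex set, which gives the same multipliers directly; everything else is bookkeeping.
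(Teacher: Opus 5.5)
Your proof is correct and follows the paper's argument. Existence comes from compactness, the multipliers come from the Kuhn--Tucker theorem under the Slater condition of \Cref{lem:transfer}(b), and the bound comes from evaluating the Lagrangian inequality at $x^{\mathrm s}$ and using that $f_\gamma$ is $M_0$-Lipschitz on $\Q$. The only difference is that you derive the multipliers by an explicit separation argument instead of citing Rockafellar (Thm.~28.2, Cor.~28.3.1); the nonnegativity of $(\lambda_0,y)$, which you assert without justification, follows from the upward closedness of $\mathcal S$.
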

\begin{proof}
$f_\gamma$ and $h_i$ are finite convex functions on $\R^d$ (\Cref{lem:smoothing}(a) and \Cref{conv:extension}), $\Q$ is compact and the Slater condition holds by \Cref{lem:transfer}(b); the existence
of $x^\ast_\gamma$ follows from continuity and compactness, the existence of Kuhn--Tucker multipliers with the saddle-point property and complementary
slackness is the classical Kuhn--Tucker theorem for convex programs under Slater's condition (\eg\ \citealp[Thm.~28.2 and Cor.~28.3.1]{rockafellar1970convex}).
For the bound, the saddle-point property gives
$f^\ast_\gamma=\min_{x\in\Q}\{f_\gamma(x)+\inner{y^\ast_\gamma}{h(x)}\}\le f_\gamma(x^{\mathrm s})+\inner{y^\ast_\gamma}{h(x^{\mathrm s})}\le f_\gamma(x^{\mathrm s})-\rho\norm{y^\ast_\gamma}_1$,
and $f_\gamma(x^{\mathrm s})-f^\ast_\gamma=f_\gamma(x^{\mathrm s})-f_\gamma(x^\ast_\gamma)\le M_0\norm{x^{\mathrm s}-x^\ast_\gamma}\le M_0D$ since $f_\gamma$ is $M_0$-Lipschitz on $\Q$
(\Cref{lem:smoothing}(a) with $C=\Q$).
\end{proof}

Throughout we fix a constant
\begin{equation}\label{eq:R}
 \Rmult\ \ge\ \norm{y^\ast_\gamma}_1+1,\qquad\text{for instance}\qquad \Rmult:=1+\frac{M_0D}{\rho},
\end{equation}
and we write $M_R:=M_0+\Rmult M_g$. The dual variable ranges over the truncated orthant and its lifting to a simplex,
\begin{equation}\label{eq:Ysets}
 Y_\Rmult:=\{y\in\R^m_+:\norm y_1\le\Rmult\},\qquad
 \tilde Y_\Rmult:=\Bigl\{\tilde y=(\tilde y_0,y)\in\R^{m+1}_+:\ \tilde y_0+\textstyle\sum_{i=1}^my_i=\Rmult\Bigr\},
\end{equation}
where the map $\tilde y\mapsto y$ (dropping the slack coordinate $\tilde y_0$) is a bijection from $\tilde Y_\Rmult$ onto $Y_\Rmult$.

\subsection{Lagrangian, restricted gap and certificate}
The smoothed Lagrangian and the certificate are
\begin{equation}\label{eq:Lag}
 \LagS(x,y):=f_\gamma(x)+\inner{y}{h(x)},\qquad (x,y)\in\Q\times Y_\Rmult,
\end{equation}
\begin{equation}\label{eq:cert}
 \Cert_{\Rmult,\gamma}(x):=\sup_{y\in Y_\Rmult}\LagS(x,y)-f^\ast_\gamma=f_\gamma(x)-f^\ast_\gamma+\Rmult\,\norm{\pos{h(x)}}_\infty,\qquad x\in\Q .
\end{equation}
The identity in \eqref{eq:cert} holds because $\sup_{y\in Y_\Rmult}\inner{y}{h}=\Rmult\max\{0,\max_ih_i\}=\Rmult\norm{\pos{h}}_\infty$.
For $y\in Y_\Rmult$ the function $\LagS(\cdot,y)$ is convex and $L$-smooth on $\R^d$ with $L$ from \eqref{eq:HL}: indeed $\nabla_x\LagS(x,y)=\nabla f_\gamma(x)+\Jg(x)^\top y$
and $\norm{(\Jg(x)-\Jg(x'))^\top y}\le\sum_iy_i\norm{\nabla g_{i,\gamma}(x)-\nabla g_{i,\gamma}(x')}\le\Rmult L_{g,\gamma}\norm{x-x'}$.

\begin{lemma}[certificate]\label{lem:certificate}
Under \Cref{ass:convex,ass:lip,ass:slater} and \eqref{eq:R}, for every $x\in\Q$,
\begin{equation}\label{eq:certbounds}
 \Cert_{\Rmult,\gamma}(x)\ \ge\ \max\bigl\{f_\gamma(x)-f^\ast_\gamma,\ \norm{\pos{h(x)}}_\infty\bigr\}\ \ge\ 0,
\end{equation}
and for every $\hat y\in Y_\Rmult$
\begin{equation}\label{eq:restrictedgap}
 \LagS(x^\ast_\gamma,\hat y)\le f^\ast_\gamma,\qquad\text{hence}\qquad \Cert_{\Rmult,\gamma}(x)\le\sup_{y\in Y_\Rmult}\LagS(x,y)-\LagS(x^\ast_\gamma,\hat y).
\end{equation}
If in addition \Cref{ass:sc} holds, then $\frac\mu2\norm{x-x^\ast_\gamma}^2\le\Cert_{\Rmult,\gamma}(x)$ for all $x\in\Q$.
\end{lemma}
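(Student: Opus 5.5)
The three parts are proved in order. Each uses the saddle-point characterization of \Cref{lem:multipliers} together with the margin $\Rmult\ge\norm{y^\ast_\gamma}_1+1$ from \eqref{eq:R}.

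\emph{Lower bounds \eqref{eq:certbounds}.} Fix $x\in\Q$ and write $v:=\norm{\pos{h(x)}}_\infty$. The saddle-point property and $y^\ast_\gamma\ge0$ give
\[
f^\ast_\gamma\le f_\gamma(x)+\inner{y^\ast_\gamma}{h(x)}\le f_\gamma(x)+\norm{y^\ast_\gamma}_1 v,
\]
so that $f_\gamma(x)-f^\ast_\gamma\ge-\norm{y^\ast_\gamma}_1v$. Substituting into \eqref{eq:cert} yields
\[
\Cert_{\Rmult,\gamma}(x)\ge(\Rmult-\norm{y^\ast_\gamma}_1)\,v\ge v,
\]
where the last step uses $\Rmult-\norm{y^\ast_\gamma}_1\ge1$. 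Since $\Rmult v\ge0$, the same identity \eqref{eq:cert} also gives $\Cert_{\Rmult,\gamma}(x)\ge f_\gamma(x)-f^\ast_\gamma$. Finally, $\Cert\ge v\ge0$ gives nonnegativity. This step is the crux: it is exactly where the slack "$+1$" in \eqref{eq:R} is needed.

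\emph{Restricted gap \eqref{eq:restrictedgap}.} The point $x^\ast_\gamma$ is feasible, so $h(x^\ast_\gamma)\le0$. For $\hat y\ge0$ this gives $\LagS(x^\ast_\gamma,\hat y)=f^\ast_\gamma+\inner{\hat y}{h(x^\ast_\gamma)}\le f^\ast_\gamma$. Subtracting this from $\sup_y\LagS(x,y)$ and using the definition \eqref{eq:cert} gives the second inequality.

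\emph{Strong convexity.} Under \Cref{ass:sc}, \Cref{lem:smoothing}(d) with $C=\Q$ makes $f_\gamma$ $\mu$-strongly convex on $\Q$. Each $h_i$ is convex, so $\LagS(\cdot,y^\ast_\gamma)$ is $\mu$-strongly convex on $\Q$. By \Cref{lem:multipliers}, $x^\ast_\gamma$ minimizes this function over $\Q$. The first-order optimality condition combined with strong convexity then gives
\[
\LagS(x,y^\ast_\gamma)\ge\LagS(x^\ast_\gamma,y^\ast_\gamma)+\tfrac\mu2\norm{x-x^\ast_\gamma}^2 .
\]
Complementary slackness gives $\LagS(x^\ast_\gamma,y^\ast_\gamma)=f^\ast_\gamma$. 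Since $y^\ast_\gamma\in Y_\Rmult$ (because $\norm{y^\ast_\gamma}_1\le\Rmult$), we also have $\LagS(x,y^\ast_\gamma)\le\sup_{y\in Y_\Rmult}\LagS(x,y)$. Combining these yields $\frac\mu2\norm{x-x^\ast_\gamma}^2\le\Cert_{\Rmult,\gamma}(x)$.

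The only subtle point here is that the minimizer inequality needs a subgradient or first-order form, not merely a function-value bound. This follows from the differentiability of $\LagS(\cdot,y^\ast_\gamma)$ on $\R^d$, which holds by \Cref{conv:extension}.
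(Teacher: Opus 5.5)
Your proof is correct and follows the paper's argument step for step: the saddle-point inequality of \Cref{lem:multipliers} combined with $\Rmult\ge\norm{y^\ast_\gamma}_1+1$ gives \eqref{eq:certbounds}, feasibility of $x^\ast_\gamma$ gives \eqref{eq:restrictedgap}, and the last claim comes from strong convexity of $\LagS(\cdot,y^\ast_\gamma)$ on $\Q$, optimality of $x^\ast_\gamma$, and $y^\ast_\gamma\in Y_\Rmult$. Your closing remark overstates the subtlety: minimality over $\Q$ together with strong convexity along the segment from $x^\ast_\gamma$ to $x$ already gives the quadratic growth without differentiability, though invoking \Cref{conv:extension} is also valid.
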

\begin{proof}
By the saddle-point property of \Cref{lem:multipliers}, for all $x\in\Q$,
$f_\gamma(x)+\inner{y^\ast_\gamma}{h(x)}\ge f_\gamma(x^\ast_\gamma)+\inner{y^\ast_\gamma}{h(x^\ast_\gamma)}=f^\ast_\gamma$, hence
$f_\gamma(x)-f^\ast_\gamma\ge-\inner{y^\ast_\gamma}{h(x)}\ge-\inner{y^\ast_\gamma}{\pos{h(x)}}\ge-\norm{y^\ast_\gamma}_1\norm{\pos{h(x)}}_\infty\ge-(\Rmult-1)\norm{\pos{h(x)}}_\infty$.
Adding $\Rmult\norm{\pos{h(x)}}_\infty$ yields $\Cert_{\Rmult,\gamma}(x)\ge\norm{\pos{h(x)}}_\infty\ge0$; and $\Cert_{\Rmult,\gamma}(x)\ge f_\gamma(x)-f^\ast_\gamma$ is
obvious. For \eqref{eq:restrictedgap}, $h(x^\ast_\gamma)\le0$ and $\hat y\ge0$ give $\LagS(x^\ast_\gamma,\hat y)\le f_\gamma(x^\ast_\gamma)=f^\ast_\gamma$.
Under \Cref{ass:sc}, $f_\gamma$ is $\mu$-strongly convex on $\Q$ (\Cref{lem:smoothing}(d) with $C=\Q$, $C_\gamma\subseteq\Q_{\bar\gamma}$; the function smoothed under
\Cref{conv:extension} coincides with $f$ on $\Q_{\bar\gamma}$), so $\LagS(\cdot,y^\ast_\gamma)$ is $\mu$-strongly convex on $\Q$ and minimized over $\Q$ at $x^\ast_\gamma$; thus $\LagS(x,y^\ast_\gamma)-f^\ast_\gamma\ge\frac\mu2\norm{x-x^\ast_\gamma}^2$ (strong convexity plus the first-order optimality condition
on $\Q$), and $\LagS(x,y^\ast_\gamma)\le\sup_{y\in Y_\Rmult}\LagS(x,y)$ because $y^\ast_\gamma\in Y_\Rmult$.
\end{proof}

\Cref{lem:certificate,lem:transfer} reduce the task to bounding $\E\sup_{y\in Y_\Rmult}\LagS(\hat x,y)-\LagS(x^\ast_\gamma,\hat y)$ for the output $\hat x$ and an
arbitrary dual output $\hat y$ of a method: an expected bound $\eps'$ on this \emph{restricted duality gap} gives an $(\eps'+b_0,\eps'+b_g)$-solution
of the original problem in expectation. Note that the supremum is over $y$ only; the primal comparator is the fixed point $x^\ast_\gamma$. This
asymmetry is exploited in the stochastic analysis: primal errors need only be centered, dual errors must be controlled uniformly (\Cref{lem:martsup}).

\subsection{Proximal geometry}
The primal space carries the Euclidean distance and the dual space the entropy on the lifted simplex $\tilde Y_\Rmult$:
\begin{equation}\label{eq:bregman}
 \Vx(x,x'):=\tfrac12\norm{x-x'}^2,\qquad
 \Vy(\tilde y,\tilde y'):=\Rmult\sum_{i=0}^m\tilde y'_i\log\frac{\tilde y'_i}{\tilde y_i}\quad(\tilde y\in\operatorname{ri}\tilde Y_\Rmult,\ \tilde y'\in\tilde Y_\Rmult).
\end{equation}
Both are Bregman distances of distance-generating functions ($\frac12\norm x^2$ and $\Rmult\sum_i\tilde y_i\log\tilde y_i$). We use the uniform
dual center $\tilde y^{\mathrm u}:=\frac{\Rmult}{m+1}(1,\dots,1)$ and the radii
\begin{equation}\label{eq:radii}
 \cA:=\Vx(x_0,x^\ast_\gamma)=\tfrac12\norm{x_0-x^\ast_\gamma}^2\le\tfrac12D^2,\qquad
 \cB:=\max_{\tilde y\in\tilde Y_\Rmult}\Vy(\tilde y^{\mathrm u},\tilde y)=\Rmult^2\log(m+1).
\end{equation}

\begin{lemma}[facts about the proximal steps]\label{lem:prox}
\begin{enumerate}[label=(\alph*)]
\item (Pinsker) For all $\tilde y\in\operatorname{ri}\tilde Y_\Rmult$, $\tilde y'\in\tilde Y_\Rmult$ with last $m$ coordinates $y,y'$:
 \[ \Vy(\tilde y,\tilde y')\ge\tfrac12\norm{\tilde y'-\tilde y}_1^2\ge\tfrac12\norm{y'-y}_1^2 . \]
\item (radius) $\Vy(\tilde y^{\mathrm u},\tilde y)\le\Rmult^2\log(m+1)=\cB$ for all $\tilde y\in\tilde Y_\Rmult$.
\item (dual step) For $\tilde y\in\operatorname{ri}\tilde Y_\Rmult$, $q\in\R^m$, $\tau>0$ and $\tilde q:=(0,q)\in\R^{m+1}$, let
 $\tilde y^+:=\argmax_{\tilde y'\in\tilde Y_\Rmult}\{\inner{\tilde q}{\tilde y'}-\tau\Vy(\tilde y,\tilde y')\}$. Then
 $\tilde y^+_i=\Rmult\,\tilde y_i e^{\tilde q_i/(\tau\Rmult)}/\sum_{l=0}^m\tilde y_le^{\tilde q_l/(\tau\Rmult)}$ and $\tilde y^+\in\operatorname{ri}\tilde Y_\Rmult$.
 Moreover, for all $\tilde y'\in\tilde Y_\Rmult$,
 \[
 \inner{q}{y'-y^+}=\inner{\tilde q}{\tilde y'-\tilde y^+}\le\tau\bigl[\Vy(\tilde y,\tilde y')-\Vy(\tilde y^+,\tilde y')-\Vy(\tilde y,\tilde y^+)\bigr].
 \]
\item (primal step) For $x_{\mathrm c},p\in\Q$, $r\in\R^d$ and $\eta,\beta\ge0$ with $\eta+\beta>0$, the point
 $p^+:=\argmin_{x\in\Q}\{\inner rx+\eta\Vx(x_{\mathrm c},x)+\beta\Vx(p,x)\}$ is the Euclidean projection onto $\Q$ of $(\eta x_{\mathrm c}+\beta p-r)/(\eta+\beta)$ and satisfies for all $x\in\Q$
 \[
 \inner{r}{p^+-x}\le\eta\bigl[\Vx(x_{\mathrm c},x)-\Vx(x_{\mathrm c},p^+)-\Vx(p^+,x)\bigr]+\beta\bigl[\Vx(p,x)-\Vx(p,p^+)-\Vx(p^+,x)\bigr].
 \]
\end{enumerate}
\end{lemma}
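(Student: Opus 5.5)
The four parts are standard facts about Euclidean and entropic proximal steps. I would prove them in the order (a), (b), (d), (c), since (c) uses the optimality argument of (d) in a simplex setting.

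For (a), rescale. Writing $p:=\tilde y/\Rmult$ and $p':=\tilde y'/\Rmult$, both probability vectors on $m+1$ points, the definition \eqref{eq:bregman} gives $\Vy(\tilde y,\tilde y')=\Rmult^2\,\KL(p'\|p)$. Pinsker's inequality $\KL(p'\|p)\ge\frac12\norm{p'-p}_1^2$ then yields $\Vy\ge\frac12\norm{\tilde y'-\tilde y}_1^2$. Dropping the slack coordinate $i=0$ can only decrease the $\ell_1$-norm, which gives the second inequality.

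For (b), use the same scaling: $\Vy(\tilde y^{\mathrm u},\tilde y)=\Rmult^2\sum_i p_i\log((m+1)p_i)=\Rmult^2(\log(m+1)-\mathrm{Ent}(p))$. Since the entropy is nonnegative, this is at most $\Rmult^2\log(m+1)$. The convention $0\log0=0$ covers boundary points.

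For (d), expand the objective $\inner rx+\frac\eta2\norm{x-x_{\mathrm c}}^2+\frac\beta2\norm{x-p}^2$. It equals $\frac{\eta+\beta}{2}\norm{x-z}^2+\mathrm{const}$ with $z=(\eta x_{\mathrm c}+\beta p-r)/(\eta+\beta)$, so the minimizer is the projection of $z$. The first-order optimality condition reads $\inner{r+\eta(p^+-x_{\mathrm c})+\beta(p^+-p)}{x-p^+}\ge0$ for all $x\in\Q$. Apply the Euclidean three-point identity $\inner{a-b}{c-b}=\Vx(a,b)+\Vx(b,c)-\Vx(a,c)$ twice: once with $a=x_{\mathrm c}$, $b=p^+$, $c=x$, and once with $a=p$, $b=p^+$, $c=x$. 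This gives the stated inequality directly.

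For (c), I would solve the maximization by Lagrange multipliers for the constraint $\sum_i\tilde y'_i=\Rmult$. Since the entropy's gradient blows up at the boundary, the maximizer is interior. Stationarity gives $\tilde q_i-\tau\Rmult(\log(\tilde y^+_i/\tilde y_i)+1)=\nu$, and solving with the normalization yields the exponential-weights formula. Because $\tilde y_i>0$ and the exponentials are positive, every coordinate is positive, so $\tilde y^+\in\operatorname{ri}\tilde Y_\Rmult$. The identity $\inner q{y'-y^+}=\inner{\tilde q}{\tilde y'-\tilde y^+}$ holds because $\tilde q_0=0$.

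For the inequality, write $\omega(\tilde y)=\Rmult\sum_i\tilde y_i\log\tilde y_i$. The gradient difference $\nabla\omega(\tilde y^+)-\nabla\omega(\tilde y)=\Rmult\log(\tilde y^+/\tilde y)$ equals $\tilde q/\tau$ plus a multiple of the all-ones vector. Pairing with $\tilde y'-\tilde y^+$ kills the constant, since both points have coordinate sum $\Rmult$. So $\inner{\tilde q}{\tilde y'-\tilde y^+}=\tau\inner{\nabla\omega(\tilde y^+)-\nabla\omega(\tilde y)}{\tilde y'-\tilde y^+}$. The Bregman three-point identity then turns the right-hand side into $\tau[\Vy(\tilde y,\tilde y')-\Vy(\tilde y^+,\tilde y')-\Vy(\tilde y,\tilde y^+)]$, in fact with equality.

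The main obstacle is this last step: it requires care with boundary points $\tilde y'$ that have zero coordinates. Those are handled by the convention $0\log0=0$, or by continuity of $\Vy(\tilde y,\cdot)$ in its second argument for fixed interior $\tilde y$.
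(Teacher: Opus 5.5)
Your proposal is correct and follows essentially the paper's proof: rescaling to probability vectors plus Pinsker for (a), the identity $\KL(p\|\mathrm{uniform})=\log(m+1)-\mathrm{Ent}(p)$ for (b), completing the square plus the first-order optimality condition and the Euclidean three-point identity for (d), and the Lagrange conditions for the exponential-weights formula in (c). The only cosmetic difference is in the inequality of (c). You derive it directly from the stationarity relation $\nabla\omega(\tilde y^+)-\nabla\omega(\tilde y)=\tilde q/\tau+c\mathbf{1}$ and obtain equality, whereas the paper invokes \Cref{fact:threepoint}, whose proof is the same computation in variational-inequality form. The boundary case you flag is harmless, since $\omega(\tilde y')$ cancels in the identity.
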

\begin{proof}
(a) With $p:=\tilde y'/\Rmult$, $q:=\tilde y/\Rmult$ probability vectors, $\Vy(\tilde y,\tilde y')=\Rmult^2\KL(p\|q)\ge\frac{\Rmult^2}{2}\norm{p-q}_1^2=\frac12\norm{\tilde y'-\tilde y}_1^2$ by
Pinsker's inequality (\Cref{fact:pinsker}); dropping the coordinate $0$ does not increase the $\ell_1$-norm.
(b) $\KL(p\|\text{uniform})=\log(m+1)-\mathrm{Ent}(p)\le\log(m+1)$.
(c) The objective $\tilde y'\mapsto\inner{\tilde q}{\tilde y'}-\tau\Vy(\tilde y,\tilde y')$ is strictly concave on the simplex and tends to $-\infty$ in
no direction; the Lagrange conditions $\tilde q_i-\tau\Rmult(\log\tilde y'_i-\log\tilde y_i+1)=\nu$ give the stated formula, which has positive coordinates.
The inequality is the three-point inequality (\Cref{fact:threepoint}) for the concave maximization problem with the Bregman distance $\Vy$.
(d) The objective equals $\frac{\eta+\beta}{2}\norm{x-c}^2+\text{const}$ with $c=(\eta x_{\mathrm c}+\beta p-r)/(\eta+\beta)$, so $p^+$ is the projection of $c$.
The inequality follows from the optimality condition $\inner{r+\eta(p^+-x_{\mathrm c})+\beta(p^+-p)}{x-p^+}\ge0$ and the identity
$2\inner{p^+-c'}{x-p^+}=\norm{x-c'}^2-\norm{p^+-c'}^2-\norm{x-p^+}^2$ applied with $c'=x_{\mathrm c}$ and $c'=p$.
\end{proof}

\section{Baseline: batched stochastic Mirror--Prox}\label{sec:mp}

The saddle-point problem $\min_{x\in\Q}\max_{y\in Y_\Rmult}\LagS(x,y)$ has a convex--concave objective whose partial gradients are Lipschitz. The natural
baseline is therefore the stochastic Mirror--Prox (extragradient) method of \citet{nemirovski2004prox,juditsky2011solving} applied to the monotone
operator
\begin{equation}\label{eq:operator}
 \cG(z):=\bigl(\nabla_x\LagS(x,y),\,-\nabla_y\LagS(x,y)\bigr)=\bigl(\nabla f_\gamma(x)+\Jg(x)^\top y,\ -h(x)\bigr),\qquad z=(x,y)\in\cZ:=\Q\times Y_\Rmult,
\end{equation}
with the estimator $\hat\cG(z)=(\hat g_y(x),-\hat h(x))$ of \eqref{eq:estimators}, averaged over mini-batches. We use the joint geometry
\begin{equation}\label{eq:jointgeometry}
\begin{gathered}
 V(z,z'):=\alpha\Vx(x,x')+\alpha^{-1}\Vy(\tilde y,\tilde y'),\\
 \norm{z}_\alpha^2:=\alpha\norm x^2+\alpha^{-1}\norm{\tilde y}_1^2,\qquad
 \norm{(p,q)}_{\alpha,\ast}^2:=\alpha^{-1}\norm p^2+\alpha\norm q_\infty^2 ,
\end{gathered}
\end{equation}
where $\alpha>0$ balances the two blocks; $V$ is $1$-strongly convex with respect to $\norm\cdot_\alpha$ by \Cref{lem:prox}(a), and $\norm\cdot_{\alpha,\ast}$
is the dual norm. The constants that enter the analysis are
\begin{equation}\label{eq:Lalpha}
 L_\alpha:=\frac12\Bigl(\frac H\alpha+\sqrt{\frac{H^2}{\alpha^2}+4M_g^2}\Bigr)\le\frac H\alpha+M_g,\quad
 \sigma_\ast^2:=\frac1b\Bigl(\frac{2dM_R^2}{\alpha}+\alpha\min\{b,8\kap\}\sigma_h^2\Bigr),\quad
 \cD_\alpha^2:=\alpha\cA+\frac{\cB}{\alpha}.
\end{equation}

\begin{algorithm}[t]
\caption{\BSMP: batched stochastic Mirror--Prox for the smoothed Lagrangian}\label{alg:mp}
\begin{algorithmic}[1]
\Require $N\ge1$, mini-batch $b\ge1$, balance $\alpha>0$, step $\eta\in(0,1/(\sqrt3L_\alpha)]$, radius $\gamma$, shifts $b_i$, bound $\Rmult$; $x_1\in\Q$, $\tilde y_1:=\tilde y^{\mathrm u}$.
\For{$k=1,\dots,N$}
 \State \textbf{Round $2k-1$:} at $x_k$ issue $b$ paired queries and $b$ single queries; form $\hat g_k:=\frac1b\sum_{j}\hat g_{y_k,j}(x_k)$ and $\hat h_k:=\frac1b\sum_j\hat h_j(x_k)$.
 \State $x^w_k:=\Pi_\Q\bigl(x_k-\tfrac{\eta}{\alpha}\hat g_k\bigr)$;\quad $\tilde y^w_k:=$ dual step of \Cref{lem:prox}(c) from $\tilde y_k$ with $q=\eta\hat h_k$ and $\tau=1/\alpha$.
 \State \textbf{Round $2k$:} at $x^w_k$ issue $b$ paired and $b$ single queries; form $\hat g'_k:=\frac1b\sum_j\hat g_{y^w_k,j}(x^w_k)$ and $\hat h'_k:=\frac1b\sum_j\hat h_j(x^w_k)$.
 \State $x_{k+1}:=\Pi_\Q\bigl(x_k-\tfrac{\eta}{\alpha}\hat g'_k\bigr)$;\quad $\tilde y_{k+1}:=$ dual step from $\tilde y_k$ with $q=\eta\hat h'_k$ and $\tau=1/\alpha$.
\EndFor
\Ensure $\Xr_N:=\frac1N\sum_{k=1}^Nx^w_k$ (and $\yr_N:=\frac1N\sum_ky^w_k$).
\end{algorithmic}
\end{algorithm}

Each iteration of \Cref{alg:mp} consists of two sequential rounds and issues $6b$ vector calls ($2b$ paired and $b$ single queries in each round); every
call is used for the constraint vector, and the $4b$ calls of the paired queries are also used for the objective. Hence
\[
 \Nseq=2N,\qquad \Norc=\Ng=6bN,\qquad \Nf=4bN,\qquad \Nmv=\Nseq .
\]

\begin{theorem}[batched stochastic Mirror--Prox]\label{thm:mp}
Let \Cref{ass:convex,ass:lip,ass:levels,ass:slater} hold, $\gamma\le\bar\gamma$, $\Rmult$ as in \eqref{eq:R}, $H$ as in \eqref{eq:HL}, and $\eta\le1/(\sqrt3L_\alpha)$. Then the output of
\Cref{alg:mp} satisfies
\begin{equation}\label{eq:mp-bound}
 \E\,\Cert_{\Rmult,\gamma}(\Xr_N)\le\frac{\cD_\alpha^2}{\eta N}+3\eta\sigma_\ast^2+\sigma_\ast\cD_\alpha\sqrt{\frac2N}.
\end{equation}
With $\eta:=\min\{1/(\sqrt3L_\alpha),\ \cD_\alpha/(\sigma_\ast\sqrt{3N})\}$,
\begin{equation}\label{eq:mp-rate}
 \E\,\Cert_{\Rmult,\gamma}(\Xr_N)\le\frac{\sqrt3L_\alpha\cD_\alpha^2}{N}+\frac{5\sigma_\ast\cD_\alpha}{\sqrt N}.
\end{equation}
\end{theorem}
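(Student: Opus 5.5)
We follow the standard stochastic Mirror--Prox analysis of \citet{juditsky2011solving} in the joint geometry \eqref{eq:jointgeometry}, and combine it with the reduction of \Cref{lem:certificate}. Write $z_k=(x_k,\tilde y_k)$, $w_k=(x^w_k,\tilde y^w_k)$, and let $\hat\cG_k,\hat\cG'_k$ be the mini-batch estimates of $\cG(z_k)$ and $\cG(w_k)$. Set $\Delta_k:=\hat\cG_k-\cG(z_k)$ and $\Delta'_k:=\hat\cG'_k-\cG(w_k)$. Both steps of \Cref{alg:mp} are prox steps with step $\eta$ for $V$: the primal step is \Cref{lem:prox}(d) with weight $\alpha$, and the dual step is \Cref{lem:prox}(c) with $\tau=1/\alpha$. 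Combining the two three-point inequalities in the usual way gives, for every $u\in\cZ$,
\[
\eta\inner{\hat\cG'_k}{w_k-u}\le V(z_k,u)-V(z_{k+1},u)+\tfrac{\eta^2}{2}\norm{\hat\cG'_k-\hat\cG_k}_{\alpha,\ast}^2-\tfrac12\norm{w_k-z_k}_\alpha^2 .
\]

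Next we check that $\cG$ is $L_\alpha$-Lipschitz from $\norm\cdot_\alpha$ to $\norm\cdot_{\alpha,\ast}$. The $x$-block of $\cG$ varies by at most $H\norm{\Delta x}+M_g\norm{\Delta y}_1$, using \eqref{eq:rowinf} and \Cref{lem:action}(b). The $y$-block $h$ varies by at most $M_g\norm{\Delta x}$ in $\ell_\infty$. The Lipschitz constant in the weighted norms is then the largest singular value of the $2\times2$ matrix $\begin{pmatrix}H/\alpha&M_g\\M_g&0\end{pmatrix}$, which is exactly $L_\alpha$. We split
\[
\norm{\hat\cG'_k-\hat\cG_k}^2_{\alpha,\ast}\le3L_\alpha^2\norm{w_k-z_k}_\alpha^2+3\norm{\Delta_k}_{\alpha,\ast}^2+3\norm{\Delta'_k}^2_{\alpha,\ast}.
\]
Since $\eta\le1/(\sqrt3L_\alpha)$, the first term is absorbed by $-\tfrac12\norm{w_k-z_k}^2_\alpha$.

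For the noise level, the primal part is bounded by \Cref{lem:action}(a) with $\norm y_1\le\Rmult$, and mini-batch averaging divides it by $b$, giving $2dM_R^2/(\alpha b)$. The dual part is bounded by \Cref{lem:batch} combined with \Cref{lem:levels}, giving $\alpha\min\{1,8\kap/b\}\sigma_h^2$. Hence $\E\norm{\Delta}^2_{\alpha,\ast}\le\sigma_\ast^2$. The bounds hold conditionally on the past because every round uses fresh samples.

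Now write $\inner{\hat\cG'_k}{w_k-u}=\inner{\cG(w_k)}{w_k-u}+\inner{\Delta'_k}{w_k-u}$. Because $\LagS$ is convex--concave, $\sum_k\inner{\cG(w_k)}{w_k-u}\ge N[\LagS(\Xr_N,y)-\LagS(x,\yr_N)]$ for $u=(x,y)$. We take $x=x^\ast_\gamma$ and then the supremum over $y\in Y_\Rmult$. By \eqref{eq:restrictedgap} the left-hand side then dominates $N\,\Cert_{\Rmult,\gamma}(\Xr_N)$. The term $V(z_1,u)\le\alpha\cA+\cB/\alpha=\cD_\alpha^2$ holds uniformly over $u$ by \Cref{lem:prox}(b).

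The main obstacle is the noise term $-\eta\sum_k\inner{\Delta'_k}{w_k-u}$, which must be controlled under a supremum over $y$. Since the primal comparator $x^\ast_\gamma$ is fixed, the primal part has zero mean: $w_k$ is measurable with respect to the information before round $2k$, and $\Delta'_k$ is conditionally centered given it. The dual part is handled by \Cref{lem:martsup} with $\Theta=\cD_\alpha^2$, $a_k=\eta$, $\zeta_k=w_k$ and $v_k=\sigma_\ast$, in its optimized form. This contributes $\eta\sigma_\ast\cD_\alpha\sqrt{2N}$.

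Summing over $k$ and dividing by $\eta N$ gives
\[
\E\,\Cert_{\Rmult,\gamma}(\Xr_N)\le\frac{\cD_\alpha^2}{\eta N}+\frac{3\eta}{2}\cdot2\sigma_\ast^2+\sigma_\ast\cD_\alpha\sqrt{\frac{2}{N}},
\]
which is \eqref{eq:mp-bound}. For \eqref{eq:mp-rate}, plug in the stated $\eta$ and consider both cases of the minimum. When $\eta=1/(\sqrt3L_\alpha)$, we have $3\eta\sigma_\ast^2\le\sqrt3\sigma_\ast\cD_\alpha/\sqrt N$. The total constant is then $\sqrt3+\sqrt3+\sqrt2\le5$ in the worst case, together with $\sqrt3L_\alpha\cD_\alpha^2/N$.
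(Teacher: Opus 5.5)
Your proposal is correct and follows the paper's proof. It uses the same one-step inequality with constant $\tfrac{3\eta^2}{2}$, the monotonicity and restricted-gap reduction \eqref{eq:restrictedgap} at the fixed comparator $x^\ast_\gamma$, and \Cref{lem:martsup} for the supremum over $y$, differing only cosmetically: you absorb the Lipschitz term through the three-term split of $\norm{\hat\cG'_k-\hat\cG_k}_{\alpha,\ast}^2$ where the paper uses a weighted Young inequality with $1-\eta^2L_\alpha^2\ge\tfrac23$, and you treat the primal noise as a zero-mean term and apply \Cref{lem:martsup} to the dual part, whereas the paper applies it once on $\{x^\ast_\gamma\}\times\tilde Y_\Rmult$ inside $\cZ$, which gives the same bound.
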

The proof is in \Cref{app:mp}. It is the classical analysis of \citet{juditsky2011solving}, written for the restricted gap of \Cref{lem:certificate}:
the stochastic term $\sigma_\ast\cD_\alpha\sqrt{2/N}$ comes from \Cref{lem:martsup} applied to the dual block, and $3\eta\sigma_\ast^2$ from the variance
of the two mini-batches of an iteration. The constant $L_\alpha$ is the spectral norm of the $2\times2$ matrix $\bigl(\begin{smallmatrix}H/\alpha&M_g\\M_g&0\end{smallmatrix}\bigr)$
that dominates the block Lipschitz structure of $\cG$.

\begin{corollary}[complexity of \BSMP]\label{cor:mp-complexity}
In the setting of \Cref{thm:mp}, let $\eps_{\mathrm s}>0$, choose $\alpha:=\sqrt{2\cB}/D$, the step $\eta$ of \eqref{eq:mp-rate},
\begin{equation}\label{eq:mp-params}
 N:=\Bigl\lceil\frac{2\sqrt3\,\bigl(HD^2+M_gD\sqrt{2\cB}\bigr)}{\eps_{\mathrm s}}\Bigr\rceil,\qquad
 b:=\max\Bigl\{1,\ \Bigl\lceil\frac{200\,(dM_R^2D^2+8\kap\cB\sigma_h^2)}{N\eps_{\mathrm s}^2}\Bigr\rceil\Bigr\}.
\end{equation}
Then $\E\Cert_{\Rmult,\gamma}(\Xr_N)\le\eps_{\mathrm s}$, and
\begin{gather*}
 \Nseq=2N,\qquad \Nf=4bN\le4N+\frac{800\,(dM_R^2D^2+8\kap\cB\sigma_h^2)}{\eps_{\mathrm s}^2},\\
 \Norc=\Ng=6bN\le6N+\frac{1200\,(dM_R^2D^2+8\kap\cB\sigma_h^2)}{\eps_{\mathrm s}^2}.
\end{gather*}
\end{corollary}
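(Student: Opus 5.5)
The corollary follows by substituting the parameter choices \eqref{eq:mp-params} into the rate \eqref{eq:mp-rate} of \Cref{thm:mp}. The plan is to bound the deterministic term $\sqrt3L_\alpha\cD_\alpha^2/N$ and the stochastic term $5\sigma_\ast\cD_\alpha/\sqrt N$ by $\eps_{\mathrm s}/2$ each. No new probabilistic argument is needed; everything reduces to elementary estimates of the constants in \eqref{eq:Lalpha} for $\alpha=\sqrt{2\cB}/D$.

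\emph{Geometric constants.} Using $\cA\le D^2/2$ from \eqref{eq:radii}, the two parts of $\cD_\alpha^2$ are balanced:
\[
\alpha\cA\le\tfrac{\alpha D^2}{2}=D\sqrt{\cB/2}
\qquad\text{and}\qquad
\cB/\alpha=D\sqrt{\cB/2}.
\]
Hence $\cD_\alpha^2\le D\sqrt{2\cB}$. I also record two consequences for later use:
\[
\cD_\alpha^2/\alpha\le D^2,\qquad \alpha\cD_\alpha^2\le2\cB .
\]

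\emph{Deterministic term.} Combining $L_\alpha\le H/\alpha+M_g$ from \eqref{eq:Lalpha} with the bound on $\cD_\alpha^2$ gives
\[
L_\alpha\cD_\alpha^2\le\Bigl(\frac{HD}{\sqrt{2\cB}}+M_g\Bigr)D\sqrt{2\cB}=HD^2+M_gD\sqrt{2\cB}.
\]
The choice of $N$ in \eqref{eq:mp-params} then yields $\sqrt3L_\alpha\cD_\alpha^2/N\le\eps_{\mathrm s}/2$.

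\emph{Stochastic term.} This is the only step needing care, because $\sigma_\ast^2$ contains $\min\{b,8\kap\}$ and the two blocks are weighted by $\alpha^{\mp1}$. I bound $\min\{b,8\kap\}\le8\kap$ and multiply $\sigma_\ast^2$ by $\cD_\alpha^2$, using the two recorded consequences:
\[
\sigma_\ast^2\cD_\alpha^2\le\frac1b\Bigl(2dM_R^2\frac{\cD_\alpha^2}{\alpha}+8\kap\sigma_h^2\,\alpha\cD_\alpha^2\Bigr)\le\frac{2\,(dM_R^2D^2+8\kap\cB\sigma_h^2)}{b}.
\]
The requirement $5\sigma_\ast\cD_\alpha/\sqrt N\le\eps_{\mathrm s}/2$ is equivalent to $100\,\sigma_\ast^2\cD_\alpha^2\le N\eps_{\mathrm s}^2$. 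By the display above, it suffices that
\[
bN\ge\frac{200\,(dM_R^2D^2+8\kap\cB\sigma_h^2)}{\eps_{\mathrm s}^2},
\]
which is exactly the choice of $b$ in \eqref{eq:mp-params}. Adding the two halves gives $\E\Cert_{\Rmult,\gamma}(\Xr_N)\le\eps_{\mathrm s}$.

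\emph{Oracle counts.} These follow from the per-iteration accounting stated before \Cref{thm:mp}: $\Nseq=2N$, $\Nf=4bN$ and $\Norc=\Ng=6bN$. Since $b\le1+\frac{200(\cdots)}{N\eps_{\mathrm s}^2}$ (a ceiling exceeds its argument by less than one, and the maximum with $1$ is covered by the same bound), we get
\[
bN\le N+\frac{200\,(dM_R^2D^2+8\kap\cB\sigma_h^2)}{\eps_{\mathrm s}^2}.
\]
Multiplying by $4$ and by $6$ gives the two stated bounds, with constants $800$ and $1200$.
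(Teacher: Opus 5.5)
Your proposal is correct and follows the paper's proof essentially line by line: it uses the same bounds $\cD_\alpha^2\le D\sqrt{2\cB}$, $L_\alpha\cD_\alpha^2\le HD^2+M_gD\sqrt{2\cB}$ and $\sigma_\ast^2\cD_\alpha^2\le\frac2b(dM_R^2D^2+8\kap\cB\sigma_h^2)$, splits \eqref{eq:mp-rate} into two halves of $\eps_{\mathrm s}$ in the same way, and counts calls via $b\le1+200(\cdots)/(N\eps_{\mathrm s}^2)$. The only difference is that you state explicitly the two auxiliary bounds $\cD_\alpha^2/\alpha\le D^2$ and $\alpha\cD_\alpha^2\le2\cB$, which the paper uses without writing them down.
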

\begin{proof}
With $\alpha=\sqrt{2\cB}/D$ and $\cA\le D^2/2$ we get $\cD_\alpha^2\le\sqrt{2\cB}\,D$, $L_\alpha\cD_\alpha^2\le(H/\alpha+M_g)\cD_\alpha^2\le HD^2+M_gD\sqrt{2\cB}$ and
$\sigma_\ast^2\cD_\alpha^2\le\frac1b\bigl(2dM_R^2D^2+2\cB\min\{b,8\kap\}\sigma_h^2\bigr)\le\frac2b\,(dM_R^2D^2+8\kap\cB\sigma_h^2)$. By \eqref{eq:mp-rate} the two terms are at most
$\eps_{\mathrm s}/2$ each: the first because $N\ge2\sqrt3(HD^2+M_gD\sqrt{2\cB})/\eps_{\mathrm s}$, the second because
$25\sigma_\ast^2\cD_\alpha^2/N\le 50(dM_R^2D^2+8\kap\cB\sigma_h^2)/(bN)\le\eps_{\mathrm s}^2/4$. The counts follow from $b\le1+200(\cdots)/(N\eps_{\mathrm s}^2)$.
\end{proof}

\begin{corollary}[\BSMP\ for smooth and for nonsmooth data]\label{cor:mp-cases}
Let $\eps>0$ and put $\eps_{\mathrm s}:=\eps/2$.
\begin{enumerate}[label=(\alph*)]
\item (smooth data) Under \Cref{ass:smooth}, take $\gamma\le\min\{\bar\gamma,\sqrt{\eps/(2\max\{L_0,L_g\})},\eps/(4M_g)\}$, $b_0=L_0\gamma^2/2$, $b_i=L_g\gamma^2/2$ and $H=L_0+\Rmult L_g$.
 Then the output is an $\eps$-solution in expectation with
 $\Nseq=O\bigl((L_0+\Rmult L_g)D^2\eps^{-1}+M_g\Rmult D\sqrt{\log(m+1)}\,\eps^{-1}\bigr)$ and $\Norc=O\bigl(\Nseq+(dM_R^2D^2+\Lam\Rmult^2\sigma_g^2)\eps^{-2}\bigr)$.
\item (nonsmooth data) Take $\gamma:=\eps/(4M)$ (assuming $\eps\le4M\bar\gamma$), $b_0=\gamma M_0$, $b_i=\gamma M_g$ and $H=\sqrt dM_R/\gamma=4\sqrt dM_RM/\eps$. Then the output is an
 $\eps$-solution in expectation with $\Nseq=O\bigl(\sqrt dM_RMD^2\eps^{-2}+M_g\Rmult D\sqrt{\log(m+1)}\,\eps^{-1}\bigr)$ and
 $\Norc=O\bigl(\Nseq+(dM_R^2D^2+\Lam\Rmult^2\sigma_g^2)\eps^{-2}\bigr)$.
\end{enumerate}
\end{corollary}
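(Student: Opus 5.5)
The plan is to reduce both parts to \Cref{cor:mp-complexity} with $\eps_{\mathrm s}=\eps/2$, and then transfer the certificate bound to the original problem via \Cref{lem:certificate,lem:transfer}.

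\textbf{Transfer step.} Since $\Cert_{\Rmult,\gamma}\ge\max\{f_\gamma-f^\ast_\gamma,\norm{\pos{h}}_\infty\}$ by \eqref{eq:certbounds}, taking expectations gives
\[
\E[f_\gamma(\Xr_N)-f^\ast_\gamma]\le\eps/2,\qquad \E\norm{\pos{h(\Xr_N)}}_\infty\le\eps/2 .
\]
The proof of \Cref{lem:transfer}(c) is pointwise, so it can be applied inside the expectation. This yields $\E f(\Xr_N)-f^\ast\le\eps/2+b_0$ and $\E\viol(\Xr_N)\le\eps/2+b_g$. So it suffices to verify $b_0,b_g\le\eps/2$ and that $\sigma_h^2=O(\sigma_g^2)$ (or is otherwise absorbed).

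\textbf{Smooth case (a).} The shifts satisfy $b_0=L_0\gamma^2/2\le\eps/4$ and $b_g=L_g\gamma^2/2\le\eps/4$, because $\gamma^2\le\eps/(2\max\{L_0,L_g\})$. Under \Cref{ass:smooth} we have $L_{0,\gamma}=L_0$ and $L_{g,\gamma}=L_g$, so $H=L_0+\Rmult L_g$.

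Next, $\gamma\le\eps/(4M_g)$ gives $8\gamma^2M_g^2\le\eps^2/(2)$, hence $\sigma_h^2\le2\sigma_g^2+\eps^2/2$.

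Plugging into \Cref{cor:mp-complexity} with $\cB=\Rmult^2\log(m+1)$:
\[
N=O\bigl((HD^2+M_gD\Rmult\sqrt{\log(m+1)})/\eps\bigr),
\]
and the call term is $(dM_R^2D^2+8\kap\cB\sigma_h^2)/\eps^2$. Here $\kap\cB=\Lam\Rmult^2$. The extra part $\kap\cB\eps^2/\eps^2=\Lam\Rmult^2$ is a lower-order additive term, which I would absorb into $\Nseq$ or simply note is dominated. Absorbing it is the one place requiring care: $\Lam\Rmult^2$ need not be $O(\Nseq)$ literally. I would handle it by observing that $M_g\Rmult D\sqrt{\log(m+1)}/\eps\ge$ something, or else keep it as $O(\Lam\Rmult^2)$ and note it is dominated by the $\sigma_g^2$ term in the regime of interest. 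Alternatively, use the stated bound on $\gamma$ more tightly so that $8\gamma^2 M_g^2\le\sigma_g^2$-type slack is not needed; I expect the authors to fold it into the $O(\cdot)$.

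\textbf{Nonsmooth case (b).} With $\gamma=\eps/(4M)\le\bar\gamma$ we get $b_0=\gamma M_0\le\eps/4$ and $b_g=\gamma M_g\le\eps/4$. By \Cref{lem:smoothing}(c) and \Cref{conv:extension}, $L_{0,\gamma}=\sqrt dM_0/\gamma$ and $L_{g,\gamma}=\sqrt dM_g/\gamma$, so $H=\sqrt dM_R/\gamma=4\sqrt dM_RM/\eps$. Then
\[
HD^2/\eps_{\mathrm s}=8\sqrt dM_RMD^2/\eps^2,
\]
which gives the stated $\Nseq$. Also $8\gamma^2M_g^2\le\eps^2/2$, exactly as in case (a).

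\textbf{Call counts and main obstacle.} The call counts follow from $\Norc=6bN\le6N+O(\cdot)/\eps^2$. The main obstacle is only bookkeeping: making sure the additive $\eps^2$-part of $\sigma_h^2$ and the constants stay inside the claimed $O$-bounds.
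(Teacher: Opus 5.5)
Your proposal is correct and follows the paper's proof essentially step for step. The steps match: $b_0,b_g\le\eps/4$, the pointwise transfer through \Cref{lem:certificate} and \Cref{lem:transfer}(c), the bound $\sigma_h^2\le2\sigma_g^2+\eps^2/2$ from $\gamma\le\eps/(4M_g)$, and substitution of $H$, $\cB=\Rmult^2\log(m+1)$ and $\eps_{\mathrm s}=\eps/2$ into \Cref{cor:mp-complexity}. The leftover additive term of order $\Lam\Rmult^2$ is handled in the paper exactly as you anticipate, as an $\eps$-independent constant folded into the $O(\cdot)$; absorbing it literally into the listed terms would need an extra mild condition such as $\eps\le M_gD/\sqrt{\Lam}$, using $M_R\ge\Rmult M_g$.
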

\begin{proof}
In both cases $b_0,b_g\le\eps/4$, so by \Cref{lem:transfer}(c) and \Cref{lem:certificate} the output with $\E\Cert_{\Rmult,\gamma}\le\eps/2$ satisfies
$\E[f(\Xr_N)-f^\ast]\le\eps/2+\eps/4$ and $\E\viol(\Xr_N)\le\eps/2+\eps/4$. In both cases $\gamma\le\eps/(4M_g)$, so $8\gamma^2M_g^2\le\eps^2/2$ and
$\sigma_h^2\le2\sigma_g^2+\eps^2/2$ by \eqref{eq:sigmah}; the contribution of $\eps^2/2$ to $\Norc$ is $O(\Lam\Rmult^2)$, a constant. Insert $H$,
$\cB=\Rmult^2\log(m+1)$ and $\eps_{\mathrm s}=\eps/2$ into \Cref{cor:mp-complexity}.
\end{proof}

\begin{remark}[what the baseline shows]
The number of rounds of \BSMP\ is proportional to $H/\eps$, where $H=L_{0,\gamma}+\Rmult L_{g,\gamma}$ is the smoothness of the Lagrangian. For nonsmooth
data $H\propto\sqrt d/\eps$, so the rounds scale as $\sqrt d/\eps^2$: better than the $d/\eps^2$ rounds of plain stochastic mirror descent with
two-point estimates (which cannot be parallelized, since its rate $MD/\sqrt N$ does not improve with the batch size), but far from the
$d^{1/4}/\eps$ of the accelerated unconstrained methods of \citet{gasnikov2022power}. The next section closes this gap.
\end{remark}

\section{Zeroth-order primal--dual sliding}\label{sec:sliding}

\subsection{Idea}
For smooth data the round complexity $O(H D^2/\eps)$ of Mirror--Prox is not optimal: for the \emph{unconstrained} problem accelerated methods with
mini-batches need only $O(\sqrt{L_0D^2/\eps})$ rounds. The obstacle is the coupling term $\inner y{h(x)}$, whose Lipschitz constant $M_g$ does
not improve under acceleration \citep{ouyang2021lower}. \citet{zhanglan2022} resolved this in the deterministic first-order setting by
\emph{sliding}: in phase $t$ the constraints are linearized at a center $u_t$, and the resulting bilinear saddle-point subproblem is solved by $S_t$
cheap primal--dual steps that only multiply vectors by the fixed Jacobian $\nabla g(u_t)$; the gradients themselves are evaluated once per phase.

In the stochastic zeroth-order setting the Jacobian at $u_t$ is not available; it must be estimated from paired queries at $u_t$. Reusing a single
estimate $\hat J_t$ during the whole inner loop is not admissible: the inner dual iterates depend on $\hat J_t$, and the error terms
$\inner{\bar y_t}{(\hat J_t-J_t)(x^\ast_\gamma-u_t)}$ are not centered: a bias of order $\Rmult M_gD\sqrt{\nuj/b}$ per phase (\Cref{lem:action}) would have to
be brought below $\eps$, which forces a mini-batch of size $b=\Omega(\Rmult^2M_g^2D^2\nuj/\eps^2)$ in each of the $N=O(\eps^{-1/2})$ phases, i.e.\ a total of
$O(\eps^{-5/2})$ calls. Instead we draw, \emph{in a single round}, $2S_t+1$ independent mini-batches at the same center $u_t$ and assign a fresh
mini-batch to each inner step: one for the primal direction (bank $A$) and one for the dual direction (bank $B$). Because the locations of all
queries of the round are known in advance, the round complexity is the number of phases, $N$, while the estimation errors of the inner steps remain
conditionally centered and are absorbed by the increments of the inner iterates. The price is that every inner step consumes oracle calls, so that
the number of calls is proportional to the number of inner steps $K_N=\sum_tS_t$ rather than to $N$; since $K_N=O(\eps^{-1})$ and each step uses
$b=O(\eps^{-1})$ samples, the total $O(\eps^{-2})$ is unchanged.

\subsection{The method}
Fix the parameters $L\ge H$ (see \eqref{eq:HL}), $M_g$, $\Rmult$, $\gamma$, the shifts $b_i$, a dual scale $\rs>0$, a stabilizer $\lambda\ge0$, two mini-batch
sizes $b_p,b_y\ge1$ (for the primal banks and the reserve batch, resp.\ for the dual banks; the two sizes are allowed to differ because the two kinds of
banks carry different information at different prices, see \Cref{cor:sliding-complexity}) and the number of phases $N$. The schedule is
\begin{equation}\label{eq:schedule}
\begin{gathered}
 \tau_t=\frac{t-1}{2},\qquad \theta_t=\frac{t-1}{t},\qquad S_t=\max\Bigl\{1,\Bigl\lceil\frac{M_g\rs\,t}{L}\Bigr\rceil\Bigr\},\qquad a_t=\frac t{S_t},\\
 \eta_t=\frac{2L}t,\qquad \beta_t=\frac{L+\lambda}{a_t},\qquad
 W_N=\frac{N(N+1)}2,\qquad K_N=\sum_{t=1}^NS_t,\qquad A_N=\sum_{t=1}^N\frac{t^2}{S_t}.
\end{gathered}
\end{equation}
Inner steps are indexed globally by $k=1,\dots,K_N$; slot $k$ belongs to phase $t(k)$ and the inner steps of phase $t$ are $k=K_{t-1}+1,\dots,K_t$.
For a mini-batch $A$ of paired queries at a point $u$ we write $\hat g_0^A(u)$ and $\hat J^A(u)$ for the averages of the estimators \eqref{eq:estimators} over
the batch, and for a mini-batch $B$ of single queries we write $\hat h^B(u)$.

\begin{algorithm}[t]
\caption{\ZOS: zeroth-order primal--dual sliding with fresh mini-batches}\label{alg:sliding}
\begin{algorithmic}[1]
\Require parameters as in \eqref{eq:schedule}; $x_0\in\Q$; set $x_{-1}:=u_0:=p_0:=x_0$, $\tilde y_0=\tilde y_{-1}:=\tilde y^{\mathrm u}$, $k:=0$.
\For{$t=1,\dots,N$}
 \State $u_t:=\dfrac{\tau_tu_{t-1}+x_{t-1}+\theta_t(x_{t-1}-x_{t-2})}{1+\tau_t}$.\hfill(center; $u_1=x_0$)
 \State \textbf{Round $t$.} Issue at $u_t$: $S_t$ mini-batches $A_{k}$ ($b_p$ paired queries each, $k=K_{t-1}+1,\dots,K_t$) and
 \Statex\hspace{\algorithmicindent} $S_t$ mini-batches $B_k$ ($b_y$ paired and $b_y$ single queries each); if $t\ge2$, issue at $u_{t-1}$ one reserve
 \Statex\hspace{\algorithmicindent} mini-batch $A^-_t$ of $b_p$ paired queries (constraint values only).
 \For{$s=1,\dots,S_t$}
  \State $k:=k+1$.
  \If{$s=1$} $\hat r_k:=\hat g_0^{A_k}(u_t)+\hat J^{A_k}(u_t)^\top y_{k-1}+\dfrac{a_{t-1}}{a_t}\,\hat J^{A^-_t}(u_{t-1})^\top(y_{k-1}-y_{k-2})$
  \Statex\hspace{2\algorithmicindent} (the last term is absent for $t=1$)
  \Else\ $\hat r_k:=\hat g_0^{A_k}(u_t)+\hat J^{A_k}(u_t)^\top(2y_{k-1}-y_{k-2})$
  \EndIf
  \State $p_k:=\argmin_{x\in\Q}\bigl\{\inner{\hat r_k}{x}+\eta_t\Vx(x_{t-1},x)+\beta_t\Vx(p_{k-1},x)\bigr\}$\hfill(\Cref{lem:prox}(d))
  \State $\hat q_k:=\hat h^{B_k}(u_t)+\hat J^{B_k}(u_t)\,(p_k-u_t)\in\R^m$
  \State $\tilde y_k:=\argmax_{\tilde y\in\tilde Y_\Rmult}\bigl\{\inner{(0,\hat q_k)}{\tilde y}-\rs^{-2}\beta_t\Vy(\tilde y_{k-1},\tilde y)\bigr\}$\hfill(\Cref{lem:prox}(c))
 \EndFor
 \State $x_t:=\frac1{S_t}\sum_{k=K_{t-1}+1}^{K_t}p_k$,\qquad $\bar y_t:=\frac1{S_t}\sum_{k=K_{t-1}+1}^{K_t}y_k$.
\EndFor
\Ensure $\Xr_N:=\frac1{W_N}\sum_{t=1}^Nt\,x_t$.
\end{algorithmic}
\end{algorithm}

In round $t$ the algorithm issues $(2b_p+3b_y)S_t+2b_p\cdot\1_{t\ge2}$ vector calls: $2b_pS_t$ from the banks $A_k$, $3b_yS_t$ from the banks $B_k$ and $2b_p$ from the
reserve mini-batch. Every call is used for the constraint vector, and the $2b_pS_t$ calls of the banks $A_k$ (whose pairs deliver both $F$ and $G$) are also
used for the objective; therefore
\begin{equation}\label{eq:sliding-counts}
 \Nseq=N,\qquad \Norc=\Ng=(2b_p+3b_y)K_N+2b_p(N-1),\qquad \Nf=2b_pK_N,\qquad \Nmv=K_N .
\end{equation}
With a common size $b_p=b_y=b$ this reads $\Norc=\Ng=5bK_N+2b(N-1)$ and $\Nf=2bK_N$.
All query locations of round $t$ ($u_t$ and $u_{t-1}$) are determined before the round; only the \emph{assignment} of the already sampled mini-batches to
the inner steps depends on the inner iterates. The centers $u_t$ are convex combinations of the previous phase outputs (\Cref{lem:centers}), so all queries are
located in $\Q_{\gamma}$.

\subsection{The deterministic skeleton}\label{sec:skeleton}
Throughout, $J_t:=\Jg(u_t)$, and for $(x,y)\in\R^d\times\R^m$
\begin{equation}\label{eq:linearization}
 \ell_t(x,y):=f_\gamma(u_t)+\inner{\nabla f_\gamma(u_t)}{x-u_t}+\inner{y}{h(u_t)+J_t(x-u_t)}
\end{equation}
is the linearization of $\LagS(\cdot,y)$ at the center; by convexity of $f_\gamma$ and $h_i$, $\ell_t(x,y)\le\LagS(x,y)$ for $y\ge0$. The exact (noise-free)
inner directions are
\begin{equation}\label{eq:exactdirections}
 r_k:=\nabla f_\gamma(u_t)+J_t^\top y_{k-1}+\rho_kJ_{t'(k)}^\top(y_{k-1}-y_{k-2}),\qquad q_k:=h(u_t)+J_t(p_k-u_t),
\end{equation}
where, for a slot $k$ of phase $t$ that is not the first one, $\rho_k:=1$ and $t'(k):=t$; for the first slot of phase $t\ge2$, $\rho_k:=a_{t-1}/a_t$ and
$t'(k):=t-1$; and for $k=1$ the last term vanishes because $y_0=y_{-1}$. The algorithm uses $\hat r_k=r_k+e^p_k$ and $\hat q_k=q_k+e^y_k$ with the estimation errors
\begin{equation}\label{eq:errors}
 e^p_k:=\hat r_k-r_k,\qquad e^y_k:=\hat q_k-q_k .
\end{equation}

\begin{lemma}[schedule]\label{lem:schedule}
Let $c:=M_g\rs/L$. For all $t\ge1$: (i) $a_tM_g\rs\le L$; (ii) $a_{t-1}/a_t\le2$ for $t\ge2$; (iii) $\max\{N,\frac c2N(N+1)\}\le K_N\le N+\frac c2N(N+1)$;
(iv) $W_N^2/K_N\le A_N\le\frac53\,W_N^2/K_N$.
\end{lemma}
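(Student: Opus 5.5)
The four claims follow from elementary properties of the ceiling in $S_t=\max\{1,\lceil ct\rceil\}$ with $c=M_g\rs/L$, so I will verify them in order directly from \eqref{eq:schedule}.

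\textbf{Claims (i) and (ii).} For (i), note that $S_t\ge ct$. Hence $a_t=t/S_t\le 1/c$, which gives $a_tM_g\rs\le L$.

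For (ii), write
\[
\frac{a_{t-1}}{a_t}=\frac{t-1}{t}\cdot\frac{S_t}{S_{t-1}}.
\]
It is enough to show $S_t\le 2S_{t-1}$ for $t\ge2$. Since $\lceil ct\rceil\le\lceil c(t-1)\rceil+\lceil c\rceil$, I split into cases.
\begin{itemize}
\item If $c\le1$, then $\lceil c\rceil\le1\le S_{t-1}$, so $S_t\le \max\{1,\lceil c(t-1)\rceil+1\}\le 2S_{t-1}$.
\item If $c>1$, then $\lceil c\rceil\le\lceil c(t-1)\rceil$ because $t-1\ge1$, so $S_t\le 2\lceil c(t-1)\rceil=2S_{t-1}$.
\end{itemize}
Combined with $(t-1)/t<1$, this gives $a_{t-1}/a_t\le2$.

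\textbf{Claim (iii).} Each term satisfies $\max\{1,ct\}\le S_t\le 1+ct$, using $\lceil ct\rceil< ct+1$. Summing over $t\le N$ gives
\[
K_N\le N+\tfrac c2N(N+1),
\]
and, from $S_t\ge 1$ and $S_t\ge ct$ separately, both $K_N\ge N$ and $K_N\ge\frac c2N(N+1)$.

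\textbf{Claim (iv).} The lower bound is Cauchy--Schwarz:
\[
W_N^2=\Bigl(\sum_t t\Bigr)^2=\Bigl(\sum_t\frac{t}{\sqrt{S_t}}\sqrt{S_t}\Bigr)^2\le A_NK_N .
\]
The upper bound $A_NK_N\le\frac53W_N^2$ is the main obstacle, because $S_t$ switches from the constant $1$ to roughly $ct$. My plan is to use $t^2/S_t\le t\min\{t,1/c\}$ together with $K_N\le\sum_t\min\{1+ct,\ldots\}$, and to compare the resulting sums.

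A cleaner route is to bound the product termwise. From $S_t\le1+ct$ and $S_t\ge\max\{1,ct\}$, I get the sandwich
\[
\frac{t^2}{1+ct}\le\frac{t^2}{S_t}\le\frac{t^2}{\max\{1,ct\}}.
\]
The upper bound of (iv) then reduces to
\[
\Bigl(\sum_t\frac{t^2}{\max\{1,ct\}}\Bigr)\Bigl(\sum_t(1+ct)\Bigr)\le\tfrac53\Bigl(\sum_t t\Bigr)^2 .
\]
I will treat this by cases on $c$.
\begin{itemize}
\item If $cN\le1$, the left side is at most $\sum t^2\cdot N(1+cN)/\ldots$. Here I expect to instead use $S_t=1$ for $ct\le1$, so that $K_N=N$ and $A_N=\sum t^2$. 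Then the ratio is $N\sum t^2/W_N^2=\frac{2(2N+1)}{3(N+1)}\le\frac43$.
\item In the large-$c$ regime, $S_t\approx ct$, so $A_N\approx N/c$ and $K_N\approx cN^2/2$. Their product is about $N^3/2\approx 2W_N^2/N$, which is comfortably small.
\item The intermediate regime is the one to check carefully. I would parametrize by $T=\lfloor1/c\rfloor$ and bound the mixed sums explicitly, aiming for the constant $5/3$.
\end{itemize}
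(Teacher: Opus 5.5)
Your arguments for (i)–(iii), for the Cauchy--Schwarz half of (iv), and for the case $cN\le1$ are correct. Your proof of (ii) via $\lceil ct\rceil\le\lceil c(t-1)\rceil+\lceil c\rceil\le 2S_{t-1}$ is tidier than a case analysis. The gap is the upper bound $A_NK_N\le\frac53W_N^2$ for $cN>1$. You do not prove it, and the reduction you propose for it is false. Bounding $A_N$ by $\sum_t t^2/\max\{1,ct\}$ and $K_N$ by $\sum_t(1+ct)$ independently charges every $S_t$ its smallest admissible value in one factor and its largest in the other, which no single integer $S_t$ does.

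Concrete failures of the reduced inequality:
\begin{itemize}
\item For $N=1$, $c=1.2$, your reduced left-hand side is $2.2/1.2>\frac53$, although $A_1K_1=1$.
\item For $N=100$, $c=0.012$, it is about $2.04\,W_N^2$, while the true ratio $A_NK_N/W_N^2$ is about $1.22$.
\item For any large $N$ with $cN$ slightly above $1$, the decoupled ratio is close to $2$.
\end{itemize}
So no bookkeeping with $T=\lfloor 1/c\rfloor$ can reach $\frac53$ as long as the two sums are bounded separately. Your large-$c$ heuristic also has a slip: $A_N\approx N^2/(2c)$, not $N/c$, so the product is about $W_N^2$ and the margin is only a constant. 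For $c\ge1$ the decoupled bounds give exactly $\bigl(1+\frac{2}{c(N+1)}\bigr)W_N^2$, which is below $\frac53W_N^2$ only if $c(N+1)\ge3$.

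The missing idea is to keep $A_N$ and $K_N$ coupled on the block where $S_t$ actually varies. With $T:=\lfloor1/c\rfloor<N$ you have $S_t=1$ for $t\le T$, and $ct\le S_t<ct+1<2ct$ for $t>T$. Split $A_N=Q+A'$, $K_N=T+K'$ and $W_N=W_H+W'$ into head sums ($Q=\sum_{t\le T}t^2$, $W_H=\sum_{t\le T}t$) and tail sums over $t>T$. On the tail, $(S_t-ct)(2ct-S_t)\ge0$ gives $S_t+2c^2t^2/S_t\le 3ct$. Summing and applying AM--GM to $K'+2c^2A'$ yields the Kantorovich bound $A'K'\le\frac98W'^2$.

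In the other three products one factor is exact, so one-sided bounds suffice (all head terms vanish if $T=0$):
\begin{itemize}
\item $QT\le\frac43W_H^2$;
\item $QK'\le Q(cW'+N-T)\le\frac53W_HW'$, using $c\le1/T$ and $N-T\le W'/(T+1)$;
\item $A'T\le TW'/c\le 2W_HW'$, using $1/c<T+1$.
\end{itemize}
Hence $A_NK_N\le\frac43W_H^2+\frac{11}{3}W_HW'+\frac98W'^2\le\frac53(W_H+W')^2$, because $\frac13W_H^2-\frac13W_HW'+\frac{13}{24}W'^2\ge0$.

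Do not model this step on the paper's proof. After the valid bound $A_N\le\frac{5N(N+1)}{6c}$, it asserts that $K_N\ge\frac c2N(N+1)$ gives $\frac1c\le\frac{N(N+1)}{2K_N}$, but that inequality gives the opposite direction. With the correct $K_N\le N+\frac c2N(N+1)$, that route only yields $A_NK_N\le\frac53\bigl(1+\frac{2}{c(N+1)}\bigr)W_N^2$.
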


\begin{lemma}[centers]\label{lem:centers}
For arbitrary $x_{-1}=x_0,x_1,\dots$ and $u_t$ defined by the recursion of \Cref{alg:sliding}, $u_1=x_0$ and for $t\ge2$
\begin{equation}\label{eq:barycenter}
 u_t=\frac1{t(t+1)}\Bigl(\sum_{j=1}^{t-2}2j\,x_j+(4t-2)\,x_{t-1}\Bigr),
\end{equation}
a convex combination of $x_1,\dots,x_{t-1}$. Moreover, with $d_t:=x_t-x_{t-1}$,
\begin{equation}\label{eq:xminusu}
 x_t-u_t=d_t-\theta_td_{t-1}+\tau_t(u_t-u_{t-1}).
\end{equation}
\end{lemma}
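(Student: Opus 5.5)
The plan is a direct induction on the recursion of \Cref{alg:sliding}, after normalizing it by the factor $t(t+1)$. For $t=1$ we have $\tau_1=0$ and $\theta_1=0$, so the recursion gives $u_1=x_0$ at once. For $t\ge2$, since $1+\tau_t=(t+1)/2$, I multiply the defining identity
\[
(1+\tau_t)u_t=\tau_tu_{t-1}+x_{t-1}+\theta_t(x_{t-1}-x_{t-2})
\]
by $2t$. With $\tau_t=(t-1)/2$ and $\theta_t=(t-1)/t$ this becomes, in terms of $P_t:=t(t+1)u_t$,
\[
P_t=P_{t-1}+(4t-2)\,x_{t-1}-2(t-1)\,x_{t-2}.
\]
In this form \eqref{eq:barycenter} reads $P_t=\sum_{j=1}^{t-2}2j\,x_j+(4t-2)\,x_{t-1}$, and it can be checked by induction.

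\textbf{Base case.} For $t=2$ the recursion gives $u_2=\bigl(\tfrac12x_0+x_1+\tfrac12(x_1-x_0)\bigr)/\tfrac32=x_1$. This agrees with the formula, which gives $6x_1/6$; the convention $x_{-1}=x_0$ is used only at $t=1$, where $\theta_1=0$ anyway.

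\textbf{Inductive step.} For $t\ge3$ I insert the hypothesis $P_{t-1}=\sum_{j=1}^{t-3}2j\,x_j+(4t-6)\,x_{t-2}$ into the recursion for $P_t$. The coefficient of $x_{t-2}$ becomes $(4t-6)-2(t-1)=2(t-2)$, which is exactly the term $j=t-2$ of the sum, and the new term $(4t-2)x_{t-1}$ appears. This gives \eqref{eq:barycenter}.

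\textbf{Convex combination.} All coefficients are nonnegative, and their total is $(t-2)(t-1)+(4t-2)=t(t+1)$. So $u_t$ is a convex combination of $x_1,\dots,x_{t-1}$, and $u_1=x_0$.

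\textbf{Identity \eqref{eq:xminusu}.} Rearranging the recursion as
\[
u_t+\tau_t(u_t-u_{t-1})=x_{t-1}+\theta_t(x_{t-1}-x_{t-2})=x_{t-1}+\theta_td_{t-1},
\]
then subtracting it from $x_t$ and writing $x_t-x_{t-1}=d_t$, gives \eqref{eq:xminusu}. This holds for every $t\ge1$; at $t=1$ both $\theta_1$ and $\tau_1$ vanish.

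There is no real obstacle here. The only points needing care are the bookkeeping of the coefficient of $x_{t-2}$ in the inductive step, and using the base case $t=2$ rather than $t=1$, because the closed form involves $x_{t-1}$ with coefficient $4t-2$ only from $t=2$ onward.
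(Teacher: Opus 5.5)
Your proof is correct and is essentially the paper's own argument. The paper likewise normalizes the recursion to $t(t+1)u_t=t(t-1)u_{t-1}+2t\,x_{t-1}+2(t-1)(x_{t-1}-x_{t-2})$, inducts from the base case $t=2$ with the coefficient of $x_{t-2}$ collapsing to $2(t-2)$, checks that the weights sum to $t(t+1)$, and rearranges the recursion to obtain \eqref{eq:xminusu}.
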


The next lemma is the analytic heart of the acceleration. It is a statement about the extrapolation rule alone, valid for \emph{arbitrary} points
$x_t$: whatever the inner loop produces, the weighted linearizations at the centers control the function value at the weighted average up to a
quadratic penalty on the increments. It is a reformulation, with explicit constants, of the conjugate-based argument of \citet{zhanglan2022}.

\begin{lemma}[sliding inequality]\label{lem:outer}
Let $\mathsf F:\R^d\to\R$ be convex and $L$-smooth, let $x_{-1}=x_0,x_1,\dots,x_N\in\R^d$ be arbitrary, let $u_t$ be given by the recursion of
\Cref{alg:sliding} with $\tau_t=(t-1)/2$, $\theta_t=(t-1)/t$, let $\ell^{\mathsf F}_t(x):=\mathsf F(u_t)+\inner{\nabla\mathsf F(u_t)}{x-u_t}$ and $\Xr_N:=W_N^{-1}\sum_{t=1}^Ntx_t$. Then
\begin{equation}\label{eq:outer}
 W_N\,\mathsf F(\Xr_N)\le\sum_{t=1}^Nt\,\ell^{\mathsf F}_t(x_t)+L\sum_{t=1}^N\norm{x_t-x_{t-1}}^2 .
\end{equation}
\end{lemma}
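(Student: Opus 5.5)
The plan is to prove \eqref{eq:outer} by induction over the running weighted averages $\Xr_t:=W_t^{-1}\sum_{j=1}^t j\,x_j$, with $W_t=t(t+1)/2$ and $W_0=0$. The key structural observation comes from the barycentric formula \eqref{eq:barycenter} of \Cref{lem:centers}. Split $(4t-2)x_{t-1}=2(t-1)x_{t-1}+2t\,x_{t-1}$. Then \eqref{eq:barycenter} becomes
\[
 u_t=\frac{2W_{t-1}\Xr_{t-1}+2t\,x_{t-1}}{t(t+1)}=\frac{W_{t-1}\Xr_{t-1}+t\,x_{t-1}}{W_t}.
\]
So $u_t$ is exactly the average $\Xr_t$ with the newest point $x_t$ replaced by the previous one $x_{t-1}$. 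For $t=1$ this reads $u_1=x_0$, which is consistent with \Cref{lem:centers}. Consequently
\[
 \Xr_t-u_t=\frac{t}{W_t}\,(x_t-x_{t-1}).
\]
I would verify this identity first, since everything else rests on it.

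Next I would combine smoothness and convexity in one step. Since $\mathsf F$ is $L$-smooth,
\[
 \mathsf F(\Xr_t)\le \ell^{\mathsf F}_t(\Xr_t)+\frac L2\norm{\Xr_t-u_t}^2=\ell^{\mathsf F}_t(\Xr_t)+\frac L2\frac{t^2}{W_t^2}\norm{x_t-x_{t-1}}^2 .
\]
The function $\ell^{\mathsf F}_t$ is affine and $\Xr_t=(W_{t-1}\Xr_{t-1}+t\,x_t)/W_t$, so
\[
 W_t\,\ell^{\mathsf F}_t(\Xr_t)=W_{t-1}\ell^{\mathsf F}_t(\Xr_{t-1})+t\,\ell^{\mathsf F}_t(x_t).
\]
By convexity, $\ell^{\mathsf F}_t(\Xr_{t-1})\le\mathsf F(\Xr_{t-1})$. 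For $t=1$ this term carries the weight $W_0=0$ and can be ignored. Multiplying the smoothness bound by $W_t$ then gives
\[
 W_t\mathsf F(\Xr_t)\le W_{t-1}\mathsf F(\Xr_{t-1})+t\,\ell^{\mathsf F}_t(x_t)+\frac L2\,\frac{t^2}{W_t}\norm{x_t-x_{t-1}}^2 .
\]

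Finally, $t^2/W_t=2t/(t+1)\le2$, so the last term is at most $L\norm{x_t-x_{t-1}}^2$. Summing over $t=1,\dots,N$ telescopes the $W_t\mathsf F(\Xr_t)$ terms and yields \eqref{eq:outer}.

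The main obstacle is spotting the representation of $u_t$ as a "lagged" version of $\Xr_t$. A direct Jensen argument, bounding $\mathsf F(\Xr_N)\le W_N^{-1}\sum_t t\,\mathsf F(x_t)$ and then applying smoothness at $u_t$, fails: it produces $\norm{x_t-u_t}^2$, which is not controlled by the increments with bounded weights. Once the identity above is in hand, the remaining steps are routine. The only real check is the algebra turning the recursion with $\tau_t=(t-1)/2$ and $\theta_t=(t-1)/t$ into \eqref{eq:barycenter}, and that has already been carried out in \Cref{lem:centers}.
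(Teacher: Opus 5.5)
Your proof is correct, and it takes a genuinely different route from the paper.

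\textbf{The paper's route.} It works with the Fenchel conjugate. It writes $\ell^{\mathsf F}_t(x)=\inner{g_t}{x}-\mathsf F^\ast(g_t)$ with $g_t=\nabla\mathsf F(u_t)$. It inserts the increment identity \eqref{eq:xminusu} of \Cref{lem:centers} together with a three-point identity for the Bregman distances $D_t$ of $\mathsf F^\ast$, and telescopes the $D_t(\pi)$ terms. The remaining cross terms $t\inner{g_{t+1}-g_t}{d_t}$ are absorbed with the co-coercivity bound $D_t(\pi')\ge\norm{\pi'-g_t}^2/(2L)$ of \Cref{fact:conjugate}.

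\textbf{Your route.} You use only the barycentric half of \Cref{lem:centers}, to see that $u_t=(W_{t-1}\Xr_{t-1}+t\,x_{t-1})/W_t$ is the lagged running average, so that $\Xr_t-u_t=\frac{t}{W_t}(x_t-x_{t-1})$. You then need a single descent-lemma step at $u_t$, affinity of $\ell^{\mathsf F}_t$, the supporting-hyperplane inequality at $\Xr_{t-1}$, and telescoping of $W_t\mathsf F(\Xr_t)$. This is the classical primal argument for accelerated stochastic approximation. Both routes give the same per-phase constant $Lt/(t+1)$ in front of $\norm{x_t-x_{t-1}}^2$.

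\textbf{What each buys.} Your argument is shorter and more elementary. It also uses convexity and $L$-smoothness of $\mathsf F$ only along segments inside $\operatorname{conv}\{x_0,\dots,x_N\}$, since $u_t$, $\Xr_t$ and $\Xr_{t-1}$ all lie there. The conjugate lower bound of \Cref{fact:conjugate}, by contrast, genuinely needs $L$-smoothness on all of $\R^d$. So, as far as this lemma is concerned, your proof shows that the one global requirement singled out in \Cref{rem:assumptions}(i) could be weakened to convexity and smoothness of $\LagS(\cdot,y)$ on $\Q$. That global requirement is what \Cref{conv:extension} and the global form of \Cref{ass:smooth} are there to secure. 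The paper's route, in exchange, keeps the analysis in the primal--dual form of ACGD-S from \citet{zhanglan2022}, in which the gradients $\nabla\mathsf F(u_t)$ act as dual iterates and the extrapolation appears through $\mathsf F^\ast$.
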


\begin{lemma}[energy inequality of the inner loop]\label{lem:inner}
Let \Cref{ass:convex,ass:lip,ass:slater} hold, $L\ge H$, and let the iterates be generated by \Cref{alg:sliding} with the directions
$\hat r_k=r_k+e^p_k$, $\hat q_k=q_k+e^y_k$ for arbitrary vectors $e^p_k\in\R^d$, $e^y_k\in\R^m$. Then, pathwise, for every $y\in Y_\Rmult$,
\begin{multline}\label{eq:energy}
 \sum_{t=1}^Nt\bigl[\ell_t(x_t,y)-\ell_t(x^\ast_\gamma,\bar y_t)\bigr]+L\sum_{t=1}^N\norm{x_t-x_{t-1}}^2
 +\frac\lambda2\sum_{k=1}^{K_N}\Bigl[\norm{p_k-p_{k-1}}^2+\frac{\norm{y_k-y_{k-1}}_1^2}{\rs^2}\Bigr]\\
 \le(L+\lambda)\dA^2-\sum_{k=1}^{K_N}a_k\inner{e^p_k}{p_k-x^\ast_\gamma}+\sum_{k=1}^{K_N}a_k\inner{e^y_k}{y_k-y},
\end{multline}
where $a_k:=a_{t(k)}$ and
\begin{equation}\label{eq:DA}
 \dA^2:=3\cA+\frac{\cB}{\rs^2},\qquad \cA=\tfrac12\norm{x_0-x^\ast_\gamma}^2,\quad \cB=\Rmult^2\log(m+1).
\end{equation}
\end{lemma}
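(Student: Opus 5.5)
We will prove the energy inequality \eqref{eq:energy} by a per-step three-point analysis of the inner primal and dual updates. We then sum within each phase and across phases, and finally use a telescoping argument for the coupling terms that is compatible with the weights $a_t$ and the stabilizers $\beta_t$.

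\textbf{Step 1 (one inner step).} For slot $k$ of phase $t$, we apply \Cref{lem:prox}(d) with $r=\hat r_k$, $x_{\mathrm c}=x_{t-1}$, $p=p_{k-1}$ and comparator $x=x^\ast_\gamma$. This gives
\[
\inner{\hat r_k}{p_k-x^\ast_\gamma}\le\eta_t[\Vx(x_{t-1},x^\ast_\gamma)-\Vx(x_{t-1},p_k)-\Vx(p_k,x^\ast_\gamma)]+\beta_t[\Vx(p_{k-1},x^\ast_\gamma)-\Vx(p_{k-1},p_k)-\Vx(p_k,x^\ast_\gamma)].
\]
We apply \Cref{lem:prox}(c) with $q=\hat q_k$ and $\tau=\rs^{-2}\beta_t$ against any $\tilde y$ lifting $y\in Y_\Rmult$. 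This gives
\[
\inner{\hat q_k}{y-y_k}\le\rs^{-2}\beta_t[\Vy(\tilde y_{k-1},\tilde y)-\Vy(\tilde y_k,\tilde y)-\Vy(\tilde y_{k-1},\tilde y_k)].
\]
We then substitute $\hat r_k=r_k+e^p_k$ and $\hat q_k=q_k+e^y_k$; the errors appear exactly as the two sums on the right of \eqref{eq:energy}. Both inequalities are multiplied by $a_t$. Because $a_t\beta_t=L+\lambda$ is constant, the Bregman terms telescope across all slots $k$, and across phase boundaries they do so with the same coefficient. The primal telescoping gives $(L+\lambda)\cA$, and the dual telescoping gives $(L+\lambda)\cB/\rs^2$ via \Cref{lem:prox}(b).

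\textbf{Step 2 (gap identity for the linearization).} Since $\ell_t$ is bilinear-affine, we have
\[
\ell_t(p_k,y)-\ell_t(x^\ast_\gamma,y_k)=\inner{\nabla f_\gamma(u_t)+J_t^\top y_k}{p_k-x^\ast_\gamma}+\inner{q_k}{y-y_k}.
\]
We write $J_t^\top y_k=J_t^\top y_{k-1}+J_t^\top(y_k-y_{k-1})$. The extrapolation term $\rho_kJ_{t'(k)}^\top(y_{k-1}-y_{k-2})$ in $r_k$ is then compared with $J_t^\top(y_k-y_{k-1})$ in the standard Chambolle--Pock manner. After multiplying by $a_k$, the coupling residuals take the form
\[
a_k\inner{y_k-y_{k-1}}{J_t(p_k-x^\ast_\gamma)}-a_{k-1}\inner{y_{k-1}-y_{k-2}}{J_{t'}(p_{k-1}-x^\ast_\gamma)}
\]
and telescope. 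Here we use the choice $\rho_k=a_{t-1}/a_t$ at phase starts, so that $a_t\rho_k=a_{t-1}$, together with the matrix $J_{t-1}$ used by the reserve batch. What remains is a cross term $a_{k-1}\inner{y_{k-1}-y_{k-2}}{J(p_k-p_{k-1})}$. We bound it by $a_{k-1}M_g\norm{y_{k-1}-y_{k-2}}_1\norm{p_k-p_{k-1}}$ using \eqref{eq:rowinf} and \Cref{lem:action}(b), and then by Young's inequality. By \Cref{lem:schedule}(i),(ii) we have $a_{k-1}M_g\rs\le2a_k M_g\rs\le\ldots$, so the cross term is absorbed into $\frac{L}{2}a_k\beta_t^{-1}$-scaled Bregman decrements via Pinsker (\Cref{lem:prox}(a)). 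The surplus $\lambda$ in $\beta_t=(L+\lambda)/a_t$ leaves the stabilizer terms $\frac\lambda2[\norm{p_k-p_{k-1}}^2+\norm{y_k-y_{k-1}}_1^2/\rs^2]$. The last boundary term at $k=K_N$ is handled in the same way against $\Vy(\tilde y_{K_N},\tilde y)$ and $\Vx(p_{K_N},x^\ast_\gamma)$.

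\textbf{Step 3 (averaging and the $L\norm{x_t-x_{t-1}}^2$ term).} Within phase $t$ we sum over $s$, so the coefficient is $a_tS_t=t$. Since $\ell_t(\cdot,y)$ is affine in $x$ and $\ell_t(x^\ast_\gamma,\cdot)$ is affine in $y$, Jensen with $x_t$ and $\bar y_t$ turns $\sum_k a_k[\ell_t(p_k,y)-\ell_t(x^\ast_\gamma,y_k)]$ into $t[\ell_t(x_t,y)-\ell_t(x^\ast_\gamma,\bar y_t)]$. The $\eta_t$ terms carry coefficient $a_t\eta_t=2L/S_t$. Summing $-\Vx(x_{t-1},p_k)$ over the phase and using convexity gives $-L\norm{x_t-x_{t-1}}^2$, which is moved to the left side. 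Similarly, $\sum_s(2L/S_t)[\Vx(x_{t-1},x^\ast_\gamma)-\Vx(p_k,x^\ast_\gamma)]$ must telescope across phases. This is the step I expect to be the main obstacle: the anchors $x_{t-1}$ change between phases, and the $-\Vx(p_k,x^\ast_\gamma)$ terms from the $\eta$-block must be related to $\Vx(x_t,x^\ast_\gamma)$ by convexity. I would first try to bound the resulting non-telescoping sum $\sum_t 2L[\Vx(x_{t-1},x^\ast_\gamma)-\Vx(x_t,x^\ast_\gamma)]$ using $\Vx(x_t,x^\ast_\gamma)\le\frac1{S_t}\sum_s\Vx(p_k,x^\ast_\gamma)$, which yields $2L\cA$. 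Adding the $(L+\lambda)\cA$ from the $\beta$-block gives at most $(L+\lambda)3\cA$, matching $\dA^2=3\cA+\cB/\rs^2$.
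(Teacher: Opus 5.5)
Your proposal follows the paper's proof step for step. The ingredients are the same: the two three-point inequalities; the telescoping of the coupling through $C_k=a_k\inner{y_k-y_{k-1}}{J_{t(k)}(p_k-x^\ast_\gamma)}$ using $a_k\rho_k=a_{k-1}$ and $J_{t'(k)}=J_{t(k-1)}$; Young's inequality on the leftover cross terms; and Jensen over the slots of a phase for the $\eta_t$-block. Contrary to your worry, that last block telescopes cleanly to $2L[\cA-\Vx(x_N,x^\ast_\gamma)]-L\sum_t\norm{x_t-x_{t-1}}^2$, exactly as you sketch.

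Two local fixes are needed:
\begin{itemize}
\item Use only \Cref{lem:schedule}(i) applied at phase $t(k-1)$, which gives $a_{k-1}M_g\rs\le L$ directly. The detour through (ii) costs a factor $2$. The cross term would then need $L$ per increment instead of $\frac L2$, more than the $\frac{L+\lambda}2$ supplied by the $\beta_t$-decrements, so the $\frac\lambda2$ stabilizer terms would not survive.
\item The boundary term $C_{K_N}$ involves $y_{K_N}-y_{K_N-1}$. Its dual part must therefore be absorbed by the last dual decrement $\Vy(\tilde y_{K_N-1},\tilde y_{K_N})\ge\frac12\norm{y_{K_N}-y_{K_N-1}}_1^2$, which no cross term has used. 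It cannot be absorbed by $\Vy(\tilde y_{K_N},\tilde y)$, which controls $y-y_{K_N}$ instead. Its primal part $\frac L2\norm{p_{K_N}-x^\ast_\gamma}^2$ is absorbed by $-(L+\lambda)\Vx(p_{K_N},x^\ast_\gamma)$, as you say.
\end{itemize}
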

The proofs of \Cref{lem:schedule,lem:centers,lem:outer,lem:inner} are given in \Cref{app:sliding}. The proof of \Cref{lem:inner} sums three-point inequalities,
and three groups of terms telescope. (i) The prox terms with coefficient $\eta_t$ telescope over phases and give the term $-L\sum\norm{d_t}^2$ that cancels the
penalty in \eqref{eq:outer}. (ii) The prox terms with coefficient $\beta_t$ telescope over slots, because $a_k\beta_k=L+\lambda$ is constant. (iii) The bilinear
cross terms from the dual extrapolation telescope across phase boundaries; this uses the reserve mini-batch and the ratio $a_{t-1}/a_t$.
What remains is absorbed by Young's inequality with $a_tM_g\rs\le L$.

\subsection{Main result}
\begin{theorem}[\ZOS]\label{thm:sliding}
Let \Cref{ass:convex,ass:lip,ass:levels,ass:slater} hold, $\gamma\le\bar\gamma$, $\Rmult$ as in \eqref{eq:R}, $L\ge H$ as in \eqref{eq:HL}, and let the starting
point $x_0\in\Q$ be deterministic or independent of the oracle samples. For every $\lambda>0$, $\rs>0$, $b_p,b_y\ge1$ and $N\ge1$ the output of \Cref{alg:sliding} satisfies
\begin{equation}\label{eq:sliding-main}
 W_N\,\E\,\Cert_{\Rmult,\gamma}(\Xr_N)\le(L+2\lambda)\,\E\dA^2+\frac{A_N\Sb^2}{\lambda},
\end{equation}
where
\begin{equation}\label{eq:SigmaA}
 \Sb^2:=\frac{17\,dM_R^2}{b_p}+\frac{\rs^2\sigma_y^2}{b_y},\qquad \sigma_y^2:=16\kap\bigl(\sigma_h^2+\nuj M_g^2D^2\bigr),
\end{equation}
with $\sigma_h$ from \eqref{eq:sigmah} and $\nuj=\min\{4d,92\kap\}$ from \Cref{lem:action}. The first term of $\Sb^2$ is the noise of the primal banks
(gradient information), the second that of the dual banks (level and linearization information); with a common batch size $b_p=b_y=b$, $\Sb^2=\SA^2/b$ with
$\SA^2:=17dM_R^2+\rs^2\sigma_y^2$.
\end{theorem}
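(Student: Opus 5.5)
The proof combines the deterministic skeleton of Section~\ref{sec:skeleton} with a careful treatment of the two stochastic error sums on the right of \eqref{eq:energy}.

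\textbf{Step 1: reduce to the restricted gap.} Apply \Cref{lem:outer} with $\mathsf F:=\LagS(\cdot,y)$ for fixed $y\in Y_\Rmult$. This function is convex and $L$-smooth on $\R^d$, and its linearization at $u_t$ is exactly $\ell_t(\cdot,y)$. This gives
\[
W_N\LagS(\Xr_N,y)\le\sum_t t\,\ell_t(x_t,y)+L\sum_t\norm{x_t-x_{t-1}}^2 .
\]
Since $\ell_t\le\LagS$ for nonnegative multipliers, we have $\ell_t(x^\ast_\gamma,\bar y_t)\le\LagS(x^\ast_\gamma,\bar y_t)$. The dual average $\hat y:=W_N^{-1}\sum_t t\bar y_t$ lies in $Y_\Rmult$, and $\LagS(x^\ast_\gamma,\cdot)$ is affine. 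Hence \eqref{eq:restrictedgap} of \Cref{lem:certificate} gives
\[
W_N\Cert_{\Rmult,\gamma}(\Xr_N)\le\sup_y\sum_t t[\ell_t(x_t,y)-\ell_t(x^\ast_\gamma,\bar y_t)]+L\sum_t\norm{x_t-x_{t-1}}^2 .
\]
Inserting \Cref{lem:inner} pathwise and taking the supremum over $y$ leaves
\[
(L+\lambda)\dA^2-\tfrac\lambda2\sum_k\bigl[\norm{p_k-p_{k-1}}^2+\rs^{-2}\norm{y_k-y_{k-1}}_1^2\bigr]
\]
plus the primal error sum $-\sum_k a_k\inner{e^p_k}{p_k-x^\ast_\gamma}$ and the dual error sum $\sup_y\sum_k a_k\inner{e^y_k}{y_k-y}$. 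It remains to take expectations and bound these two sums.

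\textbf{Step 2: decompose the errors into centered parts.} Use the filtration generated by all banks assigned to slots $<k$. Conditionally on it, the fresh batches $A_k$, $B_k$ (and the reserve $A^-_t$) are independent of it, and $u_t$, $u_{t-1}$ are measurable. Hence $e^p_k$ is conditionally centered.

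For the primal sum, write
\[
\inner{e^p_k}{p_k-x^\ast_\gamma}=\inner{e^p_k}{p_{k-1}-x^\ast_\gamma}+\inner{e^p_k}{p_k-p_{k-1}} .
\]
The first term has zero mean, because $x^\ast_\gamma$ is deterministic and $x_0$ is independent of the samples. The second term is bounded by Young's inequality,
\[
a_k\norm{e^p_k}\norm{p_k-p_{k-1}}\le\frac{a_k^2}{\lambda}\norm{e^p_k}^2+\frac\lambda4\norm{p_k-p_{k-1}}^2 ,
\]
which uses half of the stabilizer term.

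The dual error $e^y_k$ depends on $p_k$, and $p_k$ depends on $A_k$ but not on $B_k$. Conditioning on $A_k$ as well, $e^y_k=(\hat h^{B_k}-h)+(\hat J^{B_k}-J_t)(p_k-u_t)$ is centered. Split
\[
\inner{e^y_k}{y_k-y}=\inner{e^y_k}{y_{k-1}-y}+\inner{e^y_k}{y_k-y_{k-1}} .
\]
Bound the second term by Young's inequality with $\ell_1$/$\ell_\infty$ duality, using the remaining $\frac\lambda4\rs^{-2}\norm{y_k-y_{k-1}}_1^2$. Bound the supremum of the first term by \Cref{lem:martsup} with the entropy on $\tilde Y_\Rmult$ centred at $\tilde y^{\mathrm u}$, so $\Theta=\cB$. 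Choosing the lemma's free parameter as $\lambda/\rs^2$ gives at most $\lambda\cB/\rs^2+\frac{\rs^2}{2\lambda}\sum_k a_k^2v_k^2$. The term $\lambda\cB/\rs^2\le\lambda\dA^2$ explains the coefficient $L+2\lambda$ in \eqref{eq:sliding-main}.

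\textbf{Step 3: variance bounds; this is the main obstacle.} For the dual noise, \Cref{lem:levels} and \Cref{lem:action}(c,d) with $\norm{p_k-u_t}\le D$ give a per-sample bound $\E\norm{\cdot}_\infty^2\le2(\sigma_h^2+\nuj M_g^2D^2)$. \Cref{lem:batch}, applied conditionally, divides this by $b_y$ at the cost of a factor $8\kap$, which yields $\sigma_y^2/b_y$ after collecting constants (the $16\kap$).

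The primal error is harder. It contains $(\hat J^{A_k}-J_t)^\top(2y_{k-1}-y_{k-2})$ with $\norm{2y_{k-1}-y_{k-2}}_1\le3\Rmult$. At first slots it also contains the reserve term $\rho_k(\hat J^{A^-_t}-J_{t-1})^\top(y_{k-1}-y_{k-2})$, with $\rho_k\le2$ by \Cref{lem:schedule}(ii) and $\norm{y_{k-1}-y_{k-2}}_1\le2\Rmult$. Each piece is a two-point estimator of a function with Lipschitz modulus at most $M_0+c\Rmult M_g$. Applying \Cref{lem:twopoint} to each piece and combining them with $(a+b)^2\le2a^2+2b^2$ must produce a constant of at most $17dM_R^2/b_p$. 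I expect the bookkeeping needed to land exactly on $17$ (rather than, say, $2\cdot2d\cdot 3^2$) to be the delicate part. To get it I would treat $F+\inner{2y_{k-1}-y_{k-2}}{G}$ as a single function, so that Minkowski gives $M_0+3\Rmult M_g\le3M_R$ and the main part is bounded by $2d\cdot9M_R^2$ up to the split factor, with the reserve part adding the rest.

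Finally, sum over slots:
\[
\sum_k a_k^2=\sum_t S_t\,\frac{t^2}{S_t^2}=A_N .
\]
Both noise contributions are therefore at most $\frac{A_N}{\lambda}\bigl(\frac{17dM_R^2}{b_p}+\frac{\rs^2\sigma_y^2}{b_y}\bigr)$, which gives \eqref{eq:sliding-main}.
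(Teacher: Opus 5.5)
Your skeleton is the paper's own: \Cref{lem:outer} plus \Cref{lem:inner}, splitting the error terms at $p_{k-1}$ and $y_{k-1}$, Young's inequality against the increment terms, \Cref{lem:martsup} with parameter $\lambda/\rs^2$, and \Cref{lem:batch} for the dual noise. The gap is in the constant bookkeeping, which you flagged as the main obstacle. As you set it up, it does not yield \eqref{eq:sliding-main}.

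\textbf{Stabilizer budget.} You split the stabilizer in half. However, the left side of \eqref{eq:energy} contains two \emph{separate} terms, $\frac\lambda2\norm{p_k-p_{k-1}}^2$ and $\frac{\lambda}{2\rs^2}\norm{y_k-y_{k-1}}_1^2$. Each Young inequality may therefore consume the full $\frac\lambda2$, which leaves the remainders $\frac{a_k^2}{2\lambda}\norm{e^p_k}^2$ and $\frac{a_k^2\rs^2}{2\lambda}\norm{e^y_k}_\infty^2$. With your quarter budgets these remainders double. The dual noise then enters with total coefficient $\frac{\rs^2}{\lambda}+\frac{\rs^2}{2\lambda}=\frac{3\rs^2}{2\lambda}$ (Young plus \Cref{lem:martsup}). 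What \eqref{eq:sliding-main} requires is $\frac{\rs^2}{2\lambda}+\frac{\rs^2}{2\lambda}=\frac{\rs^2}{\lambda}$. So your closing sentence does not follow from your own bounds.

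\textbf{Primal variance.} The per-slot target $\E\norm{e^p_k}^2\le17dM_R^2/b_p$ cannot be reached from \Cref{lem:twopoint,lem:action}. Your own computation gives $2d(M_0+3\Rmult M_g)^2/b_p$ at a non-first slot, which is $18dM_R^2/b_p$ when $M_0\ll\Rmult M_g$. At the first slot of a phase the picture is different:
\begin{itemize}
\item the bank $A_k$ carries the weights $y_{k-1}$ (not $2y_{k-1}-y_{k-2}$) and contributes $2dM_R^2/b_p$;
\item the reserve batch carries constraint-only weights of $\ell_1$-norm at most $4\Rmult$ and contributes $32d\Rmult^2M_g^2/b_p$.
\end{itemize}
Combining these with $(a+b)^2\le2a^2+2b^2$ would give $68dM_R^2/b_p$. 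Instead, note that the two pieces come from different batches and are conditionally independent and centered given the past. Their second moments therefore add, and $\E\norm{e^p_k}^2\le34dM_R^2/b_p$ in every slot.

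The $17$ in $\Sb^2$ is $34/2$, where the $\frac12$ is the Young coefficient $\frac1{2\lambda}$ from the full budget above:
$\sum_k\frac{a_k^2}{2\lambda}\cdot\frac{34dM_R^2}{b_p}=\frac{17A_NdM_R^2}{\lambda b_p}$.
With these two corrections your argument gives $(L+\lambda)\E\dA^2+\lambda\cB/\rs^2+A_N\Sb^2/\lambda\le(L+2\lambda)\E\dA^2+A_N\Sb^2/\lambda$, as claimed.
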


\begin{proof}
\emph{Step 1 (filtration and centering).} Order the mini-batches of a round in the order in which the inner loop consumes them: $A^-_t$ (if $t\ge2$),
then $A_k,B_k$ for $k=K_{t-1}+1,\dots,K_t$. Let $\cF^A_k$ be the $\sigma$-field generated by $x_0$ and by all mini-batches consumed up to and including $A_k$,
and $\cF^B_k$ the one that also includes $B_k$; $\cF^B_0$ is generated by $x_0$. Then $p_{k-1},y_{k-1},y_{k-2}$ and $u_t,u_{t-1}$ are $\cF^B_{k-1}$-measurable, the
batches $A_k$ and $A^-_t$ are independent of $\cF^B_{k-1}$, $p_k$ is $\cF^A_k$-measurable and $B_k$ is independent of $\cF^A_k$. By
\Cref{lem:action}(a) and \Cref{lem:levels}, applied conditionally with the (measurable) weights $y_{k-1}$, $\rho_k(y_{k-1}-y_{k-2})$ and $w_k:=p_k-u_t$,
\begin{equation}\label{eq:centered}
 \E[e^p_k\mid\cF^B_{k-1}]=0,\qquad \E[e^y_k\mid\cF^A_k]=0 .
\end{equation}

\emph{Step 2 (second moments).} For a slot $k$ that is not the first of its phase, $e^p_k$ is the average over the $b$ pairs of $A_k$ of centered estimation
errors of the Lagrangian gradient with weight vector $2y_{k-1}-y_{k-2}$, whose $\ell_1$-norm is at most $3\Rmult$; by \Cref{lem:action}(a),
$\E[\norm{e^p_k}^2\mid\cF^B_{k-1}]\le2d(M_0+3\Rmult M_g)^2/b_p\le18dM_R^2/b_p$. For the first slot of a phase $t\ge2$, $e^p_k$ is the sum of two conditionally
independent centered terms, from $A_k$ (weights $y_{k-1}$, $\ell_1$-norm $\le\Rmult$) and from $A^-_t$ (weights $\rho_k(y_{k-1}-y_{k-2})$, $\ell_1$-norm $\le2\cdot2\Rmult$ by
\Cref{lem:schedule}(ii)), so $\E[\norm{e^p_k}^2\mid\cF^B_{k-1}]\le[2dM_R^2+2d(4\Rmult M_g)^2]/b_p\le34dM_R^2/b_p$. Hence in all cases
\begin{equation}\label{eq:sigp}
 \E\bigl[\norm{e^p_k}^2\mid\cF^B_{k-1}\bigr]\le\frac{34\,dM_R^2}{b_p}.
\end{equation}
For the dual error, $e^y_k=[\hat h^{B_k}(u_t)-h(u_t)]+[\hat J^{B_k}(u_t)-J_t]w_k$ with $\norm{w_k}=\norm{p_k-u_t}\le D$ (both $p_k$ and $u_t$ lie in $\Q$ by
\Cref{lem:centers}); both brackets are averages of $b_y$ conditionally
i.i.d.\ centered vectors with $\ell_\infty$ second moments bounded by $\sigma_h^2$ (\Cref{lem:levels}) and $\nuj M_g^2D^2$ (\Cref{lem:action}(c),(d)) respectively.
By \Cref{lem:batch} and $\norm{a+c}_\infty^2\le2\norm a_\infty^2+2\norm c_\infty^2$,
\begin{equation}\label{eq:sigy}
 \E\bigl[\norm{e^y_k}_\infty^2\mid\cF^A_k\bigr]\le\frac{16\kap}{b_y}\bigl(\sigma_h^2+\nuj M_g^2D^2\bigr)=\frac{\sigma_y^2}{b_y}.
\end{equation}

\emph{Step 3 (splitting the error terms).} Write $p_k-x^\ast_\gamma=(p_k-p_{k-1})+(p_{k-1}-x^\ast_\gamma)$ and $y_k-y=(y_k-y_{k-1})+(y_{k-1}-y)$ in the right-hand
side of \eqref{eq:energy}. By Young's inequality,
\begin{gather*}
 -a_k\inner{e^p_k}{p_k-p_{k-1}}\le\frac{a_k^2}{2\lambda}\norm{e^p_k}^2+\frac\lambda2\norm{p_k-p_{k-1}}^2,\\
 a_k\inner{e^y_k}{y_k-y_{k-1}}\le\frac{a_k^2\rs^2}{2\lambda}\norm{e^y_k}_\infty^2+\frac{\lambda}{2\rs^2}\norm{y_k-y_{k-1}}_1^2 ,
\end{gather*}
and the two quadratic increments are cancelled by the left-hand side of \eqref{eq:energy}. Consequently, pathwise for all $y\in Y_\Rmult$,
\begin{multline}\label{eq:energy2}
 \sum_tt\bigl[\ell_t(x_t,y)-\ell_t(x^\ast_\gamma,\bar y_t)\bigr]+L\sum_t\norm{d_t}^2\\
 \le(L+\lambda)\dA^2+\sum_k\frac{a_k^2}{2\lambda}\bigl[\norm{e^p_k}^2+\rs^2\norm{e^y_k}_\infty^2\bigr]
 -\sum_ka_k\inner{e^p_k}{p_{k-1}-x^\ast_\gamma}+\sum_ka_k\inner{e^y_k}{y_{k-1}-y}.
\end{multline}

\emph{Step 4 (from linearizations to the certificate).} Fix $y\in Y_\Rmult$ and apply \Cref{lem:outer} to $\mathsf F:=\LagS(\cdot,y)$, which is convex and
$L$-smooth on all of $\R^d$ by \Cref{conv:extension} and \eqref{eq:HL} (this is the only place where a property outside $\Q_{\bar\gamma}$ is used; the
points $u_t,x_t,\Xr_N$ at which the inequality is evaluated lie in $\Q$), with $\ell^{\mathsf F}_t=\ell_t(\cdot,y)$: $W_N\LagS(\Xr_N,y)\le\sum_tt\,\ell_t(x_t,y)+L\sum_t\norm{d_t}^2$. Moreover
$\ell_t(x^\ast_\gamma,\bar y_t)\le\LagS(x^\ast_\gamma,\bar y_t)\le f^\ast_\gamma$ by convexity and \eqref{eq:restrictedgap}, so $\sum_tt\,\ell_t(x^\ast_\gamma,\bar y_t)\le W_Nf^\ast_\gamma$.
Together with \eqref{eq:energy2},
\begin{multline*}
 W_N\bigl[\LagS(\Xr_N,y)-f^\ast_\gamma\bigr]\le(L+\lambda)\dA^2+\sum_k\frac{a_k^2}{2\lambda}\bigl[\norm{e^p_k}^2+\rs^2\norm{e^y_k}_\infty^2\bigr]\\
 -\sum_ka_k\inner{e^p_k}{p_{k-1}-x^\ast_\gamma}+\sum_ka_k\inner{e^y_k}{y_{k-1}-y}
\end{multline*}
for every $y\in Y_\Rmult$. Taking the supremum over $y$ on both sides and recalling \eqref{eq:cert},
\begin{multline}\label{eq:pathwise-cert}
 W_N\,\Cert_{\Rmult,\gamma}(\Xr_N)\le(L+\lambda)\dA^2+\sum_k\frac{a_k^2}{2\lambda}\bigl[\norm{e^p_k}^2+\rs^2\norm{e^y_k}_\infty^2\bigr]\\
 -\sum_ka_k\inner{e^p_k}{p_{k-1}-x^\ast_\gamma}+\sup_{y\in Y_\Rmult}\sum_ka_k\inner{e^y_k}{y_{k-1}-y}.
\end{multline}

\emph{Step 5 (expectations).} By \eqref{eq:centered} and the measurability of $p_{k-1}$, $\E\inner{e^p_k}{p_{k-1}-x^\ast_\gamma}=0$ (all quantities are bounded, since
$\Q$ is compact and the second moments are finite). Next we apply \Cref{lem:martsup} with
\begin{itemize}
\item $Z=\tilde Y_\Rmult$, $\omega=\Rmult\sum_i\tilde y_i\log\tilde y_i$, the norm $\norm\cdot_1$ (dual norm $\norm\cdot_\infty$), $\Theta=\cB$, $z_0=\tilde y^{\mathrm u}$;
\item the filtration $(\cF^B_k)_k$, the points $\zeta_k=\tilde y_{k-1}$ and the vectors $e_k:=-(0,e^y_k)$, so that $a_k\inner{e_k}{\tilde y-\zeta_k}=a_k\inner{e^y_k}{y_{k-1}-y}$.
\end{itemize}
The lemma applies because $e^y_k$ is $\cF^B_k$-measurable and $\E[e^y_k\mid\cF^B_{k-1}]=0$; the latter follows from \eqref{eq:centered} and the tower property,
since $\cF^B_{k-1}\subseteq\cF^A_k$. We obtain, for every $\lambda'>0$,
\[
 \E\sup_{y\in Y_\Rmult}\sum_ka_k\inner{e^y_k}{y_{k-1}-y}\le\lambda'\cB+\frac1{2\lambda'}\sum_ka_k^2\,\E\norm{e^y_k}_\infty^2 .
\]
Choosing $\lambda'=\lambda/\rs^2$ and using \eqref{eq:sigp}, \eqref{eq:sigy} and $\sum_ka_k^2=\sum_tS_ta_t^2=A_N$ in \eqref{eq:pathwise-cert},
\[
 W_N\,\E\Cert_{\Rmult,\gamma}(\Xr_N)\le(L+\lambda)\E\dA^2+\frac{\lambda\cB}{\rs^2}+\frac{A_N}{2\lambda}\Bigl(\frac{34dM_R^2}{b_p}+\frac{\rs^2\sigma_y^2}{b_y}\Bigr)+\frac{A_N\rs^2\sigma_y^2}{2\lambda b_y}.
\]
Since $\cB/\rs^2\le\dA^2$, the right-hand side is at most $(L+2\lambda)\E\dA^2+A_N\Sb^2/\lambda$, which is \eqref{eq:sliding-main}.
(When $x_0$ is random, the whole argument is carried out conditionally on $x_0$ and then integrated.)
\end{proof}

\subsection{Choice of the parameters and complexity}
\begin{corollary}[optimized stabilizer]\label{cor:sliding-lambda}
In the setting of \Cref{thm:sliding} with deterministic $x_0$, the choice $\lambda:=\Sb\sqrt{A_N/2}/\dA$ gives
\begin{equation}\label{eq:sliding-rate}
 \E\,\Cert_{\Rmult,\gamma}(\Xr_N)\le\frac{2L\dA^2}{N(N+1)}+\frac{2}{W_N}\sqrt{2A_N\Sb^2\dA^2}
 \le\frac{2L\dA^2}{N(N+1)}+4\,\dA\sqrt{\frac{17dM_R^2}{b_pK_N}+\frac{\rs^2\sigma_y^2}{b_yK_N}} .
\end{equation}
\end{corollary}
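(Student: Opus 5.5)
The plan is to start from \eqref{eq:sliding-main} of \Cref{thm:sliding}. Because $x_0$ is deterministic, $\dA^2$ is deterministic and $\E\dA^2=\dA^2$. The right-hand side then reads $L\dA^2+2\lambda\dA^2+A_N\Sb^2/\lambda$, and the stabilizer enters only through $g(\lambda):=2\lambda\dA^2+A_N\Sb^2/\lambda$. This function is minimized at $\lambda=\Sb\sqrt{A_N/2}/\dA$, which is exactly the stated choice, and its minimum value is $2\sqrt{2A_N\Sb^2\dA^2}$. The choice is admissible ($\lambda>0$) since $\Sb>0$ and $\dA>0$. The degenerate case $\dA=0$ cannot occur, because $\cB=\Rmult^2\log(m+1)>0$ when $m\ge1$.

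Dividing by $W_N=N(N+1)/2$ gives the first inequality in \eqref{eq:sliding-rate} directly: the deterministic term becomes $2L\dA^2/(N(N+1))$, and the stochastic term becomes $\frac{2}{W_N}\sqrt{2A_N\Sb^2\dA^2}$.

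For the second inequality I would invoke \Cref{lem:schedule}(iv), namely $A_N\le\frac53W_N^2/K_N$. This yields
\[
\frac{2}{W_N}\sqrt{2A_N\Sb^2\dA^2}\le 2\dA\sqrt{\tfrac{10}{3}\,\Sb^2/K_N}\le 4\dA\sqrt{\Sb^2/K_N},
\]
where the last step uses $2\sqrt{10/3}\approx3.65\le4$. Substituting the definition \eqref{eq:SigmaA} of $\Sb^2$ then gives exactly the displayed expression.

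There is no real obstacle here; the only step that needs checking is the constant arithmetic in the last inequality, and I have verified it above.
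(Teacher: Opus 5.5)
Your proposal is correct and follows the paper's proof essentially verbatim. You minimize $2\lambda\dA^2+A_N\Sb^2/\lambda$ over $\lambda>0$, divide \eqref{eq:sliding-main} by $W_N$, and then use \Cref{lem:schedule}(iv) together with $2\sqrt{10/3}\le4$. Your side remarks that $\E\dA^2=\dA^2$ for deterministic $x_0$, and that the chosen $\lambda$ is positive, are accurate small additions.
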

\begin{proof}
Minimizing $2\lambda\dA^2+A_N\Sb^2/\lambda$ over $\lambda>0$ gives the stated $\lambda$ and the value $2\sqrt{2A_N\Sb^2\dA^2}$; divide \eqref{eq:sliding-main} by $W_N$.
By \Cref{lem:schedule}(iv), $A_N\le\frac53W_N^2/K_N$, and $2\sqrt{10/3}\le4$.
\end{proof}

The bound \eqref{eq:sliding-rate} has the structure of the optimal rate for smooth stochastic convex optimization with mini-batches,
$L\dA^2/N^2+\Sigma\dA/\sqrt{\text{number of samples}}$, in which $b_pK_N$ and $b_yK_N$ are the numbers of primal and dual mini-batches consumed and $N$
the number of rounds; with a common batch size the second term is $4\SA\dA/\sqrt{bK_N}$.

\begin{corollary}[complexity of \ZOS]\label{cor:sliding-complexity}
In the setting of \Cref{cor:sliding-lambda}, let $\eps_{\mathrm s}>0$ with $\eps_{\mathrm s}\le L\dA^2$, and choose
\begin{equation}\label{eq:sliding-params}
 N:=\Bigl\lceil2\dA\sqrt{L/\eps_{\mathrm s}}\Bigr\rceil,\qquad b_p:=\max\Bigl\{1,\Bigl\lceil\frac{2176\,dM_R^2\dA^2}{K_N\eps_{\mathrm s}^2}\Bigr\rceil\Bigr\},\qquad
 b_y:=\max\Bigl\{1,\Bigl\lceil\frac{128\,\rs^2\sigma_y^2\dA^2}{K_N\eps_{\mathrm s}^2}\Bigr\rceil\Bigr\}.
\end{equation}
Then $\E\Cert_{\Rmult,\gamma}(\Xr_N)\le\eps_{\mathrm s}$ and
\begin{equation}\label{eq:sliding-counts2}
\begin{gathered}
 \Nseq=N\le2\dA\sqrt{\frac L{\eps_{\mathrm s}}}+1,\qquad \Nmv=K_N\le N+\frac{6M_g\rs\dA^2}{\eps_{\mathrm s}},\\
 \Nf\le2K_N+\frac{4352\,dM_R^2\dA^2}{\eps_{\mathrm s}^2},\qquad \Norc=\Ng\le7K_N+\frac{8704\,dM_R^2\dA^2}{\eps_{\mathrm s}^2}+\frac{384\,\rs^2\sigma_y^2\dA^2}{\eps_{\mathrm s}^2}.
\end{gathered}
\end{equation}
The objective calls thus pay only for the gradient information ($dM_R^2$), the constraint calls for the gradient and for the level information.
\end{corollary}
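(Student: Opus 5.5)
The proof reduces to plugging the parameter choices \eqref{eq:sliding-params} into the rate \eqref{eq:sliding-rate} of \Cref{cor:sliding-lambda} and then counting calls via \eqref{eq:sliding-counts} and \Cref{lem:schedule}(iii). I would aim to make each of the three terms in \eqref{eq:sliding-rate} at most $\eps_{\mathrm s}/2$ or $\eps_{\mathrm s}/4$, as appropriate.

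\emph{Accuracy.} For the deterministic term, $N\ge2\dA\sqrt{L/\eps_{\mathrm s}}$ gives $N(N+1)\ge N^2\ge4L\dA^2/\eps_{\mathrm s}$, hence $2L\dA^2/(N(N+1))\le\eps_{\mathrm s}/2$. For the stochastic term I would use $\sqrt{a+c}\le\sqrt a+\sqrt c$ and treat the primal and dual parts separately. The choice of $b_p$ gives $17dM_R^2/(b_pK_N)\le17\eps_{\mathrm s}^2/(2176\dA^2)=\eps_{\mathrm s}^2/(128\dA^2)$, so $4\dA\sqrt{\cdot}\le4\eps_{\mathrm s}/\sqrt{128}=\eps_{\mathrm s}/(2\sqrt2)$. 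The choice of $b_y$ gives $\rs^2\sigma_y^2/(b_yK_N)\le\eps_{\mathrm s}^2/(128\dA^2)$, so the second part is also at most $\eps_{\mathrm s}/(2\sqrt2)$. Summing crudely gives $\eps_{\mathrm s}/\sqrt2$, which is too large. The fix is to keep the square root over the sum: $4\dA\sqrt{2\eps_{\mathrm s}^2/(128\dA^2)}=4\eps_{\mathrm s}/8=\eps_{\mathrm s}/2$. With the deterministic term this totals $\eps_{\mathrm s}$. This bookkeeping of constants is the only delicate point, and it works out exactly. The hypothesis $\eps_{\mathrm s}\le L\dA^2$ is used only to guarantee $N\ge2$, so that $N+1\le\frac32N$ and similar estimates can be absorbed in the count bounds.

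\emph{Counts.} $\Nseq=N\le2\dA\sqrt{L/\eps_{\mathrm s}}+1$ is immediate. For $\Nmv=K_N$, \Cref{lem:schedule}(iii) gives $K_N\le N+\frac{M_g\rs}{2L}N(N+1)$. Using $N\le2\dA\sqrt{L/\eps_{\mathrm s}}+1\le3\dA\sqrt{L/\eps_{\mathrm s}}$ (since $\dA\sqrt{L/\eps_{\mathrm s}}\ge1$) and $N+1\le4\dA\sqrt{L/\eps_{\mathrm s}}$, the product is $N(N+1)\le12\dA^2L/\eps_{\mathrm s}$, which yields the $6M_g\rs\dA^2/\eps_{\mathrm s}$ term. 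For the call counts I would use $b_p\le1+2176dM_R^2\dA^2/(K_N\eps_{\mathrm s}^2)$ and the analogous bound for $b_y$. Then $\Nf=2b_pK_N\le2K_N+4352dM_R^2\dA^2/\eps_{\mathrm s}^2$.

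For $\Norc$, start from \eqref{eq:sliding-counts}: $\Norc=(2b_p+3b_y)K_N+2b_p(N-1)$. Bound $2b_p(N-1)\le2b_pK_N$, using $N\le K_N$ from \Cref{lem:schedule}(iii), so $\Norc\le4b_pK_N+3b_yK_N$. Then $4b_pK_N\le4K_N+8704dM_R^2\dA^2/\eps_{\mathrm s}^2$ and $3b_yK_N\le3K_N+384\rs^2\sigma_y^2\dA^2/\eps_{\mathrm s}^2$. This gives $7K_N$ plus the stated terms, and the final sentence of the statement is an interpretation that needs no proof.
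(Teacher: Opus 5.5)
Your proposal is correct and follows the paper's proof essentially step for step. You keep the square root over the sum of the two noise terms, giving $4\dA\cdot\eps_{\mathrm s}/(8\dA)=\eps_{\mathrm s}/2$. You use $\vartheta:=\dA\sqrt{L/\eps_{\mathrm s}}\ge1$ to get $N(N+1)\le12L\dA^2/\eps_{\mathrm s}$ in \Cref{lem:schedule}(iii). And you bound $2b_p(N-1)\le2b_pK_N$ via $N\le K_N$, so that $\Norc\le4b_pK_N+3b_yK_N$.

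One small correction to your side remark: the hypothesis $\eps_{\mathrm s}\le L\dA^2$ is needed precisely for $\vartheta\ge1$, which is how you actually use it in the count step, not merely for $N\ge2$. The condition $N\ge2$ by itself would not give the constant $6$ in the bound on $K_N$.
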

\begin{proof}
By \eqref{eq:sliding-rate}, $2L\dA^2/(N(N+1))\le2L\dA^2/N^2\le\eps_{\mathrm s}/2$, and the second term of \eqref{eq:sliding-rate} is at most $\eps_{\mathrm s}/2$ because
$17dM_R^2/(b_pK_N)\le\eps_{\mathrm s}^2/(128\dA^2)$ and $\rs^2\sigma_y^2/(b_yK_N)\le\eps_{\mathrm s}^2/(128\dA^2)$, so that the square root is at most $\eps_{\mathrm s}/(8\dA)$.
Put $\vartheta:=\dA\sqrt{L/\eps_{\mathrm s}}\ge1$; then $N\le2\vartheta+1\le3\vartheta$ and $N+1\le4\vartheta$, so $N(N+1)\le12\vartheta^2=12L\dA^2/\eps_{\mathrm s}$ and
\Cref{lem:schedule}(iii) gives $K_N\le N+\frac{M_g\rs}{2L}N(N+1)\le N+6M_g\rs\dA^2/\eps_{\mathrm s}$. Finally $b_p\le1+2176dM_R^2\dA^2/(K_N\eps_{\mathrm s}^2)$,
$b_y\le1+128\rs^2\sigma_y^2\dA^2/(K_N\eps_{\mathrm s}^2)$, $\Nf=2b_pK_N$ and $\Norc=\Ng=(2b_p+3b_y)K_N+2b_p(N-1)\le4b_pK_N+3b_yK_N$ by \eqref{eq:sliding-counts} and $N\le K_N$.
\end{proof}

The dual scale $\rs$ trades the size of $\dA^2=3\cA+\cB/\rs^2$ against the dual noise $\rs^2\sigma_y^2$ and the number of inner steps.
A choice that depends only on known quantities and balances the two prox radii is $\rs^2=2\cB/D^2$:

\begin{corollary}[explicit rates for $\rs^2=2\cB/D^2$]\label{cor:sliding-explicit}
Choose $\rs:=\sqrt{2\cB}/D$ in \Cref{cor:sliding-complexity}. Then $\dA^2\le2D^2$ and, with $\Lam=\kap\log(m+1)$,
\begin{equation}\label{eq:explicit-constants}
\begin{gathered}
 N\le2D\sqrt{\frac{2L}{\eps_{\mathrm s}}}+1,\qquad K_N\le N+\frac{17\,M_g\Rmult D\sqrt{\log(m+1)}}{\eps_{\mathrm s}},\qquad
 \Nf\le2K_N+\frac{8704\,dM_R^2D^2}{\eps_{\mathrm s}^2},\\
 \Norc=\Ng\le7K_N+\frac{17408\,dM_R^2D^2+24576\,\Lam\Rmult^2\bigl(\sigma_h^2+\nuj M_g^2D^2\bigr)}{\eps_{\mathrm s}^2}.
\end{gathered}
\end{equation}
Consequently $\Nseq=O(\sqrt{LD^2/\eps_{\mathrm s}})$, $\Nmv=O(\Nseq+M_g\Rmult D\sqrt{\log(m+1)}/\eps_{\mathrm s})$ and
\[
 \Nf=O\Bigl(\Nmv+\frac{dM_R^2D^2}{\eps_{\mathrm s}^2}\Bigr),\qquad
 \Norc=\Ng=O\Bigl(\Nmv+\frac{dM_R^2D^2+\Lam\Rmult^2(\sigma_h^2+\nuj M_g^2D^2)}{\eps_{\mathrm s}^2}\Bigr);
\]
since $\nuj\le4d$ and $\Rmult M_g\le M_R$, the last expression is in turn $O\bigl(\Nmv+\Lam(dM_R^2D^2+\Rmult^2\sigma_h^2)/\eps_{\mathrm s}^2\bigr)$, a cruder form in
which the factor $\Lam$ also multiplies the primal term. The term $\Lam\Rmult^2\nuj M_g^2D^2$ is dimension-free ($\nuj\le92\kap$) and is dominated by $dM_R^2D^2$
as soon as $d\ge92\kap^2\log(m+1)$.
\end{corollary}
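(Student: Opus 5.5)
The corollary follows by substituting $\rs^2=2\cB/D^2$ into \Cref{cor:sliding-complexity}, so the plan is a chain of elementary estimates.

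\textbf{Step 1: the radius.} Since $\cA\le D^2/2$ by \eqref{eq:radii}, we get
\[
\dA^2=3\cA+\cB/\rs^2\le\tfrac32D^2+\tfrac12D^2=2D^2 .
\]
This immediately gives the bound on $N$, because
\[
N\le2\dA\sqrt{L/\eps_{\mathrm s}}+1\le2D\sqrt{2L/\eps_{\mathrm s}}+1 .
\]

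\textbf{Step 2: the inner steps.} We start from \eqref{eq:sliding-counts2}, $K_N\le N+6M_g\rs\dA^2/\eps_{\mathrm s}$. Here
\[
\rs\dA^2\le\frac{\sqrt{2\cB}}{D}\cdot2D^2=2\sqrt2\,\Rmult D\sqrt{\log(m+1)},
\]
using $\cB=\Rmult^2\log(m+1)$. Hence the coefficient is $6\cdot2\sqrt2\approx16.97\le17$, which is the stated bound on $K_N$.

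\textbf{Step 3: the call counts.} For the objective, $4352\,dM_R^2\dA^2\le8704\,dM_R^2D^2$. For the constraints, the primal term gives $8704\cdot2=17408$. The level term is
\[
384\,\rs^2\sigma_y^2\dA^2\le384\cdot\frac{2\cB}{D^2}\cdot16\kap\bigl(\sigma_h^2+\nuj M_g^2D^2\bigr)\cdot2D^2 .
\]
This equals $24576\,\kap\cB(\sigma_h^2+\nuj M_g^2D^2)$. Writing $\kap\cB=\Lam\Rmult^2$ then gives exactly the constant in \eqref{eq:explicit-constants}.

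\textbf{Step 4: hypothesis and $O$-forms.} The one point to check is the hypothesis $\eps_{\mathrm s}\le L\dA^2$ inherited from \Cref{cor:sliding-complexity}. If it fails, the bounds are still usable, since one can either run with $N=1$ or note that $\dA^2$ enters only as an upper bound; I would simply carry the hypothesis over as part of the setting.

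The $O$-forms follow by reading off the bounds, with $\Nseq=N$ and $\Nmv=K_N$ taken from \eqref{eq:sliding-counts}. For the cruder form:
\begin{itemize}
\item $\nuj\le4d$ and $\Rmult M_g\le M_R$ give $\Lam\Rmult^2\nuj M_g^2D^2\le4\Lam dM_R^2D^2$;
\item $\Lam\ge\kap\log2>0$ is bounded below by a constant, so the primal term $dM_R^2D^2$ is $O(\Lam dM_R^2D^2)$.
\end{itemize}

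For the final dominance claim, use $\nuj\le92\kap$. Then
\[
\Lam\Rmult^2\nuj M_g^2D^2\le92\,\kap^2\log(m+1)\,M_R^2D^2\le dM_R^2D^2
\]
once $d\ge92\kap^2\log(m+1)$.

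There is no real obstacle. The only care needed is the numerical constants, in particular that $12\sqrt2\le17$ and that $384\cdot2\cdot16\cdot2=24576$.
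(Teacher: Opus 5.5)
Your proposal is correct and follows the paper's proof essentially line by line: you bound $\dA^2\le2D^2$, insert it together with $\rs=\sqrt{2\cB}/D$ and $\kap\cB=\Lam\Rmult^2$ into \eqref{eq:sliding-counts2}, and check $12\sqrt2\le17$ and $384\cdot64=24576$. The aside in Step 4 about the case $\eps_{\mathrm s}>L\dA^2$ is not justified, because the bound on $K_N$ in \Cref{cor:sliding-complexity} uses $\dA\sqrt{L/\eps_{\mathrm s}}\ge1$; this does not affect the proof, since you end up carrying the hypothesis over, exactly as the paper does.
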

\begin{proof}
$\dA^2=3\cA+\cB/\rs^2\le\frac32D^2+\frac12D^2=2D^2$, so $dM_R^2\dA^2\le2dM_R^2D^2$ and $\rs^2\sigma_y^2\dA^2\le(2\cB/D^2)\cdot16\kap(\sigma_h^2+\nuj M_g^2D^2)\cdot2D^2
=64\Lam\Rmult^2(\sigma_h^2+\nuj M_g^2D^2)$; insert this into \eqref{eq:sliding-counts2}. $6M_g\rs\dA^2\le6M_g(\sqrt{2\cB}/D)(2D^2)=12\sqrt2M_g\Rmult D\sqrt{\log(m+1)}$ and $12\sqrt2\le17$.
\end{proof}

\begin{corollary}[smooth data]\label{cor:sliding-smooth}
Let \Cref{ass:convex,ass:lip,ass:levels,ass:slater,ass:smooth} hold and $\eps\in(0,\min\{2\max\{L_0,L_g\}\bar\gamma^2,\ (L_0+\Rmult L_g)D^2\}]$. Take
$\gamma:=\min\{\sqrt{\eps/(2\max\{L_0,L_g\})},\ \eps/(4M_g)\}$, $b_0=L_0\gamma^2/2$, $b_i=L_g\gamma^2/2$,
$L:=L_0+\Rmult L_g$ and run \Cref{alg:sliding} with the parameters of \Cref{cor:sliding-explicit} for $\eps_{\mathrm s}=\eps/2$. Then $\Xr_N$ is an $\eps$-solution
in expectation and
\begin{gather*}
 \Nseq\le4D\sqrt{\frac{L_0+\Rmult L_g}{\eps}}+1,\qquad \Nmv=O\Bigl(\Nseq+\frac{M_g\Rmult D\sqrt{\log(m+1)}}{\eps}\Bigr),\\
 \Nf=O\Bigl(\Nmv+\frac{dM_R^2D^2}{\eps^2}\Bigr),\qquad \Norc=\Ng=O\Bigl(\Nmv+\frac{dM_R^2D^2+\Lam\Rmult^2(\sigma_g^2+\nuj M_g^2D^2)}{\eps^2}\Bigr).
\end{gather*}
\end{corollary}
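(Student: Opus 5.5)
The plan is to reduce the statement to \Cref{cor:sliding-explicit} with $\eps_{\mathrm s}=\eps/2$, and then transfer the certificate bound back to the original problem via \Cref{lem:transfer}(c) and \Cref{lem:certificate}, exactly as in the proof of \Cref{cor:mp-cases}(a).

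\emph{Admissibility of the parameters.} First I check that the choices are admissible.
\begin{itemize}
\item The condition $\eps\le2\max\{L_0,L_g\}\bar\gamma^2$ gives $\gamma\le\sqrt{\eps/(2\max\{L_0,L_g\})}\le\bar\gamma$.
\item Under \Cref{ass:smooth} and \Cref{conv:extension}, $L_{0,\gamma}=L_0$ and $L_{g,\gamma}=L_g$, so $H=L_0+\Rmult L_g=L$ and \eqref{eq:HL} holds.
\item \Cref{cor:sliding-complexity} requires $\eps_{\mathrm s}\le L\dA^2$. With $\rs^2=2\cB/D^2$ we have $\dA^2\ge\cB/\rs^2=D^2/2$, so it suffices that $\eps/2\le LD^2/2$. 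This is the assumption $\eps\le(L_0+\Rmult L_g)D^2$.
\end{itemize}

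\emph{Accuracy.} \Cref{cor:sliding-explicit} then gives $\E\Cert_{\Rmult,\gamma}(\Xr_N)\le\eps/2$. By \eqref{eq:certbounds}, both $\E[f_\gamma(\Xr_N)-f^\ast_\gamma]$ and $\E\norm{\pos{h(\Xr_N)}}_\infty$ are at most $\eps/2$.

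The pointwise inequalities in \Cref{lem:transfer}(c) are linear in the errors, so they pass to expectations. This gives $\E[f(\Xr_N)]-f^\ast\le\eps/2+b_0$ and $\E\viol(\Xr_N)\le\eps/2+b_g$. Since $\gamma^2\le\eps/(2\max\{L_0,L_g\})$, we get $b_0=L_0\gamma^2/2\le\eps/4$ and $b_g=L_g\gamma^2/2\le\eps/4$. Hence both quantities are at most $3\eps/4\le\eps$.

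\emph{Counts.} From \eqref{eq:explicit-constants}, $N\le2D\sqrt{2L/(\eps/2)}+1=4D\sqrt{L/\eps}+1$, which is the stated $\Nseq$ bound. The $K_N$ bound yields the stated $\Nmv$ with $\eps_{\mathrm s}=\eps/2$.

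For the call counts, I replace $\sigma_h^2$ by $\sigma_g^2$. Because $\gamma\le\eps/(4M_g)$, \eqref{eq:sigmah} gives $\sigma_h^2\le2\sigma_g^2+\eps^2/2$. The extra $\eps^2/2$ contributes at most $O(\Lam\Rmult^2)$ calls. This is an additive constant, absorbed into $O(\Nmv+\cdots)$ because $\Nmv\ge1$ and the remaining term already carries $\Lam\Rmult^2$ factors. Alternatively, one can note that it is dominated whenever $\eps\le D$-scale quantities; if needed, it can simply be kept as an $O(\Lam\Rmult^2)$ term, as in \Cref{cor:mp-cases}.

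\emph{Main obstacle.} The only delicate step is this bookkeeping: confirming that the side condition of \Cref{cor:sliding-complexity} is exactly ensured by the stated upper bound on $\eps$, and that the $\eps^2/2$ part of $\sigma_h^2$ is legitimately absorbed. Everything else is direct substitution.
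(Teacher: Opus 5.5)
Your proposal is correct and matches the paper's proof step for step. It obtains $L=H$ from \Cref{lem:smoothing}(c), derives the side condition $\eps_{\mathrm s}\le L\dA^2$ from $\dA^2\ge\cB/\rs^2=D^2/2$, transfers the certificate bound via \Cref{lem:transfer}(c) and \Cref{lem:certificate} with $b_0,b_g\le\eps/4$, and uses $\sigma_h^2\le2\sigma_g^2+\eps^2/2$ before inserting $\eps_{\mathrm s}=\eps/2$ into \eqref{eq:explicit-constants}; your explicit check $\gamma\le\bar\gamma$ is a small point the paper leaves implicit. One caveat: your stated reason for absorbing the leftover $O(\Lam\Rmult^2)$ term ($\Nmv\ge1$, plus $\Lam\Rmult^2$ factors elsewhere) does not hold as written, since $\Lam\Rmult^2$ is not a universal constant, but your fallback of keeping it as a lower-order additive constant is exactly the convention the paper itself uses.
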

\begin{proof}
By \Cref{lem:smoothing}(c), $L_{0,\gamma}=L_0$ and $L_{g,\gamma}=L_g$ for every $\gamma>0$, so $L=H$; the second moment of the two-point estimators does not
depend on $\gamma$ (\Cref{lem:twopoint}), so the radius may be taken as small as convenient. The biases satisfy $b_0,b_g\le\max\{L_0,L_g\}\gamma^2/2\le\eps/4$; by
\Cref{lem:transfer}(c) and \Cref{lem:certificate}, $\E\Cert\le\eps/2$ gives $\E[f-f^\ast]\le3\eps/4$ and $\E\viol\le3\eps/4$. Since $\gamma\le\eps/(4M_g)$,
$8\gamma^2M_g^2\le\eps^2/2$ and $\sigma_h^2\le2\sigma_g^2+\eps^2/2$, so that the contribution of the smoothing part of $\sigma_h^2$ to $\Norc$ is $O(\Lam\Rmult^2)$, a
constant. The hypothesis $\eps\le LD^2$ guarantees $\eps_{\mathrm s}=\eps/2\le L\dA^2$, because $\dA^2\ge\cB/\rs^2=D^2/2$. Insert $\eps_{\mathrm s}=\eps/2$ into
\eqref{eq:explicit-constants}.
\end{proof}

\begin{corollary}[nonsmooth data]\label{cor:sliding-nonsmooth}
Let \Cref{ass:convex,ass:lip,ass:levels,ass:slater} hold and $\eps\in(0,\min\{4M\bar\gamma,\ 2d^{1/4}\sqrt{M_RM}D\}]$. Take $\gamma:=\eps/(4M)$, $b_0=\gamma M_0$, $b_i=\gamma M_g$,
$L:=\sqrt dM_R/\gamma=4\sqrt dM_RM/\eps$, and
run \Cref{alg:sliding} with the parameters of \Cref{cor:sliding-explicit} for $\eps_{\mathrm s}=\eps/2$. Then $\Xr_N$ is an $\eps$-solution in expectation and
\begin{gather*}
 \Nseq\le\frac{8\,d^{1/4}\sqrt{M_RM}\,D}{\eps}+1,\qquad \Nmv=O\Bigl(\Nseq+\frac{M_g\Rmult D\sqrt{\log(m+1)}}{\eps}\Bigr),\\
 \Nf=O\Bigl(\Nmv+\frac{dM_R^2D^2}{\eps^2}\Bigr),\qquad \Norc=\Ng=O\Bigl(\Nmv+\frac{dM_R^2D^2+\Lam\Rmult^2(\sigma_g^2+\nuj M_g^2D^2)}{\eps^2}\Bigr).
\end{gather*}
\end{corollary}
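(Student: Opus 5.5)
The argument copies the proof of \Cref{cor:sliding-smooth}; only the smoothing constants change. First, the biases. With $\gamma=\eps/(4M)\le\bar\gamma$ (this is the hypothesis $\eps\le4M\bar\gamma$) and the general shifts $b_0=\gamma M_0$, $b_i=\gamma M_g$, we get $b_0,b_g\le\gamma M=\eps/4$. Run \Cref{alg:sliding} with $\eps_{\mathrm s}=\eps/2$. Then \Cref{cor:sliding-explicit} gives $\E\Cert_{\Rmult,\gamma}(\Xr_N)\le\eps/2$. \Cref{lem:certificate} and \Cref{lem:transfer}(c), applied pathwise and then in expectation (both gap and violation are dominated by the certificate), give $\E[f(\Xr_N)-f^\ast]\le3\eps/4$ and $\E\viol(\Xr_N)\le3\eps/4$. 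So $\Xr_N$ is an $\eps$-solution in expectation.

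Next, the admissibility of $L$. Under \Cref{conv:extension} the functions $f$ and $g_i$ are replaced by their Lipschitz extensions, and \Cref{lem:smoothing}(c) gives $L_{0,\gamma}=\sqrt dM_0/\gamma$ and $L_{g,\gamma}\le\sqrt dM_g/\gamma$. Hence $H=L_{0,\gamma}+\Rmult L_{g,\gamma}\le\sqrt d(M_0+\Rmult M_g)/\gamma=\sqrt dM_R/\gamma=L$. If some constraints happen to be smooth or affine, their constants are smaller and the inequality only gets easier. We must also check the hypothesis $\eps_{\mathrm s}\le L\dA^2$ of \Cref{cor:sliding-complexity}. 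Since $\dA^2\ge\cB/\rs^2=D^2/2$, it suffices that $\eps/2\le 4\sqrt dM_RMD^2/(2\eps)$, that is, $\eps^2\le4\sqrt dM_RMD^2$. This is exactly the hypothesis $\eps\le2d^{1/4}\sqrt{M_RM}D$.

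Then the counts. By \eqref{eq:explicit-constants}, $N\le2D\sqrt{2L/\eps_{\mathrm s}}+1=2D\sqrt{4L/\eps}+1$. Inserting $L=4\sqrt dM_RM/\eps$ gives $N\le2D\cdot4d^{1/4}\sqrt{M_RM}/\eps+1=8d^{1/4}\sqrt{M_RM}D/\eps+1$, which is the stated round bound. The $\Nmv$ bound follows directly from the $K_N$ estimate in \eqref{eq:explicit-constants}, and so does the $\Nf$ bound. For $\Ng$ we also need the smoothing part of the level noise. Since $\gamma\le\eps/(4M_g)$, we have $8\gamma^2M_g^2\le\eps^2/2$, so $\sigma_h^2\le2\sigma_g^2+\eps^2/2$ by \eqref{eq:sigmah}. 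The extra $\eps^2/2$ contributes only $O(\Lam\Rmult^2)$ calls after division by $\eps_{\mathrm s}^2$, which is a constant absorbed in the $O$.

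No step is a real obstacle. The point needing care is the global smoothness of $\LagS(\cdot,y)$ used in \Cref{lem:outer}, and that is handled by \Cref{conv:extension}. What remains is the arithmetic for $N$ and the check of the admissibility condition $\eps_{\mathrm s}\le L\dA^2$.
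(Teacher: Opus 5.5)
Your proposal is correct and follows the paper's proof essentially step by step. It uses the same ingredients: $H\le\sqrt dM_R/\gamma=L$ via \Cref{conv:extension} and \Cref{lem:smoothing}(c), the bias bounds $b_0,b_g\le\eps/4$, and the check $\eps_{\mathrm s}\le L\dA^2$ through $\dA^2\ge\cB/\rs^2=D^2/2$. It also has the same arithmetic $2D\sqrt{2L/\eps_{\mathrm s}}=8d^{1/4}\sqrt{M_RM}D/\eps$ and the same bound $\sigma_h^2\le2\sigma_g^2+\eps^2/2$ from $\gamma\le\eps/(4M_g)$.

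The only loose phrase is the claim that a smooth constraint has a ``smaller'' constant, since $L_g$ need not lie below $\sqrt dM_g/\gamma$. This is harmless: in this regime every non-affine function is smoothed through its Lipschitz extension, exactly as the paper does.
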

\begin{proof}
$H=L_{0,\gamma}+\Rmult L_{g,\gamma}=\sqrt d(M_0+\Rmult M_g)/\gamma=L$ by \Cref{lem:smoothing}(c) and \Cref{conv:extension}. The biases are $b_0\le\gamma M=\eps/4$ and $b_g\le\eps/4$, and
$N\le2D\sqrt{2L/\eps_{\mathrm s}}+1=2D\sqrt{16\sqrt dM_RM/\eps^2}+1=8d^{1/4}\sqrt{M_RM}D/\eps+1$. Also $8\gamma^2M_g^2\le\eps^2/2$, so $\sigma_h^2\le2\sigma_g^2+\eps^2/2$ and the
contribution of $\eps^2/2$ to $\Norc$ is $O(\Lam\Rmult^2)$, a constant. The hypothesis $\eps\le2d^{1/4}\sqrt{M_RM}D$ is equivalent to $\eps/2\le L D^2/2$ and
guarantees $\eps_{\mathrm s}\le L\dA^2$ because $\dA^2\ge D^2/2$.
\end{proof}

\begin{remark}[separate smoothing radii]\label{rem:two-radii}
The objective and the constraints can be smoothed with different radii. Smooth $f$ with $\gamma_0:=\eps/(4M_0)$ and every $g_i$ with $\gamma_g:=\eps/(4M_g)$,
let each estimator in \eqref{eq:estimators} use its own radius, and apply \Cref{lem:levels} with $\gamma_g$. The biases are still $b_0,b_g\le\eps/4$, and the
smoothness constant of the Lagrangian becomes
\[
 H=\frac{\sqrt dM_0}{\gamma_0}+\frac{\Rmult\sqrt dM_g}{\gamma_g}=\frac{4\sqrt d\,(M_0^2+\Rmult M_g^2)}{\eps}\le\frac{4\sqrt d\,M_RM}{\eps},
\]
since $M_0^2\le M_0M$ and $\Rmult M_g^2\le\Rmult M_gM$. All statements of \Cref{sec:smoothing,sec:lagrangian} are per-function and remain valid, so
\Cref{cor:sliding-nonsmooth} holds with $\Nseq\le8d^{1/4}\sqrt{M_0^2+\Rmult M_g^2}\,D/\eps+1$; when $M_0\gg\sqrt{\Rmult}M_g$ this is within a constant of the
unconstrained depth $d^{1/4}M_0D/\eps$. We keep a common radius in the main text to lighten the notation.
\end{remark}

The round complexities of \Cref{cor:sliding-smooth,cor:sliding-nonsmooth} coincide, up to the multiplier factor $\Rmult$ in $M_R$ and $H$, with the
iteration complexities of the accelerated batched methods for \emph{unconstrained} zeroth-order problems \citep{gasnikov2022power}:
$\sqrt{L_0D^2/\eps}$ for smooth and $d^{1/4}MD/\eps$ for nonsmooth objectives. These are the best known upper bounds for the unconstrained two-point
problem; they are minimax optimal in the smooth case, while in the nonsmooth case the known depth lower bounds are weaker (see \Cref{sec:depth}).

\subsection{Discussion}\label{sec:sliding-discussion}
\paragraph{What is paid for the depth reduction.} Compared with the Mirror--Prox baseline, whose dual noise is only the level noise $\sigma_h^2$, the sliding
method has the additional dual noise term $\nuj M_g^2D^2$ in \eqref{eq:SigmaA}. It comes from the estimate $\hat J^{B_k}(u_t)(p_k-u_t)$ of the linearized
constraint value at the inner point $p_k$: the exact value $h(p_k)$ could be estimated by a single query at $p_k$, but $p_k$ is not known before the round,
and querying it would create a new sequential round. The linearization at the center replaces an \emph{adaptive} query by a matrix--vector product with a
fresh Jacobian sample; the variance of this product is $\nuj M_g^2\norm{p_k-u_t}^2$. By \Cref{lem:action}(d), $\nuj\le92\kap$, so the extra term is dimension-free
and is dominated by the primal term $dM_R^2D^2$ as soon as $d\gtrsim\kap^2\log(m+1)$; in low dimensions the crude bound $\nuj\le4d$ is the better one, and the extra
term is then of the same order $dM_R^2D^2$ as the primal one, up to the factor $\Lam$.

\paragraph{Why the mini-batches must be fresh.} If the same Jacobian estimate were reused within a phase, the inner dual iterates would be correlated with the
estimation error and the term $\inner{\bar y_t}{(\hat J_t-J_t)(x^\ast_\gamma-u_t)}$ in the gap could only be bounded by $\Rmult\,\E\norm{(\hat J_t-J_t)(x^\ast_\gamma-u_t)}_\infty
=O(\Rmult M_gD\sqrt{\nuj/b_y})$, a bias that does not average out over the phases. The fresh mini-batches make the errors martingale differences with respect to the
consumption order, which is exactly what \eqref{eq:centered} and \Cref{lem:martsup} require. This is the reason why the counts $\Norc,\Nf,\Ng$ in
\eqref{eq:sliding-counts} are proportional to $K_N$ rather than to $N$; the depth $\Nseq=N$ is unaffected.

\paragraph{Inner steps, calls and query points.} Unlike the inner steps of ACGD-S, the inner steps of \Cref{alg:sliding} are not free in oracle calls: each of them
consumes one primal and one dual mini-batch that were drawn in the current round. What they save is \emph{rounds}: $\Nmv=K_N$ counts operations that need
no new oracle round, and the $O(\eps^{-2})$ calls of the method are located at only $N=O(\eps^{-1/2})$ distinct centers (plus the previous center for the
reserve mini-batch). For simulation oracles this concentration of the queries is an advantage of its own (state of the simulator, common random numbers)
which is not reflected in the three complexity measures. In the affine case of \Cref{sec:affine} the inner steps are oracle-free in the literal sense.

\paragraph{Relation to ACGD-S.} With exact gradients ($e^p_k=e^y_k=0$, $\lambda=0$) \Cref{lem:outer,lem:inner} give
$\Cert_{\Rmult,\gamma}(\Xr_N)\le2L\dA^2/(N(N+1))$, which is the deterministic guarantee of ACGD-S \citep[Thm.~5]{zhanglan2022} in our notation, with
$K_N=O(N+M_g\rs N^2/L)$ inner matrix--vector products. Our schedule \eqref{eq:schedule} is theirs ($\tau_t$, $\theta_t$, $\eta_t=L/\tau_{t+1}$, weights $\omega_t=t$).
The new ingredients are the stochastic zeroth-order estimators, the fresh-mini-batch design, the stabilizer $\lambda$, the explicit treatment of the dual noise
through \Cref{lem:batch,lem:martsup}, and the transfer to the original constrained problem through the smoothing shifts.

\paragraph{Parameters.} The method needs upper bounds $L$, $M_g$ and $\Rmult$ and the constants entering $\Sb$. The stabilizer of \Cref{cor:sliding-lambda} is the
worst-case choice. In the experiments of \Cref{sec:experiments} a much smaller one works better.

\paragraph{Two batch sizes.} The primal banks $A_k$ and the reserve batch carry the gradient information about the Lagrangian. Their noise, $17dM_R^2/b_p$, contains the
dimension factor $d$. The dual banks $B_k$ carry the level and linearization information. Their noise, $\rs^2\sigma_y^2/b_y$, is dimension-free but does not
shrink with the accuracy in the strongly convex case. The objective is used only in the banks $A_k$, so $\Nf=2b_pK_N$, and the two batch sizes can be tuned to the
two error sources separately (\Cref{cor:sliding-complexity}). As a result, in \Cref{thm:restart-sliding} the objective calls $\Nf$ of \Cref{def:measures} scale as $1/(\mu\eps)$, as in
strongly convex optimization, while $\Ng$ keeps the $\eps^{-2}$ level term. The experiments use a common size $b_p=b_y$.

\section{Strongly convex objectives: restarts}\label{sec:restarts}

Under \Cref{ass:sc} the certificate controls the distance to the solution of the smoothed problem, $\frac\mu2\norm{x-x^\ast_\gamma}^2\le\Cert_{\Rmult,\gamma}(x)$
(\Cref{lem:certificate}). Both methods of the previous sections have guarantees that depend on the starting point only through
$\cA=\frac12\norm{x_0-x^\ast_\gamma}^2$ and that are nondecreasing and concave in $\cA$. This is exactly what is needed for the classical restart
technique: halving the target accuracy at each stage halves the admissible bound on $\cA$, so that the number of rounds per stage stays constant.
The dual variable is reset to the uniform point at every stage; the dual radius $\cB$ does not shrink, and the dual scale $\rs$ (or the balance $\alpha$) is
increased from stage to stage to keep the two prox radii balanced. \Cref{fig:restart} shows the scheme.

\begin{figure}[t]
\centering
\resizebox{\textwidth}{!}{%
\begin{tikzpicture}[>=Latex,font=\small,node distance=6mm and 9mm,
 blk/.style={draw,rounded corners=2pt,align=center,inner sep=4pt,minimum height=8mm}]
 \node[blk,fill=gray!10] (init) {$\bar x^{(0)}:=x_0$, $\eps_0:=\mu D^2/2$,\\ $\bar\cA_1:=D^2/2$};
 \node[blk,fill=orange!12,right=of init] (par) {stage $k$: target $\eps_k=\eps_0 2^{-k}$,\\ radius bound $\bar\cA_k=\eps_{k-1}/\mu$,\\ scale $\rs_k^2=\cB/\bar\cA_k$};
 \node[blk,fill=blue!8,right=of par] (run) {run the base method from $\bar x^{(k-1)}$\\ with parameters $P_k(\bar\cA_k,\eps_k)$\\ (dual reset to $\tilde y^{\mathrm u}$)};
 \node[blk,fill=green!10,right=of run] (out) {$\bar x^{(k)}$:\\ $\E\Cert(\bar x^{(k)})\le\eps_k$};
 \node[blk,fill=gray!10,below=of out] (dist) {$\E\tfrac12\|\bar x^{(k)}-x^\ast_\gamma\|^2\le\eps_k/\mu=\bar\cA_{k+1}$};
 \draw[->] (init)--(par); \draw[->] (par)--(run); \draw[->] (run)--(out);
 \draw[->] (out)--(dist) node[midway,right] {\Cref{lem:certificate}};
 \draw[->] (dist.west) -| node[pos=.75,left] {$k\leftarrow k+1$} (par.south);
\end{tikzpicture}}
\caption{The restart scheme (\Cref{alg:restart}). The base method is either \ZOS\ (\Cref{alg:sliding}) or \BSMP\ (\Cref{alg:mp}); its
parameters at stage $k$ are chosen from the bound $\bar\cA_k$ on the expected squared distance of the starting point to $x^\ast_\gamma$.}
\label{fig:restart}
\end{figure}

\begin{algorithm}[t]
\caption{\RZOS\ / R-\BSMP: restarts for strongly convex objectives}\label{alg:restart}
\begin{algorithmic}[1]
\Require $\mu$, $D$, target $\eps_{\mathrm s}\in(0,\mu D^2/2)$, a base method with a parameter rule $P(\bar\cA,\eps)$;
 \Statex\hspace{\algorithmicindent} starting point $\bar x^{(0)}:=x_0\in\Q$.
\State $\eps_0:=\mu D^2/2$, $K:=\lceil\log_2(\eps_0/\eps_{\mathrm s})\rceil$.
\For{$k=1,\dots,K$}
 \State $\eps_k:=\eps_02^{-k}$, $\bar\cA_k:=\eps_{k-1}/\mu$.
 \State Run the base method from $\bar x^{(k-1)}$ with the dual variable reset to $\tilde y^{\mathrm u}$ and parameters $P(\bar\cA_k,\eps_k)$; let $\bar x^{(k)}$ be its output.
\EndFor
\Ensure $\bar x^{(K)}$.
\end{algorithmic}
\end{algorithm}

\begin{lemma}[restart lemma]\label{lem:restart}
Let \Cref{ass:convex,ass:lip,ass:slater,ass:sc} hold. Suppose that for every $\bar\cA>0$ and $\eps>0$ the base method with parameters $P(\bar\cA,\eps)$, started at a
random point $x_0\in\Q$ independent of the oracle samples of the run, returns $\hat x$ with
\begin{equation}\label{eq:restart-hyp}
 \E\bigl[\Cert_{\Rmult,\gamma}(\hat x)\mid x_0\bigr]\le\Psi_{P}\bigl(\tfrac12\norm{x_0-x^\ast_\gamma}^2\bigr),
\end{equation}
where $\Psi_P:[0,\infty)\to[0,\infty)$ is nondecreasing and concave with $\Psi_{P(\bar\cA,\eps)}(\bar\cA)\le\eps$. Then the output of \Cref{alg:restart} satisfies
$\E\Cert_{\Rmult,\gamma}(\bar x^{(K)})\le\eps_K\le\eps_{\mathrm s}$, and $\eps_K>\eps_{\mathrm s}/2$.
\end{lemma}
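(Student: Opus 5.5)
The plan is an induction over the stages $k=1,\dots,K$ with the hypothesis $\E\,\tfrac12\norm{\bar x^{(k-1)}-x^\ast_\gamma}^2\le\bar\cA_k=\eps_{k-1}/\mu$. In the base case $k=1$ we have $\bar x^{(0)}=x_0\in\Q$ and $x^\ast_\gamma\in\Q$. Hence $\tfrac12\norm{x_0-x^\ast_\gamma}^2\le D^2/2=\eps_0/\mu=\bar\cA_1$ holds deterministically.

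For the inductive step, stage $k$ starts at $\bar x^{(k-1)}$. This point is a measurable function of $x_0$ and of the oracle samples of stages $1,\dots,k-1$, so it is independent of the fresh samples of stage $k$. It also lies in $\Q$, because both base methods output convex combinations of points of $\Q$. Hypothesis \eqref{eq:restart-hyp} therefore applies conditionally on $\bar x^{(k-1)}$, and taking total expectation we use the concavity of $\Psi:=\Psi_{P(\bar\cA_k,\eps_k)}$ together with Jensen's inequality:
\[
\E\,\Cert_{\Rmult,\gamma}(\bar x^{(k)})\le\E\,\Psi\bigl(\tfrac12\norm{\bar x^{(k-1)}-x^\ast_\gamma}^2\bigr)\le\Psi\bigl(\E\,\tfrac12\norm{\bar x^{(k-1)}-x^\ast_\gamma}^2\bigr)\le\Psi(\bar\cA_k)\le\eps_k .
\]
The third inequality uses that $\Psi$ is nondecreasing. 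Jensen is legitimate because $\Psi$ is concave on $[0,\infty)$ and the argument is bounded by $D^2/2$.

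To close the induction, \Cref{lem:certificate} under \Cref{ass:sc} gives $\frac\mu2\norm{\bar x^{(k)}-x^\ast_\gamma}^2\le\Cert_{\Rmult,\gamma}(\bar x^{(k)})$ pointwise. Taking expectations yields $\E\,\tfrac12\norm{\bar x^{(k)}-x^\ast_\gamma}^2\le\eps_k/\mu=\bar\cA_{k+1}$. At $k=K$ this gives $\E\Cert_{\Rmult,\gamma}(\bar x^{(K)})\le\eps_K$.

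For the final inequalities, recall $K=\lceil\log_2(\eps_0/\eps_{\mathrm s})\rceil$. This gives $2^K\ge\eps_0/\eps_{\mathrm s}$, i.e.\ $\eps_K\le\eps_{\mathrm s}$. It also gives $K<\log_2(\eps_0/\eps_{\mathrm s})+1$, i.e.\ $\eps_K>\eps_{\mathrm s}/2$. Since $\eps_{\mathrm s}<\eps_0$, we have $K\ge1$, so at least one stage is run.

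The only delicate point is the interchange of the conditional bound and the expectation: this is exactly where concavity is needed, and where the independence of the stage-$k$ samples from $\bar x^{(k-1)}$ justifies conditioning. I expect this measurability and independence bookkeeping, together with checking that the outputs stay in $\Q$, to be the main obstacle. Everything else is routine.
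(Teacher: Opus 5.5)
Your proof is correct and follows the paper's argument essentially step for step. You use the same induction on $\E\tfrac12\norm{\bar x^{(k-1)}-x^\ast_\gamma}^2\le\bar\cA_k$, close it by conditioning on the stage's starting point, Jensen's inequality for the concave nondecreasing $\Psi$, and \Cref{lem:certificate}, and then compute $\eps_K$ from the definition of $K$ in the same way; your remark that the stage outputs stay in $\Q$, which \Cref{lem:certificate} needs, is a point the paper leaves implicit.
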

\begin{proof}
We show by induction that $\E\frac12\norm{\bar x^{(k-1)}-x^\ast_\gamma}^2\le\bar\cA_k$ and $\E\Cert(\bar x^{(k)})\le\eps_k$ for $k=1,\dots,K$. For $k=1$,
$\frac12\norm{x_0-x^\ast_\gamma}^2\le D^2/2=\eps_0/\mu=\bar\cA_1$. If the first claim holds for $k$, then by \eqref{eq:restart-hyp}, Jensen's inequality (concavity)
and monotonicity, $\E\Cert(\bar x^{(k)})=\E\,\E[\Cert(\bar x^{(k)})\mid\bar x^{(k-1)}]\le\E\Psi(\tfrac12\norm{\bar x^{(k-1)}-x^\ast_\gamma}^2)\le\Psi(\bar\cA_k)\le\eps_k$;
here $\bar x^{(k-1)}$ is independent of the fresh samples of stage $k$. Then \Cref{lem:certificate} gives $\E\frac12\norm{\bar x^{(k)}-x^\ast_\gamma}^2\le\E\Cert(\bar x^{(k)})/\mu\le\eps_k/\mu=\bar\cA_{k+1}$.
Finally $\eps_K=\eps_02^{-K}\in(\eps_{\mathrm s}/2,\eps_{\mathrm s}]$ by the definition of $K$.
\end{proof}

\subsection{Restarted sliding}
\begin{theorem}[\RZOS]\label{thm:restart-sliding}
Let \Cref{ass:convex,ass:lip,ass:levels,ass:slater,ass:sc} hold, $\gamma\le\bar\gamma$, $L\ge H$, $\eps_{\mathrm s}\in(0,\mu D^2/2)$, and run \Cref{alg:restart} with the base method
\Cref{alg:sliding} and the stage-$k$ parameters
\begin{equation}\label{eq:restart-sliding-params}
\begin{gathered}
 \rs_k^2:=\frac{\cB}{\bar\cA_k},\qquad D_k^2:=4\bar\cA_k,\qquad N_k:=\Bigl\lceil\sqrt{32L/\mu}\Bigr\rceil,\\
 b_{p,k}:=\max\Bigl\{1,\Bigl\lceil\frac{2176\,dM_R^2D_k^2}{K^{(k)}\eps_k^2}\Bigr\rceil\Bigr\},\qquad
 b_{y,k}:=\max\Bigl\{1,\Bigl\lceil\frac{128\,\rs_k^2\sigma_y^2D_k^2}{K^{(k)}\eps_k^2}\Bigr\rceil\Bigr\},\qquad
 \lambda_k:=\frac{\Sigma_{b,k}}{D_k}\sqrt{\frac{A^{(k)}}{2}},
\end{gathered}
\end{equation}
where $\Sigma_{b,k}^2:=17dM_R^2/b_{p,k}+\rs_k^2\sigma_y^2/b_{y,k}$ and $K^{(k)}$, $A^{(k)}$ are the quantities $K_N$, $A_N$ of \eqref{eq:schedule} for $N=N_k$ and
$\rs=\rs_k$. Then $\E\Cert_{\Rmult,\gamma}(\bar x^{(K)})\le\eps_{\mathrm s}$ and, with $K=\lceil\log_2(\mu D^2/(2\eps_{\mathrm s}))\rceil$,
\begin{align}
 \Nseq&=\sum_{k=1}^KN_k\le K\Bigl(\sqrt{32L/\mu}+1\Bigr),\qquad
 \Nmv=\sum_{k=1}^KK^{(k)}\le\Nseq+167\,M_g\sqrt{\frac{\cB}{\mu\eps_{\mathrm s}}},\label{eq:restart-sliding-depth}\\
 \Nf&\le2\Nmv+\frac{1.4\cdot10^5\,dM_R^2}{\mu\eps_{\mathrm s}},\qquad
 \Norc=\Ng\le7\Nmv+\frac{2.8\cdot10^5\,dM_R^2}{\mu\eps_{\mathrm s}}+\frac{8.2\cdot10^3\,\cB\sigma_y^2}{\eps_{\mathrm s}^2},\label{eq:restart-sliding-calls}
\end{align}
where $\cB\sigma_y^2=16\Lam\Rmult^2(\sigma_h^2+\nuj M_g^2D^2)$. In $O$-notation,
\begin{gather*}
 \Nseq=O\Bigl(\sqrt{\tfrac L\mu}\log\tfrac{\mu D^2}{\eps_{\mathrm s}}\Bigr),\qquad
 \Nmv=O\Bigl(\Nseq+M_g\Rmult\sqrt{\tfrac{\log(m+1)}{\mu\eps_{\mathrm s}}}\Bigr),\\
 \Nf=O\Bigl(\Nmv+\frac{dM_R^2}{\mu\eps_{\mathrm s}}\Bigr),\qquad
 \Norc=\Ng=O\Bigl(\Nmv+\frac{dM_R^2}{\mu\eps_{\mathrm s}}+\frac{\Lam\Rmult^2(\sigma_h^2+\nuj M_g^2D^2)}{\eps_{\mathrm s}^2}\Bigr).
\end{gather*}
The number of calls in which the objective is used thus benefits fully from strong convexity, while the $\eps_{\mathrm s}^{-2}$ level term is charged to the
constraint calls only; this is what the two batch sizes of \Cref{alg:sliding} buy.
\end{theorem}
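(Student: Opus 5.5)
The plan is to apply the restart lemma (\Cref{lem:restart}) with \Cref{alg:sliding} as the base method. We need a function $\Psi_P$ of $\cA=\tfrac12\norm{x_0-x^\ast_\gamma}^2$ that is nondecreasing and concave, and that satisfies $\Psi_{P(\bar\cA_k,\eps_k)}(\bar\cA_k)\le\eps_k$. For fixed stage parameters, \Cref{thm:sliding} applied conditionally on $x_0$ gives the bound $W_N^{-1}[(L+2\lambda)\dA^2+A_N\Sb^2/\lambda]$. Here $\dA^2=3\cA+\cB/\rs^2$ is affine and increasing in $\cA$, and $\lambda=\lambda_k$ does not depend on $x_0$. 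So this bound is an affine nondecreasing function of $\cA$, hence concave, and we take it as $\Psi$. The starting point of stage $k$ is the output of stage $k-1$, which is independent of the fresh samples of stage $k$, as the lemma requires.

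\emph{Verifying the stage accuracy.} At $\cA=\bar\cA_k$ with $\rs_k^2=\cB/\bar\cA_k$ we get $\dA^2=3\bar\cA_k+\bar\cA_k=4\bar\cA_k=D_k^2$. Then $\lambda_k$ is exactly the optimizer from \Cref{cor:sliding-lambda} with $\dA=D_k$, so the bound becomes $2LD_k^2/(N_k(N_k+1))+4D_k\sqrt{17dM_R^2/(b_{p,k}K^{(k)})+\rs_k^2\sigma_y^2/(b_{y,k}K^{(k)})}$. For the first term, $N_k^2\ge32L/\mu$ and $D_k^2=4\eps_{k-1}/\mu=8\eps_k/\mu$ give $2LD_k^2/N_k^2\le\eps_k/2$. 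For the second term, the batch sizes copy \eqref{eq:sliding-params} with $\dA^2\to D_k^2$ and $\eps_{\mathrm s}\to\eps_k$, so the same computation as in \Cref{cor:sliding-complexity} bounds it by $\eps_k/2$. We do not need the hypothesis $\eps\le L\dA^2$ here, because $N_k$ is set directly. \Cref{lem:restart} then gives $\E\Cert\le\eps_K\le\eps_{\mathrm s}$.

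\emph{Counting.} The round count is immediate: $\Nseq=\sum_kN_k\le K(\sqrt{32L/\mu}+1)$. For the inner steps, \Cref{lem:schedule}(iii) gives $K^{(k)}\le N_k+\frac{M_g\rs_k}{2L}N_k(N_k+1)$. Using $N_k(N_k+1)\lesssim 32L/\mu$ up to an absolute factor and $\rs_k=\sqrt{\cB\mu/\eps_{k-1}}$, each stage contributes $O(M_g\sqrt{\cB/(\mu\eps_{k-1})})$. This is a geometric series in $2^{k/2}$ dominated by its last term, $\eps_{K-1}=2\eps_K>\eps_{\mathrm s}$. That yields $O(M_g\sqrt{\cB/(\mu\eps_{\mathrm s})})$, and tracking the constants should give the stated $167$.

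For the calls, $b_{p,k}K^{(k)}\le K^{(k)}+2176dM_R^2D_k^2/\eps_k^2=K^{(k)}+O(dM_R^2/(\mu\eps_k))$. Summing over $k$ gives a geometric series in $2^k$, bounded by a constant times $dM_R^2/(\mu\eps_K)\le 2dM_R^2/(\mu\eps_{\mathrm s})$. The dual batches behave differently: $\rs_k^2D_k^2=4\cB$ does not shrink, so $b_{y,k}K^{(k)}\le K^{(k)}+512\cB\sigma_y^2/\eps_k^2$. This sums to $O(\cB\sigma_y^2/\eps_K^2)=O(\cB\sigma_y^2/\eps_{\mathrm s}^2)$ via $\sum_k4^k\le\frac43 4^K$ and $\eps_K>\eps_{\mathrm s}/2$. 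Then $\Nf=\sum2b_{p,k}K^{(k)}$, and $\Norc=\Ng\le\sum(4b_{p,k}+3b_{y,k})K^{(k)}$ as in the proof of \Cref{cor:sliding-complexity}. The identity for $\cB\sigma_y^2$ follows from the definitions of $\cB$ and $\sigma_y^2$.

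\emph{Main obstacle.} The only delicate step is bookkeeping the explicit constants through the geometric sums, with the factor $2$ lost from $\eps_K>\eps_{\mathrm s}/2$ and the bound on $N_k(N_k+1)$ when $N_k$ is small. I would first bound $N_k+1\le 2N_k$ and $N_k\le\sqrt{32L/\mu}+1$, and if needed use $L\ge\mu$. In the worst case only the numerical constants change; the structure of the argument does not.
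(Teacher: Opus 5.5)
Your proposal is correct and follows the paper's proof. The affine-in-$\cA$ bound of \Cref{thm:sliding} with the deterministic $\lambda_k$ serves as $\Psi$ in \Cref{lem:restart}; $\rs_k^2=\cB/\bar\cA_k$ gives $\dA^2=D_k^2=8\eps_k/\mu$, so both terms of the optimized bound are at most $\eps_k/2$; and the counts follow from geometric sums of $\eps_k^{-1/2},\eps_k^{-1},\eps_k^{-2}$ together with $\eps_K>\eps_{\mathrm s}/2$. The only cosmetic difference is that you bound $K^{(k)}$ directly through \Cref{lem:schedule}(iii) and $L\ge\mu$, whereas the paper reuses $K_N\le N+6M_g\rs\dA^2/\eps_{\mathrm s}$ from \Cref{cor:sliding-complexity} and checks that corollary's hypothesis $LD_k^2\ge\eps_k$ via the same fact $L\ge H\ge\mu$; either way the constant stays below $167$.
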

\begin{proof}
By \Cref{thm:sliding} (conditionally on the starting point), the base method with parameters $(N,b_p,b_y,\lambda,\rs)$ satisfies \eqref{eq:restart-hyp} with the
nondecreasing affine function $\Psi(\cA)=W_N^{-1}[(L+2\lambda)(3\cA+\cB/\rs^2)+A_N\Sb^2/\lambda]$. At stage $k$, $3\bar\cA_k+\cB/\rs_k^2=4\bar\cA_k=D_k^2$, and with
$\lambda_k$ as in \eqref{eq:restart-sliding-params} the proof of \Cref{cor:sliding-lambda} gives $\Psi(\bar\cA_k)\le2LD_k^2/(N_k(N_k+1))+4D_k\Sigma_{b,k}/\sqrt{K^{(k)}}$.
Since $D_k^2=4\eps_{k-1}/\mu=8\eps_k/\mu$, the choice $N_k^2\ge32L/\mu=4LD_k^2/\eps_k$ makes the first term at most $\eps_k/2$, and, exactly as in the proof of
\Cref{cor:sliding-complexity}, $b_{p,k}K^{(k)}\ge2176dM_R^2D_k^2/\eps_k^2$ and $b_{y,k}K^{(k)}\ge128\rs_k^2\sigma_y^2D_k^2/\eps_k^2$ make the second at most
$\eps_k/2$. Hence $\Psi(\bar\cA_k)\le\eps_k$ and \Cref{lem:restart} applies.

\emph{Counts.} $\Nseq=KN_k\le K(\sqrt{32L/\mu}+1)$. Since $LD_k^2=8L\eps_k/\mu\ge\eps_k$ (as $L\ge H\ge\mu$ because a $\mu$-strongly convex $L$-smooth function has $L\ge\mu$),
\Cref{cor:sliding-complexity} gives $K^{(k)}\le N_k+6M_g\rs_kD_k^2/\eps_k$ with $\rs_kD_k^2=4\sqrt{\cB\bar\cA_k}=4\sqrt{2\cB\eps_k/\mu}$, \ie\
$K^{(k)}\le N_k+24M_g\sqrt{2\cB/(\mu\eps_k)}$. Now $\eps_k=\eps_K2^{K-k}$ with $\eps_K>\eps_{\mathrm s}/2$, so
$\sum_k\eps_k^{-1/2}\le\eps_K^{-1/2}\sum_{j\ge0}2^{-j/2}\le\frac{\sqrt2}{1-2^{-1/2}}\eps_{\mathrm s}^{-1/2}\le4.83\,\eps_{\mathrm s}^{-1/2}$,
$\sum_k\eps_k^{-1}\le2\eps_K^{-1}\le4\eps_{\mathrm s}^{-1}$ and $\sum_k\eps_k^{-2}\le\frac43\eps_K^{-2}\le\frac{16}{3}\eps_{\mathrm s}^{-2}$. This gives
$\Nmv\le\Nseq+24\sqrt2\cdot4.83\,M_g\sqrt{\cB/(\mu\eps_{\mathrm s})}\le\Nseq+167M_g\sqrt{\cB/(\mu\eps_{\mathrm s})}$. For the calls, $dM_R^2D_k^2=8dM_R^2\eps_k/\mu$ and
$\rs_k^2\sigma_y^2D_k^2=(\cB/\bar\cA_k)\sigma_y^2\cdot4\bar\cA_k=4\cB\sigma_y^2$, so by \eqref{eq:sliding-counts2}
$\Nf^{(k)}\le2K^{(k)}+4352\cdot8\,dM_R^2/(\mu\eps_k)=2K^{(k)}+34816\,dM_R^2/(\mu\eps_k)$ and
$\Ng^{(k)}\le7K^{(k)}+8704\cdot8\,dM_R^2/(\mu\eps_k)+384\cdot4\,\cB\sigma_y^2/\eps_k^2=7K^{(k)}+69632\,dM_R^2/(\mu\eps_k)+1536\,\cB\sigma_y^2/\eps_k^2$. Summing with
$\sum_k\eps_k^{-1}\le4\eps_{\mathrm s}^{-1}$ and $\sum_k\eps_k^{-2}\le\frac{16}3\eps_{\mathrm s}^{-2}$ gives $34816\cdot4\le1.4\cdot10^5$, $69632\cdot4\le2.8\cdot10^5$ and
$1536\cdot\frac{16}3=8192\le8.2\cdot10^3$, which is \eqref{eq:restart-sliding-calls}.
\end{proof}

The term $\cB\sigma_y^2/\eps_{\mathrm s}^2$, which appears in $\Ng=\Norc$ but not in $\Nf$, does not benefit from strong convexity. Its first part, $\Lam\Rmult^2\sigma_h^2/\eps_{\mathrm s}^2$, is unavoidable up to
the factor $\kap\Rmult^2$ for any method, also when the objective is strongly convex (\Cref{thm:lower-levels} and \Cref{rem:lower-levels-sc}: even deciding which
constraint is active at a known point costs $\Omega(\sigma_g^2\log(1+m)/\eps^2)$ calls). Its second part,
$\Lam\Rmult^2\nuj M_g^2D^2/\eps_{\mathrm s}^2$, is the price of the linearization discussed in \Cref{sec:sliding-discussion}; the restarted Mirror--Prox method below does
not have it, at the cost of a larger number of rounds.

\subsection{Restarted Mirror--Prox}
\begin{theorem}[R-\BSMP]\label{thm:restart-mp}
Let \Cref{ass:convex,ass:lip,ass:levels,ass:slater,ass:sc} hold, $\gamma\le\bar\gamma$, $\eps_{\mathrm s}\in(0,\mu D^2/2)$, and run \Cref{alg:restart} with the base method \Cref{alg:mp}
and the stage-$k$ parameters
\begin{equation}\label{eq:restart-mp-params}
\begin{gathered}
 \alpha_k:=\sqrt{\cB/\bar\cA_k},\qquad N_k:=\Bigl\lceil2\sqrt3\Bigl(\frac{4H}{\mu}+2M_g\sqrt{\frac{2\cB}{\mu\eps_k}}\Bigr)\Bigr\rceil,\\
 b_k:=\max\Bigl\{1,\Bigl\lceil\frac{200}{N_k}\Bigl(\frac{4dM_R^2}{\mu\eps_k}+\frac{8\kap\sigma_h^2\cB}{\eps_k^2}\Bigr)\Bigr\rceil\Bigr\},
\end{gathered}
\end{equation}
and the step $\eta$ of \eqref{eq:mp-rate} with $\cD_{\alpha_k}^2$ replaced by $2\sqrt{\cB\bar\cA_k}$. Then $\E\Cert_{\Rmult,\gamma}(\bar x^{(K)})\le\eps_{\mathrm s}$ and, with $K=\lceil\log_2(\mu D^2/(2\eps_{\mathrm s}))\rceil$,
\begin{gather*}
 \Nseq=2\sum_kN_k\le K\Bigl(\frac{16\sqrt3H}{\mu}+2\Bigr)+95\,M_g\sqrt{\frac{\cB}{\mu\eps_{\mathrm s}}},\\
 \Norc\le3\Nseq+\frac{2\cdot10^4\,dM_R^2}{\mu\eps_{\mathrm s}}+\frac{5.2\cdot10^4\,\kap\Rmult^2\log(m+1)\,\sigma_h^2}{\eps_{\mathrm s}^2},
\end{gather*}
\ie\ $\Nseq=O\bigl(\frac H\mu\log\frac{\mu D^2}{\eps_{\mathrm s}}+M_g\Rmult\sqrt{\log(m+1)/(\mu\eps_{\mathrm s})}\bigr)$ and
$\Norc=O\bigl(\Nseq+\frac{dM_R^2}{\mu\eps_{\mathrm s}}+\frac{\Lam\Rmult^2\sigma_h^2}{\eps_{\mathrm s}^2}\bigr)$; no term proportional to $D^2/\eps_{\mathrm s}^2$ appears.
\end{theorem}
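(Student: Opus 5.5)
The proof will mirror that of \Cref{thm:restart-sliding}: verify the hypothesis of \Cref{lem:restart} for \Cref{alg:mp} with the stage-$k$ parameters, then sum the per-stage counts using the geometric decay of $\eps_k$.

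\emph{Step 1 (the function $\Psi$).} By \Cref{thm:mp}, applied conditionally on the starting point, the output satisfies \eqref{eq:mp-bound} with $\cD_\alpha^2=\alpha\cA+\cB/\alpha$. To obtain a nondecreasing concave (affine) function of $\cA$, I will fix the step $\eta_k$ at stage $k$ using the surrogate $\bar\cD_k^2:=2\sqrt{\cB\bar\cA_k}$, which equals $\alpha_k\bar\cA_k+\cB/\alpha_k$ for $\alpha_k=\sqrt{\cB/\bar\cA_k}$. The bound \eqref{eq:mp-bound} then reads $\Psi(\cA)=(\alpha_k\cA+\cB/\alpha_k)/(\eta_kN_k)+3\eta_k\sigma_\ast^2+\sigma_\ast\sqrt{2(\alpha_k\cA+\cB/\alpha_k)/N_k}$.

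\emph{Step 2 (concavity and the stage bound).} The function $\Psi$ is a sum of an affine term, a constant and a square root of an affine function, so it is nondecreasing and concave. This is the point to check carefully: it is why $\eta$ is frozen rather than depending on $\cA$. Evaluating at $\bar\cA_k$ recovers \eqref{eq:mp-rate} with $\cD^2=\bar\cD_k^2$, giving $\Psi(\bar\cA_k)\le\sqrt3L_{\alpha_k}\bar\cD_k^2/N_k+5\sigma_\ast\bar\cD_k/\sqrt{N_k}$. Using $L_\alpha\le H/\alpha+M_g$, $\bar\cA_k=2\eps_k/\mu$ and $\bar\cD_k^2=2\sqrt{2\cB\eps_k/\mu}$, I get $L_{\alpha_k}\bar\cD_k^2\le2H\bar\cA_k+M_g\bar\cD_k^2=4H\eps_k/\mu+2M_g\sqrt{2\cB\eps_k/\mu}$. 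Hence the choice of $N_k$ makes the first term at most $\eps_k/2$. As in \Cref{cor:mp-complexity}, $\sigma_\ast^2\bar\cD_k^2\le\frac2{b_k}(dM_R^2\cdot2\bar\cA_k+8\kap\cB\sigma_h^2)$ and $2\bar\cA_k=4\eps_k/\mu$. Therefore $b_kN_k\ge200(4dM_R^2/(\mu\eps_k)+8\kap\sigma_h^2\cB/\eps_k^2)$ makes $25\sigma_\ast^2\bar\cD_k^2/N_k\le\eps_k^2/4$. Thus $\Psi(\bar\cA_k)\le\eps_k$, and \Cref{lem:restart} gives $\E\Cert\le\eps_{\mathrm s}$.

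\emph{Step 3 (counts).} Each stage uses $2N_k$ rounds and $6b_kN_k\le6N_k+1200(\cdots)$ calls. For the rounds I will use $\sum_k\eps_k^{-1/2}\le4.83\,\eps_{\mathrm s}^{-1/2}$, so that $\sum_kN_k\le K(8\sqrt3H/\mu+1)+2\sqrt3\cdot2\sqrt2\cdot4.83\,M_g\sqrt{\cB/(\mu\eps_{\mathrm s})}$. For the calls I will use $\sum_k\eps_k^{-1}\le4\eps_{\mathrm s}^{-1}$ and $\sum_k\eps_k^{-2}\le\frac{16}3\eps_{\mathrm s}^{-2}$. Together with $\cB=\Rmult^2\log(m+1)$ this gives $1200\cdot4\cdot4\approx2\cdot10^4$ and $1200\cdot8\cdot\frac{16}{3}\approx5.2\cdot10^4$, and $6\sum N_k=3\Nseq$. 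Only the $1/(\mu\eps)$ and level terms appear, so no $D^2/\eps_{\mathrm s}^2$ term arises.

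I expect the main obstacle to be the bookkeeping of the constants in Step 3, in particular the constant $95$, whose derivation via $2\cdot2\sqrt3\cdot2\sqrt2\cdot4.83$ must be checked. The conceptual point, concavity of $\Psi$ with frozen $\eta_k$, is easy once it is set up as in Steps 1 and 2.
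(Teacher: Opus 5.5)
Your proposal is correct and follows the paper's proof step for step. You freeze $\eta$ through the surrogate $\bar\cD_k^2=2\sqrt{\cB\bar\cA_k}$ so that \Cref{rem:mp-random-start} yields a nondecreasing concave $\Psi$, bound $L_{\alpha_k}\bar\cD_k^2$ and $\sigma_\ast^2\bar\cD_k^2$ exactly as the paper does, invoke \Cref{lem:restart}, and sum the counts with the geometric bounds $4.83$, $4$ and $16/3$. The constant you flagged also checks out: $2\cdot2\sqrt3\cdot2\sqrt2=8\sqrt6$, and $8\sqrt6\cdot4.83\approx94.7\le95$.
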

\begin{proof}
Fix a stage and write $\alpha=\alpha_k$, $\bar\cA=\bar\cA_k$, $\bar\cD^2:=2\sqrt{\cB\bar\cA}=\alpha\bar\cA+\cB/\alpha$. By \Cref{rem:mp-random-start}, for a starting point $x_0$
with $\cA=\frac12\norm{x_0-x^\ast_\gamma}^2$ and any deterministic step $\eta\le1/(\sqrt3L_\alpha)$,
\[
 \E[\Cert\mid x_0]\le\Psi(\cA):=\frac{\alpha\cA+\cB/\alpha}{\eta N}+3\eta\sigma_\ast^2+\sqrt{\frac{2(\alpha\cA+\cB/\alpha)\sigma_\ast^2}{N}},
\]
a nondecreasing concave function of $\cA$. With $\eta:=\min\{1/(\sqrt3L_\alpha),\bar\cD/(\sigma_\ast\sqrt{3N})\}$ the computation in the proof of
\Cref{thm:mp} (with $\cD_\alpha$ replaced by $\bar\cD$) gives $\Psi(\bar\cA)\le\sqrt3L_\alpha\bar\cD^2/N+5\sigma_\ast\bar\cD/\sqrt N$.
At stage $k$, $\bar\cA_k=2\eps_k/\mu$, hence $L_{\alpha_k}\bar\cD_k^2\le(H/\alpha_k+M_g)\cdot2\sqrt{\cB\bar\cA_k}=2H\bar\cA_k+2M_g\sqrt{\cB\bar\cA_k}=4H\eps_k/\mu+2M_g\sqrt{2\cB\eps_k/\mu}$,
and $\sigma_\ast^2\bar\cD_k^2\le\frac1{b_k}(2dM_R^2/\alpha_k+8\kap\alpha_k\sigma_h^2)\cdot2\sqrt{\cB\bar\cA_k}=\frac{4}{b_k}(dM_R^2\bar\cA_k+4\kap\sigma_h^2\cB)$ (using $\alpha_k\sqrt{\cB\bar\cA_k}=\cB$ and
$\sqrt{\cB\bar\cA_k}/\alpha_k=\bar\cA_k$). The choices \eqref{eq:restart-mp-params} make both terms at most $\eps_k/2$: $\sqrt3L_{\alpha_k}\bar\cD_k^2/N_k\le\eps_k/2$ iff
$N_k\ge2\sqrt3(4H/\mu+2M_g\sqrt{2\cB/(\mu\eps_k)})$, and $25\sigma_\ast^2\bar\cD_k^2/N_k\le\eps_k^2/4$ iff $b_kN_k\ge400(dM_R^2\bar\cA_k+4\kap\sigma_h^2\cB)/\eps_k^2=200(4dM_R^2/(\mu\eps_k)+8\kap\sigma_h^2\cB/\eps_k^2)$.
\Cref{lem:restart} gives the accuracy claim. The counts follow from $\Nseq=2\sum_kN_k$, $\Norc=6\sum_kb_kN_k\le6\sum_kN_k+1200\sum_k(4dM_R^2/(\mu\eps_k)+8\kap\sigma_h^2\cB/\eps_k^2)$
and the sums $\sum_k\eps_k^{-1/2}\le4.83\eps_{\mathrm s}^{-1/2}$, $\sum_k\eps_k^{-1}\le4\eps_{\mathrm s}^{-1}$, $\sum_k\eps_k^{-2}\le\frac{16}3\eps_{\mathrm s}^{-2}$ computed in the proof of
\Cref{thm:restart-sliding}: $2\sum_kN_k\le2K(8\sqrt3H/\mu+1)+8\sqrt6M_g\sqrt{\cB/\mu}\cdot4.83\,\eps_{\mathrm s}^{-1/2}$ with $8\sqrt6\cdot4.83\le95$; $6\sum_kN_k=3\Nseq$;
$1200\cdot4\cdot4\,dM_R^2/(\mu\eps_{\mathrm s})\le2\cdot10^4dM_R^2/(\mu\eps_{\mathrm s})$; and $1200\cdot8\cdot\frac{16}3\kap\sigma_h^2\cB=51200\,\kap\sigma_h^2\cB\le5.2\cdot10^4\kap\Rmult^2\log(m+1)\sigma_h^2$.
\end{proof}

\section{Exactly known affine constraints: matrix--vector products}\label{sec:affine}

When the constraints are affine and known exactly, $g(x)=Ax-c$ with $A\in\R^{m\times d}$ and $c\in\R^m$, the constraint part of the oracle is not needed:
$h(x)=Ax-c$ (no smoothing, no shift, $b_i=0$), $J_t=A$ for all $t$, and the inner dual directions $q_k=Ap_k-c$ are exact. The only stochastic zeroth-order
information concerns the objective. In this situation \Cref{alg:sliding} simplifies considerably: since the dual step has no noise, the inner stabilizer
is not needed, and since the Jacobian is exact, \emph{one} mini-batch of paired queries per phase suffices for the primal directions (the reuse of the
objective gradient estimate within a phase is harmless, as in accelerated stochastic gradient methods, because the primal error only has to be centered).
Each inner step then costs one multiplication by $A^\top$ and one by $A$, which is the matrix--vector product count of ACGD-S \citep{zhanglan2022}; here the
inner steps are oracle-free in the literal sense. In the accounting of \Cref{def:measures}, round $t$ issues $2b$ vector calls, all of them used for the objective
only, so that $\Norc=\Nf=2bN$ and $\Ng=0$. We write $M_A:=\rowinf A$.

\begin{algorithm}[t]
\caption{\AZOS: sliding with exactly known affine constraints}\label{alg:affine}
\begin{algorithmic}[1]
\Require $L\ge L_{0,\gamma}$, $M_A$, $\Rmult$, $\gamma$, $\rs>0$, $\lambda\ge0$, $b\ge1$, $N$; schedule $\tau_t,\theta_t$ as in \eqref{eq:schedule},
$S_t=\max\{1,\lceil M_A\rs t/L\rceil\}$, $a_t=t/S_t$, $\eta_t=(2L+\lambda)/t$, $\beta_t=L/a_t$; initialization as in \Cref{alg:sliding}.
\For{$t=1,\dots,N$}
 \State $u_t:=[\tau_tu_{t-1}+x_{t-1}+\theta_t(x_{t-1}-x_{t-2})]/(1+\tau_t)$.
 \State \textbf{Round $t$.} Issue $b$ paired queries of the objective at $u_t$ and form $\hat g_t:=\hat g_0^{A_t}(u_t)$.
 \For{$s=1,\dots,S_t$}
  \State $k:=k+1$;\quad $\hat r_k:=\hat g_t+A^\top y_{k-1}+\rho_kA^\top(y_{k-1}-y_{k-2})$
  \Statex\hspace{2\algorithmicindent} with $\rho_k=1$ for $s\ge2$, $\rho_k=a_{t-1}/a_t$ for $s=1$, $t\ge2$ (no last term for $k=1$).
  \State $p_k:=\argmin_{x\in\Q}\{\inner{\hat r_k}{x}+\eta_t\Vx(x_{t-1},x)+\beta_t\Vx(p_{k-1},x)\}$;\quad $q_k:=Ap_k-c$;
  \State $\tilde y_k:=\argmax_{\tilde y\in\tilde Y_\Rmult}\{\inner{(0,q_k)}{\tilde y}-\rs^{-2}\beta_t\Vy(\tilde y_{k-1},\tilde y)\}$.
 \EndFor
 \State $x_t:=\frac1{S_t}\sum_{k}p_k$ over the slots of phase $t$.
\EndFor
\Ensure $\Xr_N:=W_N^{-1}\sum_tt\,x_t$.
\end{algorithmic}
\end{algorithm}

\begin{theorem}[\AZOS]\label{thm:affine}
Let \Cref{ass:convex,ass:lip,ass:slater} hold for the problem with $g(x)=Ax-c$, let $\gamma\le\bar\gamma$, $\Rmult$ as in \eqref{eq:R}, $L\ge L_{0,\gamma}$ and
$\lambda>0$. Then the output of \Cref{alg:affine} satisfies
\begin{equation}\label{eq:affine-main}
 W_N\,\E\,\Cert_{\Rmult,\gamma}(\Xr_N)\le(3L+\lambda)\cA+\frac{L\cB}{\rs^2}+\frac{\sigma_f^2}{2\lambda b}\sum_{t=1}^Nt^2,\qquad \sigma_f^2:=2dM_0^2 ,
\end{equation}
and with $\lambda:=\sigma_f\sqrt{\sum_tt^2/(2b\cA)}$
\begin{equation}\label{eq:affine-rate}
 \E\,\Cert_{\Rmult,\gamma}(\Xr_N)\le\frac{2L\dA^2}{N(N+1)}+2\sigma_f\sqrt{\frac{\cA}{b(N+1)}},\qquad \dA^2=3\cA+\frac{\cB}{\rs^2}.
\end{equation}
\end{theorem}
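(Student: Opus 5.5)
The plan is to rerun the proof of \Cref{thm:sliding} in the simplified affine setting, with three changes: the dual errors vanish, the primal error is constant within a phase, and the stabilizer sits in $\eta_t$ rather than in $\beta_t$. Since $h(x)=Ax-c$ and $J_t=A$, the linearization $\ell_t(x,y)$ is exact in the constraint part. In particular $\Phi_\gamma(\cdot,y)=f_\gamma+\inner{y}{A\cdot-c}$ is convex and $L_{0,\gamma}$-smooth on $\R^d$, so \Cref{lem:outer} applies with $L\ge L_{0,\gamma}$.

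\textbf{Step 1 (pathwise energy inequality).} I will rederive the pathwise energy inequality of \Cref{lem:inner} for \Cref{alg:affine}. The setting there is $e^y_k=0$ and $e^p_k=e_t:=\hat g_t-\nabla f_\gamma(u_t)$ for all slots of phase $t$, with $M_g$ replaced by $M_A$. The same three telescoping groups should appear:
\begin{itemize}
\item[(ii)] The $\beta$-terms telescope over slots because $a_t\beta_t=L$ is constant. Their initial contribution is $L\cA+L\cB/\rs^2$ (primal and dual $\beta$-prox terms).
\item[(iii)] The bilinear cross terms telescope across phase boundaries via $\rho_k=a_{t-1}/a_t$. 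The leftovers are absorbed by Young's inequality using $a_tM_A\rs\le L$ (\Cref{lem:schedule}(i) with $M_A$), without any stabilizer, since there is no dual noise.
\item[(i)] The $\eta$-terms: summing $a_t\eta_t\Vx(x_{t-1},p_k)$ over the $S_t$ slots and using Jensen ($x_t$ is the slot average, $a_tS_t=t$) gives $t\eta_t\cdot\frac12\norm{d_t}^2=\frac{2L+\lambda}{2}\norm{d_t}^2$. Compared with \Cref{lem:inner}, this leaves an extra $\frac\lambda2\sum_t\norm{d_t}^2$ on the left. The initial $\eta$-term contributes $(2L+\lambda)\cA$, so the total constant is $(3L+\lambda)\cA+L\cB/\rs^2$.
\end{itemize}

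\textbf{Step 2 (error term).} The error term becomes $-\sum_k a_k\inner{e_t}{p_k-x^\ast_\gamma}=-\sum_t t\inner{e_t}{x_t-x^\ast_\gamma}$. I split $x_t-x^\ast_\gamma=d_t+(x_{t-1}-x^\ast_\gamma)$. Since $x_{t-1}$ and $u_t$ are measurable before round $t$ and $\E[e_t\mid\text{past}]=0$ by \Cref{lem:twopoint}, the second part has zero expectation. For the first part, Young's inequality gives $-t\inner{e_t}{d_t}\le\frac{t^2}{2\lambda}\norm{e_t}^2+\frac\lambda2\norm{d_t}^2$, and the last term is cancelled by the extra term from Step 1. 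A batch of $b$ independent two-point estimators satisfies $\E\norm{e_t}^2\le 2dM_0^2/b=\sigma_f^2/b$.

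\textbf{Step 3 (certificate).} As in Step 4 of \Cref{thm:sliding}, I apply \Cref{lem:outer} to $\Phi_\gamma(\cdot,y)$ and use $\ell_t(x^\ast_\gamma,\bar y_t)\le f^\ast_\gamma$. Taking the supremum over $y$ is harmless here, since the only $y$-dependence is through the deterministic $\cB$ term. This yields \eqref{eq:affine-main}.

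\textbf{Step 4 (rate).} For \eqref{eq:affine-rate}, the choice of $\lambda$ balances $\lambda\cA$ against $\sigma_f^2\sum t^2/(2\lambda b)$, giving $2\sigma_f\sqrt{\cA\sum_tt^2/(2b)}$. I then use $\sum_tt^2/W_N^2=\frac{2(2N+1)}{3N(N+1)}\le\frac{2}{N+1}$, and bound $(3L\cA+L\cB/\rs^2)/W_N=2L\dA^2/(N(N+1))$.

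The main obstacle is Step 1. One must check that moving $\lambda$ from $\beta_t$ into $\eta_t$ does not break the telescoping of the $\beta$-terms. One must also confirm that the cross-term absorption in the proof of \Cref{lem:inner} needs only $a_t\beta_t\ge L$ together with $a_tM_A\rs\le L$. I would verify this first by redoing that absorption explicitly with $\lambda=0$ in the $\beta$-part.
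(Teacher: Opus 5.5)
Your proposal is correct and follows the paper's proof essentially step for step. It specializes \Cref{lem:inner} in the same way: the extra $\frac\lambda2\sum_t\norm{d_t}^2$ coming from $t\eta_t=2L+\lambda$ absorbs the Young term of the phase-constant error after the split $x_t-x^\ast_\gamma=d_t+(x_{t-1}-x^\ast_\gamma)$, and $\lambda$ is optimized with $\sum_tt^2\le NW_N$ just as you do. The point you flag as the main obstacle does check out. With $a_t\beta_t=L$ exactly, the $\beta$-prox terms supply precisely $L/2$ per increment, and $-L\Vx(p_K,x^\ast_\gamma)$ cancels the boundary term $C_K$, so the inner loop needs no stabilizer.
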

\begin{proof}
We specialize the argument of \Cref{lem:inner} to the affine case; the details are given in \Cref{app:affine}. The dual error vanishes,
$e^y_k=0$, and the primal error of every slot of phase $t$ equals $\hat e_t:=\hat g_t-\nabla f_\gamma(u_t)$, which is independent of $\cF_{t-1}$ (the
information before round $t$) with $\E\hat e_t=0$ and $\E\norm{\hat e_t}^2\le\sigma_f^2/b$ (\Cref{lem:twopoint}). With $a_k\beta_k=L$ the inner primal terms
telescope to $L[\Vx(p_0,x)-\Vx(p_K,x)]-L\sum_k\Vx(p_{k-1},p_k)$, which is exactly what is needed to absorb the bilinear cross terms (they need $L/2$ per
increment, see \eqref{eq:cross-bound}); the dual terms telescope to $\rs^{-2}L[\Vy(\tilde y_0,\tilde y)-\Vy(\tilde y_K,\tilde y)-\sum_k\Vy(\tilde y_{k-1},\tilde y_k)]$; and the outer
terms, with $t\eta_t=2L+\lambda$, telescope to $(2L+\lambda)[\Vx(x_0,x)-\Vx(x_N,x)]-(L+\tfrac\lambda2)\sum_t\norm{d_t}^2$. Hence, pathwise for every $y\in Y_\Rmult$,
\[
 \sum_tt\bigl[\ell_t(x_t,y)-\ell_t(x^\ast_\gamma,\bar y_t)\bigr]+\Bigl(L+\frac\lambda2\Bigr)\sum_t\norm{d_t}^2\le(3L+\lambda)\cA+\frac{L\cB}{\rs^2}-\sum_ka_k\inner{\hat e_{t(k)}}{p_k-x^\ast_\gamma}.
\]
The error sum equals $\sum_tt\inner{\hat e_t}{x_t-x^\ast_\gamma}$ because $\sum_{k\in\text{phase }t}a_tp_k=tx_t$. Splitting $x_t-x^\ast_\gamma=d_t+(x_{t-1}-x^\ast_\gamma)$ and using
Young's inequality, $-t\inner{\hat e_t}{d_t}\le\frac{t^2}{2\lambda}\norm{\hat e_t}^2+\frac\lambda2\norm{d_t}^2$, where the increment term is cancelled by the left-hand side. The
remaining error terms $-t\inner{\hat e_t}{x_{t-1}-x^\ast_\gamma}$ have zero mean because $x_{t-1}$ is $\cF_{t-1}$-measurable. Since the right-hand side no longer depends on
$y$, taking the supremum over $y\in Y_\Rmult$, using \Cref{lem:outer} for $\mathsf F=\LagS(\cdot,y)$ (which is $L$-smooth because $L_{g,\gamma}=0$) and
\eqref{eq:restrictedgap} exactly as in Step 4 of the proof of \Cref{thm:sliding}, and taking expectations gives \eqref{eq:affine-main}. Minimizing
$\lambda\cA+\sigma_f^2\sum_tt^2/(2\lambda b)$ over $\lambda$ gives $\sigma_f\sqrt{2\cA\sum_tt^2/b}$; since $\sum_{t=1}^Nt^2=\frac{N(N+1)(2N+1)}6\le NW_N$ and $3L\cA+L\cB/\rs^2\le L\dA^2$,
dividing by $W_N$ yields \eqref{eq:affine-rate}.
\end{proof}

\begin{corollary}[complexity of \AZOS]\label{cor:affine}
In the setting of \Cref{thm:affine}, let $\eps_{\mathrm s}\in(0,L\dA^2]$ and choose $N:=\lceil2\dA\sqrt{L/\eps_{\mathrm s}}\rceil$,
$b:=\max\{1,\lceil16\sigma_f^2\cA/((N+1)\eps_{\mathrm s}^2)\rceil\}$ (with $\cA$ replaced by the known bound $D^2/2$ if necessary). Then $\E\Cert_{\Rmult,\gamma}(\Xr_N)\le\eps_{\mathrm s}$,
\begin{gather*}
 \Nseq=N\le2\dA\sqrt{\frac L{\eps_{\mathrm s}}}+1,\qquad \Norc=\Nf=2bN\le2N+\frac{64\,dM_0^2\cA}{\eps_{\mathrm s}^2},\\
 \Ng=0,\qquad \Nmv=2K_N\le2N+\frac{12\,M_A\rs\dA^2}{\eps_{\mathrm s}} .
\end{gather*}
With $\rs=\sqrt{2\cB}/D$: $\dA^2\le2D^2$ and $\Nmv\le2N+34\,M_A\Rmult D\sqrt{\log(m+1)}/\eps_{\mathrm s}$. For a nonsmooth objective ($\gamma=\eps/(4M_0)$, $L=\sqrt dM_0/\gamma$, $\eps_{\mathrm s}=\eps/2$,
$\eps\le2d^{1/4}M_0D$ so that $\eps_{\mathrm s}\le L\dA^2$) this gives $\Nseq\le8d^{1/4}M_0D/\eps+1$ and $\Nf=O(\Nseq+dM_0^2D^2/\eps^2)$; for an $L_0$-smooth objective
($\gamma$ arbitrarily small, $L=L_0$, $\eps\le L_0D^2$) $\Nseq\le4D\sqrt{L_0/\eps}+1$.
\end{corollary}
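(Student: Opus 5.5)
The plan is to mirror the proof of \Cref{cor:sliding-complexity}, now starting from the affine rate \eqref{eq:affine-rate}. First, the optimization term. With $N\ge2\dA\sqrt{L/\eps_{\mathrm s}}$ we get $2L\dA^2/(N(N+1))\le2L\dA^2/N^2\le\eps_{\mathrm s}/2$. Second, the noise term. The choice $b\ge16\sigma_f^2\cA/((N+1)\eps_{\mathrm s}^2)$ gives $\cA/(b(N+1))\le\eps_{\mathrm s}^2/(16\sigma_f^2)$, hence $2\sigma_f\sqrt{\cA/(b(N+1))}\le\eps_{\mathrm s}/2$. If $\cA$ is replaced by $D^2/2\ge\cA$, the same holds: $\lambda$ is then computed with the bound, and \eqref{eq:affine-main} is monotone in $\cA$, so I would rerun the minimization over $\lambda$ with $\bar\cA\ge\cA$ in place of $\cA$. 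Adding the two halves gives $\E\Cert_{\Rmult,\gamma}(\Xr_N)\le\eps_{\mathrm s}$.

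Next, the counts. $\Nseq=N$ is immediate. Put $\vartheta:=\dA\sqrt{L/\eps_{\mathrm s}}\ge1$, which holds by $\eps_{\mathrm s}\le L\dA^2$; then $N\le2\vartheta+1$. For the calls, $\Norc=\Nf=2bN$ and $b\le1+16\sigma_f^2\cA/((N+1)\eps_{\mathrm s}^2)$, so $2bN\le2N+32\sigma_f^2\cA/\eps_{\mathrm s}^2=2N+64dM_0^2\cA/\eps_{\mathrm s}^2$ using $\sigma_f^2=2dM_0^2$. $\Ng=0$ holds because no constraint query is issued.

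For $\Nmv$, each inner step uses one product with $A^\top$ and one with $A$, so $\Nmv=2K_N$. \Cref{lem:schedule}(iii) applies verbatim with $M_g$ replaced by $M_A$, since only the form of $S_t$ matters. It gives $K_N\le N+\frac{M_A\rs}{2L}N(N+1)$, and with $N(N+1)\le12\vartheta^2=12L\dA^2/\eps_{\mathrm s}$ we obtain $2K_N\le2N+12M_A\rs\dA^2/\eps_{\mathrm s}$.

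Finally, the specializations. For $\rs=\sqrt{2\cB}/D$ we have $\dA^2\le\frac32D^2+\frac12D^2=2D^2$ and $12M_A\rs\dA^2\le24\sqrt2M_A\Rmult D\sqrt{\log(m+1)}\le34(\cdots)$, using $\cB=\Rmult^2\log(m+1)$. In the nonsmooth case we take $L=4\sqrt dM_0/\eps$ and $\eps_{\mathrm s}=\eps/2$, which gives $N\le2\sqrt2D\sqrt{8\sqrt dM_0/\eps^2}+1=8d^{1/4}M_0D/\eps+1$. The hypothesis $\eps\le2d^{1/4}M_0D$ is equivalent to $\eps/2\le LD^2/2\le L\dA^2$, because $\dA^2\ge\cB/\rs^2=D^2/2$. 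The call count then follows from $\cA\le D^2/2$. Transfer to the original problem is by \Cref{lem:transfer}(c) with $b_0\le\eps/4$ and $b_g=0$. In the smooth case $L_{0,\gamma}=L_0$ for every $\gamma$, and $N\le2\sqrt2D\sqrt{2L_0/\eps}+1=4D\sqrt{L_0/\eps}+1$.

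The main obstacle is justifying the use of the upper bound $D^2/2$ in place of the unknown $\cA$. Everything else is bookkeeping identical to \Cref{cor:sliding-complexity}.
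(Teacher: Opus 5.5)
Your proof is correct and follows the same route as the paper: you bound each half of \eqref{eq:affine-rate} by $\eps_{\mathrm s}/2$, apply \Cref{lem:schedule}(iii) with $M_A$ in place of $M_g$, use $N(N+1)\le12L\dA^2/\eps_{\mathrm s}$, and conclude with $24\sqrt2\le34$. You also justify substituting $D^2/2$ for $\cA$ more explicitly than the paper does. One slip to fix: in the nonsmooth case $L=\sqrt dM_0/\gamma=4\sqrt dM_0^2/\eps$, not $4\sqrt dM_0/\eps$, so the radicand should read $8\sqrt dM_0^2/\eps^2$; with this correction your bound $N\le8d^{1/4}M_0D/\eps+1$ and the equivalence $\eps\le2d^{1/4}M_0D\iff\eps/2\le LD^2/2$ both hold as you state them.
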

\begin{proof}
As in \Cref{cor:sliding-complexity}: the two terms of \eqref{eq:affine-rate} are at most $\eps_{\mathrm s}/2$ each, $K_N\le N+\frac{M_A\rs}{2L}N(N+1)\le N+6M_A\rs\dA^2/\eps_{\mathrm s}$
(each inner step performs two matrix--vector products), $2bN\le2N+32\sigma_f^2\cA N/((N+1)\eps_{\mathrm s}^2)\le2N+32\sigma_f^2\cA/\eps_{\mathrm s}^2$, and $6\sqrt2\cdot2\le17\cdot2$.
\end{proof}

The objective-call complexity $O(dM_0^2D^2/\eps^2)$ and the round complexities $O(\sqrt{L_0D^2/\eps})$ and $O(d^{1/4}M_0D/\eps)$ are those of the
\emph{unconstrained} problem \citep{gasnikov2022power}; the constraints cost only $O(M_A\Rmult D\sqrt{\log(m+1)}/\eps)$ matrix--vector products, which is the
ACGD-S accounting \citep[Cor.~3]{zhanglan2022} with the Euclidean dual norm replaced by the entropy geometry (whence $\sqrt{\log(m+1)}$ instead of $\sqrt m$
for the dual radius). The order $O(1/\eps)$ of $\Nmv$ cannot be improved by methods that access $A$ only through matrix--vector products
\citep{ouyang2021lower}; that lower bound concerns the exponent of $\eps$ and does not address the exact dependence on $M_A$, $\Rmult$, $D$ and $m$.

\section{Lower bounds}\label{sec:lower}

We now show that the $\eps^{-2}$ terms in the call complexities of \Cref{sec:sliding,sec:restarts} are unavoidable, and we identify which ingredients are
responsible for them: the dimension factor $d$ for the two-point information about gradients (objective \emph{and} constraints), and the factor
$\sigma_g^2\log(1+m)$ for the information about the constraint levels. All lower bounds are minimax statements over classes of problems satisfying
\Cref{ass:convex,ass:lip,ass:levels,ass:slater} (with the indicated constants); they hold for arbitrary, possibly randomized and adaptive, algorithms
whose output $\hat x$ is any measurable function of the observations. \Cref{thm:lower-objective,thm:lower-constraint} count paired queries (one paired query is
two vector calls, so the bounds hold for $\Norc$ with the constant halved), \Cref{thm:lower-levels} counts vector calls and \Cref{thm:lower-scalar} scalar calls.
The proofs are given in \Cref{app:lower}.

\subsection{Two-point calls for the objective}
\begin{theorem}[objective information]\label{thm:lower-objective}
Let $d\ge1$, $M>0$, $R_x>0$, $\Q=R_x\Ball$, $m\ge1$ and $T\ge1$. For every algorithm that makes at most $T$ paired queries and outputs $\hat x\in\Q$ there is a problem
in the class of linear objectives $F(x,\xi)=\inner{\xi}{x}$ with $\xi\sim\mathcal N(\theta,s^2I_d)$, $\norm\theta\le M/2$, $s=M/\sqrt{2d}$ (which satisfy \Cref{ass:lip} with $M_0=M$)
and inactive constraints $G_i\equiv-1$ (\Cref{ass:levels} with $\sigma_g=0$, \Cref{ass:slater} with $\rho=1$) such that
\[
 \E\bigl[f(\hat x)-f^\ast\bigr]\ \ge\ \frac{MR_x}{32}\,\min\Bigl\{1,\sqrt{\frac dT}\Bigr\}.
\]
Consequently, an $\eps$-solution in expectation with $\eps<MR_x/32$ requires $T\ge dM^2R_x^2/(1024\eps^2)$ paired queries.
\end{theorem}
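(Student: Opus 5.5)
The plan is a standard Bayesian lower bound. We put a Gaussian prior on $\theta$ and bound the posterior uncertainty of $\theta$ given the transcript. The key observation concerns what a paired query reveals. With a common sample, a paired query at $(x^+,x^-)$ returns $\inner{\xi}{x^+}$ and $\inner{\xi}{x^-}$, which is equivalent to observing $\inner{\xi}{x^+}$ and $\inner{\xi}{x^+-x^-}$. In the worst case, these are two linear Gaussian measurements of $\theta$: they have design directions $x^+$ and $x^+-x^-$ and noise $\mathcal N(0,s^2\norm{w}^2)$. Since only the directions matter, we may treat each paired query as two linear projections $\inner{\xi}{w_1},\inner{\xi}{w_2}$ with unit vectors $w_j$ chosen adaptively. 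Giving the algorithm the full projection of $\xi$ onto $\mathrm{span}(w_1,w_2)$ can only help it. The algorithm therefore receives at most $2T$ noisy scalar observations $\inner{\theta}{w}+s\,\zeta$ with $\zeta\sim\mathcal N(0,1)$ and $\norm{w}=1$.

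First, I will take the prior $\theta\sim\mathcal N(0,\tau^2I_d)$, truncated or conditioned to $\norm\theta\le M/2$. For the linear objective, $f(x)=\inner\theta x$ and $f^\ast=-R_x\norm\theta$, so the gap is $R_x\norm\theta+\inner\theta{\hat x}$. The Bayes-optimal $\hat x$ is $-R_x\,\E[\theta\mid\text{obs}]/\norm{\E[\theta\mid\text{obs}]}$. The expected gap is then at least of order $R_x\,\E\bigl(\norm\theta-\norm{\E[\theta\mid\mathcal O]}\bigr)$, and this can be related to the posterior variance via $\E\norm{\theta}^2-\E\norm{\E[\theta\mid\mathcal O]}^2=\E\tr\Cov(\theta\mid\mathcal O)$. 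In the untruncated Gaussian model with linear Gaussian observations, the posterior covariance is $(\tau^{-2}I+s^{-2}\sum_jw_jw_j^\top)^{-1}$ whatever the adaptive choice of the $w_j$. Its trace is at least $\sum_i 1/(\tau^{-2}+s^{-2}\lambda_i)$, where $\sum_i\lambda_i\le2T$. By convexity this is minimized by equal eigenvalues, which gives $\tr\ge d/(\tau^{-2}+2T/(s^2d))$.

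Next, I will choose $\tau^2\asymp\min\{M^2/d,\ s^2d/T\}=\min\{M^2/d, M^2/(2T)\}$ up to constants. With this choice the posterior retains a constant fraction of the prior variance, so $\E\tr\Cov\gtrsim d\tau^2$. Meanwhile $\norm\theta$ concentrates around $\sqrt d\tau\asymp M\min\{1,\sqrt{d/T}\}$, which is at most $M/2$ with high probability. To convert variance into a gap, I use the identity $\norm\theta-\inner{\theta}{\hat x}/(-R_x)\ge\frac{1}{2\norm\theta}\norm{\theta/\norm\theta\,\norm\theta - \ldots}$. More simply, for a unit vector $v=-\hat x/R_x$ (an interior $\hat x$ only worsens things), I use $\norm\theta-\inner\theta v=\frac{1}{2\norm\theta}\bigl(\norm{\theta-\norm\theta v}^2\bigr)$. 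Combined with concentration of $\norm\theta$ and the posterior variance bound, this yields $\E[\text{gap}]\gtrsim R_x\,d\tau^2/(\sqrt d\tau)=R_x\sqrt d\tau$, which is of order $MR_x\min\{1,\sqrt{d/T}\}$. A minimax-over-Bayes argument then gives an instance $\theta$ that is at least as bad.

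I expect the main obstacle to be the truncation and constant bookkeeping needed to reach exactly $1/32$. The Gaussian prior must be restricted to $\norm\theta\le M/2$ while keeping the posterior-covariance computation valid. The cleanest route is to condition the error bound on the high-probability event $\{\norm\theta\le M/2\}$ and subtract a small tail contribution, or alternatively to use a Gaussian prior and argue through Fano/Assouad on a hypercube $\theta\in\{\pm\delta\}^d$. If the Gaussian route becomes messy, I will switch to Assouad. The per-coordinate KL of $2T$ adaptive projections is at most $\sum_j\inner{w_j}{e_i}^2\delta^2/s^2$ summed over coordinates, which is $\le 2T\cdot 4\delta^2/s^2$ in total, and the gap decomposes coordinate-wise as $R_x\delta\cdot\#\{\text{sign errors}\}/\sqrt d$. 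Both routes need the Lipschitz check $\E\norm\xi^2=\norm\theta^2+ds^2\le M^2/4+M^2/2\le M^2$, which is immediate. The consequence follows by solving $MR_x\sqrt{d/T}/32\le\eps$ for $T$.
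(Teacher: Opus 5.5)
Your fallback is the paper's proof: Assouad over $\theta_v=\delta v$ with $v\in\{-1,1\}^d$, the chain rule, and the fact that a paired query reveals only the projection of $\xi$ onto $\mathrm{span}\{x^+,x^-\}$. Your primary Gaussian-prior route is different, and as sketched it does not establish the statement.

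The identity $\norm\theta-\inner\theta v=\norm{\theta-\norm\theta v}^2/(2\norm\theta)$ is fine. The problem is that the comparator $\norm\theta v$ is not a function of the transcript, so $\E\norm{\theta-\hat\theta}^2\ge\E\tr\operatorname{Cov}(\theta\mid\mathcal O)$ says nothing about it.
\begin{itemize}
\item Replacing $\norm\theta$ by a deterministic radius $r$ costs $\E(\norm\theta-r)^2$. This is of order $\tau^2$ in every dimension, so it is comparable to the trace $\asymp d\tau^2$ when $d$ is small. For $d=1$ the gap of $\hat x=\pm R_x$ is $2R_x|\theta|$ times the indicator of a sign error, which no trace bound captures.
\item The restriction $\norm\theta\le M/2$ is not a negligible tail for small $d$ unless $\tau$ is shrunk further, and that costs constants.
\item Your remark that an interior $\hat x$ only makes things worse is false pathwise. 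It is harmless only for the Bayes-optimal output, whose posterior risk is linear in $\hat x$.
\end{itemize}
As sketched, this route yields at best an unspecified constant for $d$ above some threshold, not $\frac{MR_x}{32}\min\{1,\sqrt{d/T}\}$ for all $d\ge1$. The hypercube avoids all of this, because $\norm{\theta_v}=\delta\sqrt d$ exactly and no truncation is needed.

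In the fallback, two constants must be corrected to reach $1/32$.

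First, the gap is not additive over coordinates, and the lower bound $R_x\delta\cdot\#\{\text{sign errors}\}/\sqrt d$ is false. The paper sets $\hat v_j=-\operatorname{sign}(\hat x_j)$ and applies Cauchy--Schwarz on the correctly signed coordinates to get $\inner v{\hat x}\ge-R_x\sqrt{d-\Ham(\hat v,v)}$. Then $\sqrt d-\sqrt{d-k}\ge k/(2\sqrt d)$ gives a gap of at least $\frac{\delta R_x}{2\sqrt d}\Ham(\hat v,v)$. The factor $\frac12$ cannot be dropped: a single sign error costs only about $\delta R_x/(2\sqrt d)$.

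Second, the information count is off. Flipping coordinate $j$ moves the mean by $2\delta e_j$, so a paired query with $X=[x^+\ x^-]$ contributes $\frac{2\delta^2}{s^2}\norm{\Pi_Xe_j}^2$. Your per-coordinate value $\delta^2/s^2$ is too small by a factor $2$. Summing over $j$ with $\tr\Pi_X\le2$ gives a total of $4T\delta^2/s^2$, not $8T\delta^2/s^2$. With your total, even the optimal choice of $\delta$ only gives $\frac{MR_x}{32\sqrt2}\sqrt{d/T}$.

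With the sharp total, Pinsker and Cauchy--Schwarz over $j$ give $\sum_j\TV(P_v,P_{v^{(j)}})\le\delta\sqrt{2dT}/s$. Take $\delta=M/(4\sqrt T)$ if $T\ge d/4$ and $\delta=M/(2\sqrt d)$ otherwise. Then the average Hamming distance is at least $d/4$, and you obtain exactly $\frac{MR_x}{32}\min\{1,\sqrt{d/T}\}$.
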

The proof is Assouad's method with $d$ binary parameters and the chain rule for the Kullback--Leibler divergence; it is the two-point (and thus
noise-cancelling) version of the argument of \citet{duchi2015optimal,shamir2017optimal}: even though the difference $F(x^+,\xi)-F(x^-,\xi)=\inner\xi{x^+-x^-}$
removes the noise in the value, a paired query reveals only the projection of the unknown mean onto the span of the two query points, a linear measurement of
rank at most two, and $d$ coordinates of the mean have to be estimated to accuracy $\eps/R_x$. Together with \Cref{cor:affine} this shows that the term $dM_0^2D^2/\eps^2$ in the objective calls is optimal up to constants, and in the affine case
(\Cref{thm:affine}) the whole objective complexity is optimal up to the lower-order term $\Nseq$; in \Cref{cor:sliding-nonsmooth} the objective and the
constraint information are estimated jointly and enter through $M_R=M_0+\Rmult M_g$, so there the match is up to the multiplier factor.

\subsection{Two-point calls for the constraints}
\begin{theorem}[constraint information]\label{thm:lower-constraint}
Let $d\ge1$, $M>0$, $R_x>0$, $T\ge1$. Consider problems in the variables $z=(x,t)\in\Q':=R_x\Ball\times[-2R_x,2R_x]$ with the known deterministic objective
$f(z)=Mt$ and the single stochastic constraint $G(z,\xi)=\inner{\xi}{x}-Mt$, $\xi\sim\mathcal N(\theta,s^2I_d)$, $\norm\theta\le M/2$, $s=M/\sqrt{2d}$. The class has
diameter $D=2\sqrt5R_x$ and satisfies \Cref{ass:lip} with $M_0=M$ and $M_g=\frac{\sqrt7}2M$ (since $\E\norm{(\xi,-M)}^2\le\frac34M^2+M^2$),
\Cref{ass:levels} on $\Q'_{\bar\gamma}$ with $\sigma_g^2=s^2(R_x+\bar\gamma)^2$, and \Cref{ass:slater} with $\rho=MR_x$ at $z=(0,R_x)$; the active multiplier equals
$1$ in every problem of the class. For every algorithm that makes at most $T$ paired queries of the constraint and outputs $\hat z\in\Q'$ there is a problem in
this class with
\[
 \E\Bigl[\bigl(f(\hat z)-f^\ast\bigr)+\pos{g(\hat z)}\Bigr]\ \ge\ \frac{MR_x}{32}\,\min\Bigl\{1,\sqrt{\frac dT}\Bigr\}
 =\frac{M_gD}{32\sqrt{35}}\,\min\Bigl\{1,\sqrt{\frac dT}\Bigr\}.
\]
Consequently, an $\eps$-solution in expectation in the sense of \eqref{eq:goal} with $\eps<MR_x/64$ requires
$T\ge dM^2R_x^2/(4096\,\eps^2)=dM_g^2D^2/(143360\,\eps^2)$ paired queries.
\end{theorem}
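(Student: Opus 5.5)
The plan is to reduce the statement to \Cref{thm:lower-objective} through the epigraph structure of the class. First I verify the class properties. For a fixed $\theta$ we have $g(z)=\inner\theta x-Mt$, and the problem $\min Mt$ subject to $\inner\theta x\le Mt$ over $\Q'$ is the epigraph form of $\min_{x\in R_x\Ball}\inner\theta x$. Since $|\inner\theta x|\le MR_x/2<2MR_x$, the box for $t$ never binds. The optimum is therefore $f^\ast=\min_x\inner\theta x=-\norm\theta R_x$.

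The Lagrangian $Mt+y(\inner\theta x-Mt)$ is independent of $t$ exactly when $y=1$, which gives the multiplier $1$. The remaining checks are routine:
\begin{itemize}
\item the diameter is $\sqrt{(2R_x)^2+(4R_x)^2}=2\sqrt5R_x$;
\item the Lipschitz moments are $\E\norm\xi^2=\norm\theta^2+ds^2\le M^2/4+M^2/2$;
\item $\Var\inner\xi x=s^2\norm x^2$ on $\Q'_{\bar\gamma}$;
\item the Slater condition holds because $g(0,R_x)=-MR_x$.
\end{itemize}

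The key step is a pointwise inequality. For any $\hat z=(\hat x,\hat t)\in\Q'$,
\[
(f(\hat z)-f^\ast)+\pos{g(\hat z)}=M\hat t-f^\ast+\pos{\inner\theta{\hat x}-M\hat t}\ \ge\ \inner\theta{\hat x}-f^\ast ,
\]
using $a+\pos{c-a}\ge c$ with $a=M\hat t$ and $c=\inner\theta{\hat x}$. So the quantity on the left dominates the optimality gap of the linear problem $\min_{x\in R_x\Ball}\inner\theta x$ evaluated at $\hat x$.

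Next I simulate oracle access. A paired constraint query at $(z^+,z^-)$ returns $\inner\xi{x^\pm}-Mt^\pm$. Since $t^\pm$ are known to the algorithm, this answer is a deterministic function of the pair $(\inner\xi{x^+},\inner\xi{x^-})$, which is exactly the answer of a paired objective query at $(x^+,x^-)$ for $F(x,\xi)=\inner\xi x$. The objective $f$ is known and its component carries no information.

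Hence any algorithm for the constrained class with $T$ paired queries induces an algorithm for the class of \Cref{thm:lower-objective}. The induced algorithm uses the same $\theta$-family, the same $s$ and the same $T$, and outputs $\hat x$. One subtlety: the objective theorem's oracle also returns $G_i\equiv-1$, but this is harmless because it only adds information the simulator does not need. Also, queries of the constrained algorithm may lie in $\Q'_{\bar\gamma}$ rather than $R_x\Ball$. I expect \Cref{thm:lower-objective}'s proof (Assouad/KL) to be insensitive to query location, since the KL per query depends only on the rank-two projection. If the theorem as stated restricts query locations, I would rerun its argument instead of citing it.

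Applying \Cref{thm:lower-objective} gives a $\theta$ with $\E[\inner\theta{\hat x}-f^\ast]\ge\frac{MR_x}{32}\min\{1,\sqrt{d/T}\}$. By the key step, the same $\theta$ gives the stated bound for the constrained problem.

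Finally, the constant conversion uses $M_gD=\frac{\sqrt7}{2}M\cdot2\sqrt5R_x=\sqrt{35}MR_x$. For the consequence, an $\eps$-solution makes the left side at most $2\eps$. Suppose $T<d$; the lower bound then equals $MR_x/32$, and $MR_x/32\le 2\eps$ contradicts $\eps<MR_x/64$. Hence $T\ge d$, and then $2\eps\ge\frac{MR_x}{32}\sqrt{d/T}$, which gives $T\ge dM^2R_x^2/(4096\eps^2)$. Substituting $M^2R_x^2=M_g^2D^2/35$ yields $143360=4096\cdot35$.

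The only real obstacle is the simulation step, in particular making sure that the minimax statement of \Cref{thm:lower-objective} covers adaptive algorithms whose query points are arbitrary. I would check that against its proof.
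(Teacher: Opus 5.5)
Your proposal is correct and follows the paper's proof essentially step for step. It uses the same simulation of paired constraint queries by paired objective queries of $F(x,\xi)=\inner{\xi}{x}$ with the known $t^\pm$ removed, the same appeal to \Cref{thm:lower-objective} through its proof (whose per-query Kullback--Leibler formula holds for arbitrary query points, which settles your concern), the same pointwise bound $\inner{\theta}{\hat x}-f^\ast\le(f(\hat z)-f^\ast)+\pos{g(\hat z)}$, and the same handling of the consequence; the only difference is that you identify the multiplier by stationarity in the interior variable $t$, whereas the paper computes the dual function separately for $y<1$, $y=1$ and $y>1$.
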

The proof uses a scaled epigraph transformation, $\min_x\varphi(x)=\min\{Mt:\varphi(x)\le Mt\}$. An algorithm for the constrained problem is then an algorithm
for minimizing the unknown linear function $\varphi(x)=\inner\theta x$ with the same information, and $\varphi(\hat x)-\varphi^\ast\le(M\hat t-f^\ast)+\pos{g(\hat z)}$.
We scale the epigraph variable by $M$ so that $\Q'$ has diameter of order $R_x$ and the constraint is $O(M)$-Lipschitz. The bound can then be written in the
parameters $M_g$ and $D$ of the class. Without scaling, the set $\{\vartheta:\varphi(x)\le\vartheta\}$ would have diameter of order $MR_x$ and Lipschitz constant
$\sqrt{M^2+1}$, and the bound would not take the form $dM_g^2D^2/\eps^2$ uniformly in $M$. We bound the sum of gap and violation because
$f(\hat z)-f^\ast=M\hat t-\varphi^\ast$ can be negative at infeasible points; by \eqref{eq:goal} the expected sum is at most $2\eps$.

So two-point information about \emph{constraint} gradients costs $dM_g^2D^2/\eps^2$, just like information about the objective, with universal constants.
In particular, the term $d\Rmult^2M_g^2D^2/\eps^2$ of $M_R^2=(M_0+\Rmult M_g)^2$ in \Cref{cor:sliding-nonsmooth} cannot be removed in general, since upper and lower
bounds agree in $d$, $M_g$, $D$ and $\eps$. In this construction the active multiplier is $1$, so it says nothing about the factor $\Rmult^2$; see \Cref{op:multiplier}.

\subsection{Calls for the constraint levels}
The third source of $\eps^{-2}$ complexity is the noise in the constraint \emph{values}, which does not cancel in differences. The following theorems concern
the very simple one-dimensional problem of finding, among $m$ constraints of the form $x\le\beta_i$, the one with the smallest level. They show that the entropy
dual geometry, which produces the factors $\log(m+1)$ in \Cref{thm:sliding,thm:mp}, is not an artifact: $\log(1+m)$ vector calls are necessary, and if a call
returns only one component, $m$ calls are necessary.

\begin{theorem}[level information, vector oracle]\label{thm:lower-levels}
Let $m\ge1$, $\sigma>0$ and $0<\eps\le\min\{\sigma/16,\,1/16\}$. Consider the one-dimensional problems with $\Q=[-1,1]$ (so $D=2$), known objective $f(x)=-x$, and
constraints $G_i(x,\xi)=x-\xi_i$, where $\xi=(\xi_1,\dots,\xi_m)$ has independent coordinates taking the values $\pm\sigma$ with $\E\xi_i=\beta_i\in\{-a,a\}$, $a:=8\eps\le1/2$;
the class consists of the $m+1$ vectors $\beta^{(0)}=(a,\dots,a)$ and $\beta^{(j)}=\beta^{(0)}-2ae_j$, $j=1,\dots,m$ (\Cref{ass:lip} with $M_0=M_g=1$, \Cref{ass:levels} with
$\sigma_g=2\sigma$, \Cref{ass:slater} at $x^{\mathrm s}=-1$ with $\rho=1-a\ge1/2$; the active multiplier is $y^\ast=1$ in every problem of the class, so $\Rmult=2$ is
admissible in \eqref{eq:R}). Any algorithm that makes at most $T$ vector calls and outputs $\hat x$ with $\E[f(\hat x)-f^\ast]\le\eps$ and $\E\viol(\hat x)\le\eps$ for every
problem of the class satisfies
\[
 T\ \ge\ \frac{3\,\sigma^2\log(1+m)}{1024\,\eps^2}=\frac{3\,\sigma_g^2\log(1+m)}{4096\,\eps^2}.
\]
\end{theorem}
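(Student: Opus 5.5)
The plan is a reduction to a composite hypothesis test ($\beta^{(0)}$ against the uniform mixture of $\beta^{(1)},\dots,\beta^{(m)}$), followed by a $\chi^2$ computation for that mixture. The $\log(1+m)$ comes from the fact that the mixture is hard to tell apart from $\beta^{(0)}$ until the per-coordinate $\chi^2$ accumulated over $T$ calls reaches $\log(1+m)$.

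\emph{Step 1: adaptivity is irrelevant.} A vector call at any $x$ returns $G_i(x,\xi)=x-\xi_i$. Since the algorithm knows $x$, each call reveals the whole sample $\xi\in\{\pm\sigma\}^m$, and conversely nothing else is learned. Hence, without loss of generality, the algorithm observes $T$ i.i.d.\ copies of $\xi$ (plus its internal randomness), and $\hat x$ is a measurable function of them. Under $\beta^{(j)}$ the law of the observations is the product measure $P_j^{\otimes T}$. Here $P_j$ differs from $P_0$ only in coordinate $j$, whose distribution is $\Prob(\xi_j=\sigma)=(1\pm a/\sigma)/2$.

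\emph{Step 2: accuracy forces a test.} Under $\beta^{(0)}$ the feasible set is $x\le a$, so $f^\ast=-a$ and the gap is $a-\hat x$. If $\hat x\le0$, this gap is at least $a$. Under $\beta^{(j)}$ with $j\ge1$, the violation satisfies $\viol(\hat x)\ge\pos{\hat x+a}\ge a$ whenever $\hat x>0$. Markov's inequality applied to the nonnegative quantity $\pos{f(\hat x)-f^\ast}+\viol(\hat x)$ needs some care, since the gap can be negative only where the violation is positive; I will instead use directly $a-\hat x\ge a$ and $\viol\ge a$ on the respective events. This gives $P_0(\hat x\le0)\le\eps/a=1/8$ and $P_j(\hat x>0)\le\eps/a=1/8$ for each $j$. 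Therefore
\[
\TV\Bigl(P_0^{\otimes T},\ \bar P:=\tfrac1m\textstyle\sum_jP_j^{\otimes T}\Bigr)\ge 1-\tfrac14=\tfrac34\ge\tfrac12 .
\]

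\emph{Step 3: $\chi^2$ of the mixture, the key computation.} Write $L_j=dP_j^{\otimes T}/dP_0^{\otimes T}$. By independence of the coordinates, for $j\ne k$ we have $\E_0[L_jL_k]=1$, while $\E_0[L_j^2]=(1+\chi_1)^T$, where $\chi_1:=\chi^2(P_j\|P_0)$ is computed for a single coordinate. Hence
\[
\chi^2(\bar P\|P_0^{\otimes T})=\frac{(1+\chi_1)^T-1}{m}.
\]
For the two-point laws with $p,q=(1\pm a/\sigma)/2$ one gets $\chi_1=(2a/\sigma)^2/(1-a^2/\sigma^2)\le\frac{16}{3}a^2/\sigma^2$, using $a/\sigma\le1/2$, which follows from $\eps\le\sigma/16$.

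\emph{Step 4: conclude.} From $\TV\le\frac12\sqrt{\chi^2}$ and $\TV\ge\frac12$ we get $\chi^2\ge1$, so $(1+\chi_1)^T\ge1+m$. This yields
\[
T\ge\frac{\log(1+m)}{\log(1+\chi_1)}\ge\frac{\log(1+m)}{\chi_1}\ge\frac{3\sigma^2\log(1+m)}{16a^2}=\frac{3\sigma^2\log(1+m)}{1024\eps^2}.
\]
Substituting $\sigma_g=2\sigma$ gives the second form. I expect the main care point to be Step 2, namely making the gap/violation bookkeeping rigorous when $\hat x$ is infeasible. The $\chi^2$ identity for the mixture is routine but is the essential source of the logarithm.
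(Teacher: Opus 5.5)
Your route is the paper's: reduce to testing $\beta^{(0)}$ against the uniform mixture $\bar P$ of the $\beta^{(j)}$, use $\chi^2(\bar P\|P_0^{\otimes T})=((1+\chi_1)^T-1)/m$ with $\chi_1=4a^2/(\sigma^2-a^2)\le16a^2/(3\sigma^2)$, and invert. Steps 1, 3 and 4 are correct and give the stated constant.

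The flaw is in Step 2, in the bound $P_0(\hat x\le0)\le\eps/a=1/8$. Under $\beta^{(0)}$ the hypothesis controls $\E_0[a-\hat x]\le\eps$, but $a-\hat x$ is negative on $\{\hat x>a\}$. So knowing that $a-\hat x\ge a$ on $\{\hat x\le0\}$ does not let you apply Markov's inequality. In fact the bound does not follow from the guarantees under $\beta^{(0)}$. Take an output equal to $0$ with probability $\tfrac14$ and to $a+\tfrac43\eps$ otherwise. Then $\E_0[f(\hat x)-f^\ast]=\eps$ and $\E_0\viol(\hat x)=\eps$, yet $P_0(\hat x\le0)=\tfrac14$. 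The inference in Step 2 is therefore invalid as written.

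The repair is the observation you already made, that the gap is negative only where the violation is positive, used through the sum of the two guarantees. Under $\beta^{(0)}$,
\[
(f(\hat x)-f^\ast)+\viol(\hat x)=(a-\hat x)+\pos{\hat x-a}=\pos{a-\hat x}.
\]
This quantity is nonnegative, at least $a$ on $\{\hat x\le0\}$, and has expectation at most $2\eps$. Markov's inequality then gives $P_0(\hat x\le0)\le2\eps/a=\tfrac14$. Hence $\TV(P_0^{\otimes T},\bar P)\ge\tfrac34-\tfrac18=\tfrac58$ rather than $\tfrac34$, which is exactly the paper's bound. Your Step 4 only uses $\TV\ge\tfrac12$, so the conclusion $T\ge3\sigma^2\log(1+m)/(1024\,\eps^2)$ holds unchanged once Step 2 is corrected this way.
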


\begin{theorem}[level information, scalar oracle]\label{thm:lower-scalar}
In the setting of \Cref{thm:lower-levels}, suppose that each call returns a single component $G_i(x,\xi)$ for an index $i$ chosen by the algorithm (a fresh $\xi$ per
call). Then any algorithm with the same guarantee satisfies $\E_{0}[T]\ge25\,m\sigma^2/(8192\,\eps^2)\ge m\sigma^2/(328\,\eps^2)$, where $T$ is the (possibly random) total
number of calls and $\E_0$ is the expectation under $\beta^{(0)}$.
\end{theorem}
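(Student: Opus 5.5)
The plan is a reduction to $m$ two-point hypothesis tests between $\beta^{(0)}$ and each $\beta^{(j)}$, followed by the divergence decomposition for adaptive sequential sampling with a random number of calls. Since $G_i(x,\xi)=x-\xi_i$ and $x$ is chosen by the algorithm, each scalar call on component $i$ reveals exactly one fresh copy of $\xi_i$ and nothing else. The observation process is therefore a multi-armed sampling problem with $m$ arms. Arm $i$ has law $\pm\sigma$ with mean $\beta_i$.

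\emph{Step 1 (decision event).} Under $\beta^{(0)}$ the feasible set is $\{x\le a\}$, so $x^\ast=a$ and $f^\ast=-a$. Under $\beta^{(j)}$ the binding constraint is $x\le-a$, so $x^\ast=-a$, $f^\ast=a$ and $\viol(x)=\pos{x+a}$. Let $\mathcal E:=\{\hat x\ge0\}$.
\begin{itemize}
\item Under $P_0$, the gap is $a-\hat x$, and on $\mathcal E^c$ it exceeds $a$. Markov's inequality with $\E_0[a-\hat x]\le\eps$ gives $P_0(\mathcal E^c)\le\eps/a=1/8$.
\item Under $P_j$, on $\mathcal E$ the violation is at least $a$. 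Markov's inequality with $\E_j\viol(\hat x)\le\eps$ gives $P_j(\mathcal E)\le1/8$.
\end{itemize}
Hence $\TV(P_0,P_j)\ge3/4$ for the laws $P_0,P_j$ of the whole transcript, including internal randomness. Pinsker's inequality (\Cref{fact:pinsker}) then yields $\KL(P_0\|P_j)\ge2\cdot(3/4)^2=9/8$.

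\emph{Step 2 (divergence decomposition).} Let $T_j$ be the number of calls that query component $j$. The models $\beta^{(0)}$ and $\beta^{(j)}$ differ only in the law of arm $j$. The chain rule for KL applied along the adaptive transcript then gives
\[
\KL(P_0\|P_j)=\E_0[T_j]\cdot\KL(\nu_a\|\nu_{-a}),
\]
where $\nu_{\pm a}$ is the two-point law on $\pm\sigma$ with mean $\pm a$. This is the standard bandit divergence decomposition; it is valid for a random, data-dependent stopping time $T$. We may assume $\E_0[T]<\infty$, since otherwise there is nothing to prove.

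\emph{Step 3 (per-call divergence).} The two-point laws have success probabilities $p,q=(1\pm a/\sigma)/2$. The $\chi^2$ bound gives
\[
\KL\le\frac{(p-q)^2}{q(1-q)}=\frac{4a^2/\sigma^2}{1-a^2/\sigma^2}\le\frac{16a^2}{3\sigma^2},
\]
using $a/\sigma=8\eps/\sigma\le1/2$.

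\emph{Step 4 (combine).} From Steps 1--3, $\E_0[T_j]\ge\frac{9}{8}\cdot\frac{3\sigma^2}{16a^2}=\frac{27\sigma^2}{128\cdot64\,\eps^2}$ for every $j$. Since each call queries exactly one component, $T=\sum_jT_j$, and summing over $j$ gives $\E_0[T]\ge27m\sigma^2/(8192\eps^2)$. This is at least the claimed $25m\sigma^2/(8192\eps^2)$. The slack of $27$ versus $25$ can absorb a cruder per-call KL estimate, and $25/8192\ge1/328$.

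\emph{Main obstacle.} The only delicate point is Step 2: making the chain rule rigorous for an adaptive algorithm with internal randomization and a random number of calls. I would follow the stopping-time version of the divergence decomposition. Define the transcript filtration and note that the algorithm's conditional laws of the next index and stopping decision are identical under $P_0$ and $P_j$, so they contribute zero KL. Conditioning on the arm pulled, each observation then contributes $\KL(\nu_a\|\nu_{-a})$ exactly when the pulled arm is $j$. Monotone convergence over truncations $T\wedge n$ handles the unbounded case.
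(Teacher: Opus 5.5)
Your architecture matches the paper's: two-point tests $\beta^{(0)}$ vs.\ $\beta^{(j)}$, the adaptive decomposition $\KL(P_0\|P_j)=\E_0[T_j]\,\KL(\nu_+\|\nu_-)$ for a random number of scalar calls, Pinsker, and a sum over $j$. However, Step~1 has an invalid inference, and once you repair it your per-call estimate is too weak to give the stated constant.

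\textbf{The faulty step.} Under $\beta^{(0)}$ the gap $f(\hat x)-f^\ast=a-\hat x$ is negative whenever $\hat x>a$, which is an infeasible output. Markov's inequality therefore does not apply to it, and $\E_0[a-\hat x]\le\eps$ does not give $P_0(\hat x<0)\le\eps/a$. The bound $1/8$ is in fact not implied by the guarantee. Suppose $\hat x$ lies slightly below $0$ with probability $q$ and $\hat x=a+c$ with probability $1-q$, where $(1-q)c=\eps$. Then $\E_0\viol(\hat x)=\eps$ and the expected gap is at most $\eps$ for $q$ arbitrarily close to $2\eps/a=1/4$.

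\textbf{The correct version.} You must use the violation as well. Note that $\pos{a-\hat x}=(a-\hat x)+\pos{\hat x-a}\le(f(\hat x)-f^\ast)+\viol(\hat x)$. Markov then gives $P_0(\hat x<0)\le2\eps/a=1/4$. Hence $\TV(P_0,P_j)\ge\frac34-\frac18=\frac58$ and $\KL(P_0\|P_j)\ge\frac{25}{32}$, which is exactly what the paper uses.

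\textbf{Consequence for your constant.} With $\KL(P_0\|P_j)\ge\frac{25}{32}$, your $\chi^2$-based bound $\KL(\nu_+\|\nu_-)\le16a^2/(3\sigma^2)$ yields only
$\E_0[T_j]\ge\frac{25}{32}\cdot\frac{3\sigma^2}{16a^2}=\frac{75\sigma^2}{32768\,\eps^2}$.
This is below both $25\sigma^2/(8192\,\eps^2)$ and $\sigma^2/(328\,\eps^2)$. The ``slack of $27$ versus $25$'' came entirely from the incorrect TV bound, so it cannot absorb the cruder estimate.

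\textbf{What you need instead.} Compute the divergence directly, as the paper does, using $\log\frac{1+x}{1-x}\le\frac{2x}{1-x}$ and $a/\sigma\le\frac12$:
$\KL(\nu_+\|\nu_-)=\frac a\sigma\log\frac{1+a/\sigma}{1-a/\sigma}\le\frac{4a^2}{\sigma^2}$.
This gives $\E_0[T_j]\ge\frac{25}{32}\cdot\frac{\sigma^2}{4a^2}=\frac{25\sigma^2}{8192\,\eps^2}$, and summing over $j$ yields the theorem. Your Step~2, including the stopping-time treatment of a random $T$, is fine and equivalent to the paper's padding argument.
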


\begin{remark}[strongly convex objectives]\label{rem:lower-levels-sc}
The level information remains an $\eps^{-2}$ bottleneck when the objective is strongly convex. Replace $f(x)=-x$ in \Cref{thm:lower-levels} by
$f(x)=-x+\frac\mu2x^2$ with $\mu\in(0,1]$, which is $\mu$-strongly convex and decreasing on $[-1,1]$, and take $a:=16\eps$ with $0<\eps\le\min\{\sigma/32,1/32\}$. Then
every algorithm with the guarantee of \Cref{thm:lower-levels} satisfies $T\ge3\sigma^2\log(1+m)/(4096\,\eps^2)$, and in the scalar model of \Cref{thm:lower-scalar}
$\E_0[T]\ge25m\sigma^2/(32768\,\eps^2)$. The proof is given in \Cref{app:lower}.
\end{remark}

\Cref{thm:lower-levels} is proved with a mixture argument ($\chi^2$-divergence between the uniform mixture of the $m$ alternatives and the null
hypothesis); \Cref{thm:lower-scalar} with the chain rule for the Kullback--Leibler divergence applied to the number of observations of each coordinate.
In both cases the optimization guarantee is converted into a statistical test: a solution with gap and violation below $\eps$ reveals the sign of the
smallest level.

\subsection{Depth}\label{sec:depth}
The lower bounds above concern the number of calls. Lower bounds on the number of \emph{rounds} of adaptive queries are much harder to obtain and are
known only for the unconstrained problem: for nonsmooth $M$-Lipschitz convex minimization on the unit ball with polynomially many gradient (or value)
queries per round, no algorithm can guarantee accuracy $\eps$ in fewer than $\tilde\Omega(\min\{\eps^{-2},d^{1/3}\eps^{-2/3}\})$ rounds
\citep{nemirovski1994parallel,balkanski2018parallelization,bubeck2019complexity,diakonikolas2020lower}. The best known upper bounds are
$O(d^{1/4}\eps^{-1})$ by smoothing \citep{duchi2012randomized,gasnikov2022power} and, in the regime $\eps\lesssim d^{-1/4}$, $\tilde O(d^{1/3}\eps^{-2/3})$ by ball
acceleration \citep{bubeck2019complexity,carmon2023resqueing}. Ball acceleration uses (stochastic) gradient queries, and it is open whether it can be
run with a two-point value oracle at the same depth. An unconstrained problem is a special case of \eqref{eq:P} with inactive constraints, so these lower bounds
apply to $\Nseq$ in our setting. \Cref{cor:sliding-nonsmooth} matches the best known depth for the unconstrained two-point problem up to the factor $\sqrt{M_R/M}$.
For smooth problems, $\Omega(\sqrt{L_0D^2/\eps})$ rounds are necessary even with exact gradients
\citep{nemirovski1983problem}, so the depth of \Cref{cor:sliding-smooth} is optimal up to the factor $\sqrt{(L_0+\Rmult L_g)/L_0}$. The smoothness of the
constraints cannot simply be dropped from this factor: for the scaled epigraph problem $\min\{Mt:\varphi(x)-Mt\le0,\ x\in R_x\Ball,\ |t|\le2R_x\}$ of
\Cref{thm:lower-constraint} with an $M$-Lipschitz, $L_g$-smooth convex $\varphi$ (here $L_0=0$, the active multiplier is $1$, and the diameter is $2\sqrt5R_x=\Theta(D)$),
an $\eps$-solution in the sense of \eqref{eq:goal} is a $2\eps$-minimizer of $\varphi$ over $R_x\Ball$ in expectation, so
$\Omega(\sqrt{L_gD^2/\eps})$ rounds are necessary as well. Whether the coupling constant $M_g\Rmult$ must appear in the depth of any zeroth-order method,
as it does in $\Nmv$, is open (\Cref{op:depth}).

\section{Summary of the rates}\label{sec:summary}

\Cref{tab:rates} collects the complexity bounds of \Cref{sec:mp,sec:sliding,sec:restarts,sec:affine} for the target accuracy $\eps$ of the original
problem \eqref{eq:P} (the smoothed target is $\eps_{\mathrm s}=\eps/2$ and the biases are at most $\eps/4$). Constants are omitted; the exact
statements with constants are the theorems and corollaries referenced in the first column. \Cref{tab:lower} confronts the $\eps^{-2}$ terms with the
lower bounds of \Cref{sec:lower}.

\begin{table}[t]
\centering\footnotesize\setlength{\tabcolsep}{2.5pt}
\caption{Complexity bounds for an $\eps$-solution in expectation. $H=L_0+\Rmult L_g$, $M_R=M_0+\Rmult M_g$, $M=\max\{M_0,M_g\}$, $\Lam=(1+\log2m)\log(m+1)$;
$\sigma_g^2$ bounds the variance of the constraint values; $\nuj=\min\{4d,92\kap\}=O(\min\{d,1+\log m\})$. The three call channels of \Cref{def:measures} are
listed separately: $\Nf$ (calls in which the objective is used), $\Ng$ (calls in which the constraint vector is used) and $\Norc$ (all calls); the call columns
list the terms beyond $O(\Nmv)$; ``$\Nf+\dots$'' in the $\Ng$ column means the $\Nf$ term plus the listed one. For \Cref{alg:sliding,alg:mp} every call carries the
constraint vector, so $\Norc=\Ng$; for \Cref{alg:affine}, $\Ng=0$ and $\Norc=\Nf$.}
\label{tab:rates}
\resizebox{\textwidth}{!}{%
\begin{tabular}{@{}llllll@{}}
\toprule
Result & Data & $\Nseq$ & $\Nf$ & $\Ng$ ($=\Norc$) & $\Nmv$\\
\midrule
\Cref{cor:mp-cases}(a) & smooth & $\frac{HD^2}{\eps}+\frac{M_g\Rmult D\sqrt{\log m}}{\eps}$ & $\frac{dM_R^2D^2+\Lam\Rmult^2\sigma_g^2}{\eps^2}$ & $=\Nf$ & $\Nseq$\\[3pt]
\Cref{cor:mp-cases}(b) & nonsmooth & $\frac{\sqrt dM_RMD^2}{\eps^2}+\frac{M_g\Rmult D\sqrt{\log m}}{\eps}$ & $\frac{dM_R^2D^2+\Lam\Rmult^2\sigma_g^2}{\eps^2}$ & $=\Nf$ & $\Nseq$\\[3pt]
\Cref{cor:sliding-smooth} & smooth & $\sqrt{\frac{HD^2}{\eps}}$ & $\frac{dM_R^2D^2}{\eps^2}$ & $\Nf+\frac{\Lam\Rmult^2(\sigma_g^2+\nuj M_g^2D^2)}{\eps^2}$ & $\Nseq+\frac{M_g\Rmult D\sqrt{\log m}}{\eps}$\\[3pt]
\Cref{cor:sliding-nonsmooth} & nonsmooth & $\frac{d^{1/4}\sqrt{M_RM}D}{\eps}$ & $\frac{dM_R^2D^2}{\eps^2}$ & $\Nf+\frac{\Lam\Rmult^2(\sigma_g^2+\nuj M_g^2D^2)}{\eps^2}$ & $\Nseq+\frac{M_g\Rmult D\sqrt{\log m}}{\eps}$\\[3pt]
\Cref{thm:restart-sliding} & $\mu$-s.c., smooth & $\sqrt{\frac H\mu}\log\frac{\mu D^2}{\eps}$ & $\frac{dM_R^2}{\mu\eps}$ & $\Nf+\frac{\Lam\Rmult^2(\sigma_g^2+\nuj M_g^2D^2)}{\eps^2}$ & $\Nseq+M_g\Rmult\sqrt{\frac{\log m}{\mu\eps}}$\\[3pt]
\Cref{thm:restart-sliding} & $\mu$-s.c., nonsmooth & $d^{1/4}\sqrt{\frac{M_RM}{\mu\eps}}\log\frac{\mu D^2}{\eps}$ & $\frac{dM_R^2}{\mu\eps}$ & $\Nf+\frac{\Lam\Rmult^2(\sigma_g^2+\nuj M_g^2D^2)}{\eps^2}$ & $\Nseq+M_g\Rmult\sqrt{\frac{\log m}{\mu\eps}}$\\[3pt]
\Cref{thm:restart-mp} & $\mu$-s.c., smooth & $\frac H\mu\log\frac{\mu D^2}{\eps}+M_g\Rmult\sqrt{\frac{\log m}{\mu\eps}}$ & $\frac{dM_R^2}{\mu\eps}+\frac{\Lam\Rmult^2\sigma_g^2}{\eps^2}$ & $=\Nf$ & $\Nseq$\\[3pt]
\Cref{cor:affine} & affine $g$, smooth $f$ & $\sqrt{\frac{L_0D^2}{\eps}}$ & $\frac{dM_0^2D^2}{\eps^2}$ ($=\Norc$) & $0$ & $\Nseq+\frac{\rowinf A\Rmult D\sqrt{\log m}}{\eps}$\\[3pt]
\Cref{cor:affine} & affine $g$, nonsmooth $f$ & $\frac{d^{1/4}M_0D}{\eps}$ & $\frac{dM_0^2D^2}{\eps^2}$ ($=\Norc$) & $0$ & $\Nseq+\frac{\rowinf A\Rmult D\sqrt{\log m}}{\eps}$\\
\bottomrule
\end{tabular}}
\end{table}

\begin{table}[t]
\centering\footnotesize
\caption{Upper bounds of this paper against the lower bounds of \Cref{sec:lower}. In \Cref{thm:lower-objective} the class is a ball of radius $R_x$ and $D=2R_x$; in
\Cref{thm:lower-constraint} the bound is stated directly in the diameter $D$ and the Lipschitz constant $M_g$ of the class; in all lower-bound constructions the
active multiplier equals $1$. ``Matching'' means matching up to universal constants in the listed parameters. The depth rows record what is known about $\Nseq$.}
\label{tab:lower}
\setlength{\tabcolsep}{3pt}\renewcommand{\arraystretch}{1.15}
\begin{tabular}{@{}>{\raggedright\arraybackslash}p{2.8cm}>{\raggedright\arraybackslash}p{4.3cm}>{\raggedright\arraybackslash}p{4.3cm}>{\raggedright\arraybackslash}p{4.3cm}@{}}
\toprule
Resource & Upper bound (this paper) & Lower bound & Status\\
\midrule
objective gradient, two-point calls & $dM_0^2D^2/\eps^2$ (\Cref{cor:affine}) & $dM^2D^2/\eps^2$ (\Cref{thm:lower-objective}) & matching up to constants\\
constraint gradients, two-point calls & $d\Rmult^2M_g^2D^2/\eps^2$ (\Cref{cor:sliding-nonsmooth}) & $dM_g^2D^2/\eps^2$ (\Cref{thm:lower-constraint}) & matching in $d$, $M_g$, $D$, $\eps$ at unit multipliers; factor $\Rmult^2$ open\\
constraint levels, vector oracle & $\Lam\Rmult^2\sigma_g^2/\eps^2$ (\Cref{thm:sliding,thm:mp}) & $\sigma_g^2\log(1+m)/\eps^2$ (\Cref{thm:lower-levels}, \Cref{rem:lower-levels-sc}) & one $\log$ necessary; factor $\kap\Rmult^2$ open\\
constraint levels, scalar oracle & $m\Lam\Rmult^2\sigma_g^2/\eps^2$ (one vector call $=m$ scalar calls) & $m\sigma_g^2/\eps^2$ (\Cref{thm:lower-scalar}) & linear in $m$ necessary; factor $\Lam\Rmult^2$ open\\
linearization term (\Cref{thm:sliding}) & $\Lam\Rmult^2\nuj M_g^2D^2/\eps^2$ & --- & dimension-free; below the objective term if $d\ge92\kap^2\log(m+1)$\\
\midrule
depth, smooth data & $\sqrt{(L_0+\Rmult L_g)D^2/\eps}$ (\Cref{cor:sliding-smooth}) & $\sqrt{L_0D^2/\eps}$, and $\sqrt{L_gD^2/\eps}$ at unit multipliers (\Cref{sec:depth}) & order in $\eps$ optimal; factor $\sqrt{1+\Rmult L_g/L_0}$ open\\
depth, nonsmooth data & $d^{1/4}\sqrt{M_RM}D/\eps$ (\Cref{cor:sliding-nonsmooth}) & $\tilde\Omega(\min\{\eps^{-2},d^{1/3}\eps^{-2/3}\})$, unconstrained (\Cref{sec:depth}) & best known two-point depth up to $\sqrt{M_R/M}$; not minimax\\
inner operations, affine $g$ & $\rowinf A\Rmult D\sqrt{\log(m+1)}/\eps$ (\Cref{cor:affine}) & $\Omega(1/\eps)$ for matrix--vector methods \citep{ouyang2021lower} & order in $\eps$ optimal; parameter dependence open\\
\bottomrule
\end{tabular}
\end{table}

The tables show three things. First, for smooth and nonsmooth data alike, the sliding method needs as many rounds as the unconstrained accelerated
batched methods. The constraints enter the round count only through the smoothness $H$ of the Lagrangian and the multiplier scale $\Rmult$. The coupling
constant $M_g\Rmult$ affects only the number of inner steps, which need no new rounds.

Second, the $\eps^{-2}$ call complexity cannot be avoided, and it has two parts: gradient information, with the factor $d$, and level information, with the
factor $\log(1+m)$. The objective calls $\Nf$ pay only for the first part, the constraint calls $\Ng$ for both. The difference shows up in the strongly convex
case: there $\Nf=O(\Nmv+dM_R^2/(\mu\eps))$, while $\Ng$ keeps the $\eps^{-2}$ level term, which is necessary by \Cref{rem:lower-levels-sc}.

Third, \Cref{tab:lower} shows where the lower bounds match. The objective term is tight up to constants. The constraint-gradient term is tight in all
parameters except $\Rmult^2$. The level term is tight up to $\kap\Rmult^2$; here $\kap$ comes from the $\ell_\infty$ bound on the dual noise (\Cref{lem:batch}),
and the entropy radius $\cB$ gives the necessary $\log(m+1)$. Open are the dependence on the multipliers, the extra $\kap$, the linearization term and the
nonsmooth depth (\Cref{sec:future}). For the depth and the inner operations, only the exponent of $\eps$ is known to be optimal.

\section{Numerical experiments}\label{sec:experiments}

We have three goals: to see the separation between rounds and calls predicted by \Cref{thm:sliding,thm:mp}, to test the restart scheme of
\Cref{sec:restarts}, and to illustrate two effects used in the analysis. These are the $\gamma$-independence of the common-sample two-point estimator
(\Cref{lem:twopoint}) and the $\log(1+m)$ scaling of the level information (\Cref{thm:lower-levels}). All numbers below are generated from stored raw records by
the code that accompanies the paper; implementation details and parameter values are in \Cref{app:experiments}.

\subsection{Test problems}\label{sec:exp-setup}
We take $d=24$, $m=8$ and $\Q=\Ball$, the unit ball ($D=2$). We use two objectives. The \emph{smooth} one is $f(x)=\frac12\norm{x-a}^2$ with $a=0.75\,e_1$;
it is $1$-smooth and $1$-strongly convex. The \emph{nonsmooth} ``cusp'' is $f(x)=-x_1+\frac14\norm{x_{2:d}}$, which is not differentiable on the line $x_{2:d}=0$
passing through the solution. The constraints are $g_1(x)=x_1-0.2$, which is active at the solution; $g_i(x)=\inner{A_i}{x}-0.3$ for $i=2,\dots,m-1$, with random unit
rows $A_i\perp e_1$, which are inactive; and $g_m(x)=\frac12\sum_{j=2}^{11}x_j^2-0.05$, smooth and inactive. In both problems $x^\ast=0.2\,e_1$. The optimal value is
$f^\ast=0.15125$ for the smooth objective and $f^\ast=-0.2$ for the cusp, and the only active multiplier is $y_1^\ast=0.55$ and $1$, respectively. We set $\Rmult=2$.

Both instances satisfy the assumptions of \Cref{sec:setting}. The data are convex on $\R^d$ and Lipschitz on $\Q_{\bar\gamma}$ (\Cref{ass:lip}; the constants in
\Cref{tab:params} are computed on the ball of radius $1+\bar\gamma$). The quadratic objective and $g_m$ are $1$-smooth (\Cref{ass:smooth}), and the quadratic is
$1$-strongly convex (\Cref{ass:sc}). The cusp is only Lipschitz and is covered by \Cref{conv:extension}.

The oracle returns $F(x,\xi)=f(x)+0.02\,s_0\inner{q_0}{x}$ and $G_i(x,\xi)=g_i(x)+0.02\,s_i\inner{q_i}{x}+0.02\,o_i$, where $s_0,\dots,s_m,o_1,\dots,o_m$ are independent
Rademacher variables and $q_i$ are fixed random unit vectors. The slope noise changes the Lipschitz constants of the samples (\Cref{ass:lip}). The offset noise
changes the constraint levels (\Cref{ass:levels}) and does not cancel in differences. We use smoothing radii $\gamma=0.04$ (smooth) and $\gamma=0.08$ (cusp).
Only $g_m$ needs a smoothing shift, and we use its exact bias $\gamma^2\cdot10/(2(d+2))$; the other constraints are affine.
We measure the joint error $J(x):=\max\{0,\,f(x)-f^\ast,\,\viol(x)\}$. We report the gap and the violation separately as well, since $f(x)-f^\ast$ can be negative
at infeasible points.

\subsection{Rounds versus calls}\label{sec:exp-main}
We compare \ZOS\ (\Cref{alg:sliding}) with \BSMP\ (\Cref{alg:mp}) at the \emph{same total number of vector calls} $\Norc$ (\Cref{def:measures}). In both methods
every call carries the constraint vector, so $\Norc=\Ng$, while $\Nf$ is smaller: $2b_pK_N$ and $4bN$, respectively.

\ZOS\ uses $N=60$ rounds, mini-batches $b_p=b_y=8$, dual scale $\rs=1$ and stabilizer $\lambda=0.1$. The inner schedule is \eqref{eq:schedule},
$S_t=\max\{1,\lceil M_g\rs t/L\rceil\}$, with the same $M_g$ and $L$ as in the theory: $M_g=1.06$, $L=3$ for the smooth instance and $M_g=1.10$,
$L=2+\sqrt d/(4\gamma)\approx17.3$ for the cusp (see \Cref{app:experiments}). With these values the condition $a_tM_g\rs\le L$ of \Cref{lem:schedule} holds in
every phase. This gives \SmoothCalls\ calls and \SmoothInner\ inner steps on the smooth instance and \CuspCalls\ calls and \CuspInner\ inner steps on the cusp.
We also run \ZOS\ with the worst-case stabilizer of \Cref{cor:sliding-lambda} ($\lambda=\SmoothStabilizer$ and $\lambda=\CuspStabilizer$).
\BSMP\ uses $\alpha=\sqrt{20}$, $\eta=0.45/L_\alpha$ and mini-batches $b\in\{1,8\}$, and we choose its number of iterations to match the budget of \ZOS.
We did not tune either method. The point is to see the separation between the three resources, not to find the best performance of each method.
Every configuration is run with \NumSeeds\ seeds. We report means and percentile bootstrap confidence intervals for the mean final joint error
(\Cref{tab:results} and \Cref{fig:main}).

\begin{table}[t]
\centering\footnotesize\setlength{\tabcolsep}{4pt}
\caption{Final results at equal budgets (\NumSeeds\ seeds). $\Norc$ is the number of vector calls (for both methods every call carries the constraint vector, so
$\Ng=\Norc$), $\Nf$ the number of calls whose objective component is used ($2b_pK_N$ for \ZOS, $4bN$ for \BSMP), $\Nseq$ the number of sequential oracle rounds. Means over seeds of the objective gap,
the maximal violation and the joint error $J$ at the last iterate; the last column is a $95\%$ percentile bootstrap confidence interval for the mean of $J$.}
\label{tab:results}
\begin{tabular}{@{}llrrrrrrr@{}}\toprule
Instance & Method & $N_{\mathrm{orc}}$ & $N_f$ & $N_{\mathrm{seq}}$ & $f$-gap & Violation & $J$ & 95\% CI for $J$\\\midrule
Smooth & ZO-Sliding, $b=8$, $\lambda=0.1$ & 28,064 & 10,848 & 60 & 0.0016 & 0.0000 & 0.0016 & [0.0013, 0.0020] \\
Smooth & ZO-Sliding, $b=8$, worst-case $\lambda$ & 28,064 & 10,848 & 60 & -0.0632 & 0.3355 & 0.3355 & [0.3341, 0.3369] \\
Smooth & B-SMP, $b=1$ & 28,062 & 18,708 & 9354 & -0.0403 & 0.0806 & 0.0806 & [0.0775, 0.0838] \\
Smooth & B-SMP, $b=8$ & 28,032 & 18,688 & 1168 & 0.0014 & 0.0000 & 0.0014 & [0.0013, 0.0015] \\
\midrule
Cusp & ZO-Sliding, $b=8$, $\lambda=0.1$ & 6,824 & 2,352 & 60 & 0.0078 & 0.0093 & 0.0228 & [0.0186, 0.0275] \\
Cusp & ZO-Sliding, $b=8$, worst-case $\lambda$ & 6,824 & 2,352 & 60 & -0.7021 & 0.7422 & 0.7422 & [0.7409, 0.7434] \\
Cusp & B-SMP, $b=1$ & 6,822 & 4,548 & 2274 & 0.0363 & 0.0029 & 0.0363 & [0.0324, 0.0405] \\
Cusp & B-SMP, $b=8$ & 6,816 & 4,544 & 284 & 0.0856 & 0.0000 & 0.0856 & [0.0759, 0.0950] \\
\bottomrule\end{tabular}

\end{table}

\begin{figure}[t]
\centering
\includegraphics[width=\textwidth]{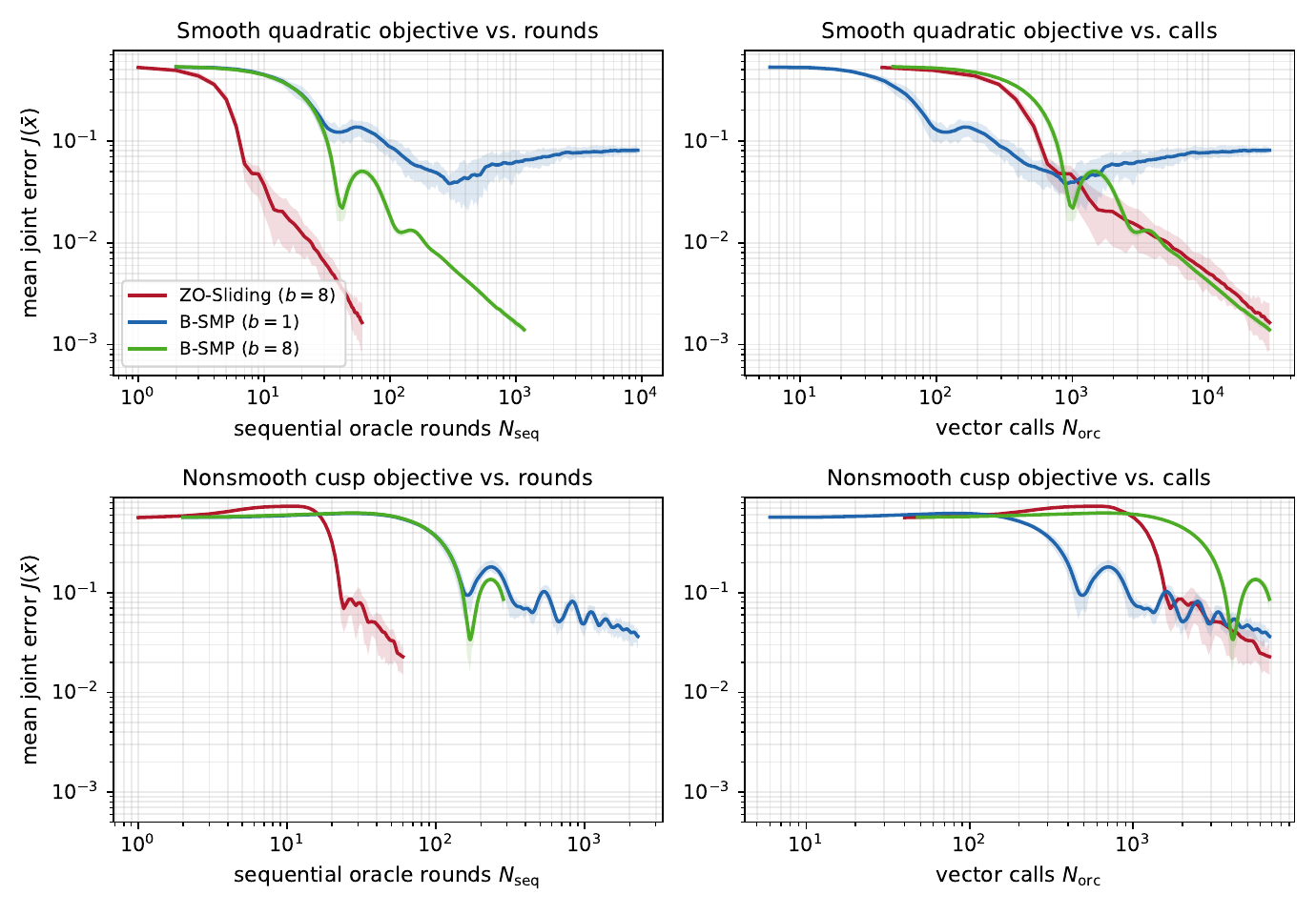}
\caption{Mean joint error against sequential rounds (left) and against vector calls $\Norc$ (right), smooth instance (top) and cusp instance (bottom); bands are the
interquartile range over \NumSeeds\ seeds. On the smooth instance \BSMP\ with $b=8$ (green) needs \SmoothBsmpRoundsToZO\ rounds to reach the final mean error of \ZOS\
(red), which uses $60$; on the cusp instance \ZOS\ reaches the final mean error of \BSMP\ with $b=8$ after \CuspZORoundsToBsmpEight\ rounds (against
\CuspBsmpEightRounds) and that of \BSMP\ with $b=1$ (blue) after \CuspZORoundsToBsmpOne\ rounds (against \CuspBsmpOneRounds). \BSMP\ with $b=1$ stalls on the
smooth instance at the level of its dual noise.}
\label{fig:main}
\end{figure}

The results agree with the theory. On the smooth instance, \ZOS\ and \BSMP\ with $b=8$ end at about the same accuracy: $J\approx\SmoothZOJoint$ (95\% interval
\SmoothZOCI) against \SmoothBsmpEightJoint\ (\SmoothBsmpEightCI). The difference is within sampling error. But \ZOS\ uses $60$ rounds, while \BSMP\ uses
\SmoothBsmpEightRounds\ and needs \SmoothBsmpRoundsToZO\ of them just to reach the final error of \ZOS. The reason is the inner loop: \ZOS\ makes
\SmoothInner\ steps without new oracle rounds, and a parallel implementation of \BSMP\ would have to spend each of them as a round.

\BSMP\ with $b=1$ runs for \SmoothBsmpOneRounds\ rounds and stalls at $J\approx\SmoothBsmpOneJoint$, with the dual iterates oscillating around the active constraint.
This fits the bound of \Cref{lem:batch}, which gives no reduction of the $\ell_\infty$ noise for $b<8\kap$, although the bound does not predict the stall.
\ZOS\ also has $b=8<8\kap$, but it additionally averages the dual noise over the $S_t$ slots of a phase and over phases through the weights $a_k$; this is why
\eqref{eq:sliding-rate} depends on $bK_N$.

On the cusp instance the smoothed Lagrangian has a large smoothness constant, $L\approx17.3$ (\Cref{lem:smoothing}(c)). With this budget \BSMP\ with $b=8$ gets
only \CuspBsmpEightIters\ iterations and stays far from the solution ($J\approx\CuspBsmpEightJoint$). \BSMP\ with $b=1$ reaches \CuspBsmpOneJoint, and \ZOS\ reaches
\CuspZOJoint\ in $60$ rounds. The remaining error reflects the noise level at this budget: the constraint offsets have standard deviation $0.02$, and the
solution lies on the active constraint. It keeps decreasing with the budget (\Cref{fig:main}, right); we see no floor in the data.

With the worst-case stabilizer, $\lambda\approx10^3$, the inner steps barely move, since $\beta_t=(L+\lambda)/a_t$. The iterates stay near $x_0=a$ and the output is
infeasible: $J\approx\SmoothWorstJoint$ (smooth) and $J\approx\CuspWorstJoint$ (cusp). \Cref{thm:sliding} holds for every $\lambda>0$, and in practice the variance is
far smaller than the worst-case value $b\,\Sb^2=\SA^2\approx3\cdot10^4$ that sets this stabilizer.

\subsection{Restarts on the strongly convex instance}\label{sec:exp-restart}
\Cref{fig:restarts} shows \RZOS\ (\Cref{alg:restart} with \Cref{alg:sliding} as the base method) on the smooth instance, $\mu=1$. It runs $7$ stages of $10$ rounds
with mini-batches $b_{p,k}=b_{y,k}=2^{k-1}$ and dual scale $\rs_k=2^{(k-1)/2}$, so the number of inner steps $S_t=\max\{1,\lceil M_g\rs_kt/L\rceil\}$ grows from stage to
stage. The dual variable is reset at each stage. We compare it with the plain method with $N=60$ rounds and mini-batches $b=8$, $b=16$ and $b=\EqBatch$. The last
choice gives the plain run the same number of vector calls as the restarted one, up to rounding, so that comparison is at \emph{equal budget}.

\begin{figure}[t]
\centering
\includegraphics[width=.8\textwidth]{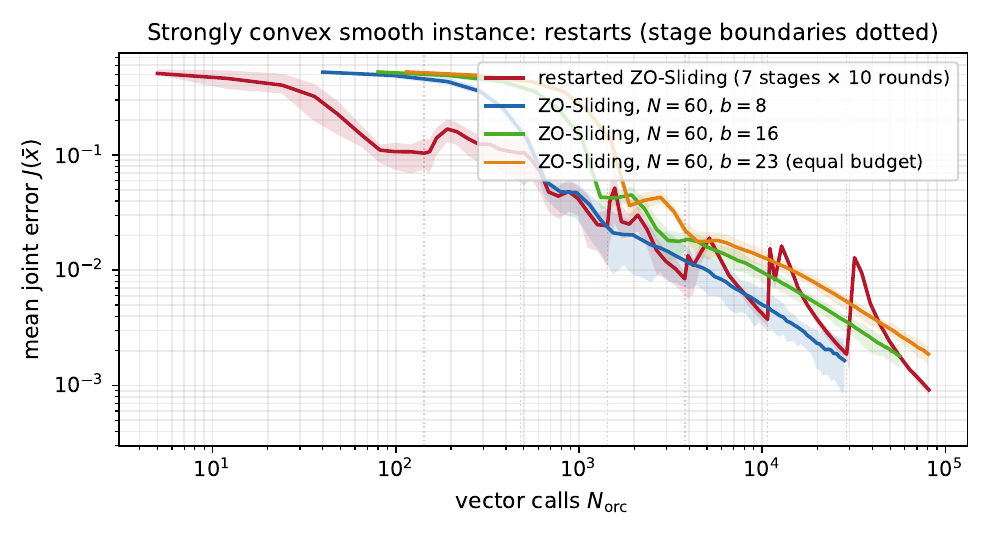}
\caption{Restarted versus plain \ZOS\ on the strongly convex smooth instance; mean joint error against vector calls, interquartile bands over \NumSeeds\ seeds. Dotted
vertical lines mark the stage boundaries of the restarted method; the plain run with $b=\EqBatch$ has the same budget as the restarted run.}
\label{fig:restarts}
\end{figure}

The restarted method reaches $J=\RestartJoint$ (\RestartCI) after \RestartCalls\ calls and \RestartRounds\ rounds. The plain method reaches $J=\PlainEightJoint$
(\PlainEightCI) with $b=8$ (\PlainEightCalls\ calls), $J=\PlainSixteenJoint$ (\PlainSixteenCI) with $b=16$ (\PlainSixteenCalls\ calls) and $J=\PlainEqJoint$ (\PlainEqCI)
with $b=\EqBatch$ (\PlainEqCalls\ calls). Larger mini-batches do not help the plain method. At $N=60$ rounds it is limited by its deterministic part and its fixed
dual scale, not by the number of samples. At equal budget the restarted method has about half the error, and the confidence intervals do not overlap. The gain
therefore comes from the schedule, i.e.\ the shrinking primal radius and the growing dual scale and inner loop, as \Cref{thm:restart-sliding} predicts.

The saw-tooth shape of the restarted curve comes from the two resets at each stage boundary. The dual variable returns to the uniform point and the average
$\Xr$ restarts from the last stage output, so the first rounds of a stage are dominated by the dual transient. The gain from restarting is moderate here because
at these budgets the level-noise term $\Lam\Rmult^2\sigma_h^2/\eps^2$, which strong convexity does not improve, is already comparable to the primal term.

\subsection{Two diagnostics}\label{sec:exp-diag}
\Cref{fig:noise} checks \Cref{lem:twopoint}. We estimate the second moment of the two-point estimator $\hat g_{y}(x)$ of the Lagrangian gradient at $x=x^\ast$, $y=y^\ast$.
When both points share the sample $\xi$, it is flat in $\gamma$ over two orders of magnitude. With independent measurement noise of standard deviation $0.01$ at the
two points, it grows like $\gamma^{-2}$; one would then have to keep $\gamma$ large, and the bias $\gamma M$ would dominate.

\Cref{fig:levels} illustrates \Cref{thm:lower-levels}. We take the testing problem from its proof: one of $m$ constraint levels is shifted by $-2a$, with $a=0.04$ and
$\sigma=0.2$. We plot the error of the likelihood-ratio test between the null $\beta^{(0)}$ and the uniform mixture of the $m$ alternatives against the normalized budget
$Ta^2/(\sigma^2\log(1+m))$. The curves for $m\in\{4,16,64,256\}$ coincide and fall below $10\%$ only once the normalized budget exceeds $1$. So the number of vector
calls needed for a fixed testing error grows like $\sigma^2\log(1+m)/a^2$; this is the origin of the $\log(m+1)$ factors in the dual geometry.

\begin{figure}[t]
\centering
\begin{subfigure}[t]{.49\textwidth}\centering\includegraphics[width=\textwidth]{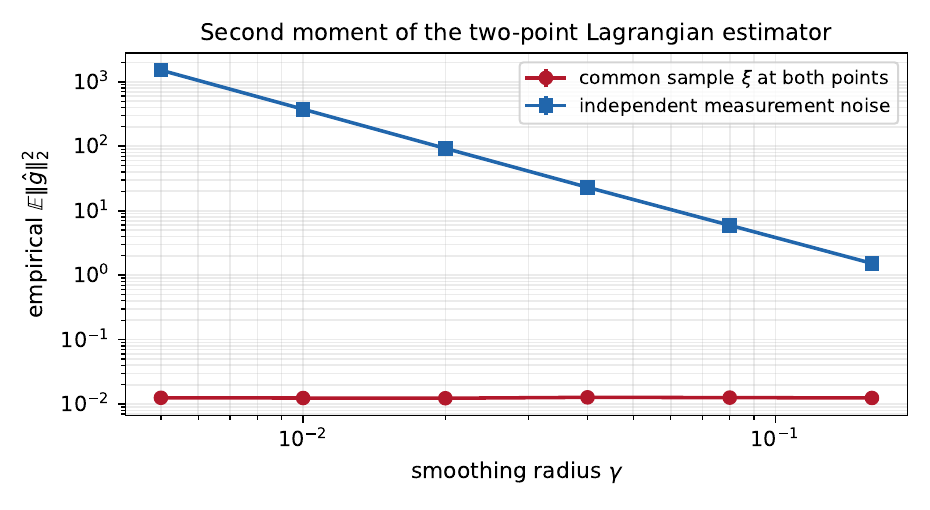}\caption{Common sample versus independent noise.}\label{fig:noise}\end{subfigure}\hfill
\begin{subfigure}[t]{.49\textwidth}\centering\includegraphics[width=\textwidth]{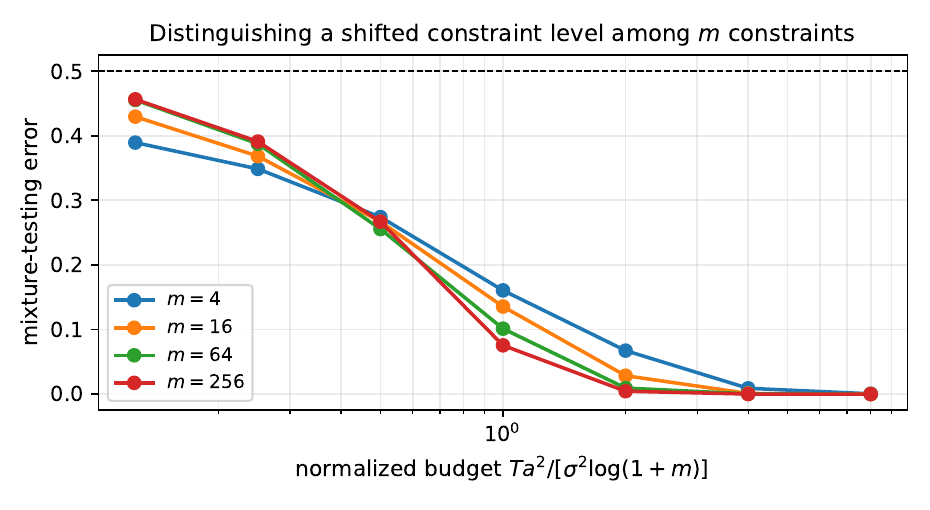}\caption{Level information and $\log(1+m)$.}\label{fig:levels}\end{subfigure}
\caption{Monte Carlo diagnostics ($12\,000$ samples per point in (a), $4\,000$ trials per point in (b)).}
\end{figure}

\section{Conclusion and open problems}\label{sec:conclusion}

We studied two-point zeroth-order methods for convex stochastic optimization with stochastic functional constraints, counting separately the sequential
rounds, the oracle calls (split between objective and constraints) and the inner operations that need no new round.

Smoothing with an explicit shift of the constraint levels reduces the problem to a smooth saddle-point problem. On this problem, batched stochastic
Mirror--Prox needs $O(\eps^{-1})$ rounds for smooth data. The zeroth-order sliding method brings the number of rounds down to that of the best unconstrained
methods: $O(\sqrt{HD^2/\eps})$ for smooth data, which is optimal in $\eps$, and $O(d^{1/4}\sqrt{M_RM}D/\eps)$ for nonsmooth data, which is the best known depth for the
two-point model. It still uses $\tilde O(\eps^{-2})$ calls, and the coupling is paid for in inner steps only. With separate primal and dual mini-batches, the factor
$d$ is charged to the objective calls and the factor $\log(1+m)$ to the constraint calls. For strongly convex objectives, restarts give
$O(\sqrt{L/\mu}\log(1/\eps))$ rounds and $O(dM_R^2/(\mu\eps))$ objective calls, while the level term stays at $\eps^{-2}$, as the lower bound of
\Cref{rem:lower-levels-sc} requires. For known affine constraints, the constraints cost no calls and $O(\eps^{-1})$ matrix--vector products.

Our lower bounds show that the $\eps^{-2}$ call terms cannot be avoided, with the factor $d$ tied to gradient information and $\log(1+m)$ to level information.
\Cref{tab:lower} shows which of these bounds are tight. The gaps that remain lead to the questions below.

\subsection{Open problems}\label{sec:future}

\begin{problem}[Slater-free variant through a level-set reformulation]\label{conj:levelset}
Let $\ell(\vartheta):=\min_{x\in\Q}\max\{f_\gamma(x)-\vartheta,\ h_1(x),\dots,h_m(x)\}$; its root is $f^\ast_\gamma$. Suppose we apply a zeroth-order sliding method to the inner
minimax problem at fixed $\vartheta$ and combine it with root-finding in $\vartheta$ \citep{aravkin2019level,lin2018level,lin2020data,boob2025level}. Does this give the
round complexity of \Cref{thm:sliding} without \Cref{ass:slater}, with $\Rmult$ replaced by a constant that depends only on a condition measure of $\ell$? Two things make
this hard. The inner problem must be solved to an accuracy that depends on the unknown slope of $\ell$ at its root, and the zeroth-order estimates of the level
values enter the root-finding test with noise that does not vanish.
\end{problem}

\begin{problem}[dual noise without the diameter term]\label{conj:dualnoise}
Can the term $\Lam\Rmult^2\nuj M_g^2D^2/\eps^2$ in the call complexity of \Cref{thm:sliding,thm:restart-sliding} be replaced by
$\Lam\Rmult^2\sigma_h^2/\eps^2+\text{lower order}$ without increasing the number of rounds? One idea is to estimate the linearized constraint value at $p_k$ not by
$\hat J(u_t)(p_k-u_t)$ but by a single query at a point announced one round ahead, i.e.\ a prediction of $p_k$. Controlling the resulting bias needs new tools.
In the strongly convex case, another option is to run stage $k$ of \Cref{alg:restart} on $\Q\cap B(\bar x^{(k-1)},r_k)$ with $r_k^2=O(\eps_{k-1}/\mu)$. This would replace
$D^2$ by $r_k^2$ in \eqref{eq:sigy} and bring the term down to $\Lam\Rmult^2\nuj M_g^2/(\mu\eps)$, the order of the primal term. It requires
$x^\ast_\gamma\in B(\bar x^{(k-1)},r_k)$ with high probability, not just in expectation.
\end{problem}

\begin{problem}[depth lower bounds with constraints]\label{op:depth}
Must the coupling constant $M_g\Rmult$ appear in the number of \emph{rounds} of a zeroth-order method for \eqref{eq:P}, or can it be confined to the inner
operations, as in \Cref{thm:sliding}? Is the factor $\sqrt{1+\Rmult L_g/L_0}$ in the depth of \Cref{cor:sliding-smooth} necessary beyond the unit-multiplier case of
\Cref{sec:depth}? Lower bounds for parallel randomized algorithms \citep{bubeck2019complexity,diakonikolas2020lower} exist only for unconstrained problems.
A related question: can the ball-acceleration methods with depth $\tilde O(d^{1/3}\eps^{-2/3})$ \citep{bubeck2019complexity,carmon2023resqueing} be run with a two-point
value oracle? This would improve the nonsmooth depth of \Cref{cor:sliding-nonsmooth} for $\eps\lesssim d^{-1/4}$.
\end{problem}

\begin{problem}[optimality of the multiplier factor]\label{op:multiplier}
Our upper bounds have a factor $\Rmult^2$ in front of the $\eps^{-2}$ terms, while in the lower-bound constructions of \Cref{thm:lower-constraint,thm:lower-levels} the
active multiplier is $1$. What is the minimax dependence on the size of the multipliers, or on the Slater margin $\rho$? Two points need care. First, criterion
\eqref{eq:goal} is not invariant under rescaling $g\mapsto g/\kappa$: this multiplies the multipliers by $\kappa$, divides $M_g$ and $\sigma_g$ by $\kappa$, and relaxes the
violation requirement. Any statement about $\Rmult$ must therefore fix the normalization of $g$ through $M_g$ and $\sigma_g$. Second, part of the gap is between the a
priori bound $\Rmult=1+M_0D/\rho$ of \eqref{eq:R} and the actual size $1+\norm{y^\ast_\gamma}_1$ of the multipliers, so the question is partly about adapting to an
unknown multiplier scale.
\end{problem}

\begin{problem}[high probability and heavy tails]\label{op:hp}
All our guarantees hold in expectation. Running $O(\log(1/\delta))$ independent copies and keeping the best gives bounds in probability, but choosing the best
copy requires estimates of $f$ and $\viol$ at the candidates. That means extra single queries with noise variance $\sigma_g^2$, and a similar assumption on the
values of $F$, which we did not need so far. A direct high-probability analysis under light-tailed noise, and an analysis under heavy-tailed noise with gradient
clipping \citep{kornilov2023accelerated}, are open in the constrained setting.
\end{problem}

\begin{problem}[adversarial noise and one-point feedback]\label{op:adv}
For unconstrained problems, \citet{gasnikov2022power} showed that the accelerated batched scheme tolerates bounded adversarial noise of level
$\delta=O(\eps^2/(d^{1/2}MD))$ up to logarithms, and treated one-point feedback with $O(d^2/\eps^{3})$ or worse call complexity. Extending this to constraints requires
understanding how the bias $\delta/\gamma$ of the Jacobian estimates propagates through the dual dynamics.
\end{problem}

\begin{problem}[sharper constants and the factor $\kap$]\label{op:constants}
What are the sharp constants in \Cref{thm:sliding} (instead of $17$, $16\kap$, $92\kap$) and in the restart theorems? The factor $\kap$ in the level term comes only
from the worst-case $\ell_\infty$ bound of \Cref{lem:batch} on the dual noise. A local-norm analysis of the entropic dual step, as for exponential weights, where the
residual of a step is $\frac1{2\tau}\sum_i\tilde y_ie_i^2$ instead of $\frac1{2\tau}\norm e_\infty^2$, would replace $\E\norm{e}_\infty^2$ by $\Rmult\max_i\E e_i^2$. This would
remove $\kap$ from the level term and leave only the $\log(m+1)$ of the entropy radius. It needs control of $\max_i|\hat q_{k,i}|/\tau$, for instance by clipping the
level estimates or by assuming bounded noise.
\end{problem}

\section*{Acknowledgements}
\paragraph{Use of AI tools.} Large language models were used in preparing this paper. In particular, the text of the manuscript
was drafted and substantially edited with the help of AI assistants, including Claude (Anthropic), which was used to revise the
wording of the abstract, the introduction, the experimental section, the conclusion and parts of the main text for readability.
All mathematical statements, proofs, code and numerical results were checked by the authors, who take full responsibility for the
content of the paper.

\bibliographystyle{abbrvnat}
\bibliography{refs}

\appendix
\section{Auxiliary facts}\label{app:aux}

\begin{fact}[Poincar\'e inequality on the sphere]\label{fact:poincare}
Let $d\ge2$, $u\sim U(\Sphere)$ and $q:\Sphere\to\R$ be $\Lambda$-Lipschitz with respect to the Euclidean distance of $\R^d$. Then
$\Var q(u)=\E(q(u)-\E q(u))^2\le\Lambda^2/(d-1)$.
\end{fact}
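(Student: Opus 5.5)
The plan is to derive the Fact from the spectral gap of the Laplace--Beltrami operator $\Delta_{\Sphere}$ on $\Sphere=\Sphere^{d-1}$, combined with an approximation argument for merely Lipschitz $q$. The argument has three steps.

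\emph{Step 1: reduce to smooth $q$.} I first pass from the Euclidean Lipschitz hypothesis to intrinsic quantities. The chord never exceeds the arc, $\norm{u-u'}\le\operatorname{dist}_{\Sphere}(u,u')$, so $q$ is also $\Lambda$-Lipschitz for the geodesic distance. By Rademacher's theorem in charts, $q$ is then differentiable almost everywhere, with tangential gradient $|\nabla_{\Sphere}q|\le\Lambda$.

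To avoid working with weak derivatives, I will regularize by averaging over rotations. Set $q_\delta(u):=\int_{SO(d)}q(Ru)\,\phi_\delta(R)\,dR$, where $\phi_\delta$ is a smooth mollifier on $SO(d)$ concentrated near the identity. Each map $u\mapsto q(Ru)$ is $\Lambda$-Lipschitz because rotations are isometries, so $q_\delta$ is smooth and $\Lambda$-Lipschitz. Moreover $q_\delta\to q$ uniformly, hence $\Var q_\delta(u)\to\Var q(u)$. It therefore suffices to prove
\[
\Var q(u)\le\frac1{d-1}\,\E\,|\nabla_{\Sphere}q(u)|^2
\]
for smooth $q$, and then to use $|\nabla_{\Sphere}q_\delta|\le\Lambda$.

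\emph{Step 2: spectral gap via spherical harmonics.} For smooth $q$, decompose $q=\sum_{k\ge0}q_k$ into spherical harmonics. These pieces are orthogonal in $L^2(U(\Sphere))$ and satisfy $-\Delta_{\Sphere}q_k=k(k+d-2)q_k$. Green's formula on the closed manifold $\Sphere$ gives
\[
\E|\nabla_{\Sphere}q|^2=\E\bigl[q\,(-\Delta_{\Sphere}q)\bigr]=\sum_k k(k+d-2)\,\E q_k^2 .
\]
The constant part is $q_0=\E q(u)$, and the smallest nonzero eigenvalue is $1\cdot(d-1)=d-1$, attained by the linear functions. Hence
\[
\Var q=\sum_{k\ge1}\E q_k^2\le\frac{1}{d-1}\sum_{k\ge1}k(k+d-2)\,\E q_k^2=\frac{\E|\nabla_{\Sphere}q|^2}{d-1}.
\]

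If one prefers not to rely on the harmonic decomposition, there is an alternative for $d\ge3$: Lichnerowicz's theorem with $\mathrm{Ric}=(d-2)g$ on an $n=(d-1)$-dimensional manifold gives $\lambda_1\ge\frac{n}{n-1}(d-2)=d-1$. For $d=2$ the sphere is the circle, and Wirtinger's inequality via Fourier series gives gap $1=d-1$ directly.

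\emph{Main obstacle.} The delicate point is the regularity step, namely justifying the inequality for functions that are only Lipschitz. The choice of rotation averaging is made precisely so that the Lipschitz constant is preserved exactly, which avoids any loss from extending $q$ off the sphere and mollifying in $\R^d$.

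Everything else is classical. I would cite the eigenvalues of $\Delta_{\Sphere}$ rather than rederive them: the degree-$k$ harmonic polynomials restricted to the sphere satisfy $-\Delta_{\Sphere}Y_k=k(k+d-2)Y_k$, which follows from writing $\Delta_{\R^d}=\partial_r^2+\frac{d-1}{r}\partial_r+r^{-2}\Delta_{\Sphere}$ and applying it to $r^kY_k$.
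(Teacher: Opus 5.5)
Your proposal is correct and follows the same route as the paper: the Poincar\'e inequality for the uniform measure on $\Sphere^{d-1}$ with spectral gap $d-1$ of the Laplace--Beltrami operator, combined with the chord--arc comparison to get $\norm{\nabla_{\Sphere}q}\le\Lambda$. The paper cites the spectral gap and uses Rademacher's theorem for the a.e.\ gradient bound. You instead derive the gap from spherical harmonics and handle merely Lipschitz $q$ by rotation-averaged mollification, which keeps the Lipschitz constant exactly and converges uniformly, so the variance passes to the limit.
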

\begin{proof}[Reference]
The uniform probability measure on $\Sphere=\Sphere^{d-1}$ satisfies the Poincar\'e inequality $\Var(q)\le\frac1{d-1}\E\norm{\nabla_{\Sphere}q}^2$, where $d-1$ is
the first nonzero eigenvalue of the Laplace--Beltrami operator (\eg\ \citealp[Sect.~2.1 and Prop.~2.10]{ledoux2001concentration},
\citealp[Sect.~4.5]{bakry2014analysis}). A function that is $\Lambda$-Lipschitz for the Euclidean (chordal) distance is $\Lambda$-Lipschitz for the geodesic
distance, hence $\norm{\nabla_{\Sphere}q}\le\Lambda$ almost everywhere by Rademacher's theorem.
\end{proof}

\begin{fact}[Gaussian concentration]\label{fact:gaussconc}
Let $\mathsf g\sim\mathcal N(0,I_d)$ and $\mathsf Q:\R^d\to\R$ be $\Lambda$-Lipschitz. Then for all $t\ge0$,
$\Prob(\mathsf Q(\mathsf g)-\E\mathsf Q(\mathsf g)\ge t)\le e^{-t^2/(2\Lambda^2)}$ \citep[Thm.~5.6]{boucheron2013concentration}.
\end{fact}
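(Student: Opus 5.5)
The plan is the Herbst argument built on the Gaussian logarithmic Sobolev inequality; it gives the sharp constant $2\Lambda^2$ in the exponent. The proof in the paper is just a citation of \citet[Thm.~5.6]{boucheron2013concentration}, so what follows is the argument I would reproduce behind that reference.

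\emph{Reduction to smooth $\mathsf Q$.} First I replace $\mathsf Q$ by $\mathsf Q_\delta:=\E\,\mathsf Q(\cdot+\delta\mathsf g')$ with an independent $\mathsf g'\sim\mathcal N(0,I_d)$. This function is smooth, $\Lambda$-Lipschitz (so $\norm{\nabla\mathsf Q_\delta}\le\Lambda$ everywhere), and converges to $\mathsf Q$ uniformly as $\delta\to0$, since $|\mathsf Q_\delta-\mathsf Q|\le\Lambda\delta\,\E\norm{\mathsf g'}$. It therefore suffices to prove the moment-generating-function bound for $\mathsf Q_\delta$ and pass to the limit by dominated convergence; the tail bound then follows for $\mathsf Q$. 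All exponential moments are finite, because $|\mathsf Q(x)|\le|\mathsf Q(0)|+\Lambda\norm x$.

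\emph{Log-Sobolev inequality.} For smooth $\phi$ with $\E\phi(\mathsf g)^2<\infty$, the Gaussian log-Sobolev inequality (Gross) reads
\[
 \mathrm{Ent}(\phi^2)\le2\,\E\norm{\nabla\phi}^2 .
\]
I would take this as the black box. If a self-contained derivation is wanted, I would use the Ornstein--Uhlenbeck semigroup $P_s$ with the Bakry--\'Emery commutation $\nabla P_s\phi=e^{-s}P_s\nabla\phi$, and write the entropy as $\int_0^\infty$ of the Fisher information along the semigroup. An alternative is the two-point inequality on $\{\pm1\}$, tensorization, and the central limit theorem. This step is the main obstacle; everything else is calculus.

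\emph{Herbst argument.} Apply the inequality with $\phi=e^{\lambda\mathsf Q/2}$, $\lambda>0$, and put $H(\lambda):=\E e^{\lambda\mathsf Q}$. Since $\norm{\nabla\phi}^2=\frac{\lambda^2}{4}\norm{\nabla\mathsf Q}^2e^{\lambda\mathsf Q}\le\frac{\lambda^2\Lambda^2}{4}e^{\lambda\mathsf Q}$, we get
\[
 \lambda H'(\lambda)-H(\lambda)\log H(\lambda)\le\frac{\lambda^2\Lambda^2}{2}\,H(\lambda).
\]
Dividing by $\lambda^2H$, this becomes $\frac{d}{d\lambda}\bigl(\lambda^{-1}\log H(\lambda)\bigr)\le\Lambda^2/2$. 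As $\lambda\downarrow0$, $\lambda^{-1}\log H(\lambda)\to\E\mathsf Q$, so integrating from $0$ gives
\[
 \E e^{\lambda(\mathsf Q-\E\mathsf Q)}\le e^{\lambda^2\Lambda^2/2}.
\]
Markov's inequality then gives $\Prob(\mathsf Q-\E\mathsf Q\ge t)\le e^{-\lambda t+\lambda^2\Lambda^2/2}$, and the choice $\lambda=t/\Lambda^2$ yields $e^{-t^2/(2\Lambda^2)}$. The case $t=0$ is trivial.
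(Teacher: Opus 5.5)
Your argument is correct and follows the same route as the proof behind the cited result. Boucheron--Lugosi--Massart prove Thm.~5.6 by reducing to differentiable $\mathsf Q$ with $\norm{\nabla\mathsf Q}\le\Lambda$, applying the Gaussian log-Sobolev inequality to $e^{\lambda\mathsf Q/2}$, and running Herbst's argument, exactly as you do; the paper itself gives only the citation, and their log-Sobolev inequality is obtained via your second option (two-point inequality, tensorization, central limit theorem).
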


\begin{fact}[moments on the sphere and for Gaussians]\label{fact:moments}
Let $u\sim U(\Sphere)$, $\mathsf g\sim\mathcal N(0,I_d)$ and $w\in\R^d$. Then $\E\inner uw^2=\norm w^2/d$, $\E\inner uw^4=3\norm w^4/(d(d+2))$,
$\E\norm{\mathsf g}^2=d$, $\E\norm{\mathsf g}^4=d(d+2)$, and $\mathsf g/\norm{\mathsf g}\sim U(\Sphere)$ is independent of $\norm{\mathsf g}$.
\end{fact}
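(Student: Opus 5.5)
The plan is to prove the Gaussian statements first and then transfer them to the sphere through the polar decomposition $\mathsf g=\norm{\mathsf g}\,(\mathsf g/\norm{\mathsf g})$. This avoids computing any integral over $\Sphere$ directly.

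\emph{Gaussian moments.} Since the coordinates $\mathsf g_1,\dots,\mathsf g_d$ are i.i.d.\ $\mathcal N(0,1)$ with $\E\mathsf g_i^2=1$ and $\E\mathsf g_i^4=3$, we get $\E\norm{\mathsf g}^2=d$. Expanding the square gives
\[
\E\norm{\mathsf g}^4=\sum_i\E\mathsf g_i^4+\sum_{i\ne j}\E\mathsf g_i^2\,\E\mathsf g_j^2=3d+d(d-1)=d(d+2).
\]
Moreover $\inner{\mathsf g}{w}\sim\mathcal N(0,\norm w^2)$, so $\E\inner{\mathsf g}{w}^2=\norm w^2$ and $\E\inner{\mathsf g}{w}^4=3\norm w^4$.

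\emph{Polar decomposition.} This is the one step needing care. The density $(2\pi)^{-d/2}e^{-\norm x^2/2}$ depends only on $\norm x$. Writing it in polar coordinates $x=r\theta$, with $dx=r^{d-1}\,dr\,d\sigma(\theta)$ where $\sigma$ is the surface measure, the joint law of $(\norm{\mathsf g},\mathsf g/\norm{\mathsf g})$ factorizes as
\[
c\,r^{d-1}e^{-r^2/2}\,dr\otimes d\sigma(\theta).
\]
Hence the two components are independent, and the direction is distributed as normalized surface measure, \ie\ $U(\Sphere)$. Equivalently, one can invoke rotation invariance of $\mathcal N(0,I_d)$ together with uniqueness of the rotation-invariant probability measure on $\Sphere$. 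I expect this to be the only point that needs a reference rather than a computation, and it is classical.

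\emph{Transfer to the sphere.} Let $u:=\mathsf g/\norm{\mathsf g}\sim U(\Sphere)$, independent of $\norm{\mathsf g}$. Independence gives
\[
\norm w^2=\E\inner{\mathsf g}{w}^2=\E\norm{\mathsf g}^2\,\E\inner uw^2=d\,\E\inner uw^2,
\]
so $\E\inner uw^2=\norm w^2/d$. In the same way,
\[
3\norm w^4=\E\norm{\mathsf g}^4\,\E\inner uw^4=d(d+2)\,\E\inner uw^4,
\]
which yields $\E\inner uw^4=3\norm w^4/(d(d+2))$. This establishes every claim of \Cref{fact:moments}.
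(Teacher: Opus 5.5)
Your proof is correct and follows essentially the paper's route: both extract the sphere moments from Gaussian ones through the independent polar factorization $\mathsf g=\norm{\mathsf g}\,u$. The paper uses that factorization only for the fourth moment, reducing to $\E u_1^{2k}$ by rotation invariance of $U(\Sphere)$ and getting $\E u_1^2=1/d$ from $\sum_i u_i^2=1$; you use it for both moments via $\inner{\mathsf g}{w}\sim\mathcal N(0,\norm w^2)$, and you also justify the independence claim by polar coordinates, which the paper simply takes as known.
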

\begin{proof}
By rotation invariance $\E\inner uw^{2k}=\norm w^{2k}\E u_1^{2k}$, and $\sum_i\E u_i^2=1$ gives $\E u_1^2=1/d$. Since $\mathsf g=\norm{\mathsf g}\,u$ with independent
factors, $\E\mathsf g_1^4=\E\norm{\mathsf g}^4\,\E u_1^4$, and $\E\mathsf g_1^4=3$, $\E\norm{\mathsf g}^4=\Var(\chi^2_d)+d^2=2d+d^2$ give $\E u_1^4=3/(d(d+2))$.
\end{proof}

\begin{fact}[Pinsker's inequality and $\chi^2$]\label{fact:pinsker}
For probability measures $P,Q$ on a common space, $\TV(P,Q):=\sup_A|P(A)-Q(A)|\le\sqrt{\KL(P\|Q)/2}$ and $\TV(P,Q)\le\frac12\sqrt{\chi^2(P\|Q)}$, where
$\chi^2(P\|Q)=\E_Q[(dP/dQ)^2]-1$. For probability vectors $p,q\in\R^{n}$, $\TV=\frac12\norm{p-q}_1$, hence $\KL(p\|q)\ge\frac12\norm{p-q}_1^2$
\citep[Lemma~2.5]{tsybakov2009introduction}.
\end{fact}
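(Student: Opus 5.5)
The statement is Fact (Pinsker's inequality and $\chi^2$). It asks for two total-variation bounds, $\TV\le\sqrt{\KL/2}$ and $\TV\le\frac12\sqrt{\chi^2}$, and for the specialization to probability vectors. I will treat these in that order: reduce the general case to the two-point case, prove the $\chi^2$ bound by Cauchy--Schwarz, and then specialize.

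\emph{Reduction to two points.} Assume $P\ll Q$, since otherwise $\KL=\chi^2=\infty$ and there is nothing to prove. Let $p=dP/dQ$, and let $A^\ast=\{p>1\}$ be the set attaining the supremum in the definition of $\TV$. Then
\[
\TV(P,Q)=P(A^\ast)-Q(A^\ast)=\tfrac12\E_Q|p-1|.
\]
Pushing both measures forward under the map $\omega\mapsto\1_{A^\ast}(\omega)$ gives Bernoulli laws $\mathrm{Ber}(a)$ and $\mathrm{Ber}(b)$ with $a=P(A^\ast)$ and $b=Q(A^\ast)$. This pushforward leaves $\TV$ unchanged and, by the data-processing inequality (a consequence of Jensen's inequality applied to the convex function $t\log t$), does not increase $\KL$. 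It therefore suffices to prove $\KL(\mathrm{Ber}(a)\|\mathrm{Ber}(b))\ge2(a-b)^2$.

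\emph{Binary inequality.} Fix $a$ and set
\[
\phi(b)=a\log\frac ab+(1-a)\log\frac{1-a}{1-b}-2(a-b)^2 .
\]
Then $\phi(a)=0$ and
\[
\phi'(b)=(b-a)\Bigl[\frac1{b(1-b)}-4\Bigr],
\]
and the bracket is nonnegative because $b(1-b)\le\frac14$. So $\phi'\le0$ for $b<a$ and $\phi'\ge0$ for $b>a$, which makes $b=a$ the minimizer and gives $\phi\ge0$. This calculus step is the only genuine computation, and I expect it to be the main point needing care.

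\emph{The $\chi^2$ bound and the vector form.} By Cauchy--Schwarz,
\[
\tfrac12\E_Q|p-1|\le\tfrac12\sqrt{\E_Q(p-1)^2}=\tfrac12\sqrt{\chi^2(P\|Q)} .
\]
For probability vectors $p,q$, the set $A^\ast$ consists of the coordinates with $p_i>q_i$. Since $p$ and $q$ both sum to one, the positive and negative parts of $p-q$ have equal mass, so $\TV=\frac12\norm{p-q}_1$. Squaring Pinsker's inequality then yields $\KL(p\|q)\ge2\,\TV^2=\frac12\norm{p-q}_1^2$. Alternatively, the whole statement can be quoted from \citet[Lemma~2.5]{tsybakov2009introduction}.
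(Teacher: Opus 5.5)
Your proof is correct and complete. The paper gives no argument for this Fact; it simply cites \citet[Lemma~2.5]{tsybakov2009introduction}, so the relevant comparison is with the textbook proof behind that citation.

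\emph{How the textbook proof goes.} It works directly with the likelihood ratio $x=dP/dQ$ and needs no reduction step. It uses the pointwise inequality $3(x-1)^2\le(2x+4)(x\log x-x+1)$ for $x\ge0$; the difference of the two sides is convex with a double zero at $x=1$. It then applies Cauchy--Schwarz:
$\E_Q|x-1|\le\sqrt{\tfrac23\E_Q(x+2)}\,\sqrt{\E_Q(x\log x-x+1)}=\sqrt{2\KL(P\|Q)}$.

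\emph{How your route differs.} You pass to the extremal set $A^\ast=\{dP/dQ>1\}$ and use data processing to reduce everything to two Bernoulli laws. After that the only computation is the one-variable identity $\phi'(b)=(b-a)\bigl[1/(b(1-b))-4\bigr]$, which you get right.

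\emph{What each buys.} Your route makes visible that the binary case is the worst case. It also reuses the monotonicity of $\KL$ under measurable maps, which the paper needs anyway (\Cref{fact:chain}). The textbook route is a single integral inequality and avoids invoking data processing. It is structurally parallel to your $\chi^2$ step: Cauchy--Schwarz with weight $x+2$ instead of $1$.

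\emph{One implicit detail.} The calculus argument requires $b$ in the open interval $(0,1)$, which you do not check. If $\TV(P,Q)>0$ then $0<Q(A^\ast)<1$. Indeed, $Q(A^\ast)=0$ forces $P(A^\ast)=0$ by absolute continuity, and $Q(A^\ast)=1$ would give $\E_Q[dP/dQ]>1$. So $b\in(0,1)$ and $\phi$ is differentiable where you use it.
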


\begin{fact}[chain rule and data processing for adaptive experiments]\label{fact:chain}
Consider an adaptive algorithm that, for $t=1,\dots,T$, chooses a query $X_t$ as a measurable function of the previous observations $O_1,\dots,O_{t-1}$ and
of an internal random seed $\omega$, and receives an observation $O_t$ drawn from a distribution $P_{X_t}$ (respectively $P'_{X_t}$) independently of the past
given $X_t$. Let $P$, $P'$ be the laws of the transcript $(\omega,X_1,O_1,\dots,X_T,O_T)$ under the two families. Then
\[
 \KL(P\|P')=\sum_{t=1}^T\E_P\bigl[\KL(P_{X_t}\|P'_{X_t})\bigr],
\]
and for every measurable function $\psi$ of the transcript, $\KL(\mathrm{law}_P(\psi)\|\mathrm{law}_{P'}(\psi))\le\KL(P\|P')$ and
$\TV(\mathrm{law}_P(\psi),\mathrm{law}_{P'}(\psi))\le\TV(P,P')$ \citep[Sect.~2.4]{tsybakov2009introduction}. Moreover, if $P_{X_t}=P'_{X_t}$ does not depend
on $X_t$ (the observations are i.i.d.\ regardless of the queries), the likelihood ratio $dP/dP'$ is the product of the likelihood ratios of the observations.
\end{fact}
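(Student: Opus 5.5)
The plan is to write the two transcript laws as products of conditional kernels that share every factor except the observation densities. The chain rule then follows from the tower property, and the remaining claims are standard consequences.

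\emph{Step 1 (factorization).} I will assume a measurable choice of densities: there is a $\sigma$-finite measure $\nu$ on the observation space and jointly measurable functions $p_x(o)$, $p'_x(o)$ with $P_x=p_x\,\nu$ and $P'_x=p'_x\,\nu$. One can take $\nu_x=P_x+P'_x$, provided $x\mapsto P_x,P'_x$ are measurable kernels. The algorithm is the same under both families. So the law of $\omega$ and the kernels $\kappa_t(dx\mid\omega,o_{<t})$ choosing $X_t$ are identical under $P$ and $P'$ (they may be deterministic, i.e.\ Dirac). Hence $P$ and $P'$ have densities with respect to the common measure
\[
\mathrm{law}(\omega)\otimes\kappa_1\otimes\nu\otimes\cdots\otimes\kappa_T\otimes\nu,
\]
namely $\prod_t p_{X_t}(O_t)$ and $\prod_t p'_{X_t}(O_t)$. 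This gives
\[
\frac{dP}{dP'}=\prod_{t=1}^T\frac{p_{X_t}(O_t)}{p'_{X_t}(O_t)}
\]
on the event where the denominators are positive. The last assertion of the fact is this identity in the special case where $p_x$ and $p'_x$ do not depend on $x$.

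\emph{Step 2 (chain rule).} Suppose first that $\KL(P\|P')<\infty$, so $P\ll P'$ and the log-ratio is $P$-integrable. Take logarithms and write
\[
\KL(P\|P')=\E_P\sum_{t=1}^T\log\frac{p_{X_t}(O_t)}{p'_{X_t}(O_t)}.
\]
Let $\mathcal H_t$ be the $\sigma$-field generated by $\omega,X_1,O_1,\dots,O_{t-1},X_t$. Given $\mathcal H_t$, the observation $O_t$ has law $P_{X_t}$. Therefore
\[
\E_P\Bigl[\log\frac{p_{X_t}(O_t)}{p'_{X_t}(O_t)}\Bigm|\mathcal H_t\Bigr]=\KL(P_{X_t}\|P'_{X_t}),
\]
and the tower property gives the sum formula. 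The delicate point, which I expect to be the main obstacle, is integrability: the individual log-ratio terms need not be integrable even when their conditional expectations are. I would handle this by splitting $\log=\log^+-\log^-$. For the negative part, $\E_P[\log^-(p/p')\mid\mathcal H_t]\le 1/e$ pointwise, since $u\log^-u\le 1/e$. So every conditional expectation is well defined in $[0,\infty]$, and both sides can be computed as sums of nonnegative quantities minus bounded ones. If some $\E_P\KL(P_{X_t}\|P'_{X_t})=\infty$, or $P_{X_t}\not\ll P'_{X_t}$ on a set of positive probability, the same computation, or an explicit null set, shows that $P\not\ll P'$ or that $\KL(P\|P')=\infty$. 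Both sides are then $+\infty$.

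\emph{Step 3 (data processing).} For the total variation bound, $\psi^{-1}(A)$ is a measurable set of transcripts. Hence
\[
|\mathrm{law}_P(\psi)(A)-\mathrm{law}_{P'}(\psi)(A)|=|P(\psi^{-1}A)-P'(\psi^{-1}A)|\le\TV(P,P').
\]
For KL, I would use the Donsker--Varadhan representation
\[
\KL(\mu\|\mu')=\sup_\varphi\bigl\{\E_\mu\varphi-\log\E_{\mu'}e^{\varphi}\bigr\},
\]
the supremum being over bounded measurable $\varphi$. Every test function $\varphi$ of $\psi$ is the test function $\varphi\circ\psi$ on transcripts, so the supremum for the image laws is at most that for $(P,P')$. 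Alternatively, I can apply Jensen's inequality to the conditional expectation of $dP/dP'$ given $\psi$.
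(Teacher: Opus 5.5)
Your proposal is correct and follows the paper's argument: you factor the transcript density as $p(\omega)\prod_t p_{X_t}(O_t)$, with the algorithm's (point-mass) query kernels common to both laws, read off the product likelihood ratio, and get the chain rule from the tower property, while data processing is the standard monotonicity of divergences under measurable maps. The only differences are extra detail the paper omits, namely the measurable choice of densities, the integrability of the log-ratios via $u\log^-u\le1/e$, and Donsker--Varadhan in place of a generic $f$-divergence citation.
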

\begin{proof}
The chain rule follows from the factorization of the transcript density as $p(\omega)\prod_tp_{X_t}(O_t)$ (the query $X_t$ being a deterministic function of the
past, its conditional law is a point mass under both hypotheses) and the tower property; the data-processing inequalities are the monotonicity of
$f$-divergences under measurable maps.
\end{proof}

\begin{fact}[three-point inequality]\label{fact:threepoint}
Let $Z\subseteq Z'$ be closed convex sets, $\omega$ a convex function on $Z'$ that is $1$-strongly convex with respect to a norm $\norm\cdot_Z$ on $Z'$ and
differentiable on an open set containing the relative interior $Z'^\circ$ of $Z'$ (or the whole $Z'$), and $V(z,z')=\omega(z')-\omega(z)-\inner{\nabla\omega(z)}{z'-z}$. For
$\bar z\in Z'^\circ$ (not necessarily in $Z$), a vector $\xi$ and $\tau>0$, let $z^+:=\argmin_{z\in Z}\{\inner\xi z+\tau V(\bar z,z)\}$ and assume $z^+\in Z'^\circ$ (this holds automatically for the Euclidean distance and for the entropy
on the simplex, where the minimizer has positive coordinates, see the explicit formula of \Cref{lem:prox}(c)). Then for all $z\in Z$,
\[
 \inner{\xi}{z^+-z}\le\tau\bigl[V(\bar z,z)-V(z^+,z)-V(\bar z,z^+)\bigr],\qquad V(\bar z,z^+)\ge\tfrac12\norm{z^+-\bar z}_Z^2 .
\]
\end{fact}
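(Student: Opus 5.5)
The plan is to derive the three-point inequality from the first-order optimality condition of the prox problem and a Bregman identity; the second claim follows directly from strong convexity. The objective $\Psi(z):=\inner\xi z+\tau V(\bar z,z)=\inner\xi z+\tau[\omega(z)-\omega(\bar z)-\inner{\nabla\omega(\bar z)}{z-\bar z}]$ is convex on $Z$. By assumption it is differentiable at $z^+\in Z'^\circ$, with gradient $\xi+\tau(\nabla\omega(z^+)-\nabla\omega(\bar z))$. Since $z^+$ minimizes $\Psi$ over the convex set $Z$, the variational inequality
\[
 \inner{\xi+\tau(\nabla\omega(z^+)-\nabla\omega(\bar z))}{z-z^+}\ge0\qquad\text{for all } z\in Z
\]
holds. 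To justify it, I consider $\phi(s):=\Psi(z^++s(z-z^+))$ for $s\in[0,1]$. Then $\phi(s)\ge\phi(0)$ for all such $s$, and $\phi$ is differentiable from the right at $0$ because $\omega$ is differentiable on an open neighbourhood of $z^+$; hence $\phi'(0^+)\ge0$.

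Next I use the three-point identity for Bregman distances,
\[
 \inner{\nabla\omega(z^+)-\nabla\omega(\bar z)}{z-z^+}=V(\bar z,z)-V(z^+,z)-V(\bar z,z^+),
\]
which follows by expanding the three definitions: the terms $\omega(z)$, $\omega(z^+)$ and $\omega(\bar z)$ cancel, and the linear terms recombine. This needs $\nabla\omega$ to be defined at both $\bar z$ and $z^+$, which holds since both lie in $Z'^\circ$. Substituting the identity into the variational inequality and moving the $\xi$ term to the other side gives $\inner\xi{z^+-z}\le\tau[V(\bar z,z)-V(z^+,z)-V(\bar z,z^+)]$, as claimed.

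For the second claim, $1$-strong convexity of $\omega$ on $Z'$ with respect to $\norm\cdot_Z$, together with differentiability at $\bar z$, gives $\omega(z^+)\ge\omega(\bar z)+\inner{\nabla\omega(\bar z)}{z^+-\bar z}+\frac12\norm{z^+-\bar z}_Z^2$. This is exactly $V(\bar z,z^+)\ge\frac12\norm{z^+-\bar z}_Z^2$. The strong-convexity inequality with the gradient holds at points of differentiability: apply the defining inequality along the segment and let the step go to $0$.

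The main obstacle is the boundary case for the entropy on the simplex, where $\omega$ fails to be differentiable at points $z\in Z$ on the relative boundary. The identity and the variational inequality, however, only use gradients at $\bar z$ and $z^+$, both of which are interior. At a boundary point $z$, $V(\cdot,z)$ is still finite by the convention $0\log0=0$, and the expansion of the identity stays valid because $\omega(z)$ appears only as a value, never through its gradient. Existence and uniqueness of $z^+$ follow from strong convexity of $\Psi$ and closedness of $Z$ (with $Z$ compact in our applications). The hypothesis $z^+\in Z'^\circ$ is what is assumed and is verified in \Cref{lem:prox}(c).
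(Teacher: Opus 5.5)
Your proposal is correct and follows the same route as the paper. It uses the first-order optimality condition of the prox problem at $z^+$, the three-point Bregman identity obtained by expanding the definitions, and strong convexity of $\omega$ at $\bar z$ for the second claim. Your extra justifications simply make explicit what the paper's short proof leaves implicit: the one-sided derivative along the segment, and the observation that gradients are only needed at the points $\bar z, z^+\in Z'^\circ$.
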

\begin{proof}
The optimality condition of the convex problem defining $z^+$ is
\[
 \inner{\xi+\tau(\nabla\omega(z^+)-\nabla\omega(\bar z))}{z-z^+}\ge0\quad\text{for all }z\in Z,
\]
and the identity $\inner{\nabla\omega(z^+)-\nabla\omega(\bar z)}{z-z^+}=V(\bar z,z)-V(z^+,z)-V(\bar z,z^+)$ is verified by expanding the definitions. Strong convexity gives the last claim.
\end{proof}

\begin{fact}[conjugates of smooth convex functions]\label{fact:conjugate}
Let $\mathsf F:\R^d\to\R$ be convex and $L$-smooth, and $\mathsf F^\ast(\pi):=\sup_x\{\inner\pi x-\mathsf F(x)\}$ its Fenchel conjugate. For every $x'\in\R^d$, with
$g':=\nabla\mathsf F(x')$, we have $\mathsf F^\ast(g')=\inner{g'}{x'}-\mathsf F(x')$ (Fenchel--Young equality) and, for every $\pi\in\dom\mathsf F^\ast$,
\[
 \mathsf F^\ast(\pi)\ \ge\ \mathsf F^\ast(g')+\inner{x'}{\pi-g'}+\frac{1}{2L}\norm{\pi-g'}^2 .
\]
\end{fact}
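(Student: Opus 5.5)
The statement to prove is \Cref{fact:conjugate}: the Fenchel--Young equality at $g'=\nabla\mathsf F(x')$ and the $1/L$-strong-convexity-type lower bound for $\mathsf F^\ast$ at $g'$. The plan is to reduce the lower bound to the standard inequality for convex $L$-smooth functions on $\R^d$,
\[
\mathsf F(x)\ \ge\ \mathsf F(x')+\inner{g'}{x-x'}+\tfrac1{2L}\norm{\nabla\mathsf F(x)-g'}^2 .
\]
I take this inequality as classical, but I will sketch why it holds: apply the descent lemma to $\phi(z):=\mathsf F(z)-\inner{g'}{z}$. This $\phi$ is convex and $L$-smooth and is minimized at $x'$. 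Then $\phi(x')\le\phi(x-\nabla\phi(x)/L)\le\phi(x)-\frac1{2L}\norm{\nabla\phi(x)}^2$.

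\emph{Fenchel--Young equality.} The map $x\mapsto\inner{g'}{x}-\mathsf F(x)$ is concave and differentiable. Its gradient $g'-\nabla\mathsf F(x)$ vanishes at $x=x'$, so $x'$ attains the supremum. Hence $\mathsf F^\ast(g')=\inner{g'}{x'}-\mathsf F(x')$.

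\emph{Lower bound.} Fix $\pi\in\dom\mathsf F^\ast$. I first handle the case where $\pi$ is a gradient, $\pi=\nabla\mathsf F(x)$ for some $x$. By Fenchel--Young, $\mathsf F^\ast(\pi)=\inner{\pi}{x}-\mathsf F(x)$. Subtracting $\mathsf F^\ast(g')+\inner{x'}{\pi-g'}$ gives
\[
\inner{\pi}{x}-\mathsf F(x)-\inner{g'}{x'}+\mathsf F(x')-\inner{x'}{\pi-g'}=\mathsf F(x')+\inner{\pi}{x-x'}-\mathsf F(x).
\]
Now apply the displayed inequality with the roles of $x$ and $x'$ swapped, that is, expanded around $x$: $\mathsf F(x')\ge\mathsf F(x)+\inner{\pi}{x'-x}+\frac1{2L}\norm{g'-\pi}^2$. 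This yields exactly the claim.

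\emph{General $\pi$.} This is the step I expect to be the main obstacle. A point $\pi\in\dom\mathsf F^\ast$ need not be a gradient: the supremum defining $\mathsf F^\ast(\pi)$ may not be attained, for example when $\mathsf F$ is linear. I would avoid this issue with a direct argument using the definition of $\mathsf F^\ast$ and the upper quadratic bound. For any $x$,
\[
\mathsf F^\ast(\pi)\ \ge\ \inner{\pi}{x}-\mathsf F(x)\ \ge\ \inner{\pi}{x}-\mathsf F(x')-\inner{g'}{x-x'}-\tfrac L2\norm{x-x'}^2 .
\]
Choose $x=x'+(\pi-g')/L$. The right-hand side then becomes $\inner{\pi}{x'}-\mathsf F(x')+\frac1{2L}\norm{\pi-g'}^2$, which equals $\mathsf F^\ast(g')+\inner{x'}{\pi-g'}+\frac1{2L}\norm{\pi-g'}^2$ by the Fenchel--Young equality.

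This direct argument needs only the descent lemma and covers every $\pi$, including the case where $\pi$ is a gradient. So I would present it as the main proof and mention the gradient-based computation only as intuition.
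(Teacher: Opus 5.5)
Your main argument is correct and is essentially the paper's proof. You bound $\mathsf F^\ast(\pi)$ from below by inserting the quadratic upper bound from $L$-smoothness into the supremum and evaluating at $x=x'+(\pi-g')/L$; the paper computes the same supremum in closed form, and both get the Fenchel--Young equality from first-order optimality at $x'$. The gradient-based computation is a valid aside, but, as you note, it only covers $\pi$ in the range of $\nabla\mathsf F$ and is not needed.
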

\begin{proof}
The equality is the first-order optimality condition of the concave maximization defining $\mathsf F^\ast(g')$, attained at $x'$. By $L$-smoothness,
$\mathsf F(x)\le\mathsf F(x')+\inner{g'}{x-x'}+\frac L2\norm{x-x'}^2$ for all $x$, hence
\begin{align*}
\mathsf F^\ast(\pi)&\ge\sup_x\Bigl\{\inner\pi x-\mathsf F(x')-\inner{g'}{x-x'}-\frac L2\norm{x-x'}^2\Bigr\}\\
 &=\inner\pi{x'}-\mathsf F(x')+\sup_x\Bigl\{\inner{\pi-g'}{x-x'}-\frac L2\norm{x-x'}^2\Bigr\}\\
 &=\mathsf F^\ast(g')+\inner{x'}{\pi-g'}+\frac{1}{2L}\norm{\pi-g'}^2 .
\end{align*}
\end{proof}

\section{Proofs for Section~\ref{sec:smoothing}}\label{app:smoothing}

\begin{proof}[Proof of \Cref{lem:smoothing}]
(a) Convexity is inherited by averaging: $\varphi_\gamma(\theta x+(1-\theta)x')=\E\varphi(\theta(x+\gamma v)+(1-\theta)(x'+\gamma v))\le\theta\varphi_\gamma(x)+(1-\theta)\varphi_\gamma(x')$.
If $\varphi$ is $M$-Lipschitz on $C_\gamma$ and $x,x'\in C$, then $x+\gamma v,x'+\gamma v\in C_\gamma$ and $|\varphi_\gamma(x)-\varphi_\gamma(x')|\le\E|\varphi(x+\gamma v)-\varphi(x'+\gamma v)|\le M\norm{x-x'}$.
Since $\E v=0$, Jensen's inequality gives $\varphi(x)=\varphi(\E(x+\gamma v))\le\E\varphi(x+\gamma v)=\varphi_\gamma(x)$ for every $x$;
and for $x\in C$, $\varphi(x+\gamma v)\le\varphi(x)+\gamma M\norm v\le\varphi(x)+\gamma M$ because the segment $[x,x+\gamma v]$ lies in $C_\gamma$. If $\varphi$ is $L$-smooth,
$\varphi(x+\gamma v)\le\varphi(x)+\gamma\inner{\nabla\varphi(x)}v+\frac L2\gamma^2\norm v^2$, and taking expectations with
$\E v=0$ and $\E\norm v^2=d/(d+2)$ (for $v\sim U(\Ball)$, $\Prob(\norm v\le r)=r^d$, so $\E\norm v^2=\int_0^1r^2\,d(r^d)=d/(d+2)$) gives the second bound. Affine functions are their own averages.

(b) Let first $\psi\in C^1(\R^d)$. Then $\psi_\gamma(x)=|\gamma\Ball|^{-1}\int_{\gamma\Ball}\psi(x+z)\,dz$ is $C^1$ (differentiation under the integral sign over the compact ball, the
gradient being continuous), and applying the divergence theorem on the ball $\gamma\Ball$ (outer unit normal $z/\gamma$) to the $C^1$ vector fields $\psi(x+\cdot)e_i$,
\begin{multline*}
 \nabla\psi_\gamma(x)=\frac1{|\gamma\Ball|}\int_{\gamma\Ball}\nabla\psi(x+z)\,dz=\frac1{|\gamma\Ball|}\int_{\gamma\Sphere}\psi(x+z)\frac z\gamma\,d\sigma(z)\\
 =\frac{|\gamma\Sphere|}{|\gamma\Ball|}\cdot\frac1\gamma\,\E_{u\sim U(\Sphere)}[\psi(x+\gamma u)u]=\frac d\gamma\,\E[\psi(x+\gamma u)u],
\end{multline*}
because $|\gamma\Sphere|/|\gamma\Ball|=d/\gamma$ (the $(d-1)$-dimensional measure of $\gamma\Sphere$ is $d\gamma^{d-1}|\Ball|$). This settles the case of an $L$-smooth $\varphi$.
For a general $M$-Lipschitz convex $\varphi$, let
$\varphi^\delta:=\varphi\ast\rho_\delta$ be the convolution with a smooth probability density supported in $\delta\Ball$; then $\varphi^\delta\in C^\infty$ is $M$-Lipschitz and
$\sup_x|\varphi^\delta(x)-\varphi(x)|\le M\delta$. Consequently $\sup_x|(\varphi^\delta)_\gamma(x)-\varphi_\gamma(x)|\le M\delta$ and
$\sup_x\norm{\nabla(\varphi^\delta)_\gamma(x)-\frac d\gamma\E[\varphi(x+\gamma u)u]}\le\frac d\gamma M\delta$. Thus $(\varphi^\delta)_\gamma\to\varphi_\gamma$ and $\nabla(\varphi^\delta)_\gamma\to G(x):=\frac d\gamma\E[\varphi(x+\gamma u)u]$
uniformly on $\R^d$ as $\delta\to0$, where $G$ is continuous (indeed $M d$-Lipschitz, since $\norm{G(x)-G(x')}\le\frac d\gamma\E|\varphi(x+\gamma u)-\varphi(x'+\gamma u)|\le\frac{dM}\gamma\norm{x-x'}$).
Uniform convergence of $C^1$ functions together with uniform convergence of their gradients to a continuous limit implies that the limit is $C^1$ with the limit
gradient: for every $x,h$, $(\varphi^\delta)_\gamma(x+h)-(\varphi^\delta)_\gamma(x)=\int_0^1\inner{\nabla(\varphi^\delta)_\gamma(x+sh)}{h}ds\to\int_0^1\inner{G(x+sh)}hds$, so
$\varphi_\gamma(x+h)-\varphi_\gamma(x)=\int_0^1\inner{G(x+sh)}hds=\inner{G(x)}h+o(\norm h)$ by continuity of $G$. The symmetric form follows from $u\mapsto-u$ invariance of $U(\Sphere)$.

(c) For $x,x'$ and any unit vector $e$, by \eqref{eq:gradformula}, $\inner{\nabla\varphi_\gamma(x)-\nabla\varphi_\gamma(x')}{e}=\frac d\gamma\E[(\varphi(x+\gamma u)-\varphi(x'+\gamma u))\inner ue]$, whose absolute
value is at most $\frac d\gamma M\norm{x-x'}\,\E|\inner ue|\le\frac d\gamma M\norm{x-x'}\sqrt{\E\inner ue^2}=\frac{\sqrt d M}{\gamma}\norm{x-x'}$ by \Cref{fact:moments}. Taking the supremum over
$e$ gives the Lipschitz constant $\sqrt dM/\gamma$. Now let $\varphi$ be $L$-smooth. On the compact ball $x+\gamma\Ball$ we have $\norm{\nabla\varphi(x+\gamma v)}\le\norm{\nabla\varphi(x)}+L\gamma$, so by dominated
convergence we may differentiate under the expectation; no Lipschitz bound on $\varphi$ is needed. This gives $\nabla\varphi_\gamma(x)=\E\nabla\varphi(x+\gamma v)$,
which is $L$-Lipschitz.

(d) If $\psi:=\varphi-\frac\mu2\norm\cdot^2$ is convex on $C_\gamma$, then for $x,x'\in C$ and $\theta\in[0,1]$ the points $x+\gamma v,x'+\gamma v$ and their convex combination lie in
$C_\gamma$, so $\psi_\gamma(x):=\E\psi(x+\gamma v)$ is convex on $C$; and $\varphi_\gamma(x)=\psi_\gamma(x)+\frac\mu2\E\norm{x+\gamma v}^2=\psi_\gamma(x)+\frac\mu2\norm x^2+\frac\mu2\gamma^2\E\norm v^2$, so
$\varphi_\gamma-\frac\mu2\norm\cdot^2$ equals on $C$ the convex function $\psi_\gamma$ plus a constant.

(e) $\tilde\varphi$ is the infimal convolution of the convex function $\varphi+\iota_{C_{\bar\gamma}}$ (where $\iota$ is the indicator, $0$ on the set and $+\infty$ outside) with the
convex function $M\norm\cdot$, hence convex; it is finite because $C_{\bar\gamma}\ne\emptyset$ and $\varphi$ is bounded below on the bounded set $C_{\bar\gamma}$ (for unbounded $C$
finiteness follows from the Lipschitz bound $\varphi(z)\ge\varphi(z_0)-M\norm{z-z_0}$ on $C_{\bar\gamma}$, which makes $z\mapsto\varphi(z)+M\norm{x-z}$ bounded below by
$\varphi(z_0)-M\norm{x-z_0}$); and it is $M$-Lipschitz as an infimum of $M$-Lipschitz functions of $x$. For $x\in C_{\bar\gamma}$ the choice $z=x$ gives
$\tilde\varphi(x)\le\varphi(x)$, while $\varphi(z)+M\norm{x-z}\ge\varphi(x)$ for every $z\in C_{\bar\gamma}$ by the Lipschitz property on the convex set $C_{\bar\gamma}$; hence
$\tilde\varphi=\varphi$ on $C_{\bar\gamma}$. Finally, for $x\in C$ and $\gamma\le\bar\gamma$ the points $x+\gamma v$ lie in $C_{\bar\gamma}$, so $\tilde\varphi_\gamma(x)=\E\tilde\varphi(x+\gamma v)=\E\varphi(x+\gamma v)=\varphi_\gamma(x)$.
\end{proof}

\begin{proof}[Proof of \Cref{lem:action}(d)]
Fix $\xi$ and $w\ne0$, and put $q_i(u):=\frac1{2\gamma}[G_i(x+\gamma u,\xi)-G_i(x-\gamma u,\xi)]$ for $u\in\Sphere$. Each $q_i$ is odd and $M_g(\xi)$-Lipschitz on $\Sphere$ for the Euclidean
distance, and $|q_i(u)|=\frac12|q_i(u)-q_i(-u)|\le M_g(\xi)$. Extend $q_i$ to $\R^d$ by positive homogeneity: $\mathsf Q_i(\mathsf g):=\norm{\mathsf g}\,q_i(\mathsf g/\norm{\mathsf g})$ for $\mathsf g\ne0$,
$\mathsf Q_i(0):=0$. We claim that $\mathsf Q_i$ is $3M_g(\xi)$-Lipschitz on $\R^d$. Indeed, for $\mathsf g,\mathsf g'\ne0$ with unit vectors $\hat{\mathsf g},\hat{\mathsf g}'$,
\[
 |\mathsf Q_i(\mathsf g)-\mathsf Q_i(\mathsf g')|\le\bigl|\norm{\mathsf g}-\norm{\mathsf g'}\bigr|\,|q_i(\hat{\mathsf g})|+\norm{\mathsf g'}\,|q_i(\hat{\mathsf g})-q_i(\hat{\mathsf g}')|
 \le M_g(\xi)\norm{\mathsf g-\mathsf g'}+M_g(\xi)\norm{\mathsf g'}\,\norm{\hat{\mathsf g}-\hat{\mathsf g}'},
\]
and $\norm{\mathsf g'}\norm{\hat{\mathsf g}-\hat{\mathsf g}'}=\norm{\tfrac{\norm{\mathsf g'}}{\norm{\mathsf g}}\mathsf g-\mathsf g'}\le\norm{\tfrac{\norm{\mathsf g'}}{\norm{\mathsf g}}\mathsf g-\mathsf g}+\norm{\mathsf g-\mathsf g'}
=\bigl|\norm{\mathsf g'}-\norm{\mathsf g}\bigr|+\norm{\mathsf g-\mathsf g'}\le2\norm{\mathsf g-\mathsf g'}$; the case $\mathsf g'=0$ is $|\mathsf Q_i(\mathsf g)|\le M_g(\xi)\norm{\mathsf g}$.
Now let $\mathsf g\sim\mathcal N(0,I_d)$, so that $u:=\mathsf g/\norm{\mathsf g}\sim U(\Sphere)$ is independent of $\norm{\mathsf g}$ (\Cref{fact:moments}). Since $\mathsf Q_i$ is odd, $\E\mathsf Q_i(\mathsf g)=0$,
and \Cref{fact:gaussconc} applied to $\pm\mathsf Q_i$ gives $\Prob(|\mathsf Q_i(\mathsf g)|\ge t)\le2e^{-t^2/(18M_g(\xi)^2)}$. Writing $c:=18M_g(\xi)^2$ and using
$\Prob(\max_i\mathsf Q_i^4>t)\le\min\{1,2me^{-\sqrt t/c}\}$,
\[
 \E\max_i\mathsf Q_i(\mathsf g)^4\le\alpha+\int_\alpha^\infty2me^{-\sqrt t/c}\,dt=\alpha+4mc\,(\sqrt\alpha+c)\,e^{-\sqrt\alpha/c}\qquad(\alpha>0),
\]
because $\int_\alpha^\infty e^{-\sqrt t/c}dt=2\int_{\sqrt\alpha}^\infty se^{-s/c}ds=2c(\sqrt\alpha+c)e^{-\sqrt\alpha/c}$. With $\sqrt\alpha:=c\log(2m)$ this gives
$\E\max_i\mathsf Q_i(\mathsf g)^4\le c^2[\log^2(2m)+2\log(2m)+2]\le2c^2(1+\log(2m))^2=648\,M_g(\xi)^4\kap^2$. On the other hand $\max_i\mathsf Q_i(\mathsf g)^4=\norm{\mathsf g}^4\max_iq_i(u)^4$ with
independent factors, so $\E_u\max_iq_i(u)^4=\E\max_i\mathsf Q_i(\mathsf g)^4/\E\norm{\mathsf g}^4\le648M_g(\xi)^4\kap^2/(d(d+2))$. Since $(\hat J(x)w)_i=d\,q_i(u)\inner uw$, the Cauchy--Schwarz
inequality and \Cref{fact:moments} yield, still for fixed $\xi$,
\begin{multline*}
 \E_u\norm{\hat J(x)w}_\infty^2=d^2\,\E_u\Bigl[\max_iq_i(u)^2\inner uw^2\Bigr]\le d^2\sqrt{\E_u\max_iq_i(u)^4}\sqrt{\E_u\inner uw^4}\\
 \le d^2\cdot\frac{\sqrt{648}\,M_g(\xi)^2\kap}{\sqrt{d(d+2)}}\cdot\frac{\sqrt3\,\norm w^2}{\sqrt{d(d+2)}}\le\sqrt{1944}\,\kap M_g(\xi)^2\norm w^2 .
\end{multline*}
Taking the expectation over $\xi$ with $\E M_g(\xi)^2\le M_g^2$ and $\sqrt{1944}<45$ gives the first claim. The second follows from
$\E\norm{(\hat J-\Jg)w}_\infty^2\le2\E\norm{\hat Jw}_\infty^2+2\norm{\Jg w}_\infty^2\le(90\kap+2)M_g^2\norm w^2\le92\kap M_g^2\norm w^2$ by part (b).
(All steps up to the Cauchy--Schwarz inequality are performed for fixed $\xi$, so that only the second moment of $M_g(\xi)$ is needed at the end; no assumption on
higher moments of the Lipschitz modulus is required.)
\end{proof}

\begin{proof}[Proof of \Cref{lem:batch}]
The bound $\mathsf H$ is Jensen's inequality: $\norm{\frac1b\sum_jZ_j}_\infty^2\le\frac1b\sum_j\norm{Z_j}_\infty^2$. For the second bound let $Z'_1,\dots,Z'_b$ be an independent copy of
$Z_1,\dots,Z_b$ and $\varepsilon_1,\dots,\varepsilon_b$ independent Rademacher signs, independent of everything else. Since $\E Z'_j=0$ and $\norm\cdot_\infty^2$ is convex,
$\norm{\sum_jZ_j}_\infty^2=\norm{\E'[\sum_j(Z_j-Z'_j)]}_\infty^2\le\E'\norm{\sum_j(Z_j-Z'_j)}_\infty^2$, where $\E'$ integrates over the copy; hence
$\E\norm{\sum_jZ_j}_\infty^2\le\E\norm{\sum_j(Z_j-Z'_j)}_\infty^2=\E\norm{\sum_j\varepsilon_j(Z_j-Z'_j)}_\infty^2$, the last equality because $Z_j-Z'_j$ is symmetric and the pairs are
independent. Condition on $(Z_j,Z'_j)_j$ and put $a_{ji}:=(Z_j-Z'_j)_i$, $X_i:=\sum_j\varepsilon_ja_{ji}$ and $s^2:=\sum_j\norm{Z_j-Z'_j}_\infty^2\ge\sum_ja_{ji}^2$ for every $i$. By Hoeffding's lemma,
$\E[e^{\theta X_i}\mid Z,Z']\le e^{\theta^2s^2/2}$ for all $\theta\in\R$, hence $\Prob(X_i^2>t\mid Z,Z')\le2e^{-t/(2s^2)}$. Therefore, for every $\alpha>0$,
\[
 \E\Bigl[\max_iX_i^2\Bigm|Z,Z'\Bigr]\le\alpha+\sum_{i=1}^m\int_\alpha^\infty\Prob(X_i^2>t\mid Z,Z')\,dt\le\alpha+4ms^2e^{-\alpha/(2s^2)},
\]
and the choice $\alpha=2s^2\log(2m)$ gives $\E[\max_iX_i^2\mid Z,Z']\le2s^2(1+\log(2m))=2\kap s^2$. Finally $\E s^2\le\sum_j2(\E\norm{Z_j}_\infty^2+\E\norm{Z'_j}_\infty^2)\le4b\mathsf H$, so
$\E\norm{\sum_jZ_j}_\infty^2\le8\kap b\mathsf H$; dividing by $b^2$ proves the claim. The conditional version is proved identically.
\end{proof}

\section{Proof of Theorem~\ref{thm:mp}}\label{app:mp}

Throughout this appendix $\cZ:=\Q\times\tilde Y_\Rmult$ (we identify $y\in Y_\Rmult$ with its lifting $\tilde y\in\tilde Y_\Rmult$), $V$ is the joint Bregman distance
\eqref{eq:jointgeometry}, $\norm\cdot_\alpha$ and $\norm\cdot_{\alpha,\ast}$ are the norms of \eqref{eq:jointgeometry}, and the operator $\cG$ of \eqref{eq:operator} is
extended to $\cZ$ by padding its dual component with a zero slack coordinate: $\cG(z)=(\nabla f_\gamma(x)+\Jg(x)^\top y,\ -(0,h(x)))$. The estimator of an iteration is
$\hat\cG(z)=(\hat g,-(0,\hat h))$ with the mini-batch averages of \Cref{alg:mp}. By \Cref{lem:prox}(a), $V(z,z')\ge\frac12\norm{z-z'}_\alpha^2$, and $\norm\cdot_{\alpha,\ast}$ is the
dual norm of $\norm\cdot_\alpha$. The proximal steps of \Cref{alg:mp} are $w=\argmin_{z'\in\cZ}\{\inner{\eta\hat\cG(z)}{z'}+V(z,z')\}$ (the two blocks separate).

\begin{lemma}[Lipschitz constant and monotonicity]\label{lem:mp-lip}
For all $z=(x,y),z'=(x',y')\in\cZ$: $\norm{\cG(z)-\cG(z')}_{\alpha,\ast}\le L_\alpha\norm{z-z'}_\alpha$ with $L_\alpha$ from \eqref{eq:Lalpha}, and
$\inner{\cG(z)}{z-z'}\ge\LagS(x,y')-\LagS(x',y)$.
\end{lemma}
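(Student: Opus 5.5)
We will prove the two claims separately, working blockwise.

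\emph{Lipschitz bound.} Write $\cG(z)-\cG(z')=(P,-(0,Q))$ with
\[
P:=\nabla f_\gamma(x)-\nabla f_\gamma(x')+(\Jg(x)-\Jg(x'))^\top y+\Jg(x')^\top(y-y'),\qquad Q:=h(x)-h(x').
\]
Split $P$ into its first two terms and its last term.

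For the first two terms, use the smoothness of $\LagS(\cdot,y)$ established after \eqref{eq:cert}. Since $\norm y_1\le\Rmult$, the norm of these two terms is at most $H\norm{x-x'}$.

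For the last term, \eqref{eq:rowinf} and \Cref{lem:action}(b) give $\norm{\Jg(x')^\top(y-y')}\le M_g\norm{y-y'}_1$. Moreover $\norm{y-y'}_1\le\norm{\tilde y-\tilde y'}_1$.

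For the dual block, $h_i$ is $M_g$-Lipschitz on $\Q$ by \Cref{lem:smoothing}(a), so $\norm{Q}_\infty\le M_g\norm{x-x'}$.

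Put $a:=\norm{x-x'}$ and $c:=\norm{\tilde y-\tilde y'}_1$. Then
\[
\norm{\cG(z)-\cG(z')}_{\alpha,\ast}^2\le\alpha^{-1}(Ha+M_gc)^2+\alpha M_g^2a^2 .
\]
Substitute $s:=\sqrt\alpha\,a$ and $r:=c/\sqrt\alpha$, so that $\norm{z-z'}_\alpha^2=s^2+r^2$. The right-hand side becomes
\[
\bigl((H/\alpha)s+M_gr\bigr)^2+(M_gs)^2=\norm{\mathsf K(s,r)^\top}^2,\qquad \mathsf K:=\begin{pmatrix}H/\alpha&M_g\\ M_g&0\end{pmatrix}.
\]
This is at most $\norm{\mathsf K}_{2\to2}^2(s^2+r^2)$. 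The matrix $\mathsf K$ is symmetric with eigenvalues $\frac12\bigl(H/\alpha\pm\sqrt{H^2/\alpha^2+4M_g^2}\bigr)$, so its spectral norm is exactly $L_\alpha$. The bound $L_\alpha\le H/\alpha+M_g$ follows from $\sqrt{p^2+q^2}\le p+q$.

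\emph{Monotonicity inequality.} Expand
\[
\inner{\cG(z)}{z-z'}=\inner{\nabla_x\LagS(x,y)}{x-x'}-\inner{h(x)}{y-y'}.
\]
Convexity of $\LagS(\cdot,y)$ gives $\inner{\nabla_x\LagS(x,y)}{x-x'}\ge\LagS(x,y)-\LagS(x',y)$. Linearity in $y$ gives $-\inner{h(x)}{y-y'}=\LagS(x,y')-\LagS(x,y)$. Adding the two yields $\LagS(x,y')-\LagS(x',y)$, as claimed.

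Both parts are routine. The only point needing care is the norm bookkeeping: passing from the lifted $\ell_1$-norm to $\norm{y-y'}_1$, and checking that the quadratic form reduces exactly to the spectral norm of $\mathsf K$. Convexity of $\LagS(\cdot,y)$ requires $y\ge0$, which holds on $Y_\Rmult$.
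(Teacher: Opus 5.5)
Your proof is correct and follows the paper's argument essentially line by line. You use the same split of the primal difference into an $H\norm{x-x'}$ part and an $M_g\norm{y-y'}_1$ part, and the same rescaling $(\sqrt\alpha\norm{x-x'},\ \norm{\tilde y-\tilde y'}_1/\sqrt\alpha)$ that reduces the bound to the spectral norm of the symmetric $2\times2$ matrix. The monotonicity inequality is obtained, as in the paper, from convexity in $x$ plus linearity in $y$.
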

\begin{proof}
By \eqref{eq:rowinf}, \Cref{lem:action}(b) and $\norm y_1\le\Rmult$,
$\norm{\nabla_x\LagS(z)-\nabla_x\LagS(z')}\le\norm{\nabla f_\gamma(x)-\nabla f_\gamma(x')}+\norm{(\Jg(x)-\Jg(x'))^\top y}+\norm{\Jg(x')^\top(y-y')}\le H\norm{x-x'}+M_g\norm{y-y'}_1$,
and $\norm{h(x)-h(x')}_\infty\le M_g\norm{x-x'}$. With $\mathsf a:=\sqrt\alpha\norm{x-x'}$ and $\mathsf c:=\norm{\tilde y-\tilde y'}_1/\sqrt\alpha\ge\norm{y-y'}_1/\sqrt\alpha$,
\begin{multline*}
 \norm{\cG(z)-\cG(z')}_{\alpha,\ast}^2\le\frac{(H\norm{x-x'}+M_g\norm{y-y'}_1)^2}{\alpha}+\alpha M_g^2\norm{x-x'}^2\\
 \le\Bigl(\frac H\alpha\mathsf a+M_g\mathsf c\Bigr)^2+(M_g\mathsf a)^2=\norm{\mathsf B(\mathsf a,\mathsf c)^\top}^2,
\end{multline*}
where $\mathsf B:=\bigl(\begin{smallmatrix}H/\alpha&M_g\\M_g&0\end{smallmatrix}\bigr)$;
hence $\norm{\cG(z)-\cG(z')}_{\alpha,\ast}\le\norm{\mathsf B}_2\sqrt{\mathsf a^2+\mathsf c^2}=\norm{\mathsf B}_2\norm{z-z'}_\alpha$, and the spectral norm of the symmetric matrix $\mathsf B$ is
its largest eigenvalue $\frac12(H/\alpha+\sqrt{H^2/\alpha^2+4M_g^2})=L_\alpha$. For the second claim, convexity of $\LagS(\cdot,y)$ gives
$\inner{\nabla_x\LagS(x,y)}{x-x'}\ge\LagS(x,y)-\LagS(x',y)$, and linearity in $y$ gives $\inner{-h(x)}{y-y'}=\LagS(x,y')-\LagS(x,y)$; add.
\end{proof}

\begin{lemma}[one iteration]\label{lem:mp-onestep}
Let $z_k\in\cZ$, $w_k:=\argmin_{z\in\cZ}\{\inner{\eta\hat\cG(z_k)}{z}+V(z_k,z)\}$ and $z_{k+1}:=\argmin_{z\in\cZ}\{\inner{\eta\hat\cG'(w_k)}{z}+V(z_k,z)\}$, where $\hat\cG(z_k)=\cG(z_k)+\Delta_k$ and
$\hat\cG'(w_k)=\cG(w_k)+\Delta'_k$. If $\eta\le1/(\sqrt3L_\alpha)$, then for every $z\in\cZ$
\[
 \eta\inner{\hat\cG'(w_k)}{w_k-z}\le V(z_k,z)-V(z_{k+1},z)+\frac{3\eta^2}{2}\bigl(\norm{\Delta_k}_{\alpha,\ast}^2+\norm{\Delta'_k}_{\alpha,\ast}^2\bigr).
\]
\end{lemma}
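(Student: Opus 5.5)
The plan is the standard Mirror--Prox one-step analysis of \citet{nemirovski2004prox,juditsky2011solving}, carried out with the noisy directions and the joint Bregman distance $V$. Apply the three-point inequality (\Cref{fact:threepoint}, valid blockwise: Euclidean in $x$ and entropic on $\tilde Y_\Rmult$ by \Cref{lem:prox}(c),(d)) to both prox steps. The step producing $z_{k+1}$ (direction $\eta\hat\cG'(w_k)$, comparator $z$) gives
\[
 \eta\inner{\hat\cG'(w_k)}{z_{k+1}-z}\le V(z_k,z)-V(z_{k+1},z)-V(z_k,z_{k+1}).
\]
The step producing $w_k$ (direction $\eta\hat\cG(z_k)$, comparator $z_{k+1}$) gives
\[
 \eta\inner{\hat\cG(z_k)}{w_k-z_{k+1}}\le V(z_k,z_{k+1})-V(w_k,z_{k+1})-V(z_k,w_k).
\]
Write $\eta\inner{\hat\cG'(w_k)}{w_k-z}=\eta\inner{\hat\cG'(w_k)}{z_{k+1}-z}+\eta\inner{\hat\cG(z_k)}{w_k-z_{k+1}}+\eta\inner{\hat\cG'(w_k)-\hat\cG(z_k)}{w_k-z_{k+1}}$. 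Adding the two inequalities cancels $V(z_k,z_{k+1})$ and leaves
\[
 V(z_k,z)-V(z_{k+1},z)-V(w_k,z_{k+1})-V(z_k,w_k)+\eta\inner{\hat\cG'(w_k)-\hat\cG(z_k)}{w_k-z_{k+1}}.
\]

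Next I control the cross term. Split $\hat\cG'(w_k)-\hat\cG(z_k)=[\cG(w_k)-\cG(z_k)]+\Delta'_k-\Delta_k$. By the dual-norm bound and \Cref{lem:mp-lip}, the cross term is at most
\[
 \eta\bigl(L_\alpha\norm{w_k-z_k}_\alpha+\norm{\Delta_k}_{\alpha,\ast}+\norm{\Delta'_k}_{\alpha,\ast}\bigr)\norm{w_k-z_{k+1}}_\alpha .
\]
Using $(a+b+c)^2\le3(a^2+b^2+c^2)$ and Young's inequality, this is bounded by
\[
 \frac12\norm{w_k-z_{k+1}}_\alpha^2+\frac{3\eta^2}{2}\bigl(L_\alpha^2\norm{w_k-z_k}_\alpha^2+\norm{\Delta_k}_{\alpha,\ast}^2+\norm{\Delta'_k}_{\alpha,\ast}^2\bigr).
\]

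Finally, strong convexity of $V$ with respect to $\norm\cdot_\alpha$ (\Cref{lem:prox}(a)) gives $V(w_k,z_{k+1})\ge\frac12\norm{w_k-z_{k+1}}_\alpha^2$ and $V(z_k,w_k)\ge\frac12\norm{w_k-z_k}_\alpha^2$. The step condition $\eta\le1/(\sqrt3L_\alpha)$ gives $3\eta^2L_\alpha^2\le1$, so $\frac{3\eta^2}{2}L_\alpha^2\norm{w_k-z_k}_\alpha^2\le\frac12\norm{w_k-z_k}_\alpha^2$. Both quadratic terms are therefore absorbed, and what remains is the claimed inequality.

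The main point needing care is applying the three-point inequality in the joint geometry with the slack coordinate and the entropy. The prox problem separates into the two blocks, and each block satisfies \Cref{fact:threepoint} with weights $\alpha$ and $\alpha^{-1}$. The minimizers lie in the relative interior, so the Bregman terms at $w_k$ and $z_{k+1}$ are well defined. The dual component of $\cG$ carries a zero slack coordinate, so the pairing matches the $\norm\cdot_{\alpha,\ast}$ duality.
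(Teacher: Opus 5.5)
Your proof is correct and is essentially the paper's argument. As in the paper, you add two three-point inequalities so that $V(z_k,z_{k+1})$ cancels, bound the cross term through \Cref{lem:mp-lip} and the dual norm, and absorb the quadratic terms using $V(w_k,z_{k+1})\ge\frac12\norm{w_k-z_{k+1}}_\alpha^2$ and $V(z_k,w_k)\ge\frac12\norm{z_k-w_k}_\alpha^2$.

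The only difference is in the Young bookkeeping. You use $(a+b+c)^2\le3(a^2+b^2+c^2)$, which consumes the condition $3\eta^2L_\alpha^2\le1$ exactly. The paper instead uses a weighted Young inequality with weight $1-\eta^2L_\alpha^2\ge\frac23$, obtaining $\frac{3\eta^2}{4}\bigl(\norm{\Delta_k}_{\alpha,\ast}+\norm{\Delta'_k}_{\alpha,\ast}\bigr)^2$ before loosening to the same constant $\frac{3\eta^2}{2}$.
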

\begin{proof}
\Cref{fact:threepoint} for the two proximal steps gives, for all $z\in\cZ$,
$\eta\inner{\hat\cG'(w_k)}{z_{k+1}-z}\le V(z_k,z)-V(z_{k+1},z)-V(z_k,z_{k+1})$ and, with $z=z_{k+1}$ in the first step,
$\eta\inner{\hat\cG(z_k)}{w_k-z_{k+1}}\le V(z_k,z_{k+1})-V(w_k,z_{k+1})-V(z_k,w_k)$. Adding and rearranging,
\begin{multline*}
 \eta\inner{\hat\cG'(w_k)}{w_k-z}\le V(z_k,z)-V(z_{k+1},z)-V(w_k,z_{k+1})-V(z_k,w_k)\\
 +\eta\inner{\hat\cG(z_k)-\hat\cG'(w_k)}{w_k-z_{k+1}}.
\end{multline*}
Put $\mathsf A:=\norm{z_k-w_k}_\alpha$, $\mathsf B:=\norm{w_k-z_{k+1}}_\alpha$ and $\delta:=\norm{\Delta_k}_{\alpha,\ast}+\norm{\Delta'_k}_{\alpha,\ast}$. Since
$\hat\cG(z_k)-\hat\cG'(w_k)=[\cG(z_k)-\cG(w_k)]+\Delta_k-\Delta'_k$, \Cref{lem:mp-lip} and the definition of the dual norm give
$\eta\inner{\hat\cG(z_k)-\hat\cG'(w_k)}{w_k-z_{k+1}}\le\eta L_\alpha\mathsf A\mathsf B+\eta\delta\mathsf B\le\frac12\mathsf A^2+\frac{\eta^2L_\alpha^2}{2}\mathsf B^2+\frac{c}{2}\mathsf B^2+\frac{\eta^2\delta^2}{2c}$
with $c:=1-\eta^2L_\alpha^2\ge\frac23$. Using $V(w_k,z_{k+1})\ge\frac12\mathsf B^2$ and $V(z_k,w_k)\ge\frac12\mathsf A^2$, the four terms $-V(w_k,z_{k+1})-V(z_k,w_k)+\frac12\mathsf A^2+\frac12(\eta^2L_\alpha^2+c)\mathsf B^2$
are at most $0$, and $\frac{\eta^2\delta^2}{2c}\le\frac{3\eta^2}{4}\delta^2\le\frac{3\eta^2}{2}(\norm{\Delta_k}_{\alpha,\ast}^2+\norm{\Delta'_k}_{\alpha,\ast}^2)$.
\end{proof}

\begin{proof}[Proof of \Cref{thm:mp}]
Let $\cF_{k}$ denote the $\sigma$-field generated by $x_1$ and by the mini-batches of the iterations $1,\dots,k$, and $\cF_{k-1/2}$ the one that additionally contains the first mini-batch of
iteration $k$. Then $z_k$ is $\cF_{k-1}$-measurable, $w_k$ is $\cF_{k-1/2}$-measurable, $\E[\Delta_k\mid\cF_{k-1}]=0$ and $\E[\Delta'_k\mid\cF_{k-1/2}]=0$ by \Cref{lem:action}(a) and
\Cref{lem:levels}, and
\begin{multline*}
 \E\bigl[\norm{\Delta_k}_{\alpha,\ast}^2\mid\cF_{k-1}\bigr]=\alpha^{-1}\E[\norm{\hat g_k-\nabla_x\LagS(z_k)}^2\mid\cF_{k-1}]+\alpha\,\E[\norm{\hat h_k-h(x_k)}_\infty^2\mid\cF_{k-1}]\\
 \le\frac{2dM_R^2}{\alpha b}+\alpha\min\Bigl\{1,\frac{8\kap}b\Bigr\}\sigma_h^2=\sigma_\ast^2
\end{multline*}
by \Cref{lem:action}(a) (the $b$ paired samples are conditionally i.i.d.\ and centered, with weight vector $y_k$, $\norm{y_k}_1\le\Rmult$), \Cref{lem:levels} and \Cref{lem:batch}; the
same bound holds for $\Delta'_k$ given $\cF_{k-1/2}$.

By \Cref{lem:mp-onestep} and \Cref{lem:mp-lip}, for every $z=(x,\tilde y)\in\cZ$,
\begin{multline*}
 \eta\bigl[\LagS(x^w_k,y)-\LagS(x,y^w_k)\bigr]\le\eta\inner{\cG(w_k)}{w_k-z}=\eta\inner{\hat\cG'(w_k)}{w_k-z}-\eta\inner{\Delta'_k}{w_k-z}\\
 \le V(z_k,z)-V(z_{k+1},z)+\frac{3\eta^2}2\bigl(\norm{\Delta_k}_{\alpha,\ast}^2+\norm{\Delta'_k}_{\alpha,\ast}^2\bigr)+\eta\inner{\Delta'_k}{z-w_k}.
\end{multline*}
Summing over $k=1,\dots,N$, dividing by $\eta N$, and using convexity of $\LagS(\cdot,y)$ and linearity of $\LagS(x,\cdot)$,
\begin{multline*}
 \LagS(\Xr_N,y)-\LagS(x,\yr_N)\le\frac1N\sum_{k=1}^N\bigl[\LagS(x^w_k,y)-\LagS(x,y^w_k)\bigr]\\
 \le\frac{V(z_1,z)}{\eta N}+\frac{3\eta}{2N}\sum_{k=1}^N\bigl(\norm{\Delta_k}_{\alpha,\ast}^2+\norm{\Delta'_k}_{\alpha,\ast}^2\bigr)+\frac1N\sum_{k=1}^N\inner{\Delta'_k}{z-w_k}.
\end{multline*}
Now fix $x:=x^\ast_\gamma$ and let $Z:=\{x^\ast_\gamma\}\times\tilde Y_\Rmult$. By \eqref{eq:restrictedgap}, $\Cert_{\Rmult,\gamma}(\Xr_N)\le\sup_{y\in Y_\Rmult}\LagS(\Xr_N,y)-\LagS(x^\ast_\gamma,\yr_N)$, and
$\sup_{z\in Z}V(z_1,z)=\alpha\cA+\alpha^{-1}\cB=\cD_\alpha^2$ by \eqref{eq:radii} ($\tilde y_1=\tilde y^{\mathrm u}$). Hence
\[
 \Cert_{\Rmult,\gamma}(\Xr_N)\le\frac{\cD_\alpha^2}{\eta N}+\frac{3\eta}{2N}\sum_{k=1}^N\bigl(\norm{\Delta_k}_{\alpha,\ast}^2+\norm{\Delta'_k}_{\alpha,\ast}^2\bigr)+\frac1N\sup_{z\in Z}\sum_{k=1}^N\inner{\Delta'_k}{z-w_k}.
\]
Taking expectations, the second term is at most $3\eta\sigma_\ast^2$. For the third we apply \Cref{lem:martsup} with the compact set $Z$, the ambient set $Z':=\cZ=\Q\times\tilde Y_\Rmult$
(the distance-generating function $\alpha\frac12\norm x^2+\alpha^{-1}\Rmult\sum_i\tilde y_i\log\tilde y_i$ is $1$-strongly convex for $\norm\cdot_\alpha$ on $\cZ$), the starting point
$z_0:=z_1=(x_1,\tilde y^{\mathrm u})\in\cZ$, which lies in $Z'$ but in general not in $Z$ (its primal part is $x_1\ne x^\ast_\gamma$), and $\Theta:=\sup_{z\in Z}V(z_1,z)=\cD_\alpha^2$, with the filtration
$(\cF_{k-1/2})_k$, $e_k:=\Delta'_k$ (which is $\cF_{k+1/2}$-measurable and centered given $\cF_{k-1/2}$), $\zeta_k:=w_k$ (bounded and $\cF_{k-1/2}$-measurable), $a_k:=1$ and
$v_k^2:=\sigma_\ast^2$. It gives
$\E\sup_{z\in Z}\sum_k\inner{\Delta'_k}{z-w_k}\le\sqrt{2\cD_\alpha^2N\sigma_\ast^2}$, and dividing by $N$ yields \eqref{eq:mp-bound}.

For \eqref{eq:mp-rate}, if $\eta=\cD_\alpha/(\sigma_\ast\sqrt{3N})\le1/(\sqrt3L_\alpha)$ then $\cD_\alpha^2/(\eta N)=3\eta\sigma_\ast^2=\sqrt3\sigma_\ast\cD_\alpha/\sqrt N$ and the right-hand side of
\eqref{eq:mp-bound} equals $(2\sqrt3+\sqrt2)\sigma_\ast\cD_\alpha/\sqrt N\le5\sigma_\ast\cD_\alpha/\sqrt N$. Otherwise $\eta=1/(\sqrt3L_\alpha)<\cD_\alpha/(\sigma_\ast\sqrt{3N})$, \ie\ $\sigma_\ast\sqrt N<L_\alpha\cD_\alpha$, and then
$\cD_\alpha^2/(\eta N)=\sqrt3L_\alpha\cD_\alpha^2/N$, $3\eta\sigma_\ast^2=\sqrt3\sigma_\ast^2/L_\alpha\le\sqrt3\sigma_\ast\cD_\alpha/\sqrt N$, so the right-hand side is at most
$\sqrt3L_\alpha\cD_\alpha^2/N+(\sqrt3+\sqrt2)\sigma_\ast\cD_\alpha/\sqrt N$.
\end{proof}

\begin{remark}[random starting point]\label{rem:mp-random-start}
If $x_1$ is random and independent of the mini-batches of the run, the proof above applies conditionally on $x_1$ with the random radius $\cD_\alpha^2=\alpha\cA+\alpha^{-1}\cB$,
$\cA=\frac12\norm{x_1-x^\ast_\gamma}^2$, and any deterministic $\eta\le1/(\sqrt3L_\alpha)$:
\[
 \E\bigl[\Cert_{\Rmult,\gamma}(\Xr_N)\mid x_1\bigr]\le\frac{\alpha\cA+\cB/\alpha}{\eta N}+3\eta\sigma_\ast^2+\sqrt{\frac{2(\alpha\cA+\cB/\alpha)\sigma_\ast^2}{N}} .
\]
The right-hand side is a nondecreasing concave function of $\cA$; this is the form used in the restart analysis of \Cref{thm:restart-mp}.
\end{remark}

\section{Proofs for Section~\ref{sec:sliding}}\label{app:sliding}

\subsection{Proof of Lemma~\ref{lem:schedule}}
Recall $S_t=\max\{1,\lceil ct\rceil\}$ with $c=M_g\rs/L$, $a_t=t/S_t$.

(i) $S_t\ge\lceil ct\rceil\ge ct$, hence $a_t\le1/c=L/(M_g\rs)$.

(ii) Let $t\ge2$. If $\lceil ct\rceil\le1$ then $S_t=S_{t-1}=1$ and $a_{t-1}/a_t=(t-1)/t<1$. Otherwise $ct>1$ and $S_t=\lceil ct\rceil<ct+1$. If $c(t-1)\ge1$, then $S_{t-1}\ge c(t-1)$ and
$a_{t-1}/a_t=\frac{(t-1)S_t}{tS_{t-1}}\le\frac{(t-1)(ct+1)}{t\,c(t-1)}=1+\frac1{ct}<2$. If $c(t-1)<1$, then $S_{t-1}=1$, $c<1/(t-1)$, and
$a_{t-1}/a_t\le\frac{(t-1)(ct+1)}{t}<\frac{(t-1)}{t}\Bigl(\frac{t}{t-1}+1\Bigr)=\frac{2t-1}{t}<2$.

(iii) From $\max\{1,ct\}\le S_t\le1+ct$ we get $\max\{N,\frac c2N(N+1)\}\le K_N\le N+\frac c2N(N+1)$.

(iv) The lower bound is the Cauchy--Schwarz inequality $W_N^2=(\sum_tt)^2=(\sum_t\frac{t}{\sqrt{S_t}}\sqrt{S_t})^2\le A_NK_N$. For the upper bound, note $t^2/S_t\le\min\{t^2,t/c\}$.
If $cN\le1$, then $S_t=1$ for all $t\le N$, $K_N=N$, $A_N=\frac{N(N+1)(2N+1)}6$ and $A_NK_N/W_N^2=\frac{2(2N+1)}{3(N+1)}\le\frac43$. If $cN>1$, let $T_0:=\lfloor1/c\rfloor\le N-1$. Then
\begin{multline*}
 A_N\le\sum_{t\le T_0}t^2+\frac1c\sum_{t=T_0+1}^Nt\le\frac{T_0(T_0+1)(2T_0+1)}{6}+\frac{N(N+1)}{2c}\\
 \le\frac{(T_0+1)(2T_0+1)}{6c}+\frac{N(N+1)}{2c}\le\frac{5N(N+1)}{6c},
\end{multline*}
using $T_0\le1/c$, $T_0+1\le N$ and $2T_0+1\le2N$. Since $K_N\ge\frac c2N(N+1)$, \ie\ $\frac1c\le\frac{N(N+1)}{2K_N}$, we get $A_N\le\frac{5N^2(N+1)^2}{12K_N}=\frac53\frac{W_N^2}{K_N}$.

\subsection{Proof of Lemma~\ref{lem:centers}}
For $t=1$, $\tau_1=\theta_1=0$ gives $u_1=x_0$. Multiplying the recursion by $t(1+\tau_t)=t(t+1)/2$ and using $t\tau_t=t(t-1)/2$, $t\theta_t=t-1$,
\[
 t(t+1)u_t=t(t-1)u_{t-1}+2t\,x_{t-1}+2(t-1)(x_{t-1}-x_{t-2}).
\]
For $t=2$ this gives $6u_2=2x_0+4x_1+2(x_1-x_0)=6x_1$, which is \eqref{eq:barycenter} (empty sum). Assume \eqref{eq:barycenter} for $t-1\ge2$, \ie\
$(t-1)t\,u_{t-1}=\sum_{j\le t-3}2jx_j+(4t-6)x_{t-2}$. Then
$t(t+1)u_t=\sum_{j\le t-3}2jx_j+(4t-6)x_{t-2}-2(t-1)x_{t-2}+(2t+2t-2)x_{t-1}=\sum_{j\le t-3}2jx_j+2(t-2)x_{t-2}+(4t-2)x_{t-1}$, which is \eqref{eq:barycenter} for $t$. The coefficients
are nonnegative and sum to $(t-2)(t-1)+4t-2=t(t+1)$, so $u_t$ is a convex combination of $x_1,\dots,x_{t-1}$. Finally, the recursion gives
$x_{t-1}=(1+\tau_t)u_t-\tau_tu_{t-1}-\theta_td_{t-1}$, hence $x_t-u_t=d_t+x_{t-1}-u_t=d_t-\theta_td_{t-1}+\tau_t(u_t-u_{t-1})$.

\subsection{Proof of Lemma~\ref{lem:outer}}
Write $g_t:=\nabla\mathsf F(u_t)$, $\pi:=\nabla\mathsf F(\Xr_N)$, and let $\mathsf F^\ast$ be the conjugate of $\mathsf F$. By \Cref{fact:conjugate}, $\mathsf F^\ast(g_t)=\inner{g_t}{u_t}-\mathsf F(u_t)$,
so that $\ell^{\mathsf F}_t(x)=\inner{g_t}x-\mathsf F^\ast(g_t)$, and the ``Bregman distances'' $D_t(\pi'):=\mathsf F^\ast(\pi')-\mathsf F^\ast(g_t)-\inner{u_t}{\pi'-g_t}$ satisfy
\begin{equation}\label{eq:Dt}
 D_t(\pi')\ge\frac1{2L}\norm{\pi'-g_t}^2\ge0\qquad\text{for all }\pi'\in\dom\mathsf F^\ast .
\end{equation}
Fenchel--Young at $\Xr_N$ gives $\mathsf F(\Xr_N)=\inner\pi{\Xr_N}-\mathsf F^\ast(\pi)$, and $W_N\Xr_N=\sum_ttx_t$; therefore
\[
 W_N\mathsf F(\Xr_N)-\sum_{t=1}^Nt\,\ell^{\mathsf F}_t(x_t)=\sum_{t=1}^Nt\bigl[\inner{\pi-g_t}{x_t}-\mathsf F^\ast(\pi)+\mathsf F^\ast(g_t)\bigr]=\sum_{t=1}^Nt\bigl[\inner{\pi-g_t}{x_t-u_t}-D_t(\pi)\bigr],
\]
where we used $\mathsf F^\ast(\pi)-\mathsf F^\ast(g_t)=D_t(\pi)+\inner{u_t}{\pi-g_t}$. Insert \eqref{eq:xminusu}, $x_t-u_t=d_t-\theta_td_{t-1}+\tau_t(u_t-u_{t-1})$, and the identity
\[
 \inner{\pi-g_t}{u_t-u_{t-1}}=D_{t-1}(\pi)-D_t(\pi)-D_{t-1}(g_t)\qquad(t\ge2),
\]
which is checked by expanding the three definitions (the terms $\mathsf F^\ast(\pi)$, $\mathsf F^\ast(g_{t-1})$ and $\inner{u_{t-1}}{\pi}$ cancel). For $t=1$ we have $\tau_1=0$, so the identity is not needed.
We obtain
\begin{multline*}
 W_N\mathsf F(\Xr_N)-\sum_tt\,\ell^{\mathsf F}_t(x_t)=\sum_{t=1}^Nt\inner{\pi-g_t}{d_t-\theta_td_{t-1}}\\
 +\sum_{t=2}^N\bigl[t\tau_tD_{t-1}(\pi)-t(1+\tau_t)D_t(\pi)-t\tau_tD_{t-1}(g_t)\bigr]-1\cdot(1+\tau_1)D_1(\pi).
\end{multline*}
Since $t(1+\tau_t)=t(t+1)/2=(t+1)\tau_{t+1}$, the $D(\pi)$-terms telescope: $\sum_{t=1}^N[t\tau_tD_{t-1}(\pi)-(t+1)\tau_{t+1}D_t(\pi)]=-(N+1)\tau_{N+1}D_N(\pi)=-W_ND_N(\pi)$ (recall $\tau_1=0$).
For the first sum, $t\theta_t=t-1$ and a shift of the index give
\[
 \sum_{t=1}^Nt\inner{\pi-g_t}{d_t}-\sum_{t=2}^N(t-1)\inner{\pi-g_t}{d_{t-1}}=N\inner{\pi-g_N}{d_N}+\sum_{t=1}^{N-1}t\inner{g_{t+1}-g_t}{d_t}.
\]
Hence
\begin{multline*}
 W_N\mathsf F(\Xr_N)-\sum_tt\,\ell^{\mathsf F}_t(x_t)\\
 =\Bigl[N\inner{\pi-g_N}{d_N}-W_ND_N(\pi)\Bigr]+\sum_{t=1}^{N-1}\Bigl[t\inner{g_{t+1}-g_t}{d_t}-(t+1)\tau_{t+1}D_t(g_{t+1})\Bigr].
\end{multline*}
By \eqref{eq:Dt} and the elementary bound $\max_{r\ge0}\{\mathsf pr-\mathsf qr^2\}=\mathsf p^2/(4\mathsf q)$ for $\mathsf p,\mathsf q>0$,
\begin{multline*}
 N\inner{\pi-g_N}{d_N}-W_ND_N(\pi)\le N\norm{\pi-g_N}\norm{d_N}-\frac{W_N}{2L}\norm{\pi-g_N}^2\\
 \le\frac{LN^2\norm{d_N}^2}{2W_N}=\frac{LN}{N+1}\norm{d_N}^2\le L\norm{d_N}^2,
\end{multline*}
and, with $(t+1)\tau_{t+1}=t(t+1)/2$,
\begin{multline*}
 t\inner{g_{t+1}-g_t}{d_t}-\frac{t(t+1)}{2}D_t(g_{t+1})\le t\norm{g_{t+1}-g_t}\norm{d_t}-\frac{t(t+1)}{4L}\norm{g_{t+1}-g_t}^2\\
 \le\frac{Lt}{t+1}\norm{d_t}^2\le L\norm{d_t}^2 .
\end{multline*}
Summing proves \eqref{eq:outer}.

\subsection{Proof of Lemma~\ref{lem:inner}}
Fix $y\in Y_\Rmult$ and write $x:=x^\ast_\gamma$, $\Delta y_k:=y_k-y_{k-1}$ ($\Delta y_0=0$), $\Delta p_k:=p_k-p_{k-1}$, $K:=K_N$, and $t(k)$ for the phase of slot $k$. All Bregman distances below are $\Vx$ and $\Vy$ of
\eqref{eq:bregman}.

\emph{Step 1: three-point inequalities.} By \Cref{lem:prox}(d) applied to the primal step with the direction $\hat r_k=r_k+e^p_k$, for all $x'\in\Q$,
\begin{multline*}
 \inner{r_k}{p_k-x'}\le\eta_{t(k)}\bigl[\Vx(x_{t(k)-1},x')-\Vx(x_{t(k)-1},p_k)-\Vx(p_k,x')\bigr]\\
 +\beta_{t(k)}\bigl[\Vx(p_{k-1},x')-\Vx(p_{k-1},p_k)-\Vx(p_k,x')\bigr]-\inner{e^p_k}{p_k-x'},
\end{multline*}
and by \Cref{lem:prox}(c) applied to the dual step with $\hat q_k=q_k+e^y_k$ and $\tau=\beta_{t(k)}/\rs^2$, for all $y'\in Y_\Rmult$,
\[
 \inner{q_k}{y'-y_k}\le\frac{\beta_{t(k)}}{\rs^2}\bigl[\Vy(\tilde y_{k-1},\tilde y')-\Vy(\tilde y_k,\tilde y')-\Vy(\tilde y_{k-1},\tilde y_k)\bigr]+\inner{e^y_k}{y_k-y'}.
\]

\emph{Step 2: the linearized gap and the cross terms.} From \eqref{eq:linearization} and \eqref{eq:exactdirections}, a direct computation gives, for every slot $k$ of phase $t$,
\[
 \ell_t(p_k,y)-\ell_t(x,y_k)=\inner{r_k}{p_k-x}+\inner{q_k}{y-y_k}+\inner{\Delta y_k}{J_t(p_k-x)}-\rho_k\inner{\Delta y_{k-1}}{J_{t'(k)}(p_k-x)} .
\]
(Indeed, $\ell_t(p_k,y)-\ell_t(x,y_k)=\inner{\nabla f_\gamma(u_t)}{p_k-x}+\inner y{q_k}-\inner{y_k}{h(u_t)+J_t(p_k-u_t)}+\inner{y_k}{J_t(p_k-x)}$, while
$\inner{r_k}{p_k-x}+\inner{q_k}{y-y_k}$ equals the same expression with $\inner{y_k}{J_t(p_k-x)}$ replaced by $\inner{y_{k-1}}{J_t(p_k-x)}+\rho_k\inner{\Delta y_{k-1}}{J_{t'(k)}(p_k-x)}$.)
Define $C_k:=a_k\inner{\Delta y_k}{J_{t(k)}(p_k-x)}$ and $C_0:=0$. For $k\ge2$ we have $a_k\rho_k=a_{k-1}$ and $J_{t'(k)}=J_{t(k-1)}$ in both cases of \eqref{eq:exactdirections} (for a non-first
slot $\rho_k=1$, $a_k=a_{k-1}$, $t'(k)=t(k)=t(k-1)$; for the first slot of phase $t\ge2$, $a_k\rho_k=a_t\cdot a_{t-1}/a_t=a_{t-1}=a_{k-1}$ and $t'(k)=t-1=t(k-1)$), so that
$a_k\rho_k\inner{\Delta y_{k-1}}{J_{t'(k)}(p_k-x)}=C_{k-1}+a_{k-1}\inner{\Delta y_{k-1}}{J_{t(k-1)}\Delta p_k}$; for $k=1$ both sides vanish. Multiplying the identity by $a_k$ and summing over $k$,
\begin{multline*}
 \sum_{k=1}^Ka_k\bigl[\ell_{t(k)}(p_k,y)-\ell_{t(k)}(x,y_k)\bigr]\\
 =\sum_ka_k\inner{r_k}{p_k-x}+\sum_ka_k\inner{q_k}{y-y_k}+C_K-\sum_{k=2}^Ka_{k-1}\inner{\Delta y_{k-1}}{J_{t(k-1)}\Delta p_k}.
\end{multline*}
By \eqref{eq:rowinf}, \Cref{lem:action}(b), \Cref{lem:schedule}(i) and Young's inequality,
\begin{equation}\label{eq:cross-bound}
\begin{gathered}
 \bigl|a_{k-1}\inner{\Delta y_{k-1}}{J_{t(k-1)}\Delta p_k}\bigr|\le a_{k-1}M_g\rs\cdot\frac{\norm{\Delta y_{k-1}}_1}{\rs}\norm{\Delta p_k}\le\frac L2\Bigl[\frac{\norm{\Delta y_{k-1}}_1^2}{\rs^2}+\norm{\Delta p_k}^2\Bigr],\\
 |C_K|\le\frac L2\Bigl[\frac{\norm{\Delta y_K}_1^2}{\rs^2}+\norm{p_K-x}^2\Bigr].
\end{gathered}
\end{equation}
Since $\ell_t$ is affine in each argument and $\sum_{k\in\text{phase }t}a_t=t$, the left-hand side equals $\sum_tt[\ell_t(x_t,y)-\ell_t(x,\bar y_t)]$. Therefore
\begin{multline}\label{eq:step2}
 \sum_tt\bigl[\ell_t(x_t,y)-\ell_t(x,\bar y_t)\bigr]\le\sum_ka_k\inner{r_k}{p_k-x}+\sum_ka_k\inner{q_k}{y-y_k}\\
 +\frac L{2\rs^2}\sum_{k=1}^K\norm{\Delta y_k}_1^2+\frac L2\sum_{k=2}^K\norm{\Delta p_k}^2+\frac L2\norm{p_K-x}^2 .
\end{multline}

\emph{Step 3: telescoping.} Insert Step 1 with $x'=x$ and $y'=y$. Since $a_k\beta_{t(k)}=L+\lambda$ for all $k$ and $p_0=x_0$,
\[
 \sum_ka_k\beta_{t(k)}\bigl[\Vx(p_{k-1},x)-\Vx(p_k,x)-\Vx(p_{k-1},p_k)\bigr]=(L+\lambda)\bigl[\cA-\Vx(p_K,x)\bigr]-\frac{L+\lambda}2\sum_{k=1}^K\norm{\Delta p_k}^2 .
\]
For the outer terms, $a_t\eta_tS_t=t\eta_t=2L$ and the convexity of $\Vx$ in each argument (Jensen over the $S_t$ slots of phase $t$, with $x_t$ the average of the $p_k$) give
\begin{multline*}
 \sum_ka_k\eta_{t(k)}\bigl[\Vx(x_{t-1},x)-\Vx(x_{t-1},p_k)-\Vx(p_k,x)\bigr]\\
 \le2L\sum_{t=1}^N\bigl[\Vx(x_{t-1},x)-\Vx(x_{t-1},x_t)-\Vx(x_t,x)\bigr]=2L\bigl[\cA-\Vx(x_N,x)\bigr]-L\sum_{t=1}^N\norm{d_t}^2 .
\end{multline*}
For the dual terms, $a_k\beta_{t(k)}/\rs^2=(L+\lambda)/\rs^2$, $\Vy(\tilde y_0,\tilde y)\le\cB$ and \Cref{lem:prox}(a) give
\[
 \sum_k\frac{a_k\beta_{t(k)}}{\rs^2}\bigl[\Vy(\tilde y_{k-1},\tilde y)-\Vy(\tilde y_k,\tilde y)-\Vy(\tilde y_{k-1},\tilde y_k)\bigr]\le\frac{L+\lambda}{\rs^2}\cB-\frac{L+\lambda}{2\rs^2}\sum_{k=1}^K\norm{\Delta y_k}_1^2 .
\]

\emph{Step 4: collecting.} Substituting into \eqref{eq:step2} and dropping $-2L\Vx(x_N,x)\le0$, the terms in $\norm{p_K-x}^2$ combine to $-(L+\lambda)\Vx(p_K,x)+\frac L2\norm{p_K-x}^2=-\frac\lambda2\norm{p_K-x}^2\le0$, the terms in
$\norm{\Delta p_k}^2$ to $-\frac{L+\lambda}2\sum_{k=1}^K\norm{\Delta p_k}^2+\frac L2\sum_{k=2}^K\norm{\Delta p_k}^2\le-\frac\lambda2\sum_{k=1}^K\norm{\Delta p_k}^2$, and the terms in $\norm{\Delta y_k}_1^2$ to
$-\frac\lambda{2\rs^2}\sum_{k=1}^K\norm{\Delta y_k}_1^2$. What remains is
\begin{multline*}
 \sum_tt\bigl[\ell_t(x_t,y)-\ell_t(x,\bar y_t)\bigr]+L\sum_t\norm{d_t}^2+\frac\lambda2\sum_k\Bigl[\norm{\Delta p_k}^2+\frac{\norm{\Delta y_k}_1^2}{\rs^2}\Bigr]\\
 \le(3L+\lambda)\cA+\frac{(L+\lambda)\cB}{\rs^2}-\sum_ka_k\inner{e^p_k}{p_k-x}+\sum_ka_k\inner{e^y_k}{y_k-y},
\end{multline*}
and $(3L+\lambda)\cA+(L+\lambda)\cB/\rs^2\le(L+\lambda)(3\cA+\cB/\rs^2)=(L+\lambda)\dA^2$. This is \eqref{eq:energy}.

\subsection{Proof details for Theorem~\ref{thm:affine}}\label{app:affine}
In \Cref{alg:affine} the exact directions are $r_k=\nabla f_\gamma(u_t)+A^\top y_{k-1}+\rho_kA^\top\Delta y_{k-1}$ and $q_k=Ap_k-c$, the errors are $e^p_k=\hat e_{t(k)}:=\hat g_{t(k)}-\nabla f_\gamma(u_{t(k)})$
and $e^y_k=0$. Steps 1 and 2 of the previous proof apply verbatim with $J_t=A$, $M_g=M_A$ (\Cref{lem:schedule}(i) holds with $S_t=\max\{1,\lceil M_A\rs t/L\rceil\}$), and \eqref{eq:cross-bound} requires
$\frac L2$ per increment on both sides. In Step 3 the inner coefficients are $a_k\beta_{t(k)}=L$, so the primal inner terms telescope to $L[\cA-\Vx(p_K,x)]-\frac L2\sum_k\norm{\Delta p_k}^2$ and the dual terms to
$\frac{L}{\rs^2}\cB-\frac{L}{2\rs^2}\sum_k\norm{\Delta y_k}_1^2$; these exactly cancel the cross terms of \eqref{eq:cross-bound} (no negative increment terms remain, which is why the stabilizer is not needed
in the inner loop). The outer terms have $t\eta_t=2L+\lambda$ and telescope to $(2L+\lambda)[\cA-\Vx(x_N,x)]-(L+\frac\lambda2)\sum_t\norm{d_t}^2$, so that Step 4 yields
\[
 \sum_tt\bigl[\ell_t(x_t,y)-\ell_t(x,\bar y_t)\bigr]+\Bigl(L+\frac\lambda2\Bigr)\sum_t\norm{d_t}^2\le(3L+\lambda)\cA+\frac{L\cB}{\rs^2}-\sum_ka_k\inner{e^p_k}{p_k-x},
\]
as claimed in the proof of \Cref{thm:affine}. Finally, $\sum_{k\in\text{phase }t}a_t\inner{\hat e_t}{p_k-x}=t\inner{\hat e_t}{x_t-x}$ because $e^p_k$ is the same vector for all slots of the phase.

\section{Proofs for Section~\ref{sec:lower}}\label{app:lower}

\subsection{Proof of Theorem~\ref{thm:lower-objective}}
\emph{The family.} For $v\in\{-1,1\}^d$ let $\theta_v:=av$ with a parameter $a\in(0,M/(2\sqrt d)]$ fixed below, so that $\norm{\theta_v}=a\sqrt d\le M/2$, and let $\xi\sim\mathcal N(\theta_v,s^2I_d)$ with
$s^2=M^2/(2d)$. Then $F(x,\xi)=\inner\xi x$ is $\norm\xi$-Lipschitz with $\E\norm\xi^2=a^2d+s^2d\le M^2/4+M^2/2<M^2$, so \Cref{ass:lip} holds with $M_0=M$; the constraints $G_i\equiv-1$ are trivially
Lipschitz, noiseless and satisfy Slater's condition with $\rho=1$. The objective is $f_v(x)=a\inner vx$, and on $\Q=R_x\Ball$ its minimum is $f_v^\ast=-aR_x\sqrt d$, attained at $-R_xv/\sqrt d$.

\emph{From the gap to a Hamming distance.} For $\hat x\in\Q$ let $\hat v_j:=-\operatorname{sign}(\hat x_j)$ (with $\operatorname{sign}(0):=1$) and $\mathsf J:=\{j:\hat v_j=v_j\}$, so that $v_j\hat x_j\le0$ for $j\in\mathsf J$ and
$v_j\hat x_j\ge0$ otherwise. Then $\inner v{\hat x}\ge\sum_{j\in\mathsf J}v_j\hat x_j\ge-\sum_{j\in\mathsf J}|\hat x_j|\ge-\sqrt{|\mathsf J|}\,\norm{\hat x}\ge-R_x\sqrt{d-\Ham(\hat v,v)}$, whence
\[
 f_v(\hat x)-f_v^\ast=a\bigl(\inner v{\hat x}+R_x\sqrt d\bigr)\ge aR_x\bigl(\sqrt d-\sqrt{d-\Ham(\hat v,v)}\bigr)\ge\frac{aR_x}{2\sqrt d}\Ham(\hat v,v),
\]
using $\sqrt d-\sqrt{d-k}=k/(\sqrt d+\sqrt{d-k})\ge k/(2\sqrt d)$.

\emph{Assouad's argument.} Let $P_v$ be the law of the transcript of the algorithm (internal seed, queries and observations) under $\theta_v$; $\hat v$ is a measurable function of the transcript.
For $j\in\{1,\dots,d\}$ and $v$ let $v^{(j)}$ denote $v$ with the $j$-th coordinate flipped, and let $P_{+j}:=2^{-(d-1)}\sum_{v:v_j=1}P_v$, $P_{-j}:=2^{-(d-1)}\sum_{v:v_j=-1}P_v$. Then
\[
 \frac1{2^d}\sum_vP_v(\hat v_j\ne v_j)=\frac12\bigl[P_{+j}(\hat v_j=-1)+P_{-j}(\hat v_j=1)\bigr]\ge\frac12\bigl(1-\TV(P_{+j},P_{-j})\bigr),
\]
because $P(A)+P'(A^c)\ge1-\TV(P,P')$ for every event $A$. Summing over $j$ and using the joint convexity of $\TV$ (the two mixtures are averages over the matched pairs $(v,v^{(j)})$, $v_j=1$),
\[
 \frac1{2^d}\sum_v\E_v\Ham(\hat v,v)\ge\frac d2-\frac12\sum_{j=1}^d\TV(P_{+j},P_{-j})\ge\frac d2-\frac12\cdot\frac1{2^d}\sum_v\sum_{j=1}^d\TV\bigl(P_v,P_{v^{(j)}}\bigr),
\]
where each unordered pair $\{v,v^{(j)}\}$ appears twice in the double sum, which accounts for the factor $2^{-d}$ (instead of $2^{-(d-1)}$) and the symmetry of $\TV$.

\emph{Information per query.} Fix $v$ and $j$. The observation of a paired query at $(x^+,x^-)$ is $O=(\inner\xi{x^+},\inner\xi{x^-})=X^\top\xi$ with $X:=[x^+\ x^-]\in\R^{d\times2}$ (a single
query is the case $x^+=x^-$), a Gaussian vector with mean $X^\top\theta$ and covariance $s^2X^\top X$. For two means $\theta,\theta'$ the Kullback--Leibler divergence between the corresponding
observation laws is $\frac1{2s^2}\norm{\Pi_X(\theta-\theta')}^2$, where $\Pi_X=X(X^\top X)^+X^\top$ is the orthogonal projector onto the span of $x^+,x^-$ (this is the standard formula for Gaussians
with a common, possibly singular, covariance and a mean difference in its range). For $\theta=\theta_v$, $\theta'=\theta_{v^{(j)}}$ the difference is $\pm2ae_j$, so the divergence equals
$\frac{2a^2}{s^2}\norm{\Pi_Xe_j}^2$. By the chain rule (\Cref{fact:chain}), $\KL(P_v\|P_{v^{(j)}})=\frac{2a^2}{s^2}\E_v\sum_{t=1}^T\norm{\Pi_{X_t}e_j}^2$, where $X_t$ is the (random) $t$-th query.
For fixed $v$, Pinsker's inequality, the Cauchy--Schwarz inequality over $j$, and $\sum_j\norm{\Pi_Xe_j}^2=\tr\Pi_X\le2$ give
\begin{multline*}
 \sum_{j=1}^d\TV\bigl(P_v,P_{v^{(j)}}\bigr)\le\sum_{j=1}^d\sqrt{\tfrac12\KL(P_v\|P_{v^{(j)}})}\le\sqrt{\frac d2\sum_{j=1}^d\KL(P_v\|P_{v^{(j)}})}\\
 =\sqrt{\frac{da^2}{s^2}\,\E_v\sum_{t=1}^T\tr\Pi_{X_t}}\le\frac{a\sqrt{2dT}}{s}.
\end{multline*}
(The expectation $\E_v$ is common to all $j$, which is what allows the trace identity to be used inside it.) Consequently
$2^{-d}\sum_v\E_v\Ham(\hat v,v)\ge\frac12\bigl(d-a\sqrt{2dT}/s\bigr)$.

\emph{Choice of $a$.} If $T\ge d/4$, take $a:=s\sqrt{d/(8T)}=M/(4\sqrt T)$; then $a\sqrt d=M\sqrt{d/T}/4\le M/2$ and $a\sqrt{2dT}/s=d/2$, so the average Hamming distance is at least $d/4$ and the
average gap at least $\frac{aR_x}{2\sqrt d}\cdot\frac d4=\frac{aR_x\sqrt d}8=\frac{MR_x}{32}\sqrt{\frac dT}$. If $T<d/4$, take $a:=M/(2\sqrt d)$; then $a\sqrt{2dT}/s=\sqrt{dT}<d/2$, the average Hamming
distance exceeds $d/4$, and the average gap exceeds $\frac{aR_x\sqrt d}{8}=\frac{MR_x}{16}\ge\frac{MR_x}{32}$. In both cases some $v$ attains at least the average, which proves the theorem. For the
consequence, note that $\frac{MR_x}{32}\min\{1,\sqrt{d/T}\}\le\eps<\frac{MR_x}{32}$ forces $\sqrt{d/T}\le32\eps/(MR_x)$.

\subsection{Proof of Theorem~\ref{thm:lower-constraint}}
Let $\varphi_v(x):=\inner{\theta_v}{x}$ with $\theta_v$ as above, and consider the problem $\min\{f(z)=Mt:\ g(z)=\varphi_v(x)-Mt\le0,\ z=(x,t)\in\Q'\}$. Its optimal value is
$f^\ast=\min_{x\in R_x\Ball}\varphi_v(x)=\varphi_v^\ast=-\norm{\theta_v}R_x\in[-MR_x/2,0]$, attained at $t^\ast=\varphi_v^\ast/M\in[-R_x/2,0]$, which lies inside the interval
$[-2R_x,2R_x]$ allowed for $t$. The stated constants are verified as follows. The diameter of $\Q'=R_x\Ball\times[-2R_x,2R_x]$ is $\sqrt{(2R_x)^2+(4R_x)^2}=2\sqrt5R_x$.
The objective $f(z)=Mt$ is deterministic and $M$-Lipschitz, so $M_0=M$. $G(\cdot,\xi)$ is $\norm{(\xi,-M)}$-Lipschitz in $z$, with
$\E\norm{(\xi,-M)}^2=\E\norm\xi^2+M^2\le\frac34M^2+M^2=\frac74M^2$, so $M_g=\frac{\sqrt7}2M$; $G(z,\xi)-g(z)=\inner{\xi-\theta_v}{x}\sim\mathcal N(0,s^2\norm x^2)$ with
$\norm x\le R_x+\bar\gamma$ on $\Q'_{\bar\gamma}$, so $\sigma_g^2=s^2(R_x+\bar\gamma)^2$; and $g(0,R_x)=-MR_x$ gives Slater's condition with $\rho=MR_x$. The Lagrangian is
$Mt+y(\varphi_v(x)-Mt)=M(1-y)t+y\varphi_v(x)$; for $y<1$ its minimum over $\Q'$ is attained at $t=-2R_x$ and equals $-2MR_x(1-y)+y\varphi_v^\ast<\varphi_v^\ast$, for $y>1$ at
$t=2R_x$ with value $2MR_x(1-y)+y\varphi_v^\ast<\varphi_v^\ast$, and for $y=1$ it equals $\varphi_v^\ast=f^\ast$; hence the dual function is maximized at $y^\ast=1$, the active
multiplier equals $1$, and $\Rmult=2$ is admissible in \eqref{eq:R}. Finally $\frac{MR_x}{32}=\frac{2M_g}{\sqrt7}\cdot\frac{D}{2\sqrt5}\cdot\frac1{32}=\frac{M_gD}{32\sqrt{35}}$ and
$dM^2R_x^2/4096=dM_g^2D^2/(4096\cdot35)=dM_g^2D^2/143360$.

A paired constraint query at $(z^+,z^-)$ returns $\inner\xi{x^\pm}-Mt^\pm$; since $t^\pm$ are known to the algorithm, this
is equivalent to observing $X^\top\xi$ with $X=[x^+\ x^-]$, exactly as in the proof of \Cref{thm:lower-objective}; objective queries return the known value $Mt$ and carry no information.
Therefore any algorithm for the constrained problem with output $\hat z=(\hat x,\hat t)$ induces an algorithm for minimizing $\varphi_v$ over $R_x\Ball$ with output $\hat x$ and the same
transcript laws, and \Cref{thm:lower-objective} (whose proof only used the transcript laws and the output) yields $\max_v\E_v[\varphi_v(\hat x)-\varphi_v^\ast]\ge\frac{MR_x}{32}\min\{1,\sqrt{d/T}\}$.
Finally, $\varphi_v(\hat x)=M\hat t+g(\hat z)\le M\hat t+\pos{g(\hat z)}$, so
$\varphi_v(\hat x)-\varphi_v^\ast\le(M\hat t-f^\ast)+\pos{g(\hat z)}=(f(\hat z)-f^\ast)+\pos{g(\hat z)}$, and taking expectations gives the claim. For the consequence,
\eqref{eq:goal} gives $\E[(f(\hat z)-f^\ast)+\pos{g(\hat z)}]\le2\eps$, and $2\eps<MR_x/32$ forces $\sqrt{d/T}\le64\eps/(MR_x)$.

\subsection{Proof of Theorem~\ref{thm:lower-levels}}
\emph{The class.} Since $a\le1/2$, both feasible sets below are nonempty subsets of $[-1,1]$, and $g_i(-1)=-1-\beta_i\le-1+a\le-1/2$ for all $i$, which is the
Slater condition at $x^{\mathrm s}=-1$ with $\rho=1/2$. The samples $G_i(\cdot,\xi)$ are $1$-Lipschitz and $\norm{\xi-\beta}_\infty\le\sigma+a\le2\sigma$, so $\sigma_g=2\sigma$.

\emph{Reduction to a test.} Under $\beta^{(0)}$ the feasible set is $\{x\le a\}$, $x^\ast=a$ and $f^\ast=-a$; under $\beta^{(j)}$ it is $\{x\le-a\}$, $x^\ast=-a$ and $f^\ast=a$. Let $P_0$ and $P_j$ be the
transcript laws, and suppose the algorithm satisfies the guarantee. Under $P_0$: $f(\hat x)-f^\ast=a-\hat x$ and $\viol(\hat x)\ge\pos{\hat x-a}$; since $a-\hat x\ge a\1_{\{\hat x<0\}}-\pos{\hat x-a}$
pointwise, $a\,P_0(\hat x<0)\le\E_0[(a-\hat x)+\pos{\hat x-a}]\le2\eps$, \ie\ $P_0(\hat x<0)\le2\eps/a=1/4$. Under $P_j$: $\viol(\hat x)\ge\pos{\hat x+a}\ge a\1_{\{\hat x\ge0\}}$, so
$P_j(\hat x\ge0)\le\eps/a=1/8$. Hence, for the mixture $\bar P:=\frac1m\sum_{j=1}^mP_j$,
\[
 \TV(P_0,\bar P)\ge P_0(\hat x\ge0)-\bar P(\hat x\ge0)\ge\frac34-\frac18=\frac58,\qquad\text{so}\qquad \chi^2(\bar P\|P_0)\ge4\TV(P_0,\bar P)^2\ge\frac{25}{16}
\]
by \Cref{fact:pinsker}.

\emph{The $\chi^2$-divergence of the mixture.} A vector call at $x$ returns $x-\xi$ for a fresh $\xi$, which is equivalent to observing $\xi\in\{\pm\sigma\}^m$ itself; a paired call reveals the same $\xi$ twice.
The law of $\xi$ does not depend on the query point, so (\Cref{fact:chain}) the likelihood ratio of the transcript is the product of the likelihood ratios of the $T$ observed vectors
$\xi^{(1)},\dots,\xi^{(T)}$, and under $\beta^{(j)}$ only the $j$-th coordinates have a different law: $dP_j/dP_0=\prod_{t=1}^Tr(\xi^{(t)}_j)$ with $r:=d\nu_-/d\nu_+$, where $\nu_\pm$ is the law of a
$\pm\sigma$-valued variable with mean $\pm a$. Hence, using the independence of the coordinates under $P_0$ and $\E_{\nu_+}r=1$,
\begin{multline*}
 \chi^2(\bar P\|P_0)=\E_0\Bigl[\Bigl(\frac1m\sum_j\prod_tr(\xi^{(t)}_j)\Bigr)^2\Bigr]-1=\frac1{m^2}\sum_{j,l}\prod_{t=1}^T\E_0\bigl[r(\xi^{(t)}_j)r(\xi^{(t)}_l)\bigr]-1\\
 =\frac{m(m-1)+m(1+\eta)^T}{m^2}-1=\frac{(1+\eta)^T-1}{m},
\end{multline*}
where $\eta:=\E_{\nu_+}[r^2]-1=\chi^2(\nu_-\|\nu_+)$. With $p:=\nu_+(\{\sigma\})=(1+a/\sigma)/2$ we have $\nu_-(\{\sigma\})=1-p$ and
$\eta=(2p-1)^2\bigl(\frac1p+\frac1{1-p}\bigr)=\frac{(a/\sigma)^2}{p(1-p)}=\frac{4a^2}{\sigma^2-a^2}\le\frac{16a^2}{3\sigma^2}$ for $a\le\sigma/2$, which holds because $a=8\eps\le\sigma/2$.

\emph{Conclusion.} Combining, $(1+\eta)^T\ge1+\frac{25}{16}m\ge1+m$, so $T\ge\log(1+m)/\log(1+\eta)\ge\log(1+m)/\eta\ge\frac{3\sigma^2\log(1+m)}{16a^2}=\frac{3\sigma^2\log(1+m)}{1024\,\eps^2}$.
The second form uses $\sigma_g=2\sigma$.

\subsection{Proof of Remark~\ref{rem:lower-levels-sc}}
Only the reduction to a test changes. Let $f(x)=-x+\frac\mu2x^2$ with $\mu\in(0,1]$ and $a=16\eps\le1/2$; then $f'(x)=-1+\mu x\le0$ on $[-1,1]$, so $f$ is decreasing, $f$ is
$(1+\mu)$-Lipschitz on $[-1,1]$, and the minimizers are again $x^\ast=a$ under $\beta^{(0)}$ and $x^\ast=-a$ under $\beta^{(j)}$. Under $\beta^{(0)}$: if $\hat x<0$ then
$f(\hat x)-f^\ast\ge f(0)-f(a)=a-\frac\mu2a^2\ge\frac a2$ because $\mu a\le1$; if $\hat x>a$ then $f(\hat x)-f(a)=-(\hat x-a)+\frac\mu2(\hat x^2-a^2)\ge-\pos{\hat x-a}$; hence
$f(\hat x)-f^\ast\ge\frac a2\1_{\{\hat x<0\}}-\pos{\hat x-a}$ pointwise and, as $\viol(\hat x)\ge\pos{\hat x-a}$, $\frac a2P_0(\hat x<0)\le\E_0[(f(\hat x)-f^\ast)+\viol(\hat x)]\le2\eps$, \ie\
$P_0(\hat x<0)\le4\eps/a=1/4$. Under $\beta^{(j)}$, $\viol(\hat x)\ge\pos{\hat x+a}\ge a\1_{\{\hat x\ge0\}}$ gives $P_j(\hat x\ge0)\le\eps/a=1/16$. Hence
$\TV(P_0,\bar P)\ge\frac34-\frac1{16}\ge\frac58$ and $\TV(P_0,P_j)\ge\frac58$, exactly the bounds used above and in the proof of \Cref{thm:lower-scalar}; the condition
$a\le\sigma/2$ holds because $\eps\le\sigma/32$. The remaining steps are unchanged and give $T\ge3\sigma^2\log(1+m)/(16a^2)=3\sigma^2\log(1+m)/(4096\,\eps^2)$ for the vector oracle
and $\E_0[T]\ge25m\sigma^2/(128a^2)=25m\sigma^2/(32768\,\eps^2)$ for the scalar oracle.

\subsection{Proof of Theorem~\ref{thm:lower-scalar}}
With a scalar oracle, the $t$-th observation is $x_t-\xi^{(t)}_{i_t}$ for an index $i_t$ chosen by the algorithm, equivalently $\xi^{(t)}_{i_t}$. Let $T_j:=\#\{t\le T:i_t=j\}$. Under $P_0$ every
observation has law $\nu_+$; under $P_j$ the observations with $i_t=j$ have law $\nu_-$ and the others $\nu_+$. Conditionally on the past, the divergence of the $t$-th observation is
$\KL(\nu_+\|\nu_-)\1_{\{i_t=j\}}$, so the chain rule (\Cref{fact:chain}, which applies to a random number of calls by padding with uninformative calls) gives
$\KL(P_0\|P_j)=\E_0[T_j]\,\KL(\nu_+\|\nu_-)$. Now
\begin{multline*}
 \KL(\nu_+\|\nu_-)=p\log\frac p{1-p}+(1-p)\log\frac{1-p}p=(2p-1)\log\frac{p}{1-p}\\
 =\frac a\sigma\log\frac{1+a/\sigma}{1-a/\sigma}\le\frac a\sigma\cdot\frac{2a/\sigma}{1-a/\sigma}\le\frac{4a^2}{\sigma^2},
\end{multline*}
using $\log\frac{1+x}{1-x}\le x+\frac{x}{1-x}\le\frac{2x}{1-x}$ for $x\in[0,1)$ and $a/\sigma\le\frac12$. On the other hand, as in the previous proof, $\TV(P_0,P_j)\ge P_0(\hat x\ge0)-P_j(\hat x\ge0)\ge\frac58$,
so by Pinsker's inequality $\KL(P_0\|P_j)\ge2\TV^2\ge\frac{25}{32}$. Hence $\E_0[T_j]\ge\frac{25}{32}\cdot\frac{\sigma^2}{4a^2}=\frac{25\sigma^2}{128a^2}=\frac{25\sigma^2}{8192\,\eps^2}$ for every $j$, and
summing over $j$ gives $\E_0[T]=\sum_j\E_0[T_j]\ge\frac{25m\sigma^2}{8192\,\eps^2}$.

\section{Experimental details}\label{app:experiments}

\paragraph{Instances.} The random data ($A_i$, $q_i$) are generated once from a fixed seed and stored with the results (\texttt{data/instance\_*.npz}). The constraint rows $A_i$
($i=2,\dots,m-1$) are standard Gaussian vectors with the first coordinate set to zero and normalized to unit length, so that $g_i(x^\ast)=-0.3<0$; the noise directions $q_0,\dots,q_m$
are independent random unit vectors. With $\Q=\Ball$ and $x_0=a$, the primal radius is $\cA=\frac12\norm{a-x^\ast}^2=\frac12(0.55)^2$ for both instances and the dual radius is
$\cB=\Rmult^2\log(m+1)=4\log9$.

\paragraph{Oracle.} Each paired query draws one direction $u\sim U(\Sphere)$ (a normalized Gaussian vector) and one sample $\xi=(s_0,\dots,s_m,o_1,\dots,o_m)$ of independent Rademacher
variables, evaluates the noisy values at $c\pm\gamma u$ with the \emph{same} $\xi$, and returns the two vectors; the objective and the constraint estimates of \eqref{eq:estimators} are then
formed by the algorithm. Level estimates use one point $c+\gamma v$ with $v\sim U(\Ball)$ (a normalized Gaussian scaled by $U^{1/d}$, $U$ uniform on $[0,1]$) and a fresh $\xi$. The
implementation guards against any reuse of a mini-batch: every batch receives an identifier when it is drawn and raises an error if it is consumed twice. The oracle
counts every point evaluation, i.e.\ the number $\Norc$ of vector calls of \Cref{def:measures}; the identities $\Norc=(2b_p+3b_y)K_N+2b_p(N-1)$ of
\eqref{eq:sliding-counts} for \ZOS\ (with $b_p=b_y=8$ in all runs) and $\Norc=6bN$ for \BSMP\ are checked at the end of every run, as is the schedule condition
$a_tM_g\rs\le L$ in every phase, and the columns $\Nf$ of \Cref{tab:results} are $2b_pK_N$ and $4bN$.

\paragraph{Local constants of the quadratic instance.} The quadratic objective $\frac12\norm{x-a}^2$ and the constraint $g_m$ are convex and $1$-smooth on $\R^d$
(\Cref{ass:smooth}), the objective is $1$-strongly convex (\Cref{ass:sc}), and both are Lipschitz on every bounded set; \Cref{ass:lip} only requires the Lipschitz
property on $\Q_{\bar\gamma}$ (radius $1+\gamma$), where the objective is $(1.75+\gamma)$-Lipschitz and $g_m$ is $(1+\gamma)$-Lipschitz, and these are the constants used
below (\Cref{tab:params}). No global Lipschitz property is assumed and none is needed: by \Cref{rem:assumptions}(i), a global Lipschitz bound would be incompatible with
the strong convexity of the objective, and the theory uses only the local constants. In the smooth regime the smoothed Lagrangian is $L$-smooth on $\R^d$ with
$L=L_0+\Rmult L_g=3$ directly from \Cref{ass:smooth}, which is the property required by \Cref{lem:outer}.

\paragraph{Smoothness constant of the cusp instance.} For the cusp instance the value $L=2+\sqrt d/(4\gamma)$ of \Cref{tab:params} uses the additive structure of the data
rather than the generic bound $H\le\sqrt dM_R/\gamma\approx240$: the linear part $-x_1$ and the affine constraints are unchanged by smoothing and contribute nothing
to $H$; the nonsmooth part $\frac14\norm{x_{2:d}}$ is $\frac14$-Lipschitz, so by \Cref{lem:smoothing}(c) its smoothed version is $\sqrt d/(4\gamma)$-smooth; and
$g_m$ is $1$-smooth, contributing $\Rmult L_g=2$. Hence $H\le2+\sqrt d/(4\gamma)$, which is all that \Cref{thm:sliding} requires ($L\ge H$). The slope noise is linear
in $x$ and does not affect the smoothness of the expected functions.

\paragraph{Parameters.} \Cref{tab:params} lists the parameters. For \ZOS\ the constants entering the theoretical stabilizer are the upper bounds implied by the instance:
$M_g=1+\gamma+0.02$, $M_0=1.75+\gamma+0.02$ (smooth) or $M_0=\sqrt{1+1/16}+0.02$ (cusp), $\sigma_h^2=(2\gamma(1+\gamma)+0.02(1+\gamma)+0.02)^2$ (a deterministic bound on
$\norm{\hat h-h}_\infty$), $\nuj=4d$, $\rs=1$, $D=2$; the stabilizer of \Cref{cor:sliding-lambda} is then $\lambda=\Sb\sqrt{A_N/2}/\dA=\SA\sqrt{A_N/(2b)}/\dA$ (with $b_p=b_y=b=8$) and $\dA^2=3\cA+\cB$. The inner
schedule is exactly the one of \eqref{eq:schedule}, $S_t=\max\{1,\lceil M_g\rs t/L\rceil\}$ with $\rs=1$ and the same $M_g=1+\gamma+0.02$ that enters the stabilizer
(a bound that covers the affine constraints, $g_m$ and the slope noise); it gives $K_N=\SmoothInner$ (smooth) and $K_N=\CuspInner$ (cusp), and the condition
$a_tM_g\rs\le L$ of \Cref{lem:schedule}(i) is asserted by the code in every phase. For \BSMP\ the constant $L_\alpha$ is
computed from \eqref{eq:Lalpha} with $H=L$ and the mean-square Lipschitz constant $M_g=1+\gamma$ of the smoothed constraints.

\begin{table}[htbp]
\centering\footnotesize
\caption{Parameters of the experiments.}\label{tab:params}
\setlength{\tabcolsep}{4pt}
\begin{tabular}{@{}lll@{}}
\toprule
& smooth instance & cusp instance\\
\midrule
$\gamma$, shift of $g_m$ & $0.04$, $\gamma^2\cdot10/52$ & $0.08$, $\gamma^2\cdot10/52$\\
$L$ (upper bound of $H$) & $L_0+\Rmult L_g=3$ & $2+\sqrt d/(4\gamma)\approx17.31$\\
$\Rmult$ & $2$ & $2$\\
\ZOS: $N$, $b_p=b_y$, $\rs$, $S_t$ & $60$, $8$, $1$, $\lceil1.06\,t/3\rceil$ & $60$, $8$, $1$, $\lceil1.10\,t/17.31\rceil$\\
\ZOS: $\lambda$ (practical / theoretical) & $0.1$ / \SmoothStabilizer & $0.1$ / \CuspStabilizer\\
\ZOS: $K_N$, $\Norc$, $\Nf$ & \SmoothInner, \SmoothCalls, \SmoothNf & \CuspInner, \CuspCalls, \CuspNf\\
\BSMP: $\alpha$, $\eta$ & $\sqrt{20}$, $0.45/L_\alpha$ & $\sqrt{20}$, $0.45/L_\alpha$\\
\BSMP: iterations for $b=1$ / $b=8$ & \SmoothBsmpOneIters\ / \SmoothBsmpEightIters & \CuspBsmpOneIters\ / \CuspBsmpEightIters\\
restarts (\Cref{sec:exp-restart}) & \begin{tabular}[t]{@{}l@{}}$7$ stages of $10$ rounds,\\ $b_k=2^{k-1}$, $\rs_k=2^{(k-1)/2}$;\\ plain runs: $N=60$, $b\in\{8,16,\EqBatch\}$\end{tabular} & ---\\
seeds & $10000,\dots,10029$ & $10000,\dots,10029$\\
\bottomrule
\end{tabular}
\end{table}

\paragraph{Level-testing diagnostic.} In \Cref{fig:levels} the null hypothesis is $\beta^{(0)}$ and the alternative is the uniform mixture of $\beta^{(1)},\dots,\beta^{(m)}$
of the proof of \Cref{thm:lower-levels}, with $\sigma=0.2$, $a=0.04$ and $T$ vector calls, each returning the $m$ independent $\pm\sigma$-valued coordinates. The test
statistic is the exact log-likelihood ratio of the mixture against the null, $\log\frac1m\sum_{j=1}^m r_j^{T}$ with $r_j^{T}=\exp\bigl((2c_j-T)\log\frac{\sigma-a}{\sigma+a}\bigr)$
and $c_j$ the number of $+\sigma$ observations in coordinate $j$, thresholded at $0$; the reported error is the average of the two error probabilities (the Bayes
risk with equal priors), estimated from $4\,000$ independent trials under each hypothesis, so that its standard error is at most $0.01$.

\paragraph{Metrics.} After every round (resp.\ every Mirror--Prox iteration) the current output $\Xr$ is evaluated exactly (the exact $f$ and $g$ are used only for reporting, never
by the algorithms): the objective gap $f(\Xr)-f^\ast$, the violation $\viol(\Xr)=\max_i\pos{g_i(\Xr)}$ and the joint error $J(\Xr)=\max\{0,f(\Xr)-f^\ast,\viol(\Xr)\}$. The means in
\Cref{tab:results} are over the \NumSeeds\ seeds; the confidence intervals are percentile bootstrap intervals ($10\,000$ resamples) for the mean of the final $J$. The bands in the figures are
pointwise interquartile ranges over seeds.

\paragraph{Reproducibility.} \begin{sloppypar}The archive contains the code in the directory \texttt{code/}: the file \texttt{zo\_constrained.py}
(problem, oracle and the two methods), \texttt{run\_all.py} (all experiments) and \texttt{make\_figures.py} (figures and table); the raw records in
\texttt{data/}: \texttt{trajectories.csv}, \texttt{restarts.csv}, \texttt{level\_testing.csv} and \texttt{noise\_diagnostic.csv}; the configurations
\texttt{config.json}, \texttt{restart\_config.json} and \texttt{environment.json}; and the figures.\end{sloppypar}
Running \texttt{python3 run\_all.py} followed by \texttt{python3 make\_figures.py} in the \texttt{code} directory regenerates all data, figures, the table \texttt{sections/results\_table.tex}
and the numerical macros \texttt{sections/results\_macros.tex} used in the text. The total running time is about five minutes on a single CPU core (Python~3 with NumPy, SciPy,
pandas and Matplotlib).

\end{document}